\numberwithin{subsection}{section}
\numberwithin{equation}{section}
\theoremstyle{plain}
\newtheorem{thm}[equation]{Theorem}
\newtheorem{prop}[equation]{Proposition}
\newtheorem{cor}[equation]{Corollary}
\newtheorem{lemma}[equation]{Lemma}
\newtheorem{conj}[equation]{Conjecture}
\theoremstyle{definition}
\newtheorem{defn}[equation]{Definition}
\newtheorem{defns}[equation]{Definitions}
\theoremstyle{remark}
\newtheorem{rem}[equation]{Remark}
\newtheorem{rems}[equation]{Remarks}
\newtheorem{exer}[equation]{Exercise}
\newtheorem{exers}[equation]{Exercises}
\newtheorem{rem-exer}[equation]{Remark/Exercise}
\newtheorem{rem-exers}[equation]{Remark/Exercises}
\newcommand{\CC}{\mathcal{C}}
\newcommand{\CCbar}{\overline{\mathcal{C}}}
\newcommand{\DD}{\mathcal{D}}
\newcommand{\DDbar}{\overline{\mathcal{D}}}
\newcommand{\XX}{\mathcal{X}}
\newcommand{\XXbar}{\overline{\mathcal{X}}}
\newcommand{\YY}{\mathcal{Y}}
\newcommand{\YYbar}{\overline{\mathcal{Y}}}
\newcommand{\ZZ}{{\mathcal{Z}}}
\newcommand{\WW}{{\mathcal{W}}}
\renewcommand{\O}{\mathcal{O}}
\newcommand{\FF}{\mathcal{F}}
\newcommand{\EE}{\mathcal{E}}
\newcommand{\EEbar}{\overline{\mathcal{E}}}
\newcommand{\LL}{\mathcal{L}}
\newcommand{\MM}{\mathcal{M}}
\newcommand{\OO}{\mathcal{O}}
\newcommand{\F}{\mathbb{F}}
\newcommand{\Fp}{{\mathbb{F}_p}}
\newcommand{\Fq}{{\mathbb{F}_q}}
\newcommand{\Fqn}{{\mathbb{F}_{q^n}}}
\newcommand{\Fpbar}{{\overline{\mathbb{F}}_p}}
\newcommand{\Fqbar}{{\overline{\mathbb{F}}_q}}
\newcommand{\kbar}{{\overline{k}}}
\newcommand{\Kbar}{{\overline{K}}}
\newcommand{\ratto}{{\dashrightarrow}}
\newcommand{\Ql}{{\mathbb{Q}_\ell}}
\newcommand{\Zl}{{\mathbb{Z}_\ell}}
\newcommand{\Qlbar}{{\overline{\mathbb{Q}}_\ell}}
\newcommand{\Qp}{{\mathbb{Q}_p}}
\newcommand{\Zp}{{\mathbb{Z}_p}}
\newcommand{\Z}{\mathbb{Z}}
\newcommand{\Q}{\mathbb{Q}}
\newcommand{\C}{\mathbb{C}}
\newcommand{\A}{\mathbb{A}}
\renewcommand{\P}{\mathbb{P}}
\newcommand{\m}{\mathfrak{m}}
\newcommand{\n}{\mathfrak{n}}
\newcommand{\T}{\mathcal{T}}
\newcommand{\sha}{{\hbox to 10pt{\rlap{\hskip2.8pt\vrule
height6pt\hskip1.6pt\vrule height6pt\hskip1.6pt
\vrule height6pt}\hskip1pt\vrule height0.8pt width 8pt\hskip1pt}}}
\newcommand{\<}{\langle}
\renewcommand{\>}{\rangle}
\newcommand{\into}{\hookrightarrow}
\newcommand{\onto}{\twoheadrightarrow}
\newcommand{\isoto}{\tilde{\to}}
\newcommand{\tensor}{\otimes}
\newcommand{\compose}{\circ}
\newcommand{\nodiv}{\not|}
\def\nodiv{\mathrel{\mathchoice{\not|}{\not|}{\kern-.2em\not\kern.2em|}{\kern-.2em\not\kern.2em|}}}
\newcommand{\SL}{\mathrm{SL}}
\newcommand{\GL}{\mathrm{GL}}
\newcommand{\G}{\mathbb{G}}
\DeclareMathOperator{\coker}{coker}
\DeclareMathOperator{\tr}{Tr}
\DeclareMathOperator{\ind}{Ind}
\DeclareMathOperator{\cond}{Cond}
\DeclareMathOperator{\ord}{ord}
\DeclareMathOperator{\rk}{Rank}
\DeclareMathOperator{\dvsr}{div}
\DeclareMathOperator{\RP}{Re}
\DeclareMathOperator{\Hom}{Hom}
\DeclareMathOperator{\Aut}{Aut}
\DeclareMathOperator{\Pic}{Pic}
\DeclareMathOperator{\DivCorr}{DivCorr}
\DeclareMathOperator{\ps}{\underline{Pic}}
\DeclareMathOperator{\Picvar}{PicVar}
\DeclareMathOperator{\NS}{NS}
\DeclareMathOperator{\Num}{Num}
\DeclareMathOperator{\Ho}{Homol}
\DeclareMathOperator{\Br}{Br}
\DeclareMathOperator{\gal}{Gal}
\DeclareMathOperator{\spec}{Spec}
\DeclareMathOperator{\en}{End}
\DeclareMathOperator{\mor}{Mor}
\DeclareMathOperator{\divr}{\rm Div}
\DeclareMathOperator{\Fr}{Fr}
\def\clap#1{\hbox to 0pt{\hss#1\hss}} 
\def\mathllap{\mathpalette\mathllapinternal}
\def\mathllapinternal#1#2{% 
\llap{$\mathsurround=0pt#1{#2}$}}
\begin{document}

\frontmatter
\tableofcontents

\mainmatter

\LectureSeries{Elliptic curves over function fields \author{Douglas Ulmer}}
\address{School of Mathematics, Georgia Institute of Technology,
Atlanta, GA~~30332}
\email{douglas.ulmer@math.gatech.edu}

\section*{Introduction}

These are the notes from a course of five lectures at the 2009 Park
City Math Institute.  The focus is on {\it elliptic curves\/} over
function fields over {\it finite fields\/}.  In the first three
lectures, we explain the main classical results (mainly due to Tate)
on the Birch and Swinnerton-Dyer conjecture in this context and its
connection to the Tate conjecture about divisors on surfaces.  This
is preceded by a ``Lecture~0'' on background material.  In the
remaining two lectures, we discuss more recent developments on
elliptic curves of large rank and constructions of explicit points in
high rank situations.

A great deal of this material generalizes naturally to the context of
curves and Jacobians of {\it any genus\/} over function fields over
{\it arbitrary ground fields\/}.  These generalizations were discussed
in a course of 12 lectures at the CRM in Barcelona in February, 2010,
and will be written up as a companion to these notes, see
\cite{UlmerCRM}.  Unfortunately, theorems on unbounded ranks over
function fields are currently known only in the context of finite
ground fields.

Finally, we mention here that very interesting theorems of
Gross-Zagier type exist also in the function field context.  These
would be the subject of another series of lectures and we will not say
anything more about them in these notes.

It is a pleasure to thank the organizers of the 2009 PCMI for the
invitation to speak, the students for their interest, enthusiasm, and
stimulating questions, and the ``elder statesmen''---Bryan Birch, Dick
Gross, John Tate, and Yuri Zarhin---for their remarks and
encouragement.  Thanks also to Keith Conrad for bringing the
fascinating historical articles of Roquette \cite{Roquette06} to my
attention.  Last but not least, thanks are due as well to Lisa Berger,
Tommy Occhipinti, Karl Rubin, Alice Silverberg, Yuri Zarhin, and an
anonymous referee for their suggestions and \TeX{}nical advice.

\setcounter{lecture}{-1}
\lecture{Background on curves and function fields}
This ``Lecture~0'' covers definitions and notations that are
probably familiar to many readers and that were reviewed very quickly
during the PCMI lectures.  Readers are invited to skip it and refer
back as necessary.  

\section{Terminology}
Throughout, we use the language of schemes.  This is necessary to be
on firm ground when dealing with some of the more subtle aspects
involving non-perfect ground fields and possibly non-reduced group
schemes.  However, the instances where we use any hard results from this
theory are isolated and students should be able to follow readily the main
lines of discussion, perhaps with the assistance of a friendly
algebraic geometer. 

Throughout, a {\it variety\/} over a field $F$ is a separated, reduced
scheme of finite type over $\spec F$.  A {\it curve\/} is a variety
purely of dimension 1 and a {\it surface\/} is a variety purely of
dimension 2.

\section{Function fields and curves}\label{s:ffs}
Throughout, $p$ will be a prime number and $\Fq$ will denote the field
with $q$ elements with $q$ a power of $p$.  We write $\CC$ for a
smooth, projective, and absolutely irreducible curve of genus $g$ over
$\Fq$ and we write $K=\Fq(\CC)$ for the function field of $\CC$ over
$\Fq$.  The most important example is when $\CC=\P^1$, the projective
line, in which case $K=\Fq(\CC)=\Fq(t)$ is the field of rational
functions in a variable $t$ over $\Fq$.

We write $v$ for a closed point of $\CC$, or equivalently for an
equivalence class of valuations of $K$.  For each such $v$ we write
$\O_{(v)}$ for the local ring at $v$ (the ring of rational functions
on $\CC$ regular at $v$), $\m_v\subset\O_{(v)}$ for the maximal ideal
(those functions vanishing at $v$), and $\kappa_v=\O_{(v)}/\m_v$ for
the residue field at $v$.  The extension $\kappa_v/\Fq$ is finite and
we set $\deg(v)=[\kappa_v:\Fq]$ and $q_v=q^{\deg(v)}$ so that
$\kappa_v\cong\F_{q_v}$.

For example, in the case where $\CC=\P^1$, the ``finite'' places of
$\CC$ correspond bijectively to monic irreducible polynomials
$f\in\Fq[t]$.  If $v$ corresponds to $f$, then $\O_{(v)}$ is the set
of ratios $g/h$ where $g,h\in\Fq[t]$ and $f$ does not divide $h$.  The
maximal ideal $\m_v$ consists of ratios $g/h$ where $f$ does divide
$g$, and the degree of $v$ is the degree of $f$ as a polynomial in
$t$.  There is one more place of $K$, the ``infinite'' place
$v=\infty$.  The local ring consists of ratios $g/h$ with
$g,h\in\Fq[t]$ and $\deg(g)\le\deg(h)$.  The maximal ideal consists of
ratios $g/h$ where $\deg(g)<\deg(h)$ and the degree of $v=\infty$ is
1.  The finite and infinite places of $\P^1$ give all closed points of
$\P^1$.

We write $K^{sep}$ for a separable closure of $K$ and let
$G_K=\gal(K^{sep}/K)$.  We write $\Fqbar$ for the algebraic closure of
$\Fq$ in $K^{sep}$.  For each place $v$ of $K$ we have the
decomposition group $D_v$ (defined only up to conjugacy), its normal
subgroup the inertia group $I_v\subset D_v$, and $\Fr_v$ the
(geometric) Frobenius at $v$, a canonical generator of the quotient
$D_v/I_v\cong\gal(\Fqbar/\Fq)$ that acts as $x\mapsto x^{q_v^{-1}}$ on
the residue field at a place $w$ dividing $v$ in a finite extension
$F\subset K^{sep}$ unramified over $v$.

General references for this section and the next are
\cite{GoldschmidtAFPC}, \cite{RosenNTFF}, and \cite{StichtenothAFFC}.

\section{Zeta functions}\label{s:zetas}
Let $\XX$ be a variety over the finite field $\Fq$.  Extending the
notation of the previous section, if $x$ is a closed point of $\XX$,
we write $\kappa_x$ for the residue field at $x$, $q_x$ for its
cardinality, and $\deg(x)$ for $[\kappa_x:\Fq]$.

We define the $Z$ and $\zeta$ functions of $\XX$ via Euler products:
$$Z(\XX,T)=\prod_x\left(1-T^{\deg(x)}\right)^{-1}$$
and 
$$\zeta(\XX,s)=Z(\XX,q^{-s})=\prod_x\left(1-q_x^{-s}\right)^{-1}$$
where the products are over the closed points of $\XX$.
It is a standard exercise to show that
$$Z(\XX,T)=\exp\left(\sum_{n\ge1}N_n\frac{T^n}{n}\right)$$
where $N_n$ is the number of $\F_{q^n}$-valued points of $\XX$.  
It follows from a crude estimate for the number of $\F_{q^n}$ points
of $\XX$ that the Euler product defining $\zeta(\XX,s)$
converges in the half plane $\RP(s)>\dim \XX$.

If $\XX$ is smooth and projective, then it is known that
$Z(\XX,T)$ is a rational function of the form
$$\frac{\prod_{i=0}^{\dim\XX-1}P_{2i+1}(T)}{\prod_{i=0}^{\dim\XX}P_{2i}(T)}$$
where $P_0(T)=(1-T)$, $P_{2\dim\XX}(T)=(1-q^{\dim\XX}T)$, and for all
$0\le i\le2\dim\XX$ $P_i(T)$ is a polynomial with integer coefficients
and constant term 1.  We denote the inverse roots of $P_i$ by
$\alpha_{ij}$ so that
$$P_i(T)=\prod_j(1-\alpha_{ij}T)$$

The inverse roots $\alpha_{ij}$ of $P_i(T)$ are algebraic integers
that have absolute value $q^{i/2}$ in every complex embedding.  (We
say that they are {\it Weil numbers of size $q^{i/2}$\/}.)  It follows
that $\zeta(\XX,s)$ has a meromorphic continuation to the whole $s$
plane, with poles on the lines $\RP s\in\{0,\dots,\dim\XX\}$ and
zeroes on the lines $\RP s\in\{1/2,\dots,\dim\XX-1/2\}$.  This is the
analogue of the Riemann hypothesis for $\zeta(\XX,s)$.

It is also known that the set of inverse roots of $P_i(T)$ (with
multiplicities) is stable under $\alpha_{ij}\mapsto q/\alpha_{ij}$.
Thus $\zeta(\XX,s)$ satisfies a functional equation when $s$ is
replaced by $\dim\XX-s$.

In the case where $\XX$ is a curve, $P_1(T)$ has
degree $2g$ ($g=$ the genus of $\CC$) and has the form
$$P_1(T)=1+\cdots+q^gT^{2g}=\prod_{j=1}^{2g}(1-\alpha_{1j}T).$$
Thus $\zeta(\CC,s)$ has simple poles for $s\in\frac{2\pi i}{\log q}\Z$
and $s\in1+\frac{2\pi i}{\log q}\Z$ and its zeroes lie on the line
$\RP s=1/2$.

For a fascinating history of the early work on zeta functions and the
Riemann hypothesis for curves over finite fields, see
\cite{Roquette06} and parts I and II of that work.

\section{Cohomology}\label{s:cohomology}
Assume that $\XX$ is a smooth projective variety over $k=\Fq$.
We write $\XXbar$ for $\XX\times_{\Fq}\Fqbar$.  Note that
$G_k=\gal(\Fqbar/\Fq)$ acts on $\XXbar$ via the factor $\Fqbar$.

Choose a prime $\ell\neq p$.  We have $\ell$-adic cohomology groups
$H^i(\XXbar,\Ql)$ which are finite-dimensional $\Ql$-vector spaces
and which vanish unless $0\le i\le 2\dim\XX$.

Functoriality in $\XXbar$ gives a continuous action of
$\gal(\Fqbar/\Fq)$.  Since the geometric Frobenius
($\Fr_q(a)=a^{q^{-1}}$) is a topological generator of
$\gal(\Fqbar/\Fq)$, the characteristic polynomial of $\Fr_q$ on
$H^i(\XXbar,\Ql)$ determines the eigenvalues of the action of
$\gal(\Fqbar/\Fq)$; in fancier language, it determines the action up
to semi-simplification.

An important result (inspired by \cite{Weil49} and proven in
great generality in \cite{SGA5})  says that the factors
$P_i$ of $Z(\XX,t)$ are characteristic polynomials of Frobenius:
\begin{equation}\label{eq:P-cohom}
P_i(T)=\det(1-T\Fr_q|H^i(\XXbar,\Ql)).
\end{equation}
From this point of view, the functional equation and Riemann
hypothesis for $Z(\XX,T)$ are statements about duality and purity.

To discuss the connections, we need more notation.  Let
$\Zl(1)=\varprojlim_n\mu_{\ell^n}(\Fqbar)$ and
$\Ql(1)=\Zl(1)\tensor_{\Zl}\Ql$, so that $\Ql(1)$ is a one-dimensional
$\Ql$-vector space on which $\gal(\Fqbar/\Fq)$ acts via the
$\ell$-adic cyclotomic character.  More generally, for $n>0$ set
$\Ql(n)=\Ql(1)^{\tensor n}$ ($n$-th tensor power) and
$\Ql(-n)=\Hom(\Ql(n),\Ql)$, so that for all $n$, $\Ql(n)$ is a
one-dimensional $\Ql$-vector space on which $\gal(\Fqbar/\Fq)$ acts
via the $n$th power of the $\ell$-adic cyclotomic character.

We have $H^0(\XXbar,\Ql)\cong\Ql$ (with trivial Galois action) and
$H^{2\dim\XX}(\XXbar,\Ql)\cong\Ql(\dim\XX)$.  The functional equation
follows from the fact that we have a canonical non-degenerate, Galois
equivariant pairing
$$H^i(\XXbar,\Ql)\times H^{2\dim\XX-i}(\XXbar,\Ql)\to 
H^{2\dim\XX}(\XXbar,\Ql)\cong\Ql(\dim\XX).$$ 
Indeed, the non-degeneracy of this pairing implies that if $\alpha$ is
an eigenvalue of $\Fr_q$ on $H^i(\XXbar,\Ql)$, then
$q^{\dim\XX}/\alpha$ is an eigenvalue of $\Fr_q$ on
$H^{2\dim\XX-i}(\XXbar,\Ql)$.

The Riemann hypothesis in this context is the statement that the
eigenvalues of $\Fr_q$ on $H^i(\XXbar,\Ql)$ are algebraic integers
with absolute value $q^{i/2}$ in every complex embedding.

See \cite{SGA4.5} or \cite{MilneEC} for an overview of \'etale
cohomology and its connections with the Weil conjectures.

\section{Jacobians}
\numberwithin{equation}{subsection}
\subsection{Picard and Albanese properties}\label{ss:pic-alb}
We briefly review two (dual) universal properties of the Jacobian of a
curve that we will need.  See \cite{Milne86jv} for more details.

We assume throughout that the curve $\CC$ has an $\Fq$-rational point
$x$, i.e., a closed point with residue field $\Fq$.  If $T$ is another
connected variety over $\Fq$ with an $\Fq$-rational point $t$, a {\it
  divisorial correspondence\/} between $(C,x)$ and $(T,t)$ is an
invertible sheaf $\LL$ on $C\times_\Fq T$ such that $\LL|_{C\times t}$
and $\LL|_{x\times T}$ are trivial.  Two divisorial correspondences
are equal when they are isomorphic as invertible sheaves.  Note that
the set of divisorial correspondences between $(\CC,x)$ and $(T,t)$
forms a group under tensor product and is thus a subgroup of
$\Pic(\CC\times T)$.  We write
$$\DivCorr((\CC,x),(T,t))\subset\Pic(\CC\times T)$$
for this subgroup.  One may think of a divisorial correspondence as
giving a family of invertible sheaves on $C$: $s\mapsto\LL|_{C\times
  s}$.

Let $J=J_\CC$ be the Jacobian of $\CC$ and write $0$ for its identity
element.  Then $J$ is a $g$-dimensional abelian variety over $\Fq$ and
it carries the ``universal divisorial correspondence with $C$.''  More
precisely, there is a divisorial correspondence $\MM$ between $(C,x)$
and $(J,0)$ such that if $S$ is another connected variety over $\Fq$
with $\Fq$-rational point $s$ and $\LL$ is a divisorial correspondence
between $(C,x)$ and $(S,s)$, then there is a unique morphism
$\phi:S\to J$ sending $s$ to $0$ such that $\LL=\phi^*\MM$.  (Of
course $\MM$ depends on the choice of base point $x$, but we omit this
from the notation.)

It follows that there is a canonical morphism, the Abel-Jacobi
morphism, $AJ:\CC\to J$ sending $x$ to $0$.  Intuitively, this
corresponds to the family of invertible sheaves parameterized by $\CC$
that sends $y\in\CC$ to $\OO_\CC(y-x)$.  More precisely, let
$\Delta\subset \CC\times C$ be the diagonal, let
$$D=\Delta-x\times \CC-\CC\times x,$$
and let $\LL=\OO_{\CC\times\CC}(D)$ which is a divisorial
correspondence between $(C,x)$ and itself.  The universal property
above then yields the morphism $AJ:\CC\to J$.  It is known that $AJ$
is a closed immersion and that its image generates $J$ as an algebraic
group.

The second universal property enjoyed by $J$ (or rather by $AJ$) is
the Albanese property: it is universal for maps to abelian varieties.
More precisely, if $A$ is an abelian variety and $\phi:\CC\to A$ is a
morphism sending $x$ to 0, then there is a unique homomorphism of
abelian varieties $\psi:J\to A$ such that $\phi=\psi\compose AJ$.

Combining the two universal properties gives a useful connection
between correspondences and homomorphisms: Suppose $\CC$ and $\DD$ are
curves over $\Fq$ with rational points $x\in\CC$ and $y\in\DD$.  Then
we have an isomorphism
\begin{equation}\label{eq:divcor-hom}
\DivCorr((\CC,x),(\DD,y))\cong\Hom(J_\CC,J_\DD).
\end{equation}
Intuitively, given a divisorial
correspondence on $\CC\times\DD$, we get a family of invertible
sheaves on $\DD$ parameterized by $\CC$ and thus a morphism $\CC\to
J_\DD$.  The Albanese property then gives a homomorphism $J_\CC\to
J_\DD$.  We leave the precise version as an exercise, or
see \cite{Milne86jv}*{6.3}.  We will use this isomorphism later to
understand the N\'eron-Severi group of a product of curves.

\subsection{The Tate module}
Let $A$ be an abelian variety of dimension $g$ over $\Fq$, for example
the Jacobian of a curve of genus $g$.  (See \cite{Milne86av} for a
brief introduction to abelian varieties and \cite{MumfordAV} for a
much more complete treatment.)  Choose a prime $\ell\neq p$.  Let
$A[\ell^n]$ be the set of $\Fqbar$ points of $A$ of order dividing
$\ell^n$.  It is a group isomorphic to $(\Z/\ell^n\Z)^{2g}$ with a
linear action of $\gal(\Fqbar/\Fq)$.  We form the inverse limit
$$T_\ell A=\varprojlim_n A[\ell^n]$$ 
where the transition maps are given by multiplication by $\ell$.  Let
$V_\ell A=T_\ell A\tensor_{\Zl}\Ql$, a $2g$-dimensional $\Ql$-vector
space with a linear action of $\gal(\Fqbar/\Fq)$.  It is often called
the {\it Tate module\/} of $A$.

According to Roquette, what we now call the Tate module seems to have
first been used in print by Deuring \cite{Deuring40} as a substitute
for homology in his work on correspondences on curves.  It appears
already in a letter of Hasse from 1935, see \cite{Roquette06}*{p.~36}.

The following proposition is the modern interpretation of the
connection between homology and torsion points.

\begin{prop} Let $A$ be an abelian variety over a field $k$ and let
  $\ell$ be a prime not equal to the characteristic of $k$.  Let
  $V_\ell A$ be the Tate module of $A$ and $(V_\ell A)^*$ its dual as
  a $G_k=\gal(k^{sep}/k)$-module.
\begin{itemize}
\item There is a canonical isomorphism of $G_k$-modules
$$(V_\ell A)^*\cong H^1(A\times\kbar,\Ql).$$
\item If $A$ is the Jacobian of a curve $\CC$ over $k$, then
$$H^1(A\times\kbar,\Ql)\cong
H^1(\CC\times\kbar,\Ql).$$
\end{itemize}
\end{prop}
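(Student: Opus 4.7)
The plan is to identify both sides with the dual of the Tate module $V_\ell A$ via Kummer-type constructions, after which part (2) reduces to part (1) plus a short computation on the curve.

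For part (1), I would start from the fact that the multiplication-by-$\ell^n$ map $[\ell^n]\colon A\times\kbar\to A\times\kbar$ is a finite \'etale Galois cover with deck group $A[\ell^n]$. Combining the standard identification
\[
H^1_{et}(X,\Z/\ell^n\Z)\cong\Hom_{\mathrm{cts}}(\pi_1^{et}(X)^{ab},\Z/\ell^n\Z)
\]
with the fact that these covers exhaust the pro-$\ell$ quotient of $\pi_1^{et}(A\times\kbar)$, one gets
\[
H^1_{et}(A\times\kbar,\Z/\ell^n\Z)\cong\Hom(A[\ell^n],\Z/\ell^n\Z).
\]
Passing to the inverse limit over $n$ and tensoring with $\Q_\ell$ produces the canonical isomorphism $H^1(A\times\kbar,\Q_\ell)\cong(V_\ell A)^*$. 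All constructions are functorial in $A$, hence respect the $G_k$-action.

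For part (2), I would compute $H^1(\CC\times\kbar,\Q_\ell)$ independently and match the result with part (1). The Kummer sequence $0\to\mu_{\ell^n}\to\G_m\xrightarrow{\ell^n}\G_m\to 0$ on $\CC\times\kbar$, together with $H^1(\CC\times\kbar,\G_m)=\Pic(\CC\times\kbar)$ and the $\ell$-divisibility of $\kbar^\times$, yields
\[
H^1(\CC\times\kbar,\mu_{\ell^n})\cong\Pic(\CC\times\kbar)[\ell^n]=J(\kbar)[\ell^n]=A[\ell^n].
\]
Passing to the limit gives $H^1(\CC\times\kbar,\Q_\ell(1))\cong V_\ell A$, and Poincar\'e duality for the curve (using $H^2(\CC\times\kbar,\Q_\ell(1))\cong\Q_\ell$) delivers
\[
H^1(\CC\times\kbar,\Q_\ell)\cong(V_\ell A)^*\cong H^1(A\times\kbar,\Q_\ell).
\]
If a natural map realizing this isomorphism is wanted, the pullback $AJ^*\colon H^1(J\times\kbar,\Q_\ell)\to H^1(\CC\times\kbar,\Q_\ell)$ along the Abel-Jacobi morphism does the job; its bijectivity then follows by dimension count once one checks injectivity using the Albanese property.

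The main obstacle is part (1): the Kummer-type computation of $H^1$ for an abelian variety relies on the structure of the \'etale fundamental group of $A\times\kbar$, which is genuine \'etale-cohomology machinery beyond the elementary Kummer sequence used for curves. I would treat it as a black box, citing \cite{MilneEC} or \cite{Milne86av}. Once part (1) is granted, part (2) is a direct Kummer computation combined with Poincar\'e duality on a curve, both of which are routine.
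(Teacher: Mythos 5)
The paper does not actually give a proof of this proposition; it simply cites Milne (\cite{Milne86av}*{15.1} for part~1, \cite{Milne86jv}*{9.6} for part~2) and then offers two ``plausibility'' exercises. So there is no in-paper proof to compare against; I can only check your argument on its own terms and against what those exercises hint at.

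Your sketch is correct. For part~(1) you correctly reduce to the fact that the pro-$\ell$ completion of $\pi_1^{et}(A\times\kbar)$ is $T_\ell A$ (the $[\ell^n]$-covers are cofinal among abelian $\ell$-covers); that is precisely the nontrivial structural input, and you rightly flag it as such. Passing to the inverse limit over the reduction maps $\Z/\ell^{n+1}\Z\to\Z/\ell^n\Z$ gives $\Hom_{\Z_\ell}(T_\ell A,\Z_\ell)$, and tensoring with $\Q_\ell$ yields $(V_\ell A)^*$; functoriality in the scheme over $\kbar$ gives the $G_k$-equivariance. For part~(2) your route differs from the one the paper gestures at: the paper's Exercise~2 suggests identifying $H^1(\CC,\Z/\ell\Z)$ via geometric class field theory (unramified $\Z/\ell$-covers correspond to $\Hom(J_\CC[\ell],\Z/\ell\Z)$), whereas you use the Kummer sequence to get $H^1(\CC\times\kbar,\Q_\ell(1))\cong V_\ell A$ and then Poincar\'e duality on the curve. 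Your route is cleaner and more self-contained; the class-field-theory route is conceptually parallel to part~(1). One small caution: your main argument for part~(2) shows both sides are canonically isomorphic to $(V_\ell A)^*$, which does yield a canonical comparison, but the cleanest way to see that the Abel--Jacobi pullback $AJ^*$ realizes it (and is an isomorphism) is via the surjectivity of $AJ_*\colon\pi_1(\CC\times\kbar)\to\pi_1(J\times\kbar)$ (which holds because $AJ(\CC)$ generates $J$), rather than the somewhat vague ``injectivity using the Albanese property'' you mention; with that adjustment the side remark is also fine.
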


For a proof of part 1, see \cite{Milne86av}*{15.1} and for part 2, see
\cite{Milne86jv}*{9.6}.

\begin{exers} 
  These exercises are meant to make the Proposition more plausible.
\begin{enumerate}
\item Show that if $A(\C)$ is a complex torus $\C^g/\Lambda$, then the
singular homology $H_1(A(\C),\Ql)$ is canonically isomorphic to
$V_\ell A(\C)$.  (Hint: Use the universal coefficient theorem to
show that $H_1(A(\C),\Z/\ell^n\Z)\cong\Lambda/\ell^n\Lambda$.)
\item (Advanced) Let $\CC$ be a smooth projective curve over an
  algebraically closed field $k$.  Let $\ell$ be a prime not equal to
  the characteristic of $k$.  Use geometric class field theory (as in
  \cite{SerreAGCF}) to show that unramified Galois covers $\CC'\to\CC$
  equipped with an isomorphism $\gal(\CC'/\CC)\cong\Z/\ell\Z$ are in
  bijection with elements of $\Hom(J_\CC[\ell],\Z/\ell\Z)$.  (Make a
  convention to deal with the trivial homomorphism.)  This suggests
  that $H^1(\CC,\Z/\ell\Z)$ ``should be''
  $\Hom(J_\CC[\ell],\Z/\ell\Z)$ and $H_1(C,\Z/\ell\Z)$ ``should be''
  $J_\CC[\ell]$.  The reason we only have ``should be'' rather than a
  theorem is that a non-trivial Galois cover $\CC'\to\CC$ is never
  locally constant in the Zariski topology.  This is a prime
  motivation for introducing the \'etale topology.
\end{enumerate}
\end{exers}

\numberwithin{equation}{section}

\section{Tate's theorem on homomorphisms 
of abelian varieties}\label{s:Tate-thm}

As usual, let $k$ be a finite field and let $A$ and $B$ be two abelian
varieties over $k$.  Choose a prime $\ell$ not equal to the
characteristic of $k$ and form the Tate modules $V_\ell A$ and $V_\ell
B$.  Any homomorphism of abelian varieties $\phi:A\to B$ induces a
homomorphism of Tate modules $\phi_*:V_\ell A\to V_\ell B$ and this
homomorphism commutes with the action of $G_k=\gal(\kbar/k)$ on the
Tate modules.  We get an induced homomorphism
$\Hom_k(A,B)\tensor\Ql\to\Hom_{G_k}\left(V_\ell A,V_\ell B\right)$.
Tate's famous result \cite{Tate66a} asserts that this is an
isomorphism:
\begin{thm}\label{thm:TateIsogThm} 
The map $\phi\mapsto\phi_*$ induces an isomorphism of
  $\Ql$-vector spaces: 
$$\Hom_k(A,B)\tensor\Ql\isoto\Hom_{G_k}\left(V_\ell  A,V_\ell B\right).$$
\end{thm}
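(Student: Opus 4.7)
The plan is a three-stage argument: injectivity, reduction to the endomorphism case $A = B$, and a surjectivity proof built on a finiteness theorem for abelian varieties over $\Fq$. Injectivity follows from standard facts: $\Hom_k(A,B)$ is finitely generated and torsion-free as a $\Z$-module, and a nonzero $\phi$ cannot annihilate $V_\ell A$, since otherwise it would kill $A[\ell^n]$ for every $n$ and hence factor as $\ell^n \psi_n$, placing $\phi$ inside $\bigcap_n \ell^n \Hom_k(A,B) = 0$. Tensoring with $\Ql$ preserves the injection. For surjectivity, first reduce to $A = B$ by passing to $C = A \times B$: the statement for $(A,B)$ is the off-diagonal block of the statement for $(C,C)$.

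To prove $\en_k(C) \tensor \Ql \isoto \en_{G_k}(V_\ell C)$, I would use the \emph{graph trick}: given $f \in \en_{G_k}(V_\ell C)$, its graph $\Gamma_f \subset V_\ell C \oplus V_\ell C \cong V_\ell(C \times C)$ is a $G_k$-stable $\Ql$-subspace. It therefore suffices to establish the following \emph{key lemma}: for every abelian variety $A$ over $\Fq$ and every $G_k$-stable $\Ql$-subspace $W \subset V_\ell A$, there exists $u \in \en_k(A) \tensor \Ql$ with $u(V_\ell A) = W$. Applied to $C \times C$ and $\Gamma_f$, the key lemma produces a quasi-endomorphism whose induced map on Tate modules has graph $\Gamma_f$; extracting the appropriate block and clearing denominators then recovers a preimage of $f$.

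To prove the key lemma, for each $n \ge 0$ consider the $G_k$-stable sublattice $L_n := (W \cap T_\ell A) + \ell^n T_\ell A$ of $T_\ell A$. Under the standard dictionary between $G_k$-stable finite-index sublattices of $T_\ell A$ and $\ell$-power isogenies out of $A$, each $L_n$ determines an isogeny $\pi_n : A \to A_n$ defined over $\Fq$, with $A_n$ in the isogeny class of $A$. Now invoke \emph{Tate's finiteness theorem}: over a finite field, any isogeny class of abelian varieties contains only finitely many isomorphism classes of polarized varieties of bounded polarization degree. Pulling back a fixed polarization of $A$ along each $\pi_n$ yields polarizations of $A_n$ whose degrees remain in a bounded range, so $A_n \cong A_m$ for some $n < m$. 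Composing $\pi_m$ with this isomorphism and with a suitable quasi-inverse of $\pi_n$ produces a quasi-endomorphism of $A$ whose image on $V_\ell A$ is exactly $W$.

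The main obstacle is Tate's finiteness theorem itself, which is the only place where the finiteness of the ground field is essential. It rests on the fact that the moduli of polarized abelian varieties of given dimension with polarization degree below any fixed bound is of finite type over $\Z$ and therefore has only finitely many $\Fq$-points, together with the delicate verification that the polarization degrees produced in the key lemma stay bounded as $n$ varies. Everything else—injectivity, finite generation of $\Hom_k(A,B)$, the lattice/isogeny dictionary, and the graph trick—is either formal or classical; it is precisely this finiteness input, unavailable over infinite ground fields, that drives the theorem.
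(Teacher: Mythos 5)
The paper does not prove this theorem; it states it and refers to \cite{Tate66a} for Tate's original proof and to \cite{Zarhin08} for an alternative. That said, your proposal is in outline Tate's original argument: the injectivity step, the reduction to endomorphisms via $A\times B$, the key lemma realizing $G_k$-stable subspaces as images $u(V_\ell A)$ of quasi-endomorphisms, and above all the finiteness theorem (over $\Fq$ there are only finitely many abelian varieties of a given dimension carrying a polarization of bounded degree) are exactly his ingredients, and you have correctly isolated the last as the sole point where finiteness of the ground field enters.

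The gap is in the ``graph trick.''  Suppose the key lemma, applied to $C^2$ and $\Gamma_f$, produces $u\in\en_k(C^2)\tensor\Ql$ with $u_*(V_\ell(C^2))=\Gamma_f$.  Writing $u$ in $2\times2$ block form with entries $a,b,c,d\in\en_k(C)\tensor\Ql$, the condition $u_*(V_\ell(C^2))=\Gamma_f$ gives $c_*=f\circ a_*$, $d_*=f\circ b_*$, and $\im(a_*)+\im(b_*)=V_\ell C$.  But neither $a$ nor $b$ need be a quasi-isogeny, so there is no block from which $f$ can be solved for, and ``clearing denominators'' does not help.  (The phrase ``a quasi-endomorphism whose induced map on Tate modules has graph $\Gamma_f$'' also does not type-check: the key lemma controls the image of $u_*$, not a graph.)  What actually closes the argument in Tate's proof is an algebra layer your sketch omits: one uses that $\en_k(A)\tensor\Q$ is a semisimple $\Q$-algebra (Poincar\'e complete reducibility, valid over any field), deduces from the key lemma that $V_\ell A$ is a semisimple $\Ql[\Fr_q]$-module, and finishes by a double centralizer argument identifying $\en_k(A)\tensor\Ql$ with the commutant of $\Ql[\Fr_q]$ inside $\en_{\Ql}(V_\ell A)$.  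Extracting a matrix block cannot replace this step, so as written the surjectivity argument is incomplete.
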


We also mention
\cite{Zarhin08} which gives a different proof and a strengthening with
finite coefficients.

We will use Tate's theorem in Theorem~\ref{thm:products} of Lecture~2
to understand the divisors on a product of curves in terms of
homomorphisms between their Jacobians.

%%%%%%%%%%%%%%%%%%%%%%%%%%%%%%%%%%%%%%%%%%%%%%%%%%%%%

\lecture{Elliptic curves over function fields}

In this lecture we discuss the basic facts about elliptic curves over
function fields over finite fields.  We assume the reader has some
familiarity with elliptic curves over global fields such as $\Q$ or
number fields, as explained, e.g., in \cite{SilvermanAEC}, and we will
focus on aspects specific to characteristic~$p$.  The lecture ends
with statements of the main results known about the conjecture of
Birch and Swinnerton-Dyer in this context.

\section{Elliptic curves}\label{s:ell-curves}
\numberwithin{equation}{subsection}
\subsection{Definitions}
We write $k=\Fq$ for the finite field of cardinality $q$ and
characteristic $p$ and we let $K$ be the function field of a smooth,
projective, absolutely irreducible curve $\CC$ over $k$.

An {\it elliptic curve\/} over $K$ is a smooth, projective, absolutely
irreducible curve of genus 1 over $K$ equipped with a $K$-rational
point $O$ that will serve as the origin of the group law.

All the basic geometric facts, e.g., of \cite{SilvermanAEC}*{Ch.~III
  and App.~A}, continue to hold in the context of function fields.  We
review a few of them to establish notation, but will not enter into
full details.

Using the Riemann-Roch theorem, an elliptic curve $E$ over $K$ can
always be presented as a projective plane cubic curve defined by a
Weierstrass equation, i.e., by an equation of the form
\begin{equation}\label{eq:cubic}
Y^2Z+a_1XYZ+a_3YZ^2=X^3+a_2X^2Z+a_4XZ^2+a_6Z^3
\end{equation}
where $a_1,\dots,a_6\in K$.  The origin $O$ is the point at infinity
$[0:1:0]$.  We often give the equation in affine form:
\begin{equation}\label{eq:cubica}
y^2+a_1xy+a_3y=x^3+a_2x^2+a_4x+a_6
\end{equation}
where $x=X/Z$ and $y=Y/Z$.

The quantities $b_2,\dots,b_8,c_4,c_6,\Delta,j$ are defined by the
usual formulas (\cite{SilvermanAEC}*{III.1} or \cite{Deligne75}).  Since
$E$ is smooth, by the following exercise $\Delta\neq0$.

\begin{rem-exers}
  The word ``smooth'' in the definition of an elliptic curve means
  that the morphism $E\to\spec K$ is smooth.  Smoothness of a morphism
  can be tested via the Jacobian criterion (see, e.g.,
  \cite{HartshorneAG}*{III.10.4} or \cite{LiuAGAC}*{4.3.3}).  Show
  that the projective plane cubic~(\ref{eq:cubic}) is smooth if and
  only if $\Delta\neq0$.  Because the ground field $K$ is not perfect,
  smoothness is strictly stronger than the requirement that $E$ be
  regular, i.e., that its local rings be regular local rings
  (cf.~\cite{LiuAGAC}*{4.2.2}).  For example, show that the projective
  cubic defined by $Y^2Z=X^3-tZ^3$ over $K=\Fp(t)$ with $p=2$ or $3$
  is a regular scheme, but is not smooth over $K$.
\end{rem-exers}

\begin{defns} Let $E$ be an elliptic curve over $K$.
\begin{enumerate}
\item We say $E$ is {\it constant\/} if there is an elliptic curve $E_0$ defined
over $k$ such that $E\cong E_0\times_kK$.  Equivalently, $E$ is
constant if it can be defined by a Weierstrass cubic (\ref{eq:cubic})
where the $a_i\in k$.
\item We say $E$ is {\it isotrivial\/} if there exists a finite
  extension $K'$ of $K$ such that $E$ becomes constant over $K'$.
  Note that a constant curve is isotrivial.
\item We say $E$ is {\it non-isotrivial\/} if it is not isotrivial.
  We say $E$ is {\it non-constant\/} if it is not constant.
\end{enumerate}
\end{defns}

\begin{rem-exers}
  Show that $E$ is isotrivial if and only if $j(E)\in k$.  Suppose
  that $E$ is isotrivial, so that $E$ becomes constant over a
  finite extension $K'$ and let $k'$ be the field of constants of $K'$
  (the algebraic closure of $k$ in $K'$).  {\it A priori\/}, the
  definition of isotrivial says that there is an elliptic curve $E_0$
  over $k'$ such that $E\times_KK'\cong E_0\times_{k'}K'$.  Show that
  we may take $K'$ to have field of constants $k$ and $E_0$ to be
  defined over $k$.  Show also that we may take $K'$ to be separable
  and of degree dividing 24 over $K$.
\end{rem-exers}

\begin{exer}
  For any elliptic curve $E$ over $K$, the functor on $K$-algebras
  $L\mapsto\Aut_L(E\times L)$ is represented by a group scheme
  $\underline{\Aut}(E)$.  (Concretely, this means there is a group
  scheme $\underline{\Aut}(E)$ such that for any $K$-algebra $L$,
  $\Aut_L(E\times L)$ is $\underline{\Aut}(E)(L)$, the group of
  $L$-valued points of $\underline{\Aut}(E)$.)  Show that
  $\underline{\Aut}(E)$ is an \'etale group scheme.  Equivalently,
  show that any element of $\Aut_{\overline K}(E)$ is defined over a
  separable extension of $K$.  (This is closely related to the
  previous exercise.)
\end{exer}

\subsection{Examples}\label{ss:examples}
Let $K=\Fp(t)$ with $p>3$ and define elliptic curves
\begin{align*}
E_1:\quad&y^2=x^3+1\\
E_2:\quad&y^2=x^3+t^6\\
E_3:\quad&y^2=x^3+t\\
E_4:\quad&y^2=x^3+x+t.
\end{align*}
Then $E_1\cong E_2$ over $K$ and both are constant, $E_3$ is isotrivial and
non-constant, whereas $E_4$ is non-isotrivial.

For more examples, let $K=\Fp(t)$ (with $p$ restricted as indicated) and define
\begin{align*}
(p\neq3)\qquad&E_5:\quad y^2+ty=x^3\\
(p\neq2)\qquad&E_6:\quad y^2=x^3+tx\\
(p~\text{arbitrary})\qquad&E_7:\quad y^2+xy+ty=x^3\\
(p~\text{arbitrary})\qquad&E_8:\quad y^2+xy=x^3+tx\\
(p~\text{arbitrary})\qquad&E_9:\quad y^2+xy=x^3+t.
\end{align*}
Then $E_5$ and $E_6$ are isotrivial and non-constant whereas $E_7$,
$E_8$, and $E_9$ are non-isotrivial. 

\numberwithin{equation}{section}

\section{Frobenius}
If $X$ is a scheme of characteristic $p$, we define the {\it absolute
  Frobenius\/} morphism $\Fr_X:X\to X$ as usual:  It is the identity
on the underlying topological space and raises functions to the $p$-th
power.  When $X=\spec K$, $\Fr_X$ is just the map of schemes induced
by the ring homomorphism $K\to K$, $a\mapsto a^p$.

Suppose as usual that $K$ is a function field and let $E$ be an
elliptic curve over $K$.  Define a new elliptic curve $E^{(p)}$ over $K$ by the
fiber product diagram:
\begin{equation*}
\xymatrix{\mathllap{E^{(p)}=}\spec K\times_{\spec K}E\ar[r]\ar[d]&E\ar[d]\\
\spec K\ar[r]^{\Fr}&\spec K}
\end{equation*}
More concretely, if $E$ is presented as a Weierstrass cubic as in
equation~(\ref{eq:cubica}), then $E^{(p)}$ is given by the equation
with $a_i$ replaced by $a_i^p$.  The universal property of the fiber
product gives a canonical morphism $\Fr_{E/K}$, the {\it relative Frobenius\/}:
\begin{equation*}
\xymatrix{E\ar[r]^{\Fr_{E/K}}\ar[dr]&E^{(p)}\ar[r]\ar[d]&E\ar[d]\\
&\spec K\ar[r]^{\Fr}&\spec K}
\end{equation*}
By definition $\Fr_{E/K}$ is a morphism over $K$.  In terms of
Weierstrass equations for $E$ and $E^{(p)}$ as above, it is just the
map $(x,y)\mapsto(x^p,y^p)$.  

It is evident that $\Fr_{E/K}$ is an isogeny, i.e., a surjective
homomorphism of elliptic curves, and that its degree is $p$.  We
define $V=V_{E/K}$ to be the dual isogeny, so that
$V_{E/K}\circ\Fr_{E/K}=[p]$, multiplication by $p$ on $E$.

Note that $j(E^{(p)})=j(E)^p$ so that if $E$ is non-isotrivial, $E$
and $E^{(p)}$ are not isomorphic.  Thus, using Frobenius and its
iterates, we see that there are infinitely many non-isomorphic
elliptic curves isogenous to any non-isotrivial $E$.  This is in
marked contrast to the situation over number fields
(cf.~\cite{Faltings86}).

\begin{lemma}\label{l:(p)}
  Let $E$ be an elliptic curve over $K$.  Then $j(E)$ is a $p$-th
  power in $K$ if and only if there exists an elliptic curve $E'$ over
  $K$ such that $E\cong E^{\prime(p)}$.
\end{lemma}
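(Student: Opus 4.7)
Plan: The forward direction is direct. If $E\cong E'^{(p)}$ for some $E'/K$, pick a Weierstrass equation for $E'$ with coefficients $b_i\in K$; the Frobenius twist $E'^{(p)}$ is defined by the equation with coefficients $b_i^p$, and since $j$ is a rational function of the coefficients with $\F_p$-rational coefficients, $j(E) = j(E')^p \in K^p$.

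For the converse, suppose $j(E) = c \in K^p$, and set $L = K^p \subseteq K$. The plan is to show $E$ is defined over $L$; once we produce a Weierstrass equation for $E$ whose coefficients lie in $K^p$, extracting the unique $p$-th roots in $K$ gives an elliptic curve $E'/K$ with $E'^{(p)} \cong E$ by construction.

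To descend $E$ to $L$, first choose an explicit $E_0/L$ with $j(E_0) = c$, using standard Weierstrass formulas (with the usual adjustments for $c=0$, $c=1728$, or characteristic $2$ or $3$). Then $E_0\otimes_L K$ is an elliptic curve over $K$ with the same $j$-invariant as $E$, so $E$ and $E_0\otimes_L K$ are twists of one another over $K$, classified by a class in $H^1(\gal(K^{sep}/K),\underline{\Aut}(E_0)(K^{sep}))$. The key point is that $K/L$ is purely inseparable and $\underline{\Aut}(E_0)$ is étale (by the preceding exercise in the text), so the twist classification is insensitive to the purely inseparable extension: $K$-forms of $E_0\otimes_L K$ and $L$-forms of $E_0$ are classified by canonically identified $H^1$ sets, with the identification given by base change. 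Hence there exists an $L$-twist $E_1/L$ of $E_0$ with $E_1\otimes_L K \cong E$. Taking $p$-th roots of the (now $K^p$-valued) coefficients of a Weierstrass equation for $E_1$ produces the desired $E'/K$.

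The main obstacle is the descent step through the purely inseparable extension $K/L$; this rests on étaleness of $\underline{\Aut}(E_0)$, without which infinitesimal automorphisms could obstruct descent. An alternative, more elementary route for $p \geq 5$ is to place $E$ in short Weierstrass form $y^2 = x^3 + Ax + B$ and directly solve for $u \in K^*$ such that the isomorphic equation with $(A/u^4, B/u^6)$ has both coefficients in $K^p$: the hypothesis $j\in K^p$ translates (generically) to $A^3/B^2 \in K^p$, and solvability for $u$ then reduces to elementary facts about the $\F_p$-vector space $K^*/K^p$, on which multiplication by any integer coprime to $p$ is bijective. For $p = 2$ or $3$ the Weierstrass form is more intricate, so the uniform descent approach is preferable in general.
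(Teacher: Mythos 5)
Your argument is correct and rests on exactly the same ingredients as the paper's proof: the \'etaleness of $\underline{\Aut}$ (the preceding exercise), the classification of twists by Galois $H^1$, and the insensitivity of that $H^1$ to a purely inseparable base change. The only real difference is packaging. The paper stays over $K$ throughout: pick $E''/K$ with $j(E'')^p=j(E)$, view $E$ as the twist of $E''^{(p)}$ by a class $c\in H^1(G_K,\Aut_{K^{sep}}(E''^{(p)}))$, transport $c$ across the canonical identification $\Aut_{K^{sep}}(E''^{(p)})\cong\Aut_{K^{sep}}(E'')$, and twist $E''$ to get $E'$ with $E'^{(p)}\cong E$. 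You instead descend $E$ to $L=K^p$ and then invert Frobenius on coefficients. Under the field isomorphism $K\isoto K^p$, $a\mapsto a^p$, your $E_0/L$ is the paper's $E''/K$, $E_0\otimes_L K$ is $E''^{(p)}$, and the identification $H^1(G_L,-)\cong H^1(G_K,-)$ you invoke is precisely what justifies the paper's transport of $c$; so the two proofs are formally the same, merely written in ``descent'' versus ``cocycle'' language. Your more elementary Weierstrass-equation route for $p\ge 5$ is exactly the exercise the paper poses immediately after the lemma.
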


\begin{proof}
  We sketch a fancy argument and pose as an exercise a more
  down-to-earth proof.  Obviously if there is an $E'$ with $E\cong
  E^{\prime(p)}$, then $j(E)=j(E^{\prime(p)})=j(E')^p\in K^p$.
  Conversely, suppose $j(E)\in K^p$ and choose an elliptic curve $E''$
  such that $j(E'')^p=j(E)$.  It follows that $E^{\prime\prime(p)}$ is
  isomorphic to $E$ over a finite separable extension of $K$.  In
  other words, $E$ is the twist of $E^{\prime\prime(p)}$ by a cocycle
  in $H^1(G_K,\Aut_{K^{sep}}(E^{\prime\prime(p)}))$.  But there is a
  canonical isomorphism
  $\Aut_{K^{sep}}(E^{\prime\prime(p)})\cong\Aut_{K^{sep}}(E^{\prime\prime})$
  and twisting $E''$ by the corresponding element of
  $$H^1(G_K,\Aut_{K^{sep}}(E^{\prime\prime}))\cong
  H^1(G_K,\Aut_{K^{sep}}(E^{\prime\prime(p)}))$$ 
  we obtain an elliptic curve $E'$ with $E^{\prime(p)}\cong E$.
\end{proof}

\begin{exer}
Use explicit equations, as in \cite{SilvermanAEC}*{Appendix~A}, to prove
the lemma.
%Hint for $p>3$:  Show that there is an equation for $E$ with
%$a_1=a_2=a_3=0$ and $a_4\in K^p$.  Use $j\in K^p$ to conclude that
%$a_6\in K^p$ as well, so $E\cong E^{\prime p}$.
\end{exer}

\section{The Hasse invariant}
Let $F$ be a field of characteristic $p$ and $E$ an elliptic curve
over $F$.  Let $\O_E$ be the sheaf of regular functions on $E$ and let
$\Omega^1_E$ be the sheaf of K\"ahler differentials on $E$.  The
coherent cohomology group $H^1(E,\O_E)$ is a one-dimensional
$F$-vector space and is Serre dual to the space of invariant
differentials $H^0(E,\Omega^1_E)$.  Choose a non-zero differential
$\omega\in H^0(E,\Omega^1_E)$ and let $\eta$ be the dual element of
$H^1(E,\O_E)$.  The absolute Frobenius $\Fr_E$ induces a ($p$-linear)
homomorphism:
$$\Fr_E^*:H^1(E,\O_E)\to H^1(E,\O_E).$$
We define an element $A=A(E,\omega)$ of $F$ by requiring that
$\Fr_E^*(\eta)=A(E,\omega)\eta$.  This is the {\it Hasse invariant\/}
of $E$.  It has weight $p-1$ in the sense that
$A(E,\lambda^{-1}\omega)=\lambda^{p-1}A(E,\omega)$ for all $\lambda\in
F^\times$.

Suppose $E$ is given by a Weierstrass equation (\ref{eq:cubica}) and
$\omega=dx/(2y+a_1x+a_3)$.  If $p=2$, then $A(E,\omega)=a_1$.  If
$p>2$, choose an equation with $a_1=a_3=0$.  Then $A(E,\omega)=$ the
coefficient of $x^{p-1}$ in $(x^3+a_2x^2+a_4x+a_6)^{(p-1)/2}$.  These
assertions follow from \cite{KatzMazurAM}*{12.4} where several other
calculations of $A$ are also presented.

Recall that $E/K$ is {\it ordinary\/} if the group of $p$-torsion
points $E(\overline K)[p]\neq 0$ and {\it supersingular\/} otherwise.
It is known that $E$ is supersingular if and only if $A(E,\omega)=0$
(e.g., \cite{KatzMazurAM}*{12.3.6 and 12.4}) and in this case
$j(E)\in\F_{p^2}$ (e.g., \cite{KatzMazurAM}*{proof of 2.9.4}).
(Alternatively, one may apply \cite{SilvermanAEC}*{V.3.1} to $E$ over
$\overline K$.)  In particular, if $E$ is supersingular, then it must
also be isotrivial.

\section{Endomorphisms}
The classification of endomorphism rings in
\cite{SilvermanAEC}*{III.9} goes over verbatim to the function field
case: $\en_{\overline K}(E)$ is either $\Z$, an order in an imaginary
quadratic number field, or an order in a quaternion algebra over $\Q$
ramified exactly at $\infty$ and $p$.  The quaternionic case occurs if
and only if $E$ is supersingular, and the imaginary quadratic case
occurs if and only if $j(E)$ is in $\Fpbar$ and $E$ is not
supersingular (\cite{SilvermanAEC}*{V.3.1 and Exer.~V.5.8}).

In particular, if $E$ is non-isotrivial, then $\en_{\overline
  K}(E)=\en_K(E)=\Z$.

\section{The Mordell-Weil-Lang-N\'eron theorem}
We write $E(K)$ for the group of $K$-rational points of $E$ and we
call $E(K)$ the Mordell-Weil group of $E$ over $K$.  Lang and N\'eron
(independently) generalized the classical Mordell-Weil theorem to the
function field context:

\begin{thm} 
  Assume that $K=\Fq(\C)$ is the function field of a curve over a
  finite field and let $E$ be an elliptic curve over $K$.  Then $E(K)$
  is a finitely generated abelian group.
\end{thm}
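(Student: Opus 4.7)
The plan is to carry out the classical Mordell-Weil descent, adapted to the function-field setting over a finite ground field. The two ingredients are a \emph{weak Mordell-Weil theorem}, namely that $E(K)/nE(K)$ is finite for some integer $n\geq 2$ prime to $p$, and a \emph{theory of heights}, providing a quadratic function $h\colon E(K)\to\R_{\geq 0}$ that satisfies a parallelogram identity up to a bounded error and has only finitely many points below any fixed bound. With both in hand, the usual infinite descent concludes: taking coset representatives $P_1,\dots,P_r$ for $E(K)/nE(K)$ and writing any $P$ as $nQ + P_i$, one finds $h(Q) \leq n^{-2}h(P) + O(1)$; iterating forces the height into a bounded, hence finite, region, so $E(K)$ is generated by the $P_i$ together with this finite set.

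For weak Mordell-Weil, I would use Kummer theory applied to the multiplication-by-$n$ map on $E$. The short exact sequence $0\to E[n]\to E\to E\to 0$ of sheaves on $\spec K$ produces an injection
$$E(K)/nE(K)\hookrightarrow H^1(G_K, E[n]).$$
The N\'eron-Ogg-Shafarevich criterion implies the image is unramified at every place $v$ of $K$ of good reduction for $E$ with $v\nmid n$, hence lies in a subgroup cut out by an unramifiedness condition outside a finite set $S$. After enlarging $K$ by a finite extension to split $E[n]$, the relevant cohomology group becomes a $\Hom$ into $E[n]$ from the Galois group of the maximal abelian $n$-elementary extension unramified outside $S$. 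Finiteness of this Galois group is the function-field analogue of Hermite-Minkowski; it reduces to finite generation of the $S$-class group and $S$-unit group of a function field over a finite field, where the hypothesis that the constants are finite is essential to get finite generation rather than only finite rank.

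For heights, fix a regular proper (N\'eron) model $\EE\to\CC$ of $E$ together with a symmetric line bundle $\LL$ on $\EE$ whose restriction to the generic fiber has positive even degree, for example extending the class of $2(O)$ on $E$. For $P\in E(K)$, spread $P$ to a section $s_P\colon\CC\to\EE$ and set $h(P)=\deg_\CC s_P^*\LL$. The theorem of the cube on $E$ yields the parallelogram law up to an error bounded by contributions from the finitely many singular fibers, and passing to the canonical height $\hat h=\lim_m m^{-2}h(mP)$ makes the identity exact. The finiteness of $\{P\in E(K) : h(P)\leq B\}$ is where the finiteness of $k=\Fq$ enters crucially: a section of bounded height corresponds, in local coordinates, to rational functions on $\CC$ with pole divisor bounded in terms of $B$, and such functions lie in a finite-dimensional $\Fq$-vector space, hence in a finite set.

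I expect the main obstacle to be the weak Mordell-Weil step, and within it the combination of controlling ramification in the Kummer cocycle and proving finiteness of the resulting unramified-outside-$S$ cohomology over a function field; heights and the descent itself are then essentially formal. A secondary subtlety, absent over number fields, arises when $E$ is isotrivial or constant: then $E(K)$ can contain a substantial ``constant'' part coming from $\Hom(J_\CC, E_0)$ for an elliptic curve $E_0$ over a finite extension of $k$. However, Theorem~\ref{thm:TateIsogThm} together with the finiteness of $E_0(\Fq)$ shows this part is itself finitely generated, and the descent above handles it uniformly with the rest of $E(K)$.
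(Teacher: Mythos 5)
Your proposal follows the first of the two strategies the paper itself sketches for this theorem: classical descent combining weak Mordell--Weil (proved via a Kummer/Selmer argument, with finiteness coming from the $S$-class group and $S$-unit group of the function field) with a theory of heights in which finiteness of $\Fq$ makes each set of bounded height finite. Your sketch is a correct elaboration of that approach; the paper additionally mentions, but you do not pursue, a second route relating $E(K)$ to the N\'eron-Severi group of the associated elliptic surface and reducing to the theorem of the base.
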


(The theorems of Lang and N\'eron apply much more generally to any
abelian variety $A$ over a field $K$ that is finitely generated over
its ``constant field'' $k$, but one has to take care of the ``constant
part'' of $A$.  See \cite{UlmerCRM} for details.)

We will not give a detailed proof of the MWLN theorem here, but will
mention two strategies.  One is to follow the method of proof of the
Mordell-Weil (MW) theorem over a number field.  Choose a prime number
$\ell\neq p$.  By an argument very similar to that in
\cite{SilvermanAEC}*{Ch.~VIII} one can show that $E(K)/\ell E(K)$ is
finite (the ``weak Mordell-Weil theorem'') by embedding it in a Selmer
group and showing that the Selmer group is finite by using the two
fundamental finiteness results of algebraic number theory (finiteness
of the class group and finite generation of the unit group) applied to
Dedekind domains in $K$.  One can then introduce a theory of heights
exactly as in \cite{SilvermanAEC} and show that the MW theorem follows
from the weak MW theorem and finiteness properties of heights.  See
the original paper of Lang and N\'eron \cite{LangNeron59} for the full
details.  A complete treatment in modern language has been given by
Conrad \cite{Conrad06}.

One interesting twist in the function field setting comes if one takes
$\ell=p$ above.  It is still true that the Selmer group for $p$ is
finite, but one needs to use the local restrictions at all places; the
maximal abelian extension of exponent $p$ unramified outside a finite
but non-empty set of places is not finite and so one needs some
control on ramification at every place.  See \cite{Ulmer91} for a
detailed account of $p$-descent in characteristic $p$.

A second strategy of proof, about which we will say more in Lecture~3,
involves relating the Mordell-Weil group of $E$ to the N\'eron-Severi
group of a closely related surface $\EE$.  In fact, finite generation
of the N\'eron-Severi group (known as the ``theorem of the base'') is
equivalent to the Lang-N\'eron theorem.  A direct proof of the theorem
of the base was given by Kleiman in \cite{SGA6}*{XIII}.  See also
\cite{MilneEC}*{V.3.25}.

\section{The constant case}\label{s:constant}
It is worth pausing in the general development to look at the case of
a constant curve $E$.  Recall that $K$ is the function field $k(\CC)$
of the curve $\CC$ over $k=\Fq$.  Suppose $E_0$ is an elliptic curve
over $k$ and let $E=E_0\times_kK$.

\begin{prop}
We have a canonical isomorphism 
$$E(K)\cong\mor_k(\CC,E_0)$$
where $\mor_k$ denotes morphisms of varieties over $k$
\textup{(}=morphisms of $k$-schemes\textup{)}.  Under this
isomorphism, $E(K)_{tor}$ corresponds to the subset of constant
morphisms.
\end{prop}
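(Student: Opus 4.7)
The plan is to prove the isomorphism by two independent identifications. First, a $K$-rational point of $E = E_0\times_k K$ is, by the definition of fiber product, the same as a morphism $\spec K\to E_0$ over $\spec k$. Second, since $\CC$ is a smooth (hence regular) projective curve over $k$ and $E_0$ is proper over $k$, the valuative criterion of properness (applied at each closed point of $\CC$) shows that any morphism from the generic point $\spec K\to E_0$ extends uniquely to a morphism $\CC\to E_0$. Composing these two bijections gives a natural map $E(K)\to\mor_k(\CC,E_0)$, and one checks it is a group homomorphism because the group law on $E$ is obtained by base change from that on $E_0$, so addition of sections is computed pointwise via $E_0$.

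For the torsion statement, observe first that a constant morphism $\CC\to E_0$ is the same as an element of $E_0(k)$. Since $E_0$ is an abelian variety over the finite field $k=\Fq$, the group $E_0(k)$ is finite, hence every constant morphism is torsion in $\mor_k(\CC,E_0)$.

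Conversely, suppose $\phi\in\mor_k(\CC,E_0)$ satisfies $n\phi=0$ for some $n\ge1$. Then $\phi$ factors scheme-theoretically through the kernel group scheme $E_0[n]\subset E_0$. Because $\CC$ is reduced, the morphism further factors through $(E_0[n])_{red}$, which is a finite reduced scheme over the perfect field $k$ and hence a finite disjoint union of spectra of finite separable extensions of $k$. Since $\CC$ is connected (indeed geometrically integral), its image lies in a single component $\spec L$ for some finite extension $L/k$. A morphism $\CC\to\spec L$ over $k$ corresponds to a $k$-algebra homomorphism $L\to H^0(\CC,\OO_\CC)=k$, the last equality holding because $\CC$ is geometrically integral and projective over $k$. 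This forces $L=k$, so $\phi$ factors through a single $k$-point of $E_0$ and is constant.

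The only subtle point is the reduction step in the torsion argument: in characteristic $p$, the group scheme $E_0[n]$ may be non-reduced when $p\mid n$, but passing to $(E_0[n])_{red}$ is legitimate precisely because the source $\CC$ is reduced. Apart from that, both directions are formal consequences of (i) the universal property of base change, (ii) the extension theorem for rational maps from smooth curves to proper schemes, and (iii) the fact that the constants of $\CC$ are exactly $k$.
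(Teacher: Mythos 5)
Your proof takes exactly the paper's route: identify $E(K)$ with $k$-morphisms $\spec K\to E_0$ by the universal property of the fiber product, extend uniquely across the smooth proper curve $\CC$ via the valuative criterion, and characterize torsion points as constant morphisms. The only divergence is in the torsion direction, where the paper simply says the image of $\phi$ lands in the discrete set of $n$-torsion points and hence $\phi$ is constant; you spell out the scheme-theoretic content of this---factoring through $E_0[n]$, passing to $(E_0[n])_{\mathrm{red}}$ using that $\CC$ is reduced (which makes the case $p\mid n$ transparent), and pinning down the residue field via $H^0(\CC,\OO_\CC)=k$. That is a welcome sharpening of rigor rather than a different argument.
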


\begin{proof}
  By definition, $E(K)$ is the set of $K$-morphisms 
$$\spec K\to  E=E_0\times_kK.$$  
By the universal property of the fiber product, these are in bijection
with $k$-morphisms $\spec K\to E_0$.  Since $\CC$ is a smooth curve,
any $k$-morphism $\spec K\to E_0$ extends uniquely to a $k$-morphism
$\CC\to E_0$.  This establishes a map $E(K)\to\mor_k(\CC,E_0)$.  If
$\eta:\spec K\to\CC$ denotes the canonical inclusion, composition with
$\eta$ ($\phi\mapsto\phi\circ\eta$) induces a map $\mor_k(\CC,E_0)\to
E(K)$ inverse to the map above.  This establishes the desired
bijection and this bijection is obviously compatible with the group
structures.

Since $k$ is finite, it is clear that a constant morphism goes over to
a torsion point.  Conversely, if $P\in E(K)$ is torsion, say of order
$n$, then the image of the corresponding $\phi:\CC\to E_0$ must lie in
the set of $n$-torsion points of $E_0$, a discrete set, and this
implies that $\phi$ is constant.
\end{proof}

For example, if $K$ is rational (i.e., $\CC=\P^1$ so that $K=k(t)$),
then $E(K)=E_0(k)$.

\begin{cor}
Let $J_\CC$ be the Jacobian of $\CC$.  We have canonical isomorphisms
$$E(K)/E(K)_{tor}\cong\Hom_{k-av}(J_\CC,E_0)\cong\Hom_{k-av}(E_0,J_\CC).$$
\end{cor}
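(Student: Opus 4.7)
The preceding proposition identifies $E(K)$ with $\mor_k(\CC, E_0)$, with $E(K)_{tor}$ corresponding to the constant morphisms, so the task reduces to computing $\mor_k(\CC, E_0)$ modulo constants and relating it to the two Hom groups on the right. The plan is to use the Albanese universal property of the Jacobian for the first identification, and the symmetry of divisorial correspondences for the second.

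Fix an $\Fq$-rational basepoint $x \in \CC$ (if $\CC$ has no rational point, extend scalars to a finite $k'/k$ with $\CC(k') \neq \emptyset$, prove the result there, and descend by Galois invariance of all three groups in the statement). Given any $\phi \in \mor_k(\CC, E_0)$, subtracting in $E_0$ the constant morphism with value $\phi(x)$ produces a pointed morphism $\phi_0 : (\CC, x) \to (E_0, 0)$, yielding a canonical direct-sum decomposition of abelian groups
$$\mor_k(\CC, E_0) = E_0(k) \;\oplus\; \{\phi \in \mor_k(\CC, E_0) \mid \phi(x) = 0\}.$$
Since $E_0(k)$ is finite (hence torsion) and is exactly the group of constant morphisms, the quotient by $E(K)_{tor}$ is the group of pointed morphisms. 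By the Albanese property of $J_\CC$ recalled in \S0.4.1, this group is canonically identified with $\Hom_{k-av}(J_\CC, E_0)$ via $\psi \mapsto \psi \compose AJ$, yielding the first isomorphism.

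For the second isomorphism, I would use that an elliptic curve is canonically its own Jacobian (with its origin $O$ as basepoint), together with the formula~(0.5.1) of \S0.4.1. Explicitly,
$$\Hom_{k-av}(J_\CC, E_0) \;=\; \Hom(J_\CC, J_{E_0}) \;\cong\; \DivCorr((\CC, x), (E_0, O)),$$
and the right-hand side is manifestly symmetric in its two pointed arguments: the swap $\CC \times E_0 \isoto E_0 \times \CC$ carries a divisorial correspondence for $((\CC,x),(E_0,O))$ to one for $((E_0,O),(\CC,x))$. A further application of (0.5.1) in the reversed order then produces $\Hom(J_{E_0}, J_\CC) = \Hom_{k-av}(E_0, J_\CC)$.

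The main obstacle is really a bookkeeping one: checking that the splitting into pointed and constant morphisms is compatible with the Albanese bijection as group homomorphisms, and that the composite isomorphism is canonical (independent of the choice of basepoint $x$, which only affects the constant summand being killed). Handling the case of a curve with no $\Fq$-rational point requires the extra Galois-descent step indicated above; alternatively one can argue more intrinsically by representing $J_\CC$ as the Picard scheme $\Pic^0_{\CC/k}$ without reference to a basepoint.
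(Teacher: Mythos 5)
Your proof is correct and follows the same broad strategy as the paper's: split off the constant morphisms and use the Albanese property to identify the remainder with $\Hom_{k\text{-}av}(J_\CC,E_0)$. Two small presentational differences are worth noting. For the first isomorphism, you make the splitting explicit via the basepoint ($\mor_k(\CC,E_0)=E_0(k)\oplus\{\phi:\phi(x)=0\}$) and then invoke the Albanese \emph{bijection} on pointed morphisms, whereas the paper states the Albanese property gives a \emph{surjection} $\mor_k(\CC,E_0)\to\Hom_{k\text{-}av}(J_\CC,E_0)$ and identifies the kernel with the constants by observing that non-constant morphisms of curves are surjective and hence induce non-zero homomorphisms of abelian varieties; your version is arguably cleaner since it does not need the surjectivity observation. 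For the second isomorphism, you route through the symmetry of $\DivCorr$ and equation~(\ref{eq:divcor-hom}), while the paper simply cites the self-duality of Jacobians (the canonical principal polarization). These are closely related facts, and your route has the virtue of staying entirely inside the machinery the paper set up in Subsection~\ref{ss:pic-alb}; both are correct. (Your cautious remark about $\CC$ lacking a rational point is unnecessary: the standing hypothesis in Subsection~\ref{ss:pic-alb} is that $\CC$ has an $\Fq$-point, and that assumption is in force here.)
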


\begin{proof}
  The Albanese property of the Jacobian of $\CC$ (Subsection
  \ref{ss:pic-alb} of Lecture~0) gives a surjective homomorphism
$$\mor_{k}(\CC,E_0)\to\Hom_{k-av}(J_\CC, E_0).$$
This homomorphism sends non-constant (and therefore surjective)
morphisms to non-constant (surjective) homomorphisms, so its kernel
consists exactly of the constant morphisms.  The second isomorphism in
the statement of the corollary follows from the fact that Jacobians are
self-dual.
\end{proof}

By Poincar\'e complete reducibility \cite{Milne86av}*{12.1}, $J_\CC$ is
isogenous to a product of simple abelian varieties.  Suppose $J_\CC$
is isogenous to $E_0^m\times A$ and $A$ admits no non-zero morphisms
to $E_0$.  We say that ``$E_0$ appears in $J_\CC$ with multiplicity
$m$.''  Then it is clear from the corollary that
$E(K)/E(K)_{tor}\cong\en_k(E_0)^m$ and so the rank of $E(K)$ is $m$,
$2m$, or $4m$.

Tate and Shafarevich used these ideas to exhibit isotrivial elliptic
curves over $F=\F_p(t)$ of arbitrarily large rank.  Indeed, using
Tate's theorem on isogenies of abelian varieties over finite fields
(reviewed in Section~\ref{s:Tate-thm} of Lecture~0) and a calculation
of zeta functions in terms of Gauss sums, they were able to produce a
hyperelliptic curve $\CC$ over $\Fp$ whose Jacobian is isogenous to
$E_0^m\times A$ where $E_0$ is a supersingular elliptic curve and the
multiplicity $m$ is as large as desired.  If $K=\Fp(\CC)$, $E$ is the
constant curve $E=E_0\times F$, and $E'$ is the twist of $E$ by the
quadratic extension $K/F$, then $\rk E'(F)=\rk E(K)$ and so $E'(F)$
has large rank by the analysis above.  See the original article
\cite{TateShafarevich67} for more details and a series of articles by
Elkies (starting with \cite{Elkies94}) for a beautiful application to
the construction of lattices with high packing densities.

\section{Torsion}
An immediate corollary of the MWLN theorem is that $E(K)_{tor}$ is
finite.  In fact, $E(K)_{tor}$ is isomorphic to a group of the form
$$\Z/m\Z\times\Z/n\Z$$
where $m$ divides $n$ and $p$ does not divide $m$.  (See for example
\cite{SilvermanAEC}*{Ch.~3}.)  One can also see using the theory of
modular curves that every such group appears for a suitable $K$ and
$E$.

In another direction, one can give uniform bounds on torsion that
depend only on crude invariants of the field $K$.

Indeed, in the constant case, $E(K)_{tor}\cong E_0(\Fq)$ which has order
bounded by $(q^{1/2}+1)^2$.  In the isotrivial case, there is a finite
extension $K'$ with the same field of constants $k=\Fq$ over which $E$
becomes constant.  Thus $E(K)_{tor}\subset E(L)_{tor}$ again has
cardinality bounded by $(q^{1/2}+1)^2$.

We now turn to the non-isotrivial case.

\begin{prop}
  Assume that $E$ is non-isotrivial and let $g_\CC$ be the genus of
  $\CC$.  Then there is a finite \textup{(}and effectively
  calculable\textup{)} list of groups---depending only on $g_\CC$ and
  $p$---such that for any non-isotrivial elliptic curve $E$ over $K$,
  $E(K)_{tor}$ appears on the list.
\end{prop}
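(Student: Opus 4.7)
The plan is to bound the exponent of $E(K)_{\mathrm{tor}}$, splitting it into its prime-to-$p$ part and its $p$-primary part. By the structure statement preceding the proposition, $E(K)_{\mathrm{tor}} \cong \Z/m\Z \times \Z/n\Z$ with $m \mid n$ and $p \nmid m$; the $p$-primary part is cyclic, say of order $p^e$. So a bound on the exponent together with the shape of the group gives an explicit, finite, effectively computable list of possible torsion subgroups.

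\textbf{Prime-to-$p$ part via modular curves.} Suppose $E$ has a $K$-rational point $P$ of exact order $N$ with $\gcd(N, p) = 1$. Then $(E, P)$ is classified by a $K$-point of the (smooth, affine) modular curve $Y_1(N)$ over $\Fq$. Because $\CC$ is regular of dimension one and $X_1(N)$ is projective, this $K$-point extends uniquely to a morphism $\phi: \CC \to X_1(N)$ over $\Fq$, whose composition with the map $X_1(N) \to X(1) \cong \P^1$ is the $j$-invariant. Non-isotriviality of $E$ means $j(E) \notin \Fqbar$, so $\phi_{\Fqbar}$ is non-constant. Riemann--Hurwitz then yields
\begin{equation*}
2 g_\CC - 2 \;\ge\; \deg(\phi_{\Fqbar})\bigl(2 g(X_1(N)) - 2\bigr) \;\ge\; 2 g(X_1(N)) - 2
\end{equation*}
whenever $g(X_1(N)) \ge 1$. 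Since $g(X_1(N)) \to \infty$ effectively with $N$, this bounds $N$ by an explicit constant $N_0(g_\CC)$ depending only on $g_\CC$.

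\textbf{$p$-primary part.} Suppose $P \in E(K)$ has order $p^e$ with $e \ge 1$. In characteristic $p$ the group $\mu_p(K)$ is trivial, so the connected--\'etale sequence of $E[p^e]$ (at any place of good reduction) forces $E$ to be ordinary and $P$ to generate the \'etale quotient; in particular $E$ has an \'etale $K$-subgroup isomorphic to $\Z/p^e\Z$. Such a pair $(E, P)$ is classified over the ordinary locus by an Igusa-type moduli scheme $\mathrm{Ig}(p^e)$, and the universal $j$-map $\mathrm{Ig}(p^e) \to \mathbb{A}^1$ factors the $j$-invariant of $E$. Extending the resulting rational map $\CC \dashrightarrow \overline{\mathrm{Ig}(p^e)}$ across the finitely many supersingular specializations (using regularity of $\CC$ and properness of the compactification) and applying Riemann--Hurwitz exactly as before bounds $e$ in terms of $g_\CC$ and $p$, because $g(\overline{\mathrm{Ig}(p^e)}) \to \infty$ with $e$ by the explicit genus formulas of \cite{KatzMazurAM}.

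\textbf{Main obstacle and conclusion.} The $p$-primary bound is the genuine difficulty: $X_1(p^e)$ fails to be smooth in characteristic $p$, so one cannot reuse the first argument verbatim; instead one must work with the Igusa cover of the ordinary locus, verify its geometric connectedness (Igusa's theorem), and deal carefully with the supersingular and cuspidal loci in passing to a smooth compactification. Once both bounds are in hand, $E(K)_{\mathrm{tor}}$ is a subgroup of $(\Z/N_0)^2 \times \Z/p^{e_0}$ of the prescribed shape $\Z/m\Z \times \Z/n\Z$ with $m \mid n$ and $p \nmid m$, which is a finite and effectively enumerable collection.
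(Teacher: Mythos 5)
Your proposal is correct and follows essentially the same route as the paper: for the prime-to-$p$ part, a $K$-rational torsion structure gives a $K$-point of a modular curve, hence a morphism $\CC\to X_1(N)$ (the paper uses $X(m,n)$ but immediately reduces to the genus of $X(1,n)=X_1(n)$), which is non-constant by non-isotriviality of $j(E)$, so Riemann--Hurwitz and the growth of $g(X_1(N))$ bound $N$; and for the $p$-primary part one repeats the argument with the Igusa curves $Ig(p^e)$. The only cosmetic difference is that you bound the exponent and then invoke the known shape $\Z/m\Z\times\Z/n\Z$ with $m\mid n$, $p\nmid m$, while the paper bounds the full group $\Z/m\Z\times\Z/n\Z$ directly via $X(m,n)$.
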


\begin{proof}
  (Sketch) First consider the prime-to-$p$ torsion subgroup of $E(K)$.
  It has the form $G=\Z/m\Z\times\Z/n\Z$ where $m|n$ and $p\nodiv m$.
  There is a modular curve $X(m,n)$, irreducible and defined over
  $\Fp(\mu_m)$, that is a coarse moduli space for elliptic curves
  with subgroups isomorphic to $G$.  We get a morphism $\CC\to X(m,n)$
  which is non-constant (because $E$ is non-isotrivial) and therefore
  surjective.  The Riemann-Hurwitz formula then implies that $g_\CC\ge
  g_{X(m,n)}$.  But the genus of $X(m,n)$ goes to infinity with $n$.
  Indeed, $g_{X(m,n)}\ge g_{X(1,n)}$ and standard genus formulae
  (\cite{MiyakeMF}*{4.2}) together with crude estimation show that the
  latter is bounded below by 
$$1+\frac{n^2}{24\zeta(2)}-\frac{n\log_2n}{4}.$$  
This shows that for a
  fixed value of $g_\CC$, only finitely many groups $G$ as above can
  appear as $E(K)_{tor}$.

  The argument for $p$-torsion is similar, except that ones uses the
  Igusa curves $Ig(p^n)$ (cf.~\cite{KatzMazurAM}*{Ch.~12}).  If $E(K)$
  has a point of order $p^n$, we get a non-constant morphism $\CC\to
  Ig(p^n)$ and the genus of $Ig(p^n)$ is asymptotic to $p^{2n}/48$
  \cite{Igusa68}*{p.~96}.
\end{proof}

This proposition seems to have been rediscovered repeatedly over the years.
The first reference I know of is \cite{Levin68}.  

Since the genus of a function field is an analogue of the discriminant
(more precisely $q^{2g-2}$ is an analogue of the absolute value of the
discriminant of a number field), the proposition is an analogue of
bounding $E(K)_{tor}$ in terms of the discriminant of a number field
$K$.  One could ask for a strengthening where torsion is bounded by
``gonality'', i.e., by the smallest degree of a non-constant map
$\CC\to\P^1$.  This would be an analogue of bounding $E(K)_{tor}$ in
terms of the degree of a number field $K$, as in the theorems of
Mazur, Kamienny, and Merel \cite{Merel96}.  This is indeed possible and
can be proven by mimicking the proof of the proposition, replacing
bounds on the genus of the modular curve with bounds on its gonality.
See \cite{Poonen07} for the best results currently
known on gonality of modular curves.

\begin{exer}
  Compute the optimal list mentioned in the proposition for $g=0$.
  (This is rather involved.)  
  Note that the optimal list in fact depends on $p$.  Indeed,
  $\Z/11\Z$ is on the list if and only if $p=11$.
\end{exer}

One can be very explicit about $p$-torsion:

\begin{prop}
Suppose that $E$ is a non-isotrivial elliptic curve over $K$.
Then $E(K)$ has a point of order $p$ if and only if $j(E)\in K^{p}$ and
$A(E,\omega)$ is a $(p-1)$st power in $K^\times$.
\end{prop}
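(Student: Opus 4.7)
Since $E$ is non-isotrivial, $E$ must be ordinary (supersingular curves have $j \in \F_{p^2}$ and are isotrivial), so in particular $A(E,\omega) \neq 0$. Suppose first that $P \in E(K)$ has order $p$. The subgroup $\langle P\rangle \subset E$ is $K$-rational and étale of order $p$, and the quotient isogeny $\phi: E \to E'' := E/\langle P\rangle$ is an étale $K$-isogeny of degree $p$. Its dual $\hat\phi: E'' \to E$ has degree $p$ and is purely inseparable, hence equals $\Fr_{E''/K}$, so $E \cong E''^{(p)}$ over $K$ and Lemma~\ref{l:(p)} gives $j(E) = j(E'')^p \in K^p$. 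Conversely, if $j(E) \in K^p$, Lemma~\ref{l:(p)} supplies some $E'$ over $K$ with $E \cong E'^{(p)}$.

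Fix such a presentation $E \cong E'^{(p)}$ and let $V := V_{E'/K}: E \to E'$, so that $[p]_E = \Fr_{E'/K} \circ V$. Since $E'$ is ordinary, $V$ is separable of degree $p$, and $\ker V \subset E$ is an étale $K$-group scheme of order $p$. The infinitesimality of $\ker \Fr_{E'/K}$ forces $E(K)[p] = (\ker V)(K)$: if $pP = 0$ with $P \in E(K)$, then $V(P) \in E'(K) \cap \ker \Fr_{E'/K} = 0$. Étale $K$-group schemes of order $p$ are classified by characters $\chi: G_K \to \Aut(\Z/p\Z) = \F_p^\times$, and since $p-1$ is coprime to $p$, Kummer theory identifies $\Hom(G_K,\F_p^\times) \cong K^\times/(K^\times)^{p-1}$. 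Thus $\ker V$ is classified by a class $[a_V] \in K^\times/(K^\times)^{p-1}$, and $E(K)[p] \neq 0$ iff $[a_V]$ is trivial; the proposition is equivalent to the identification $[a_V] = [A(E,\omega)]$.

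The main obstacle is this last identification. My plan is to use the formal group $\hat{E'}$: with parameter $T$ dual to $\omega_{E'}$, one has $[p]_{E'}(T) = A(E',\omega_{E'})\,T^p + O(T^{p+1})$. Since $[p]_{E'} = V \circ \Fr_{E'/K}$ and $\Fr_{E'/K}$ acts on formal parameters by $T \mapsto T^p$, the leading term of $V$ on the formal group is determined by $A(E',\omega_{E'})$; chasing this through Cartier duality shows that the form $\ker V$ of $\Z/p$ (equivalently its Cartier dual $\ker \Fr_{E'/K}$, a form of $\mu_p$) is classified by $[A(E',\omega_{E'})]$ in $K^\times/(K^\times)^{p-1}$. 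A quick Weierstrass comparison gives $A(E,\omega_E) = A(E',\omega_{E'})^p$, and since $\gcd(p,p-1) = 1$ this equals $A(E',\omega_{E'})$ modulo $(p-1)$st powers, completing the identification. The technical heart is making the formal-group/Cartier-duality argument precise while tracking $\omega$-normalizations; alternatively, one can give a hands-on derivation from explicit Weierstrass equations and the $p$-division polynomial, which is concrete but becomes unwieldy for general $p$.
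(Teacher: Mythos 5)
Your reduction follows the same structural path as the paper's proof: the quotient by $\langle P\rangle$ together with Lemma~\ref{l:(p)} gives the equivalence with $j(E)\in K^p$, and the Frobenius--Verschiebung factorization of $[p]$ together with the observation $E(K)[p]=(\ker V)(K)$ collapses the remaining question to a single claim about $\ker V$. Your Kummer-theoretic packaging of that claim --- that the class in $K^\times/(K^\times)^{p-1}$ classifying the \'etale $K$-form $\ker V$ of $\Z/p\Z$ is the class of the Hasse invariant --- is a clean restatement of what the paper cites from \cite{Ulmer91}*{2.1}, namely that $A(E,\omega)$ is a $(p-1)$st power iff $\ker\Fr\cong\mu_p$ iff $\ker V\cong\Z/p\Z$.

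But that identification, $[a_V]=[A(E,\omega)]$, is exactly where the content lies, and your third paragraph only outlines a plan for it. As written, you have shown that $E(K)[p]\neq 0$ iff $j(E)\in K^p$ and \emph{some} class $[a_V]$ vanishes; the connection to $A(E,\omega)$ is not established. This is a genuine gap, and you flag it yourself. The formal-group starting point $[p]_{E'}(T)=A(E',\omega_{E'})T^p+O(T^{p+1})$ is correct and promising, but to close it you must (a) track how the $\omega$-normalization of the formal parameter interacts with the factorization $[p]=\Fr\circ V$ and with the base change to $E=E'^{(p)}$, (b) account for the inversion Cartier duality introduces between the cohomology class of $\ker V$ (a form of $\Z/p\Z$) and that of $\ker\Fr_{E'/K}$ (a form of $\mu_p$) --- harmless for the ``iff'' but not for a literal equality of classes, and (c) reconcile $A(E,\omega)$ with $A(E',\omega')$ via $A(E,\omega)=A(E',\omega')^p$, which you note but do not justify. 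The alternative Weierstrass/division-polynomial route is precisely what the cited reference carries out; one of the two computations has to actually be done for the proposal to be a proof.
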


Note that whether $A(E,\omega)$ is a $(p-1)$st power is independent of
the choice of the differential $\omega$. 

\begin{proof}
Let $E\xrightarrow{\Fr}E^{(p)}\xrightarrow{V}E$ be the standard
factorization of multiplication by $p$ into Frobenius and Verschiebung.
Recall (e.g., \cite{Ulmer91}*{2.1}) that $A(E,\omega)$ is a $(p-1)$st
power in $K$ if and only if $\ker \Fr\cong\mu_p$ if and only if $\ker
V\cong\Z/p\Z$ if and only if there is a non-trivial $p$-torsion point
in $E^{(p)}(K)$.

Now suppose that $P\in E(K)$ is a non-trivial $p$-torsion point.  Then
$\Fr(P)$ is a non-trivial $p$-torsion point in $E^{(p)}(K)$ and so
$A(E,\omega)$ is a $(p-1)$st power in $K$.  Let $E'$ be the quotient
of $E$ by the cyclic subgroup generated by $P$: $E'=E/\<P\>$.  Since
$\<P\>$ is in the kernel of multiplication by $p$, we have a
factorization of multiplication by $p$:
$$[p]:E\to E'\to E.$$
Since $E\to E'$ is \'etale of degree $p$ and $[p]$ is inseparable of
degree $p^2$, we have that $E'\to E$ is purely inseparable of degree
$p$.  But an elliptic curve in characteristic $p$ has a unique
inseparable isogeny of degree $p$ (namely the quotient by the unique
connected subgroup of order $p$, the kernel of Frobenius) so we have
an identification $E=E^{\prime(p)}$.  By \ref{l:(p)}, $j(E)\in K^p$.

Conversely, suppose $A(E,\omega)$ is a $(p-1)$st power and $j(E)\in
K^p$.  Let $E'$ be the elliptic curve such that $E^{\prime(p)}\cong
E$.  Given a differential $\omega$ on $E$, there is a differential
$\omega'$ on $E'$ such that $A(E,\omega)=A(E',\omega')^p$ (as can be
seen for example by using Weierstrass equations).  It follows that
$A(E',\omega')$ is also a $(p-1)$st power in $K$.  Thus we have a
non-trivial point of order $p$ in $E^{\prime(p)}(K)=E(K)$.
\end{proof}

Part of the proposition generalizes trivially by iteration: if $E(K)$
has a point of order $p^n$, then $j(E)\in K^{p^n}$.  A full
characterization of $p^n$ torsion seems harder---the condition that
$A(E,\omega)$ be a $(p-1)$st power is closely related to the equations
defining the Igusa curve $Ig(p)$ (\cite{KatzMazurAM}*{12.8}), but we
do not have such explicit equations for $Ig(p^n)$ when $n>1$.

\section{Local invariants}\label{s:local-invs}
Let $E$ be an elliptic curve over $K$ and let $v$ be a place of $K$.
A model (\ref{eq:cubica}) for $E$ with coefficients in the valuation
ring $\O_{(v)}$ is said to be {\it integral at $v$\/}.  The valuation
of the discriminant $\Delta$ of an integral model is a non-negative
integer and so there are models where this valuation takes its minimum
value.  Such models are {\it minimal integral models at $v$\/}.

Choose a model for $E$ that is minimal integral at $v$:
$$y^2+a_1xy+a_3y=x^3+a_2x^2+a_4x+a_6.$$
Let $\overline a_i\in\kappa(v)$ be the reductions of the coefficients
and let $E_v$ be the plane cubic
\begin{equation}\label{eq:reduction}
y^2+\overline a_1xy+\overline a_3y
=x^3+\overline a_2x^2+\overline a_4x+\overline a_6
\end{equation}
over the residue field $\kappa_v$.
It is not hard to check using Weierstrass equations that the
isomorphism type of the reduced cubic (\ref{eq:reduction}) is independent
of the choice of minimal model.

If the discriminant of a minimal integral model at $v$ has valuation
zero, i.e., is a unit at $v$, then the reduced equation defines an
elliptic curve over $\kappa_v$.  If the minimal valuation is positive,
then the reduced curve is singular.  We distinguish three cases
according to the geometry of the reduced curve.

\begin{defn}\leavevmode
\begin{enumerate}
\item If $E_v$ is a smooth cubic, we say $E$
  has {\it good reduction at $v$}.
\item If $E_v$ is a nodal cubic, we say $E$ has {\it multiplicative
    reduction at $v$}.  If the tangent lines at the node are rational
  over $\kappa(v)$ we say the reduction is {\it split multiplicative\/}
  and if they are rational only over a quadratic extension, we say the
  reduction is {\it non-split multiplicative\/}.
\item If $E_v$ is a cuspidal cubic, we say $E$ has {\it additive
    reduction\/}.
\end{enumerate}
\end{defn}

Define an integer $a_v$ as follows:
\begin{equation}\label{eq:a_v}
a_v=\begin{cases}
q_v+1-\#E_v(\kappa_v)&\text{if $E$ has good reduction at $v$}\\
1&\text{if $E$ has split multiplicative reduction at $v$}\\
-1&\text{if $E$ has non-split multiplicative reduction at $v$}\\
0&\text{if $E$ has additive reduction at $v$}
\end{cases}
\end{equation}

\begin{exer}
To make this definition less {\it ad hoc\/}, note that in the good
reduction case, the numerator of the $\zeta$-function of the reduced
curve is $1-a_vq_v^{-s}+q_v^{1-2s}$.  Show that in the bad reduction
cases, the $\zeta$-function of the reduced curve is
$$\frac{1-a_vq_v^{-s}}{(1-q_v^{-s})(1-q_v^{1-s})}.$$
\end{exer}

In the good reduction case, the results about zeta
functions and \'etale cohomology reviewed in Lecture~0,
Sections~\ref{s:zetas} and \ref{s:cohomology} imply the ``Hasse
bound'':  $|a_v|\le2\sqrt{q_v}$.

There are two more refined invariants in the bad reduction cases: the
N\'eron model and the conductor.  The local exponent of the conductor
at $v$, denoted $n_v$ is defined as
\begin{equation}
n_v=\begin{cases}
0&\text{if $E$ has good reduction at $v$}\\
1&\text{if $E$ has multiplicative reduction at $v$}\\
2+\delta_v&\text{if $E$ has additive reduction at $v$}
\end{cases}
\end{equation}
Here $\delta_v$ is a non-negative integer that is 0 when $p>3$ and is
$\ge0$ when $p=2$ or 3.  We refer to \cite{Tate75} for a definition and
an algorithm to compute $\delta_v$.

The (global) conductor of $E$ is defined to be the divisor $\n=\sum_v
n_v[v]$.  Its degree is $\deg\n=\sum_vn_v\deg v$.

The N\'eron model will be discussed in Lecture~3 below.

\begin{exer}
  Mimic \cite{SilvermanAEC}*{Ch.~VII} to define a filtration on the
  points of $E$ over a completion $K_v$ of $K$.  Show that the
  prime-to-$p$ part of $E(K)_{tor}$ maps injectively into
  $E(K_v)/E(K_v)_1$.  Relate $E(K_v)/E(K_v)_1$ to the special fiber of
  the N\'eron model of $E$ at $v$.  As in the classical case, this
  gives an excellent way to bound the prime-to-$p$ part of
  $E(K)_{tor}$.  
\end{exer}

\section{The $L$-function}
We define the $L$-function of $E/K$ as an Euler product:
\begin{equation}\label{eq:Ldef}
L(E,T)=\prod_{\text{good }v}\left(1-a_vT^{\deg v}+q_vT^{2\deg
    v}\right)^{-1}
\prod_{\text{bad }v}\left(1-a_vT^{\deg v}\right)^{-1}
\end{equation}
and
$$L(E,s)=L(E,q^{-s}).$$
(Here $T$ is a formal indeterminant and $s$ is a complex number.
Unfortunately, there is no standard reasonable parallel of the
notations $Z$ and $\zeta$ to distinguish the function of $T$ and the
function of $s$.)  Because of the Hasse bound on the size of $a_v$,
the product converges absolutely in the region $\RP s>3/2$, and as we
will see below, it has a meromorphic continuation to all $s$.

When $E$ is constant it is elementary to calculate $L(E,s)$ in terms
of the zeta-functions of $E_0$ and $\CC$.  

\begin{exer}\label{exer:L-const}
Suppose that $E=E_0\times_kK$.  Write the $\zeta$-functions of $E_0$
and $\CC$ as rational functions:
$$\zeta(E_0,s)=\frac{\prod_{i=1}^{2}(1-\alpha_iq^{-s})}{(1-q^{-s})(1-q^{1-s})}$$
and
$$\zeta(\CC,s)=\frac{\prod_{j=1}^{2g_\CC}(1-\beta_jq^{-s})}{(1-q^{-s})(1-q^{1-s})}.$$
Prove that
$$L(E,s)=\frac{\prod_{i,j}(1-\alpha_i\beta_jq^{-s})}
{\prod_{i=1}^{2}(1-\alpha_iq^{-s})\prod_{i=1}^{2}(1-\alpha_iq^{1-s})}.$$
\end{exer}

Thus $L(E,s)$ is a rational function in $q^{-s}$ of degree $4g_\CC-4$,
it extends to a meromorphic function of $s$, and it satisfies a
functional equation for $s\leftrightarrow 2-s$.  Its poles lie on the
lines $\RP s=1/2$ and $\RP s=3/2$ and its zeroes lie on the line $\RP
s=1$.

Although the proofs are much less elementary, these facts extend to
the non-constant case as well:

\begin{thm}
  Suppose the $E$ is a non-constant elliptic curve over $K$.  Let $\n$
  be the conductor of $E$.  Then $L(E,s)$ is a polynomial in $q^{-s}$
  of degree $N=4g_\CC-4+\deg\n$, it satisfies a functional equation
  for $s\leftrightarrow2-s$, and its zeroes lie on the line $\RP s=1$.
  More precisely,
$$L(E,s)=\prod_{i=1}^{N}(1-\alpha_iq^{-s})$$
where each $\alpha_i$ is an algebraic integer of absolute value $q$ in
every complex embedding.  The collection of $\alpha_i$ \textup{(}with
multiplicities\textup{)} is invariant under $\alpha_i\mapsto q^2/\alpha_i$.
\end{thm}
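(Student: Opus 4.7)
The plan is to interpret $L(E,s)$ cohomologically via the Grothendieck--Lefschetz trace formula. Let $j\colon U\hookrightarrow\CC$ be the open locus of good reduction and let $\FF$ be the lisse $\Ql$-sheaf on $U$ with stalk $H^1(E_{\bar u},\Ql)$ at a geometric point $\bar u$; equivalently, $\FF$ is the $\Ql$-dual of the Tate module of $E$ as a continuous representation of $\pi_1(U)$. At each good $v$ the Euler factor $(1-a_vT^{\deg v}+q_vT^{2\deg v})^{-1}$ coincides with $\det(1-T^{\deg v}\Fr_v\mid\FF_{\bar v})^{-1}$, and at each bad $v$ a direct check (cf.\ \cite{Tate75}) matches $(1-a_vT^{\deg v})^{-1}$ with $\det(1-T^{\deg v}\Fr_v\mid(\FF_{\bar\eta})^{I_v})^{-1}$. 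Hence $L(E,s)=\prod_v\det(1-q_v^{-s}\Fr_v\mid(j_*\FF)_{\bar v})^{-1}$, and Grothendieck's trace formula gives
$$L(E,s)=\prod_{i=0}^{2}\det\bigl(1-q^{-s}\Fr_q\mid H^i(\CCbar,j_*\FF)\bigr)^{(-1)^{i+1}}.$$

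The next step is to verify that $H^0(\CCbar,j_*\FF)=H^2(\CCbar,j_*\FF)=0$ whenever $E$ is non-constant. One has $H^0=\FF^{\pi_1^{\mathrm{geom}}(U)}$, and via Poincar\'e duality together with the Weil-pairing identification $\FF\cong\FF^\vee(-1)$ also $H^2\cong\FF_{\pi_1^{\mathrm{geom}}(U)}(-1)$. For non-isotrivial $E$, Igusa's theorem on monodromy of the universal elliptic curve produces geometric monodromy open in $\SL_2(\Zl)$, which is irreducible on $\FF$ and so has no invariants or coinvariants. For $E$ isotrivial but non-constant, $E$ becomes a twist of a constant curve $E_0\times_kK$ over a finite extension $K'/K$; non-constancy over $K$ forces the twisting cocycle's restriction $\pi_1^{\mathrm{geom}}(U)\to\Aut(E_0)(\kbar)\subset\GL(\FF)$ to be non-trivial, and a case check shows that every non-trivial element of $\Aut(E_0)(\kbar)\subset\GL_2(\Ql)$ acts on $\FF$ without eigenvalue $1$, killing both invariants and coinvariants. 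This identifies $L(E,s)=\det(1-q^{-s}\Fr_q\mid H^1(\CCbar,j_*\FF))$, a polynomial in $q^{-s}$.

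Its degree equals $-\chi(\CCbar,j_*\FF)$, and the Grothendieck--Ogg--Shafarevich formula computes this as $4g_\CC-4+\deg\n=N$, since the local contribution $(2-\dim\FF^{I_v})+\swan_v(\FF)$ is precisely the Artin conductor exponent $n_v$ (Ogg's formula absorbing the wild part $\delta_v$ when $p\in\{2,3\}$). Poincar\'e--Verdier duality combined with $\FF\cong\FF^\vee(-1)$ then supplies a perfect Galois-equivariant pairing
$$H^1(\CCbar,j_*\FF)\times H^1(\CCbar,j_*\FF)\to\Ql(-2),$$
so the inverse roots are stable under $\alpha_i\mapsto q^2/\alpha_i$---exactly the claimed symmetry and the $s\leftrightarrow 2-s$ functional equation. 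Finally, the Riemann hypothesis follows from Deligne's main theorem of Weil II applied to $\FF$, which is pointwise pure of weight $1$: the image of $H^1_c(U,\FF)\to H^1(U,\FF)$, i.e.\ $H^1(\CCbar,j_*\FF)$, is pure of weight $2$, so $|\alpha_i|=q$ in every complex embedding.

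The hardest step is the vanishing of $H^0$ and $H^2$ in the isotrivial-but-non-constant regime: the $j$-invariant is constant on $\CC$, so variation of moduli is unavailable, and one must extract the needed non-trivial geometric monodromy purely from the twisting class distinguishing $E$ from its constant base change. A secondary technicality is matching the local conductor exponent $n_v$ with the $\ell$-adic Artin conductor of $\FF$ at bad places of residue characteristic $2$ or $3$, where the wild part $\delta_v$ corresponds precisely to the Swan conductor of the local Galois representation.
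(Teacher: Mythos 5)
Your argument is correct and follows the same route the paper takes in Lecture~4: construct the lisse sheaf $\FF$ on the good-reduction locus and its middle extension $j_*\FF$, apply the Grothendieck--Lefschetz trace formula, kill $H^0$ and $H^2$ by showing $\rho_E|_{G_{\kbar K}}$ has no trivial sub- or quotient-representation, compute the degree by Grothendieck--Ogg--Shafarevich together with Ogg's identification of the $\ell$-adic Artin conductor with $\n$, get the functional equation from the Weil pairing plus Poincar\'e duality, and invoke Deligne's Weil~II for purity. The one place you go beyond the paper's sketch is the isotrivial non-constant case, which the paper relegates to an exercise: your observation that a nontrivial automorphism of $E_0$ has $\det = \deg = 1$ and finite order on $V_\ell$, hence no eigenvalue $1$, is a clean way to close that gap, and is equivalent to the paper's hint that a nontrivial automorphism has only finitely many fixed points. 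The only wrinkle worth flagging: the general statement in the paper's Theorem~\ref{thm:GA} hypothesizes only the absence of trivial \emph{sub}representations, which by itself controls $H^0$ but not $H^2$; in the elliptic-curve setting the symplectic self-duality you use makes absence of trivial subs and absence of trivial quotients equivalent, so your formulation is the right one and there is no circularity.
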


The theorem is a combination of results of Grothendieck, Deligne, and
others.  We will sketch a proof of it in Lecture~4.

Note that in all cases $L(E,s)$ is holomorphic at $s=1$.  In the
non-constant case, its order of vanishing at $s=1$ is bounded above by
$N$ and it equals $N$ if and only if $L(E,s)=(1-q^{1-s})^N$.

\section{The basic BSD conjecture}
This remarkable conjecture connects the analytic behavior of the
function $L(E,s)$, constructed from local data, to the Mordell-Weil
group, a global invariant.

\begin{conj}[Birch and Swinnerton-Dyer]
$$\rk E(K) = \ord_{s=1}L(E,s)$$
\end{conj}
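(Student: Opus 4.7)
The plan is to translate the conjecture into a geometric statement about an associated elliptic surface, and then reduce that statement to the Tate conjecture on divisors. First, construct the minimal regular proper model $\pi\colon\EE\to\CC$ of $E/K$: this is a smooth projective surface over $\Fq$ whose generic fiber is $E$ and whose special fibers at the bad places $v$ are the Kodaira fiber types. Sections of $\pi$ are in bijection with $E(K)$, so by the MWLN theorem they form a finitely generated group, and the map $E(K)\to\NS(\EE)$ has image of rank $\rk E(K)$.

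Next, invoke the Shioda-Tate formula: the zero section, a general fiber class, the non-identity components of the reducible fibers, and a set of sections representing a basis of $E(K)/E(K)_{tor}$ together span $\NS(\EE)\tensor\Q$. This gives the identity
$$\rk\NS(\EE)=2+\rk E(K)+\sum_{v\text{ bad}}(m_v-1),$$
where $m_v$ is the number of geometric components of $\EE_v$.

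On the analytic side, apply the Grothendieck--Lefschetz trace formula to the constructible sheaf $R^1\pi_*\Ql$ on $\CC$. This expresses $L(E,T)$ (up to explicit factors at bad places encoding $\sum(m_v-1)$ and the conductor exponents) as the characteristic polynomial of $\Fr_q$ on $H^1(\CCbar,j_*\FF)$, where $\FF$ is the lisse sheaf on the smooth locus of $\pi$. Equivalently, $L(E,s)$ is essentially the ratio of the interesting factor of $Z(\EE,q^{-s})$ to the contributions of $Z(\CC,\cdot)$ coming from $H^0$ and $H^2$ via the Leray spectral sequence. Since the eigenvalues of $\Fr_q$ on $H^2(\EEbar,\Ql)$ have absolute value $q$, by the Riemann hypothesis part of the Weil conjectures, $\ord_{s=1}L(E,s)$ equals the multiplicity of $q$ as a $\Fr_q$-eigenvalue on $H^2(\EEbar,\Ql)$ minus the contributions from the classes of the zero section, fibers, and non-identity fiber components, which exactly match the $2+\sum(m_v-1)$ in Shioda--Tate.

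Combining these two counts, the conjecture becomes the assertion
$$\rk\NS(\EE)\tensor\Ql\;=\;\dim H^2(\EEbar,\Ql)^{\Fr_q=q},$$
i.e.\ that the image of the cycle class map $\NS(\EE)\tensor\Ql\to H^2(\EEbar,\Ql)(1)$ fills out the entire $\Fr_q$-invariant subspace. This is precisely the Tate conjecture for divisors on the surface $\EE$, and it is the main obstacle: injectivity of the cycle class map yields the unconditional inequality $\rk E(K)\le\ord_{s=1}L(E,s)$ together with the fact that the two sides have the same parity, but surjectivity is open in general. Where the Tate conjecture is known---for instance when $\EE$ is dominated by a product of curves, which by Theorem~\ref{thm:TateIsogThm} follows from Tate's homomorphism theorem applied to the Jacobians---the argument above gives a complete proof of BSD. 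A full proof in the stated generality therefore awaits the Tate conjecture for arbitrary elliptic surfaces over finite fields.
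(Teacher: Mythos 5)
What you have labeled a ``proof proposal'' is, correctly, not a proof: the statement you were asked about is the Birch and Swinnerton-Dyer conjecture itself, which remains open, and the paper offers no proof of it in general. What the paper does offer (and what you have accurately reconstructed) is the reduction of BSD for $E/K$ to the Tate conjecture $T_2$ for the associated elliptic surface $\EE\to\CC$, carried out via exactly the ingredients you list: the minimal regular model (Proposition~\ref{prop:model}), the Shioda--Tate formula relating $\rk E(K)$ to $\rk\NS(\EE)$ and the fiber-component count (Theorem~\ref{thm:Shioda-Tate} and equation~(\ref{eq:STformula})), and the comparison of $L(E,s)$ with $\zeta(\EE,s)$ (equations~(\ref{eq:Z-L}) and (\ref{eq:Z-L-ords})), leading to the equivalence stated as part 1 of Theorem~\ref{thm:BSD-Tate}. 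Your final remarks about DPC surfaces and Tate's homomorphism theorem also match the paper's route to the known cases (Theorems~\ref{thm:products}, \ref{thm:DPC-BSD}). So your proposal is a faithful account of the paper's strategy and is appropriately honest that it stops short of proving the conjecture.

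One inaccuracy worth flagging: you write that injectivity of the cycle class map yields both the inequality $\rk E(K)\le\ord_{s=1}L(E,s)$ \emph{and} the fact that the two sides have the same parity. Injectivity gives only the inequality. The parity statement is true, but it is a separate fact requiring the existence of a non-degenerate alternating pairing on the obstruction group (the Tate module of $\Br(\EE)$, or equivalently of $\sha(E/K)$), which forces the defect to be even; it does not follow from the injectivity of $\NS(\EE)\tensor\Ql\to H^2(\EEbar,\Ql(1))$ alone, and one must also handle the possible failure of semisimplicity of $\Fr_q$ on $H^2$. The paper does not assert the parity claim, so if you want to keep it you should supply this additional input rather than attribute it to injectivity.
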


The original conjecture was stated only for elliptic curves over $\Q$
\cite{BSD65} but it is easily seen to make sense for abelian varieties
over global fields.  There is very strong evidence in favor of it,
especially for elliptic curves over $\Q$ and abelian varieties over
function fields. See \cite{GrossPCMI}*{Lecture~3, \S4} for a summary
of the best theoretical evidence in the number field case.  We will
discuss what is known for elliptic curves in the function field case
later in this course.  See Section~\ref{s:results} for statements of
the main results and \cite{UlmerCRM} for a discussion of the case of
higher dimensional abelian varieties over function fields.

\section{The Tate-Shafarevich group}
We define the Tate-Shafarevich group of $E$ over $K$ as
$$\sha(E/K)=\ker\left(H^1(K,E)\to\prod_v H^1(K_v,E)\right).$$
Here the cohomology groups can be taken to be Galois cohomology
groups:
$$H^1(K,E)=H^1(G_K,E(K^{sep}))$$
and similarly for $H^1(K_v,E)$; or they can be taken as \'etale or flat
cohomology groups of $\spec K$ with coefficients in the sheaf
associated to $E$.  The flat cohomology definition is essential for
proving finer results on $p$-torsion in $\sha(E/K)$.

\begin{exer}
Show that the group $H^1(K,E)$ (and therefore $\sha(E/K)$) is
torsion.  Hint:  Show that given a class $c\in H^1(K,E)$, there is a
finite Galois extension $L/K$ such that $c$ vanishes in $H^1(L,E)$. 
\end{exer}

The refined BSD conjecture relates the leading coefficient of $L(E,s)$
at $s=1$ to invariants of $E$ including heights, Tamagawa numbers, and
the order of $\sha(E/K)$.  In particular, the conjecture that
$\sha(E/K)$ is finite is included in the refined BSD conjecture.  We
will not discuss that conjecture in these lectures, so we refer to
\cite{GrossPCMI} and \cite{UlmerCRM} for more details.

\section{Statements of the main results}\label{s:results}
Much is known about the BSD conjecture over function fields.  We start
with general results.

\begin{thm}\label{thm:BSD1}
Let $E$ be an elliptic curve over a function field $K$.  Then we have:
\begin{enumerate}
\item\label{item:inequality}
$\rk E(K)\le\ord_{s=1}L(E,s)$.
\item\label{item:sha}
The following are equivalent:
\begin{itemize}
\item $\rk E(K)=\ord_{s=1}L(E,s)$
\item $\sha(E/K)$ is finite
\item for any one prime number $\ell$ \textup{(}$\ell=p$ is
  allowed\textup{)}, the $\ell$-primary part $\sha(E/K)_{\ell^\infty}$
  is finite.
\end{itemize}
\item\label{item:descent}
If $K'/K$ is a finite extension and if the BSD conjecture holds
  for $E$ over $K'$, then it holds for $E$ over $K$.
\end{enumerate}
\end{thm}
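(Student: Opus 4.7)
The plan is to pass to the associated elliptic surface. Let $\pi:\EE\to\CC$ be a minimal regular proper model of $E/K$ (a desingularized compactification of the N\'eron model); then $\EE$ is a smooth projective surface over $\Fq$, and (1)--(3) become statements about $\NS(\EE)$, $\Br(\EE)$, and the zeta function of $\EE$. The cohomological key is a factorization, obtained from the Leray spectral sequence for $\pi$ together with the Grothendieck--Lefschetz trace formula,
\[
\det\!\bigl(1 - q^{-s}\Fr_q \,\big|\, H^2(\EEbar,\Ql)\bigr) = (1-q^{-s})\,L(E,s)\,(1-q^{1-s})\cdot\prod_v F_v(q^{-s}),
\]
where the local factors $F_v$ record the components of the bad fibers; equivalently, $L(E,s)$ is the characteristic polynomial of $q^{-s}\Fr_q$ acting on $H^1(\CCbar, j_* R^1\pi_*\Ql)$, with $j$ the inclusion of the good-reduction locus. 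Consequently $\ord_{s=1}L(E,s)$ equals the multiplicity of $q$ as a Frobenius eigenvalue on $H^2(\EEbar,\Ql)$, minus $2+\sum_v(m_v-1)$, where $m_v$ is the number of geometric components of $\pi^{-1}(v)$.

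For part (1), I would combine this with the Shioda--Tate formula $\rk\NS(\EE) = 2 + \sum_v(m_v-1) + \rk E(K)$ and the injectivity of the $\ell$-adic cycle class map $\NS(\EE)\tensor\Ql \into H^2(\EEbar,\Ql(1))^{\Fr_q}$, whose target has $\Ql$-dimension equal to the multiplicity of $q$ as a Frobenius eigenvalue on $H^2$. Subtracting the common summand $2+\sum_v(m_v-1)$ yields $\rk E(K)\le\ord_{s=1}L(E,s)$, with equality iff the cycle class map is surjective---that is, iff the Tate conjecture for divisors on $\EE$ holds.

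For part (2), the equivalences come from Tate's theorem on divisors on surfaces over finite fields together with the Grothendieck--Artin comparison: $\Br(\EE)$ and $\sha(E/K)$ differ by a finite group, so are finite (respectively have finite $\ell$-primary part) simultaneously. Tate's theorem says that the Tate conjecture for $\EE$ is equivalent to finiteness of $\Br(\EE)_{\ell^\infty}$ for any one prime $\ell\ne p$, and hence (by the last sentence of the previous paragraph) to rank equality in (1). The extension to $\ell=p$ requires flat (or crystalline) cohomology and is due to Artin--Milne and Kato--Trihan; this is the main technical obstacle and the piece not available from Tate's original $\ell$-adic methods.

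For part (3), I would argue by descent through (2). For any finite extension $K'/K$ of degree $n$, the restriction map $\sha(E/K)\to\sha(E/K')$ has kernel killed by $n$ (inflation--restriction plus corestriction). Hence BSD over $K'$ implies $\sha(E/K')$ is finite by (2), so $\sha(E/K)_{\ell^\infty}$ is finite for any $\ell\nodiv n$, so $\sha(E/K)$ is itself finite by (2) applied over $K$, whence BSD holds over $K$. For the purely inseparable part of $K'/K$, Frobenius induces compatible identifications on all invariants involved and the conclusion is automatic.
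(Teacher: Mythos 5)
Your argument for parts (1) and (2) follows the same route as the paper: Shioda--Tate plus the comparison of $L(E,s)$ with $\zeta(\EE,s)$ reduces BSD to the Tate conjecture $T_2(\EE)$, the inequality falls out of injectivity of the cycle class map, and the finiteness equivalences for $\sha$ come from the identification with $\Br(\EE)$ and Tate's theorem (with Milne supplying the $p$-part). Two small points: since $\pi:\EE\to\CC$ has a section (the zero section), Grothendieck's result gives a genuine isomorphism $\Br(\EE)\cong\sha(E/K)$, not merely ``differ by a finite group''; and you should separate the inequality $\dim H^2(\EEbar,\Ql(1))^{\Fr_q=1}\le\ord_{T=q^{-1}}P_2(T)$ (eigenspace versus generalized eigenspace) from the injectivity of the cycle map---your phrasing conflates them, though the full chain $\rk\NS(\EE)\le\dim H^2(\EEbar,\Ql(1))^{\Fr_q=1}\le -\ord_{s=1}\zeta(\EE,s)$ still yields the conclusion.

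For part (3) your route genuinely differs from the paper's. The paper derives descent purely geometrically: the elliptic surface $\EE'$ over $K'$ maps by a dominant rational map to $\EE$, and $T_1$ passes down along dominant rational maps (via push-forward/pull-back on cohomology), so $T_1(\EE')\Rightarrow T_1(\EE)$. You instead argue on the arithmetic side: BSD over $K'$ forces $\sha(E/K')$ finite, restriction--corestriction kills the kernel of $\sha(E/K)\to\sha(E/K')$ by $n=[K':K]$, and choosing $\ell\nmid n$ gives $\sha(E/K)_{\ell^\infty}$ finite, whence BSD over $K$ by (2). Both are correct. The geometric argument in the paper is cleaner in that it never leaves the surface side and requires no discussion of separable versus purely inseparable; your argument requires splitting $K'/K$ into a separable part (handled by cor--res) and a purely inseparable part (handled by noting $G_{K'}\cong G_K$ and Frobenius descent), and one must check that the cor--res norm relation is compatible with the local conditions defining $\sha$. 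On the other hand, your argument is more portable: it does not require the reader to have digested the birational invariance of $T_1$, and it generalizes verbatim to settings where the surface-theoretic dictionary is less readily available.
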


The theorem was proven by Tate \cite{Tate66b} and Milne \cite{Milne75}
and we will sketch a proof in Lecture~3.  When the equivalent
conditions of Item~\ref{item:sha} hold, it turns out that the refined
BSD conjecture automatically follows.  (This is also due to Tate and
Milne and will be discussed in detail in \cite{UlmerCRM}.)

We now state several special cases where the conjecture is known to be
true.  As will be seen in the sequel, they all ultimately reduce
either to Tate's theorem on isogenies of abelian varieties over finite
fields (Theorem~\ref{thm:TateIsogThm} of Lecture~0) or to a theorem of
Artin and Swinnerton-Dyer on $K3$ surfaces
\cite{ArtinSwinnertonDyer73}.

\begin{thm}\label{thm:BSD2}
If $E$ is an isotrivial elliptic curve over
a function field $K$, then the BSD conjecture holds for $E$.  
\end{thm}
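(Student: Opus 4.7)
The plan is to use part~(\ref{item:descent}) of Theorem~\ref{thm:BSD1} to reduce to the case of a constant elliptic curve, and then to verify BSD for constant curves by comparing two explicit eigenvalue counts: one for the rank (via Tate's isogeny theorem) and one for the order of vanishing of $L(E,s)$ at $s=1$ (via the closed-form expression of Exercise~\ref{exer:L-const}). First, by definition of isotriviality there is a finite extension $K'/K$ over which $E$ becomes constant, so part~(\ref{item:descent}) of Theorem~\ref{thm:BSD1} reduces the problem to BSD for a constant elliptic curve $E=E_0\times_k K$ (after replacing $K$ by $K'$ and $k$ by the field of constants of $K'$).

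Next I would compute both sides of BSD. On the rank side, the corollary in Section~\ref{s:constant} gives $\rk E(K)=\rk_{\Z}\Hom_{k\text{-av}}(E_0,J_\CC)$, and Tate's isogeny theorem (Theorem~\ref{thm:TateIsogThm}) identifies this, after tensoring with $\Ql$, with $\dim_{\Ql}\Hom_{G_k}(V_\ell E_0,V_\ell J_\CC)$. Writing $\alpha_1,\alpha_2$ for the eigenvalues of $\Fr_q$ on $H^1(E_0\times\kbar,\Ql)$ and $\beta_1,\dots,\beta_{2g_\CC}$ for those on $H^1(\CC\times\kbar,\Ql)$, this dimension counts pairs $(i,j)$ with $\alpha_i=\beta_j$. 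On the analytic side, Exercise~\ref{exer:L-const} gives
\[
L(E,s)=\frac{\prod_{i,j}(1-\alpha_i\beta_j q^{-s})}
{\prod_{i=1}^{2}(1-\alpha_i q^{-s})\prod_{i=1}^{2}(1-\alpha_i q^{1-s})},
\]
and since $|\alpha_i|=|\beta_j|=q^{1/2}$ the denominator is nonzero and finite at $s=1$, so $\ord_{s=1}L(E,s)$ equals the number of pairs $(i,j)$ with $\alpha_i\beta_j=q$. The two counts agree because the set $\{\beta_j\}$ is stable under $\beta\mapsto q/\beta$ (Poincaré duality on $H^1(\CC)$), producing a bijection between $\{(i,j):\alpha_i=\beta_j\}$ and $\{(i,j):\alpha_i\beta_j=q\}$.

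The essential content is Tate's theorem on homomorphisms of abelian varieties over finite fields; the rest is bookkeeping with the eigenvalues of Frobenius. The main obstacle in writing this carefully is keeping duals and Tate twists straight so that the Galois invariants of $V_\ell E_0^{\vee}\tensor V_\ell J_\CC$ really do correspond to the condition $\alpha_i=\beta_j$ (or, equivalently after the Weil involution, $\alpha_i\beta_j=q$) that appears in the $L$-factor. Once these identifications are aligned, BSD holds for the constant reduction, and hence, by descent, for the original isotrivial $E$.
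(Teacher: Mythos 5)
Your proof is correct, and it takes a genuinely different route from the paper's in the constant case. The paper's proof reduces to the constant case exactly as you do (via part~(\ref{item:descent}) of Theorem~\ref{thm:BSD1}), but then observes that for a constant curve the associated elliptic surface is literally $\EE=E_0\times_k\CC$, applies Tate's theorem on products of curves (Theorem~\ref{thm:products}) to get $T_1(\EE)$, deduces $T_2(\EE)$ via Proposition~\ref{prop:T1->T2}, and finally invokes the equivalence of $T_2(\EE)$ with BSD for $E$ (part~1 of Theorem~\ref{thm:BSD-Tate}). Your argument bypasses the surface, the Shioda--Tate formula, and the zeta/$L$-function comparison entirely: you instead compare the Mordell--Weil rank, computed via the Corollary in Section~\ref{s:constant} plus Tate's isogeny theorem, with the analytic rank read off from the explicit formula in Exercise~\ref{exer:L-const}. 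Both approaches ultimately rest on Theorem~\ref{thm:TateIsogThm}, so the ``essential content'' you identify is the same; what the paper's route buys is uniformity (the same surface machinery proves Theorems~\ref{thm:BSD-low-height}, \ref{thm:4-monos}, \ref{thm:berger}), while yours buys a shorter, self-contained argument in the constant case. Note, though, that you still lean on the surface machinery implicitly through the descent step, since the proof of Theorem~\ref{thm:BSD1}(\ref{item:descent}) itself goes through $T_2(\EE)$ and Proposition~\ref{prop:T-descent}.

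One point you should flag explicitly: identifying $\dim_{\Ql}\Hom_{G_k}(V_\ell E_0,V_\ell J_\CC)$ with the cardinality of $\{(i,j):\alpha_i=\beta_j\}$ requires that $\Fr_q$ act semisimply on $V_\ell E_0\tensor (V_\ell J_\CC)^*$; otherwise the space of invariants could be strictly smaller than the multiplicity of the eigenvalue $1$. This semisimplicity is a theorem of Tate from the same paper \cite{Tate66a}, but it is not part of the statement of Theorem~\ref{thm:TateIsogThm} as given in these notes, so you need to cite it separately. The paper's route avoids invoking semisimplicity as a standalone input: the proof of Proposition~\ref{prop:T1->T2} deduces the needed partial semisimplicity (on the $\Fr_q=q$ generalized eigenspace of $H^2$) as a consequence of $T_1$ together with the non-degeneracy of the intersection pairing. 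Once you add the semisimplicity citation, your argument closes.
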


Recall that a constant curve is also isotrivial.

To state the next result, we make an {\it ad hoc\/} definition.  If
$E$ is an elliptic curve over $K=\Fq(t)$ we define the {\it height\/}
$h$ of $E$ to be the smallest non-negative integer such that $E$ can
be defined by a Weierstrass equation~(\ref{eq:cubic}) where the $a_i$
are all polynomials and $\deg(a_i)\le hi$.  For example, the curves
$E_1$ and $E_2$ in Subsection~\ref{ss:examples} have height
$h=0$ and the other curves $E_3,\dots E_9$ there all have height
$h=1$.  See Section~\ref{s:height} of Lecture~3 below for a more
general definition.

\begin{thm}\label{thm:BSD-low-height}
Suppose that $K=k(t)$ and that $E$ is an elliptic curve over $K$ of
height $h\le2$.  Then the BSD conjecture holds for $E$.
\end{thm}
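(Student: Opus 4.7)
The plan is to translate BSD for $E$ into the Tate conjecture for the associated elliptic surface $\pi: \EE \to \P^1$ (the minimal proper regular model of $E$) and then invoke the fact that, under the height hypothesis, $\EE$ is a surface for which the Tate conjecture is already known. We may immediately restrict to non-isotrivial $E$, since the isotrivial case is covered by Theorem~\ref{thm:BSD2}. The required dictionary, to be set up systematically in Lecture~3, identifies $\sha(E/K)$ with (the appropriate part of) the Brauer group $\Br(\EE)$, identifies $E(K)$ modulo torsion with the quotient of $\NS(\EE)$ by the span of the zero section, a general fiber, and components of bad fibers, and realizes $L(E,s)$ as the characteristic polynomial of $\Fr_q$ on the transcendental part of $H^2(\EEbar,\Ql)$. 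Through Theorem~\ref{thm:BSD1}, BSD for $E$ becomes equivalent to the Tate conjecture for divisors on $\EE$ (equivalently, finiteness of $\Br(\EE)$).

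The key numerical link between the height and the geometry of $\EE$ is that, for non-isotrivial $E$ in minimal Weierstrass form, $\chi(\O_\EE) = h$. Concretely, if the $a_i$ are polynomials with $\deg(a_i)\le hi$, the associated fundamental line bundle on $\P^1$ is $\O(h)$, so the canonical bundle formula for elliptic surfaces gives $K_\EE = \pi^*\O_{\P^1}(h-2)$. Thus $h=1$ forces $\EE$ to be a rational elliptic surface (blow up of $\P^2$ at nine, possibly infinitely near, points) and $h=2$ forces $\EE$ to be an elliptic K3 surface ($K_\EE = \O_\EE$, $\chi = 2$). For $h=1$, the Tate conjecture is essentially automatic: every class in $H^2(\EEbar,\Ql)$ is algebraic because $\EEbar$ is rational, every eigenvalue of $\Fr_q$ on $H^2(\EEbar,\Ql)$ equals $q$, and the Picard rank equals $b_2$. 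BSD then follows from the dictionary.

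The remaining and decisive case is $h=2$, where $\EE$ is an elliptic K3 surface. Here one invokes the theorem of Artin and Swinnerton-Dyer \cite{ArtinSwinnertonDyer73}, which establishes the Tate conjecture (equivalently, the finiteness of the Brauer group) for elliptic K3 surfaces over a finite field. Their argument proceeds by exhibiting an auxiliary elliptic fibration and controlling the $\ell$-primary and $p$-primary parts of the Brauer group via a specialization/finiteness argument across the fibers. This is the hard step: the rational and isotrivial cases are comparatively elementary, but the K3 case genuinely rests on Artin–Swinnerton-Dyer. Combining the three cases $h\in\{0,1,2\}$ completes the proof once the surface-theoretic reformulation of BSD is in hand.
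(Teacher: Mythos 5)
Your proof is correct and follows essentially the same route as the paper: pass to the elliptic surface $\EE\to\P^1$, use the height to place $\EE$ in the Enriques–Kodaira classification via $\chi(\O_\EE)=h$ and $K_\EE=\pi^*\O(h-2)$, and then invoke the Tate conjecture for rational surfaces ($h=1$) and the theorem of Artin–Swinnerton-Dyer for K3 surfaces ($h=2$), with $h=0$ disposed of by the isotrivial case. The one genuine difference is in how you handle $h=1$: the paper deduces $T_2(\EE)$ for a rational surface from Proposition~\ref{prop:T-DPC} (domination by a product of curves, which unifies this case with the four-monomial and Berger theorems), whereas you argue directly that the entire $H^2(\EEbar,\Ql(1))$ of a rational surface is spanned by algebraic cycle classes, so $\NS(\XX)\otimes\Ql\to H^2(\XXbar,\Ql(1))^{G_k}$ is automatically onto. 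Both are valid; your version is slightly more elementary, while the paper's version reuses machinery built for later applications. One small imprecision worth flagging: you write that every eigenvalue of $\Fr_q$ on $H^2(\EEbar,\Ql)$ equals $q$; in fact the eigenvalues are $q$ times roots of unity (the blown-up points and exceptional curves need not be rational over $k$), and what you actually need — and have — is that the $G_k$-invariants of $H^2(\EEbar,\Ql(1))$ coincide with $\NS(\XX)\otimes\Ql$ because all of $H^2(\EEbar,\Ql(1))$ is already spanned by classes of divisors over $\kbar$.
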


Note that this case overlaps the preceding one since an elliptic curve
over $k(t)$ is constant if and only if its height is zero
(cf.~Proposition~\ref{prop:h=0} in Lecture~3).

The following case is essentially due to Shioda \cite{Shioda86}.  To
state it, consider a polynomial $f$ in three variables with coefficients
in $k$ which is the sum of exactly 4 non-zero monomials, say
$$f=\sum_{i=1}^4c_i\prod_{j=1}^3x_j^{e_{ij}}$$
where the $c_i\in k$ are non-zero.  Set $e_{i4}=1-\sum_{j=1}^3e_{ij}$
and let $A$ be the $4\times4$ integer matrix $A=(e_{ij})$.  If $\det
A\neq0\pmod p$, we say that $f$ {\it satisfies Shioda's condition\/}.
Note that the condition is independent of the order of the variables
$x_j$.

\begin{thm}\label{thm:4-monos}
  Suppose that $K=k(t)$ and that $E$ is an elliptic curve over $K$.
  Suppose that $E$ is birational to a plane curve
  $V(f)\subset\A^2_K$ where $f$ is a polynomial in $k[t,x,y]\subset
  K[x,y]$ which is the sum of exactly 4 non-zero monomials and which
  satisfies Shioda's condition.  Then the BSD conjecture holds for
  $E$.
\end{thm}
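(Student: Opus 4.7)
The plan is to reduce BSD for $E$ to the Tate conjecture for the associated elliptic surface $\EE \to \P^1$, and then prove the Tate conjecture for $\EE$ by realizing $\EE$ as a quotient of (a variety dominated by) a Fermat surface, for which the Tate conjecture is already known.

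First, I would invoke the equivalence promised in Theorem~\ref{thm:BSD1} and to be established in Lecture~3: BSD for $E/K$ is equivalent to finiteness of $\sha(E/K)$, which in turn is equivalent (via the Shioda--Tate / Artin--Tate formalism) to the Tate conjecture holding for the smooth projective minimal model $\EE$ of the elliptic surface associated to $E/K$. So it suffices to prove the Tate conjecture for the surface $\EE$. Since $E$ is birational to the affine hypersurface $V(f) \subset \A^3_k$ cut out by the 4-monomial equation $f(t,x,y)=0$, the surface $\EE$ is birational to a projective completion of $V(f)$, hence is a \emph{Delsarte surface} in the sense of Shioda.

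Second, I would produce a dominant rational map from a Fermat surface to $\EE$. Writing $f=\sum_{i=1}^4 c_i \prod_{j=1}^4 x_j^{e_{ij}}$ after homogenizing (with $x_4$ the homogenizing variable so that the rows of $A=(e_{ij})$ sum to $1$), Shioda's monomial substitution $x_j \mapsto \prod_k y_k^{b_{jk}}$ for a suitable integer matrix $B$ related to $m A^{-1}$ transforms $f$ into a scalar multiple of the Fermat polynomial $\sum_i c_i' y_i^m$ on some weighted projective space, for any common multiple $m$ of the denominators appearing in $A^{-1}$. Shioda's condition $\det A \not\equiv 0 \pmod p$ guarantees that we may choose $m$ prime to $p$, so that $A$ is invertible over $\Z_{(p)}$; this ensures the monomial map is a well-defined, dominant, generically finite morphism from the Fermat surface $F_m : \sum c_i' y_i^m = 0$ to $\EE$, after passing to a suitable finite extension of $k$ containing the necessary roots of unity and scalars.

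Third, I would appeal to the known Tate conjecture for Fermat surfaces over finite fields, which is a theorem of Tate (and worked out concretely by Shioda--Katsura, Tate, and others): for $F_m$ over $\Fq$ with $(m,p)=1$, the cycle classes coming from the $m^2$ explicit lines and from quotients by the diagonal $\mu_m$-action already span the $\Ql$-subspace of $H^2(F_m\times\Fbar,\Ql)(1)$ fixed by Frobenius. This is because the eigenvalues of Frobenius on $H^2$ of a Fermat hypersurface are expressible as Jacobi sums, and one can check explicitly which are of the form $q\cdot(\text{root of unity})$; all such classes are accounted for by known algebraic cycles.

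Finally, I would transfer the Tate conjecture from $F_m$ to $\EE$. Since $F_m \dashrightarrow \EE$ is dominant and generically finite, the induced map on the Frobenius-invariant part of $H^2(\cdot,\Ql)(1)$ modulo the subspace spanned by algebraic cycles is surjective from $F_m$ to $\EE$; hence validity of Tate for $F_m$ implies validity for $\EE$. Combined with part~\ref{item:descent} of Theorem~\ref{thm:BSD1}, which lets us descend from the finite extension of $k$ used to realize the Fermat cover back down to $k$, this yields BSD for $E/K$. The main obstacle is the second step: setting up Shioda's monomial substitution carefully enough to produce an honest dominant map of smooth projective surfaces (rather than just a birational identification of function fields of open subsets), and controlling the field of definition so that the descent step applies. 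The Shioda condition $\det A \not\equiv 0 \pmod p$ is precisely what makes this work, because it ensures $A^{-1}$ exists $p$-integrally and hence the cover is separable.
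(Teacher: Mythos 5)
Your proposal is structurally the same as the paper's proof: reduce BSD for $E$ to the Tate conjecture for the associated elliptic surface $\EE$, use Shioda's monomial substitution (with the matrix $B$ satisfying $AB=\delta I$) to produce a dominant rational map from a Fermat surface of degree $\delta$ to $\EE$, observe that Shioda's condition $p\nodiv\delta$ makes the covering separable, and then invoke the known Tate conjecture in the Fermat case. The reduction steps and the role of the determinant condition are exactly right.

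The one place you diverge is the final step, and it is worth flagging. You treat ``the Tate conjecture for Fermat surfaces over $\Fq$'' as a black box provable by comparing Jacobi-sum eigenvalues of Frobenius on $H^2$ with explicit lines and $\mu_m$-quotient cycles. That last claim is not quite how the proof actually goes: a direct computation of Jacobi sums identifies the dimension of the $\Fr_q=q$ eigenspace but does not by itself produce algebraic cycles spanning it, and the lines on $F_\delta^2$ are in general far from enough. The standard (and, in this paper's framework, the only needed) proof of Tate for Fermat surfaces is Shioda--Katsura's inductive structure: $F^2_\delta$ is dominated by the product of Fermat curves $F^1_\delta\times F^1_\delta$, and then the Tate conjecture for a product of curves follows from Tate's isogeny theorem for abelian varieties over finite fields (Theorem~\ref{thm:products}, via the identification of $\NS(\CC\times\DD)$ with $\Z^2\oplus\Hom(J_\CC,J_\DD)$). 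The paper takes exactly this extra step, which lets it close the argument with the already-established Proposition~\ref{prop:T-DPC} / Theorem~\ref{thm:DPC-BSD} (``dominated by a product of curves $\Rightarrow$ BSD''), rather than invoking an external result whose own proof would have to be unwound. So your proposal is correct, but the cleanest and most self-contained finish is to dominate $F^2_\delta$ by $F^1_\delta\times F^1_\delta$ and cite the DPC theorem, rather than asserting the Fermat surface case outright.
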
 

For example, the theorem applies to the curves $E_4$, $E_7$, $E_8$,
and $E_9$ of Subsection~\ref{ss:examples} over $K=\Fq(t)$ for any
prime power $q$.  It applies more generally to these curves when $t$
is replaced by $t^d$ for any $d$ prime to $p$.  Note that when $d$ is
large, the height of the curve is also large, and so we get cases of
BSD not covered by Theorem~\ref{thm:BSD-low-height}.

Finally we state another more recent and ultimately much more flexible
special case due to Lisa Berger \cite{Berger08}.

\begin{thm}\label{thm:berger}
Suppose that $K=k(t)$ and that $E$ is an elliptic curve over $K$.
Suppose that $E$ is birational to a plane curve of the form
$$f(x)=t^dg(y)$$
where $f$ and $g$ are rational functions of one variable and $d$ is
prime to $p$.  Then the BSD conjecture holds for $E$.
\end{thm}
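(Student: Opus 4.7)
The plan is to realize the elliptic surface $\EE$ attached to $E/K$ as a quotient of a product of two curves over $k$, and then to deduce BSD for $E$ from the Tate conjecture for that product via Theorem~\ref{thm:BSD1}. Let $C_f$ (resp.\ $C_g$) be the smooth projective model over $k$ of the affine superelliptic curve $u^d = f(x)$ (resp.\ $v^d = g(y)$); the hypothesis $\gcd(d,p)=1$ guarantees that these are separable covers of $\P^1$ and hence smooth. The substitution $t = u/v$ defines a rational map $\pi : C_f \times_k C_g \dashrightarrow \EE$, since $t^d = u^d/v^d = f(x)/g(y)$ recovers the defining equation. It is dominant and generically finite of degree $d$: its fibers are the orbits of the diagonal $\mu_d$-action $(u,v)\mapsto(\zeta u,\zeta v)$ on $C_f\times C_g$. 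Blowing up the indeterminacy locus and desingularizing produces a proper surjective morphism $\tilde\pi : \tilde S \to \tilde\EE$ of smooth projective surfaces over $k$ of generic degree $d$.

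Next I would establish the Tate conjecture for $C_f \times_k C_g$ via the forthcoming Theorem~\ref{thm:products} of Lecture~2. Under the K\"unneth decomposition the outer summands $H^0\tensor H^2$ and $H^2\tensor H^0$ of $H^2(C_f \times C_g, \Ql(1))$ are spanned by fiber classes of the two projections and are trivially algebraic; the middle summand $H^1(C_{f,\kbar},\Ql)\tensor H^1(C_{g,\kbar},\Ql(1))$ has Galois-fixed part isomorphic to $\Hom_{G_k}(V_\ell J_{C_f}, V_\ell J_{C_g})$, which by Tate's isogeny theorem~\ref{thm:TateIsogThm} coincides with $\Hom_k(J_{C_f},J_{C_g})\tensor\Ql$ and is realized by algebraic correspondences via the dictionary (\ref{eq:divcor-hom}). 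A sequence of blow-ups adds only rank-one algebraic summands to $H^2$, so the Tate conjecture is inherited by $\tilde S$.

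The final step is to descend the Tate conjecture from $\tilde S$ to $\tilde\EE$ along $\tilde\pi$. Because $\tilde\pi$ has generic degree $d$ with $d\in\Ql^\times$, the projection formula gives $\tilde\pi_*\compose\tilde\pi^* = d\cdot\id$ on $H^2(\tilde\EE_{\kbar},\Ql(1))$; both $\tilde\pi^*$ and $\tilde\pi_*$ preserve Galois invariance and send algebraic classes to algebraic classes, so every Tate class on $\tilde\EE$ is algebraic. By Theorem~\ref{thm:BSD1}(\ref{item:sha}) the Tate conjecture for $\tilde\EE$ is equivalent to BSD for $E$, and this completes the proof.

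The main obstacle is the geometric bookkeeping in the first step: one must keep careful track of the zeros and poles of $f$ and $g$, the branch loci of the two superelliptic covers, and the exceptional divisors arising from the blow-ups that relate the affine surface $\{f(x) = t^d g(y)\}$ to the minimal smooth proper model $\tilde\EE$, in order to confirm that $\tilde\pi$ is a well-defined morphism of smooth projective surfaces of the claimed generic degree. Once this geometric setup is in place, the cohomological descent and the reduction to Tate's isogeny theorem are essentially formal.
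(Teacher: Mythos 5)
Your proposal follows essentially the same route as the paper: dominate the elliptic surface by the product of the superelliptic covers $z^d=f(x)$ and $w^d=g(y)$ via the rational map $(x,z,y,w)\mapsto(x,y,z/w)$, establish the Tate conjecture $T_1$ for the product from the isogeny theorem, descend along the generically finite degree-$d$ map, and conclude BSD; the paper packages this exact chain as Theorem~\ref{thm:DPC-BSD}, which combines Theorem~\ref{thm:products}, Proposition~\ref{prop:T-descent}, and part 1 of Theorem~\ref{thm:BSD-Tate}. The one inaccuracy is your closing citation: the passage from Tate's conjecture for $\EE$ to BSD for $E$ should invoke Theorem~\ref{thm:BSD-Tate} (the equivalence of $T_2(\EE)$ with BSD, derived from the Shioda--Tate formula and the $\zeta$--$L$ comparison), not Theorem~\ref{thm:BSD1}(\ref{item:sha}) alone, which only records the equivalence of rank equality with finiteness of $\sha$.
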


Here one should clear denominators to interpret the equation $f=t^dg$
(or work in a Zariski open subset of the plane).  For example, if
$f(x)=x(x-1)$ and $g(y)=y^2/(1-y)$ then we have the plane curve over
$K=k(t)$ defined by 
$$x(x-1)(1-y)=t^dy^2$$
which turns out to be birational to 
$$y^2+xy+t^dy=x^3+t^dx^2.$$

\section{The rest of the course}
The remainder of these lectures will be devoted to sketching the
proofs of most of the main results and applying them to construct
elliptic curves of large rank over function fields.

More precisely, in Lecture~2 we will review facts about surfaces and
the Tate conjecture on divisors.  This is a close relative of the BSD
conjecture.

In Lecture~3 we will explain the relationship between the BSD and Tate
conjectures and use it to prove the part of Theorem~\ref{thm:BSD1}
related to $\ell\neq p$ as well as most of the other theorems stated
in the previous section.

In Lecture~4 we will recall a general result on vanishing of
$L$-functions in towers and combine it with the results above to
obtain many elliptic curves of arbitrarily large rank.

In Lecture~5, we will give other applications of these ideas to ranks
of elliptic curves and explicit points.

%%%%%%%%%%%%%%%%%%%%%%%%%%%%%%%%%%%%%%%%%%%%%%%%%%%%%%

\lecture{Surfaces and the Tate conjecture}
\section{Motivation}
Consider an elliptic curve $E/K$ and suppose that
$K=k(t)$ and that we choose an equation for
$E$ as in Lecture~1, equation~(\ref{eq:cubica}) where the $a_i$ are in
$k[t]$.  Then (\ref{eq:cubica}), viewed in $K[x,y]$, defines an affine
open subset of an elliptic curve $E$.  But if we view it as an
equation in $k[t,x,y]$, it defines an affine surface with a projection
to the affine $t$ line.  The generic fiber of this projection is the
affine curve just mentioned.

With a little more work (discussed in the next lecture), for any $E$
over $K=k(\CC)$ we can define a smooth projective surface $\EE$ over
$k$ with a morphism $\pi:\EE\to\CC$ whose generic fiber is $E$.
Obviously there will be close connection between the arithmetic of
$\EE$ and that of $E$.  Although $\EE$ has higher dimension than $E$,
it is defined over the finite field $k$ and as a result we have better
control over its arithmetic.  Pursuing this line of inquiry leads to
the main theorems stated at the end of the previous section.

In this lecture, we discuss the relevant facts and conjectures about
surfaces over finite fields.  In the next lecture we will look
carefully at the connections between $\EE$ and $E$ and deduce the main
classical theorems.  

There are many excellent references for the general theory of
surfaces, including \cite{BeauvilleCAS},
\cite{BarthHulekPetersVandeVenCCS}, and \cite{BadescuAS}.  We
generally refer to \cite{BadescuAS} below since it works throughout
over a field of arbitrary characteristic.

\section{Surfaces}
Let $k=\Fq$ be a finite field of characteristic $p$.  As always, by a
surface over $k$ we mean a purely 2-dimensional, separated, reduced
scheme of finite type over $k$.  Such a scheme is automatically
quasi-projective and is projective if and only if it is complete
\cite{BadescuAS}*{1.28}.  Since $k$ is perfect, a surface $\XX$ is a
regular scheme if and only if $\XX\to\spec k$ is a smooth morphism
(e.g., \cite{LiuAGAC}*{4.3.3, Exer. 3.24}).  We sloppily say that
``$\XX$ is smooth'' if these conditions hold.  Resolution of
singularities is known for surfaces: For any surface $\XX$, there is a
proper birational morphism $\tilde\XX\to\XX$ with $\tilde\XX$ smooth.
(We may even take this morphism to be a composition of normalizations
and blow ups at closed points \cite{Lipman78}.  See also
\cite{Artin86} for a nice exposition.)  Therefore, every surface is
birational to a smooth projective surface.  In the cases of interest
to us, this can be made very explicit in an elementary manner.

Throughout we assume that $\XX$ is a smooth, projective, absolutely
irreducible surface over $k$ and we assume that $\XX(k)$ is non-empty,
i.e., $\XX$ has a $k$-rational point.

\section{Divisors and the N\'eron-Severi group}
We give a lightning review of divisors and equivalence relations on
divisors.  See, for example, \cite{HartshorneAG}*{V.1} for more
details.

\numberwithin{equation}{subsection}
\subsection{Divisor classes}
A (Weil)  {\it divisor\/} is a finite formal $\Z$-linear combination of
reduced, closed, codimension 1 subvarieties of $\XX$:
$$D=\sum a_ZZ.$$
In other words, the set of divisors is the free abelian group on the 
reduced, closed, codimension 1 subvarieties on $\XX$.

If $Z$ is a reduced, closed subvariety of $\XX$ of codimension 1,
there is an associated valuation
$$\ord_Z:k(\XX)^\times\to\ZZ$$
that sends a rational function to its order of zero or pole along
$Z$.

A rational function $f$ on $\XX$ has a divisor:
$$\divr(f)=\sum_Z\ord_Z(f)Z.$$

A divisor $D$ is said to be {\it linearly equivalent to zero\/} if
there is a rational function $f$ such that $\divr(f)=D$.  Two divisors
$D$ and $D'$ are linearly equivalent if their difference $D-D'$ is
linearly equivalent to zero.

The group of divisors modulo those linear equivalent to zero is the
{\it divisor class group\/} $DivCl(\XX)$.  It is a fundamental
invariant of $\XX$.

\subsection{The Picard group}
Let $\Pic(\XX)$ be the Picard group of $\XX$, i.e., the group of
isomorphism classes of invertible sheaves on $\XX$ with group law
given by the tensor product.  There is a cohomological calculation of
$\Pic(\XX)$:
$$\Pic(\XX)\cong H^1(\XX,\O_\XX^\times).$$

The map sending a divisor $D$ to the invertible sheaf $\O_\XX(D)$
induces an isomorphism $DivCl(\XX)\isoto\Pic(\XX)$.

\subsection{The N\'eron-Severi group}
As usual, we write $\XXbar$ for $\XX\times_k\kbar$.  We first
introduce the notion of algebraic equivalence for divisors on
$\XXbar$.  Intuitively, two divisors $D$ and $D'$ are algebraically
equivalent if they lie in a family parameterized by a connected
variety (which we may take to be a smooth curve).  More precisely, if
$T$ is a smooth curve over $\kbar$ and $\DD\subset \XX\times_\kbar T$
is a divisor that is flat over $T$, then we get a family of divisors
on $\XX$ parameterized by $T$: $t\in T$ corresponds to
$\XX\times\{t\}\cap\DD$.  Two divisors $D_1$ and $D_2$ on $\XX$ are
algebraically equivalent if they lie in such a family, i.e., if there
is a curve $T$ and a divisor $\DD$ as above and two points $t_1$ and
$t_2\in T(\kbar)$ such that $D_i=\XX\times_\kbar\{t_i\}\cap\DD$.
({\it A priori\/}, to ensure transitivity of this relation we should
use chains of equivalences (see \cite{HartshorneAG}*{Exer.~V.1.7}) but
see \cite{FultonIT}*{10.3.2} for an argument that shows the definition
works as is.)  Note that linear equivalence is algebraic equivalence
where $T$ is restricted to be $\P^1$
(\cite{HartshorneAG}*{Exer.~V.1.7}) and so algebraic equivalence is
weaker than linear equivalence.

The group of divisors on $\XXbar$ modulo those algebraically
equivalent to zero is the {\it N\'eron-Severi\/} group $\NS(\XXbar)$.
A classical (and difficult) theorem, the ``theorem of the base,'' says
that $\NS(\XXbar)$ is finitely generated.  See \cite{LangNeron59} and
\cite{SGA6}*{XIII.5.1} for proofs and Lecture~3 below for more
discussion. See also \cite{Conrad06} for a modern discussion of the
results in \cite{LangNeron59}.

Since linear equivalence is weaker than algebraic equivalence,
$\NS(\XXbar)$ is a quotient of $\Pic(\XXbar)$.

We define $\NS(\XX)$ to be the image of $\divr(\XX)$ in $\NS(\XXbar)$
or equivalently 
% add explanation in comment
the image of $\Pic(\XX)$ in $\NS(\XXbar)$.  Thus
$\NS(\XX)$ is again a finitely generated abelian group.  As we will
see, it is of arithmetical nature.

\begin{exer} Let $G_k=\gal(\kbar/k)$.  Show that $\NS(\XX)$ is the group
  of invariants $\NS(\XXbar)^{G_k}$.  You will need to use that $k$ is a
  finite field.
\end{exer}

\numberwithin{equation}{section}

\section{The Picard scheme}
We define $\Pic^0(\XX)$ as the kernel of the surjection
$\Pic(\XX)\to\NS(\XX)$.  In order to understand this group better, we
will introduce more structure on the Picard group.  The main fact we
need to know is that the group $\Pic^0(\XX\times\kbar)$ is the set of
points on an abelian variety and is therefore a divisible group.
(I.e., for every class $c\in\Pic^0(\XX\times\kbar)$ and every positive
integer $n$, there is a class $c'$ such that $n\,c'=c$.)  Readers
willing to accept this assertion can skip the rest of this section.

The Picard group $\Pic(\XX)$ is the set of $k$-points of a group
scheme.  More precisely, under our hypotheses on $\XX$ there is a
group scheme called the {\it Picard scheme\/} and denoted
$\ps_{\XX/k}$ which is locally of finite type over $k$ and represents
the relative Picard functor.  This means that if $T\to S=\spec k$ is a
morphism of schemes and $\pi_T:\XX_T:=\XX\times_{\spec k}T\to T$ is
the base change then
$$\ps_{\XX/k}(T)=\frac{\Pic(\XX_T)}{\pi_T^*\Pic(T)}.$$
Here the left hand side is the group of $T$-valued points of
$\ps_{\XX/k}$.  See \cite{Kleiman05} for a thorough and detailed
overview of the Picard scheme, and in particular
\cite{Kleiman05}*{9.4.8} for the proof that there is a scheme
representing the relative Picard functor as above.

We write $\ps^0_{\XX/k}$ for the connected component of $\ps_{\XX/k}$
containing the identity.  Under our hypotheses, $\ps^0_{\XX/k}$ is a
geometrically irreducible projective group scheme over $k$
\cite{Kleiman05}*{9.5.3, 9.5.4}.  It may be non-reduced.  (See
examples in \cite{Igusa55} and \cite{Serre58} and a full analysis of
this phenomenon in \cite{MumfordLCAS}.)  We let
$\Picvar_{\XX/k}=\left(\ps^0_{\XX/k}\right)_{red}$, the {\it Picard
  variety\/} of $\XX$ over $k$, which is an abelian variety over $k$.

If $k'$ is a field extension of $k$, we have
$$\Pic^0(\XX_{k'})=\ps^0_{\XX/k}(k')=\Picvar_{\XX/k}(k')$$
so that $\Pic^0(\XX_{k'})$ is the set of points of an abelian
variety.  

By \cite{Kleiman05}*{9.5.10}, $\ps^0_{\XX/k}(k)=\Pic^0(\XX)$, in other
words, the class of a divisor in $\ps(\XX)$ lies in $\ps^0(\XX)$ if
and only if the divisor is algebraically equivalent to 0.

\section{Intersection numbers and numerical equivalence}

There is an intersection pairing on the N\'eron-Severi group:
$$\NS(\XX)\times\NS(\XX)\to\Z$$
which is bilinear and symmetric.  If $D$ and $D'$ are divisors, we
write $D.D'$ for their intersection pairing.

There are two approaches to defining the pairing.  In the first
approach, one shows that given two divisors, there are divisors in the
same classes in $\NS(\XX)$ (or even the same classes in $\Pic(\XX)$)
that meet transversally.  Then the intersection number is literally
the number of points of intersection.  The work in this approach is to
prove a moving lemma and then show that the resulting pairing is well
defined.  See \cite{HartshorneAG}*{V.1} for the details.  

In the second approach, one uses coherent cohomology.  If $\LL$ is an
invertible sheaf on $\XX$, let
$$\chi(\LL)=\sum_{i=0}^2(-1)^i \dim_k H^i(\XX,\LL)$$
be the coherent Euler characteristic of $\LL$.  Then define
$$D.D'=\chi(\OO_\XX)-\chi(\OO_\XX(-D))-\chi(\OO_\XX(-D'))
+\chi(\OO_\XX(-D-D')).$$ One checks that if $C$ is a smooth
irreducible curve on $\XX$, then $C.D=\deg\OO_\XX(D)|_{C}$ and that if
$C$ and $C'$ are two distinct irreducible curves on $\XX$ meeting
transversally, then $C.C'$ is the sum of local intersection
multiplicities.  See \cite{BeauvilleCAS}*{I.1-7} for details.
(Nowhere is it used in this part of \cite{BeauvilleCAS} that the
ground field is $\C$.)

Two divisors $D$ and $D'$ are said to be {\it numerically
  equivalent\/} if $D.D''=D'.D''$ for all divisors $D''$.  If
$\Num(\XX)$ denotes the group of divisors in $\XX$ up to numerical
equivalence, then we have surjections
$$\Pic(\XX)\onto\NS(\XX)\onto\Num(\XX)$$
and so $\Num(\XX)$ is a finitely generated group.  It is clear from
the definition that $\Num(\XX)$ is torsion-free and so we can insert
$\NS(\XX)/{tor}$ (N\'eron-Severi modulo torsion) into this chain:
$$\Pic(\XX)\onto\NS(\XX)\onto\NS(\XX)/{tor}\onto\Num(\XX).$$

\section{Cycle classes and homological equivalence}\label{s:cycle}
There is a general theory of cycle classes in $\ell$-adic cohomology,
see for example \cite{SGA4.5}*{[Cycle]}.  In the case of divisors,
things are much simpler and we can construct a cycle class map from
the Kummer sequence.

Indeed, consider the short exact sequence of sheaves on $\XXbar$ for
the \'etale topology:
$$0\to\mu_{\ell^n}\to\G_m\stackrel{\ell^n}{\longrightarrow}\G_m\to0.$$
(The sheaves $\mu_{\ell^n}$ and $\G_m$ are perfectly reasonable
sheaves in the Zariski topology on $\XX$, but the arrow in the right
is not surjective in that context.  We need to use the \'etale
topology or a finer one.)  Taking cohomology, we get a homomorphism
$$Pic(\XXbar)/\ell^n=H^1(\XXbar,\G_m)/\ell^n\to
H^2(\XXbar,\mu_{\ell^n}).$$
Since $\Pic^0(\XXbar)$ is a divisible group, we have 
$\NS(\XXbar)/\ell^n=\Pic(\XXbar)/\ell^n$
and so taking an inverse limit gives an injection 
\begin{equation*}
\NS(\XXbar)\tensor\Zl\to H^2(\XXbar,\Zl(1)).
\end{equation*}

Composing with the natural homomorphism $\NS(\XX)\to\NS(\XXbar)$ 
gives our cycle class map
\begin{equation}\label{eq:cycle}
\NS(\XX)\to\NS(\XX)\tensor\Zl\to H^2(\XXbar,\Zl(1)).
\end{equation}

We declare two divisors to be ($\ell$-){\it homologically
  equivalent\/} if their classes in $H^2(\XXbar,\Zl(1))$ are equal.
(We will see below that this notion is independent of $\ell$.)  The
group of divisors modulo homological equivalence will (temporarily) be
denoted $\Ho(\XX)$.  It will turn out to be a finitely generated
free abelian group.

The intersection pairing on $NS(\XX)$ corresponds
under the cycle class map to the cup product on cohomology.  This
means that a divisor that is homologically equivalent to zero is also
numerically equivalent to zero.  Thus we have a chain of surjections:
$$\Pic(\XX)\onto\NS(\XX)\onto\NS(\XX)/{tor}
\onto\Ho(\XX)\onto\Num(\XX).$$

\section{Comparison of equivalence relations on divisors}

A theorem of Matsusaka \cite{Matsusaka57} asserts that the surjection
$$\NS(\XX)/{tor}\onto\Num(\XX)$$
is in fact an isomorphism.  Thus
$$\NS(\XX)/{tor}\cong\Ho(\XX)\cong\Num(\XX)$$
and these groups are finitely generated, free abelian groups.  Since
$\NS(\XX)$ is finitely generated, $\NS(\XX)_{tor}$ is finite.

In all of the examples we will consider, $\NS(\XX)$ is torsion free.
(In fact, for an elliptic surface with a section, the surjection
$\NS(\XX)\to\Num(\XX)$ is always an isomorphism, see
\cite{SchuttShiodaES}*{Theorem~6.5}.)  So to understand $\Pic(\XX)$ we
have only to consider the finitely generated free abelian group
$\NS(\XX)$ and the group $\Pic^0(\XX)$, which is (the set of points
of) an abelian variety.

\begin{exer}
  In the case of a surface $\XX$ over the complex numbers, use the
  cohomology of the exponential sequence
$$0\to\Z\to\OO_\XX\stackrel{\exp}{\longrightarrow}\OO^\times_\XX\to0$$
to analyze the structure of $\Pic(\XX)$.
\end{exer}
% referee suggests noting that NS is invariant under extension of
% algebraically closed fields of characteritic 0.  Save for CRM

\section{Examples}
\numberwithin{equation}{subsection}
\subsection{$\P^2$} 
It is well known (e.g., \cite{HartshorneAG}*{II.6.4}) that two curves on
$\P^2$ are linearly equivalent if and only if they have the same
degree.  It follows that $\Pic(\P^2)=\NS(\P^2)\cong\Z$.

\subsection{$\P^1\times\P^1$}
By \cite{HartshorneAG}*{II.6.6.1}, two curves on
$\P^1\times\P^1$ are linearly equivalent if and only if they have the
same bi-degree.  It follows that
$\Pic(\P^1\times\P^1)=\NS(\P^1\times\P^1)\cong\Z^2$.

\subsection{Abelian varieties}
If $\XX$ is an abelian variety (of any dimension $g$), then $\Pic^0(\XX)$
is the dual abelian variety and $\NS(\XX)$ is a finitely
generated free abelian group of rank between 1 and $4g^2$.  See
\cite{MumfordAV} for details.

\subsection{Products of curves}\label{ss:products}
Suppose that $\CC$ and $\DD$ are smooth projective curves over $k$
with $k$-rational points $x\in\CC$ and $y\in\DD$.  By definition (see
Subsection~\ref{ss:pic-alb} of Lecture~0), the group of divisorial
correspondences between $(\CC,x)$ and $(\DD,y)$ is a subgroup of
$\Pic(\CC\times\DD)$ and it is clear that
\begin{align*}
\Pic(\CC\times\DD)&\cong\Pic(\CC)\times\Pic(\DD)\times
     \DivCorr\left((\CC,x),(\DD,y)\right)\\
&\cong\Pic^0(\CC)\times\Pic^0(\DD)\times\Z^2\times
     \DivCorr\left((\CC,x),(\DD,y)\right).
\end{align*}
Moreover, as we saw in Lecture~0,
$$\DivCorr\left((\CC,x),(\DD,y)\right)\cong\Hom(J_\CC,J_\DD)$$
is a discrete group.  It follows that
\begin{equation}\label{eq:Pic0(CxD)}
\Pic^0(\CC\times\DD)\cong\Pic^0(\CC)\times\Pic^0(\DD)
\end{equation}
and
\begin{equation}\label{eq:NS(CxD)}
\NS(\CC\times\DD)\cong\Z^2\times\Hom(J_\CC,J_\DD).
\end{equation}
This last isomorphism will be important for a new approach to elliptic
curves of high rank over function fields discussed in Lecture~5.

\subsection{Blow ups}\label{ss:blow-ups}
Let $\XX$ be a smooth projective surface over $k$ and let
$\pi:\YY\to\XX$ be the blow up of $\XX$ at a closed point $x\in\XX$ so
that $E=\pi^{-1}(x)$ is a rational curve on $\YY$.  Then we have
canonical isomorphisms
$$\Pic(\YY)\cong\Pic(\XX)\oplus\Z\quad\text{and}\quad
\NS(\YY)\cong\NS(\XX)\oplus\Z$$
where in both groups the factor $\Z$ is generated by the class of $E$.
See \cite{HartshorneAG}*{V.3.2}.

\subsection{Fibrations}\label{ss:fibrations}
Let $\XX$ be a smooth projective surface over $k$, $\CC$ a smooth
projective curve over $k$, and $\pi:\XX\to\CC$ a non-constant
morphism.  Assume that the induced extension of function fields
$k(\CC)\into k(\XX)$ is separable and $k(\CC)$ is algebraically closed
in $k(\XX)$.  Then for every closed point $y\in\CC$, the fiber
$\pi^{-1}(y)$ is connected, and it is irreducible for almost all $y$.
Write $F$ for the class in $\NS(\XX)$ of the fiber over a $k$-rational
point $y$ of $\CC$.  (This exists because we assumed that $\XX$ has a
$k$-rational point.)  We write $\<F\>$ for the subgroup of $\NS(\XX)$
generated by $F$.

It is clear from the definition of $\NS(\XX)$ that if $y'$ is another
closed point of $\CC$, then the class in $\NS(\XX)$ of $\pi^{-1}(y')$
is equal to $(\deg y')F$.

Now suppose that $z\in\CC$ is a closed point such that $\pi^{-1}(z)$
is reducible, say
$$\pi^{-1}(z)=\sum_{i=1}^{f_z}n_iZ_i$$
where the $Z_i$ are the irreducible components of $\pi^{-1}(z)$ and
the $n_i$ are their multiplicities in the fiber.  Then a consideration
of intersection multiplicities 
(see for example \cite{SilvermanAT}*{III.8})
shows that for any integers $m_i$,
$$\sum_im_iZ_i\in\<F\>\subset\NS(\XX)$$
if and only if there is a rational number $\alpha$ such that
$m_i=\alpha n_i$ for all $i$.  More precisely, the intersection
pairing restricted to the part of $\NS(\XX)$ generated by the classes
of the $Z_i$ is negative semi-definite, with a one-dimensional kernel
spanned by integral divisors that are rational multiples of the whole
fiber.  It follows that the subgroup of $\NS(\XX)/\<F\>$ generated by
the classes of the $Z_i$ has rank $f_z-1$.  It is free of this rank if
the gcd of the multiplicities $n_i$ is 1.

It also follows that if $D$ is a divisor supported on a fiber of $\pi$ and
$D'$ is another divisor supported on other fibers, then $D=D'$ in
$\NS(\XX)/\<F\>$ if and only if $D=D'=0$ in $\NS(\XX)/\<F\>$.

Define $L^2\NS(\XX)$ to be the subgroup of $\NS(\XX)$ generated by
all components of all fibers of $\pi$ over closed points of $\CC$.  By
the above, it is the direct sum of the $\<F\>$ and the subgroups of
$\NS(\XX)/\<F\>$ generated by the components of the various fibers.
Thus we obtain the following computation of the rank of $L^2\NS(\XX)$.

\begin{prop}
For a closed point $y$ of $\CC$, let $f_y$ denote
the number of irreducible components in the fiber $\pi^{-1}(y)$.
Then the rank of
$L^2\NS(\XX)$ is 
$$1+\sum_y(f_y-1).$$
If for all $y$ the greatest common
divisor of the multiplicities of
the components in the fiber of $\pi$ over $y$ is 1, then
$L^2\NS(\XX)$ is torsion-free.
\end{prop}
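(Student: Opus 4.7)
The plan is to assemble the proposition from the intersection-theoretic facts already collected in the paragraph preceding it: the behavior of the intersection form on fiber components and the observation that classes supported on distinct fibers are independent modulo $\<F\>$.

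First I would establish that the class $F$ of a (rational) fiber is non-torsion, so that $\<F\>\cong\Z$. Any irreducible curve $\Sigma\subset\XX$ that dominates $\CC$ satisfies $\Sigma\cdot F=\deg(\Sigma\to\CC)>0$, so $F$ has infinite order in $\NS(\XX)$. Next, I would invoke the key fact quoted from \cite{SilvermanAT}*{III.8}: if $D$ is supported on a single fiber of $\pi$ and $D'$ on a union of other fibers, then $D\equiv D'\pmod{\<F\>}$ forces $D$ and $D'$ to each vanish in $\NS(\XX)/\<F\>$. This orthogonality means the image of $L^2\NS(\XX)$ in $\NS(\XX)/\<F\>$ decomposes as an internal direct sum
\begin{equation*}
L^2\NS(\XX)/\<F\>\;\cong\;\bigoplus_{y\in\CC}V_y,
\end{equation*}
where $V_y$ is the subgroup generated by the images of the components $Z_i$ of $\pi^{-1}(y)$. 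For $y$ with $\pi^{-1}(y)$ irreducible (almost all $y$), $V_y=0$ since the single component of $\pi^{-1}(y)$ is a rational multiple of $F$; the sum is therefore finite.

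Second, I would feed in the rank computation already stated: for each closed point $y$ with fiber $\pi^{-1}(y)=\sum_{i=1}^{f_y}n_iZ_i$, the negative semi-definiteness of the intersection form on the span of the $Z_i$ and the fact that its radical is one-dimensional and generated by a rational multiple of the full fiber yield $\rk V_y=f_y-1$. Combining this with the direct sum decomposition above and the short exact sequence
\begin{equation*}
0\to\<F\>\to L^2\NS(\XX)\to L^2\NS(\XX)/\<F\>\to0
\end{equation*}
gives $\rk L^2\NS(\XX)=1+\sum_y(f_y-1)$.

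Finally, for torsion-freeness under the assumption that $\gcd(n_i)=1$ for every fiber: the same quoted statement says that under this hypothesis each $V_y$ is free of rank $f_y-1$, so $L^2\NS(\XX)/\<F\>$ is free. Since $\<F\>\cong\Z$ is also free and an extension of a torsion-free finitely generated abelian group by a torsion-free one is torsion-free (hence free), $L^2\NS(\XX)$ is torsion-free.

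I expect no serious obstacle; essentially all the content has been proved in the discussion preceding the statement, and the only thing to check carefully is that the gcd hypothesis indeed upgrades the rank statement to freeness fiber-by-fiber, which is exactly what is isolated in the intersection-theoretic lemma cited from \cite{SilvermanAT}*{III.8}.
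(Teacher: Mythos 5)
Your proof is correct and follows essentially the same route as the paper: the proposition is presented there as an immediate assembly of the preceding observations (the intersection-theoretic analysis of fiber components, the independence of distinct fibers modulo $\<F\>$, and the freeness criterion when $\gcd(n_i)=1$), and you carry out exactly that assembly. Your explicit check that $F$ is non-torsion by intersecting with a multisection is a small but worthwhile detail that the paper leaves implicit.
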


\numberwithin{equation}{section}

\section{Tate's conjectures $T_1$ and $T_2$}
Tate's conjecture $T_1$ for $\XX$ (which we denote $T_1(\XX)$)
characterizes the image of the cycle class map:

\begin{conj}[$T_1(\XX)$] For any prime $\ell\neq p$, the cycle class
  map induces an isomorphism
$$\NS(\XX)\tensor\Ql\to H^2(\XXbar,\Ql(1))^{G_k}$$
\end{conj}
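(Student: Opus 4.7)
The plan is to split $T_1(\XX)$ into (i)~injectivity together with well-definedness of the map into the $G_k$-invariants, which are essentially formal, and (ii)~surjectivity, which is the genuine content of the conjecture and is open in general. For (i), I would unwind the construction of the cycle class map in Section~\ref{s:cycle}: the Kummer sequence yields an injection $\NS(\XXbar)/\ell^n \into H^2(\XXbar,\mu_{\ell^n})$ (using divisibility of $\Pic^0(\XXbar)$), and since $\NS(\XX)$ is finitely generated by the theorem of the base and $\NS(\XX)/{tor}$ is free of finite rank, passing to the inverse limit and tensoring with $\Ql$ preserves injectivity. A divisor on $\XX$ is automatically defined over $k$, so its cycle class is Frobenius-fixed and the image lands in $H^2(\XXbar,\Ql(1))^{G_k}$.

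Next I would reinterpret the target analytically. Because $G_k$ is topologically generated by geometric Frobenius $\Fr_q$,
$$H^2(\XXbar,\Ql(1))^{G_k} \;=\; \ker\bigl(\Fr_q - 1 \mid H^2(\XXbar,\Ql(1))\bigr)$$
is the $q$-eigenspace of $\Fr_q$ acting on $H^2(\XXbar,\Ql)$. Its dimension equals the multiplicity of $q$ as an inverse root of $P_2(T) = \det(1-T\Fr_q\mid H^2(\XXbar,\Ql))$ from (\ref{eq:P-cohom}), and since $P_0$ and $P_4$ contribute poles of $\zeta(\XX,s)$ only at $s=0,2$, this multiplicity equals $-\ord_{s=1}\zeta(\XX,s)$. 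Thus $T_1(\XX)$ is equivalent to the clean numerical identity
$$\rk\NS(\XX) \;=\; -\ord_{s=1}\zeta(\XX,s).$$

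The main obstacle is (ii): producing enough algebraic divisors to realize this equality. I would only attempt it in tractable cases where cycles can be written down explicitly. For a product of curves $\XX = \CC\times\DD$, the decomposition (\ref{eq:NS(CxD)}) combined with Tate's theorem on homomorphisms of abelian varieties (Theorem~\ref{thm:TateIsogThm}) and the K\"unneth formula match both sides dimension-by-dimension; the same machinery covers abelian surfaces and, via dominant correspondences, any surface dominated by a product of curves. Further accessible cases include ordinary K3 surfaces (Artin--Swinnerton-Dyer) and elliptic surfaces through the BSD--Tate bridge to be developed in Lecture~3. In full generality the conjecture resists proof because there is no systematic mechanism for manufacturing algebraic cycles out of Galois-invariant cohomology classes.
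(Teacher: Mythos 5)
You have correctly recognized that this is a labeled \emph{conjecture}, so the paper contains no proof of it, and no proof is expected; your reorganization into the formal ``easy'' direction (injectivity and Galois-invariance of the image, via the Kummer sequence and divisibility of $\Pic^0(\XXbar)$) versus the genuinely open surjectivity, together with the list of tractable special cases (products of curves via Tate's theorem, DPC surfaces, K3 surfaces via Artin--Swinnerton-Dyer, elliptic surfaces via the BSD bridge) is exactly the program the paper carries out in the remainder of Lecture~2 and in Lecture~3. In that sense your proposal matches the paper's development.

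One point, however, is stated too quickly and is in fact wrong as written. You assert that $\dim_{\Ql} H^2(\XXbar,\Ql(1))^{G_k}$ ``equals the multiplicity of $q$ as an inverse root of $P_2(T)$,'' and hence that $T_1(\XX)$ is equivalent to $\rk\NS(\XX)=-\ord_{s=1}\zeta(\XX,s)$. The invariant subspace $H^2(\XXbar,\Ql(1))^{G_k}=\ker(\Fr_q-1)$ is the \emph{genuine} eigenspace, while the multiplicity of $q$ as an inverse root of $P_2$ is the dimension of the \emph{generalized} eigenspace; without knowing semisimplicity of $\Fr_q$ on $H^2$ (a separate open conjecture, as the paper remarks), one only has an inequality. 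This is precisely why the paper records the chain
$$\rk \NS(\XX)\le \dim_{\Ql}H^2(\XXbar,\Ql)^{\Fr_q=q}\le -\ord_{s=1}\zeta(\XX,s)$$
and then needs the nontrivial argument of Proposition~\ref{prop:T1->T2}---using nondegeneracy of the intersection pairing on $\NS(\XX)$, Poincar\'e duality on $H^2$, and a commuting square comparing invariants and coinvariants---to show $T_1(\XX)\Rightarrow T_2(\XX)$. The reverse implication $T_2\Rightarrow T_1$ is the trivial one, not the forward one as your phrasing suggests. So ``$T_1$ is equivalent to the clean numerical identity'' is a theorem of Tate, not a definition-chase, and your sketch omits the key duality input that makes it true.
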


We will see below that $T_1(\XX)$ is equivalent to the apparently
stronger integral statement that the cycle class induces an
isomorphism
$$\NS(\XX)\tensor\Zl\to H^2(\XXbar,\Zl(1))^{G_k}$$

We will also see that $T_1(\XX)$ is independent of $\ell$ which is why
we have omitted $\ell$ from the notation.

Since $G_k$ is generated topologically by $Fr_q$, we have
$$H^2(\XXbar,\Ql(1))^{G_k}=H^2(\XXbar,\Ql(1))^{Fr_q=1}
=H^2(\XXbar,\Ql)^{Fr_q=q}.$$
The injectivity of the cycle class map implies that
$$\rk \NS(\XX)\le \dim_{\Ql}H^2(\XXbar,\Ql)^{Fr_q=q}$$
and $T_1(\XX)$ is the statement that these two dimensions are equal.

The second Tate conjecture relates the zeta-function to divisors.
Recall that $\zeta(\XX,s)$ denotes the zeta function of $\XX$, defined
in Lecture~0, Section~\ref{s:zetas}.

\begin{conj}[$T_2(\XX)$] We have
$$\rk \NS(\XX) = -\ord_{s=1}\zeta(\XX,s)$$
\end{conj}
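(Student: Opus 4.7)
The plan is to translate both sides of $T_2(\XX)$ into $\ell$-adic cohomology and then compare them via the cycle class map. From Sections \ref{s:zetas} and \ref{s:cohomology},
$$\zeta(\XX,s) \;=\; \frac{P_1(q^{-s})\,P_3(q^{-s})}{P_0(q^{-s})\,P_2(q^{-s})\,P_4(q^{-s})}, \qquad P_i(T)=\det(1-T\Fr_q\mid H^i(\XXbar,\Ql)).$$
The Riemann hypothesis puts the reciprocal roots of $P_i$ on the circle of radius $q^{i/2}$, so near $s=1$ only the middle factor $P_2(q^{-s})$ can vanish or blow up. A linear factor $(1-\alpha T)$ of $P_2(T)$ contributes to $\ord_{s=1}$ exactly when $\alpha=q$, and each such factor contributes a simple zero. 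Hence
$$-\ord_{s=1}\zeta(\XX,s) \;=\; m_{\mathrm{alg}}(q),$$
where $m_{\mathrm{alg}}(q)$ is the algebraic multiplicity of $q$ as a root of the characteristic polynomial of $\Fr_q$ on $H^2(\XXbar,\Ql)$.

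Next I would use the cycle class map of Section~\ref{s:cycle}, which is injective and lands in $H^2(\XXbar,\Ql(1))^{G_k}$. Since $G_k$ is topologically generated by $\Fr_q$ and the Tate twist scales its action by $q^{-1}$, that invariant subspace is the honest $q$-eigenspace $H^2(\XXbar,\Ql)^{\Fr_q=q}$, of dimension $m_{\mathrm{geom}}(q) \le m_{\mathrm{alg}}(q)$. Putting this together gives unconditionally
$$\rk\NS(\XX) \;\le\; m_{\mathrm{geom}}(q) \;\le\; m_{\mathrm{alg}}(q) \;=\; -\ord_{s=1}\zeta(\XX,s),$$
which already proves one direction of $T_2(\XX)$. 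The obstruction to equality is that one could lose rank at either inequality.

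To close the first gap I would invoke $T_1(\XX)$, which upgrades the cycle class injection to a surjection onto the $q$-eigenspace and forces $\rk\NS(\XX) = m_{\mathrm{geom}}(q)$. To close the second gap I would need semisimplicity of $\Fr_q$ at the eigenvalue $q$ on $H^2(\XXbar,\Ql)$, so that no nontrivial Jordan blocks conceal contributions to $m_{\mathrm{alg}}$ invisible to $m_{\mathrm{geom}}$. A short symmetric argument in the reverse direction then shows that $T_2(\XX)$ is in fact equivalent to $T_1(\XX)$ plus semisimplicity at the eigenvalue $q$. The main obstacle is the semisimplicity step, which is an independent piece of Tate's standard conjectures and not known in general; for divisors one can try to exploit the Galois-equivariant nondegenerate cup-product pairing $H^2(\XXbar,\Ql) \times H^2(\XXbar,\Ql) \to H^4(\XXbar,\Ql)\cong\Ql(-2)$ together with the functional-equation symmetry $\alpha\mapsto q^2/\alpha$ to rule out Jordan blocks on the subspace of algebraic classes, at least in favorable cases such as products of curves where $T_1$ reduces to Tate's isogeny theorem (Theorem~\ref{thm:TateIsogThm}).
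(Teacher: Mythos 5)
The statement you are addressing is labeled a \emph{Conjecture} ($T_2(\XX)$) and is not proven in the paper; what the paper actually establishes, in Proposition~\ref{prop:T1->T2}, is the equivalence $T_1(\XX)\Leftrightarrow T_2(\XX)$. Your chain of inequalities
$$\rk\NS(\XX)\le\dim_{\Ql}H^2(\XXbar,\Ql)^{\Fr_q=q}\le -\ord_{s=1}\zeta(\XX,s)$$
is exactly the one displayed as~(\ref{prop:T-ineqs}), and your diagnosis that the first gap is $T_1(\XX)$ and the second is semisimplicity of $\Fr_q$ at the eigenvalue $q$ on $H^2(\XXbar,\Ql)$ is accurate as far as it goes. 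You correctly stop short of claiming a proof and acknowledge the semisimplicity obstruction.

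The genuine gap is in the next step: you assert that closing the second inequality requires semisimplicity as an \emph{independent} hypothesis on top of $T_1$, so that $T_2\Leftrightarrow T_1+\text{semisimplicity}$. In fact Tate's argument, which the paper reproduces, shows that $T_1(\XX)$ by itself already forces semisimplicity at the eigenvalue $q$ and hence implies $T_2(\XX)$, with no extra hypothesis. The mechanism is the commuting diagram in the proof of Proposition~\ref{prop:T1->T2}: the nondegeneracy of the intersection pairing on $\NS(\XX)$ together with Poincar\'e duality identifies $\NS(\XX)\otimes\Ql$ with its own dual in a way compatible with the cycle class maps $h$ and $h^*$, and once $T_1$ makes $h$ and $h^*$ isomorphisms, the natural map $H^2(\XXbar,\Ql(1))^{G_k}\to H^2(\XXbar,\Ql(1))_{G_k}$ is sandwiched between isomorphisms and is therefore itself an isomorphism. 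That map being an isomorphism is precisely the statement that the $\Fr_q=q$ eigenspace fills out the generalized eigenspace, i.e., semisimplicity at $q$. Your proposed cup-product pairing on $H^2$ is in the right spirit, but the decisive leverage comes from the nondegenerate pairing on the \emph{image} of the cycle map (equivalently, on $\NS(\XX)$), not from a pairing on all of $H^2$; that is what lets one bootstrap semisimplicity out of $T_1$ rather than assume it. The paper also notes explicitly that the converse fails: semisimplicity by itself does not appear to imply $T_1$, which is another reason to keep the logical dependency pointing from $T_1$ to semisimplicity and not treat them as parallel inputs.
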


Note that by the Riemann hypothesis, the poles of $\zeta(\XX,s)$ at
$s=1$ come from $P_2(\XX,q^{-s})$.  More precisely, using the
cohomological formula~(\ref{eq:P-cohom}) of Lecture~0 for $P_2$, we
have that the order of pole of $\zeta(\XX,s)$ at $s=1$ is equal to the
multiplicity of $q$ as an eigenvalue of $Fr_q$ on $H^2(\XXbar,\Ql)$.

Thus we have a string of inequalities
\begin{equation}\label{prop:T-ineqs}
\rk \NS(\XX)\le \dim_{\Ql}H^2(\XXbar,\Ql)^{Fr_q=q}
\le -\ord_{s=1}\zeta(\XX,s).
\end{equation}
Conjecture $T_1(\XX)$ is that the first inequality is an equality and
conjecture $T_2(\XX)$ is that the leftmost and rightmost integers
are equal.  It follows trivially that $T_2(\XX)$ implies
$T_1(\XX)$.  Tate proved the reverse implication.

\begin{prop}\label{prop:T1->T2}
The conjectures $T_1(\XX)$ and $T_2(\XX)$ are equivalent.  In
particular, $T_1(\XX)$ is independent of $\ell$.
\end{prop}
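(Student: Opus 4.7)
The plan is to work with the chain of inequalities in~(\ref{prop:T-ineqs}). The direction $T_2(\XX) \Rightarrow T_1(\XX)$ is immediate: if the leftmost and rightmost integers agree, then all three do, forcing equality in the first inequality. So the real content is $T_1(\XX) \Rightarrow T_2(\XX)$, i.e.\ the second inequality is also an equality. Write $V = H^2(\XXbar,\Ql(1))$ and $F = \Fr_q$ acting on $V$. The second inequality compares $\dim\ker(F-1)$ with $\dim V_1$, where $V_1 \subset V$ is the generalized eigenspace for eigenvalue $1$; these agree iff $F$ acts semisimply on $V_1$. So the whole task is to show, under $T_1$, that there are no nontrivial Jordan blocks of $F$ at the eigenvalue $1$.

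The tool is Poincar\'e duality. Cup product gives a perfect, symmetric, Galois-equivariant pairing
\begin{equation*}
\langle\,,\,\rangle\colon V\times V\longrightarrow H^4(\XXbar,\Ql(2))\cong\Ql,
\end{equation*}
and under this pairing the generalized eigenspaces $V_\alpha$ and $V_\beta$ for $F$ are orthogonal unless $\alpha\beta=1$. In particular $V_1$ pairs non-degenerately with itself. Moreover, the pairing transports along the cycle class map to the intersection pairing on $\NS(\XX)\tensor\Ql$, which is non-degenerate by Matsusaka's theorem ($\NS(\XX)/\mathrm{tor}\cong\Num(\XX)$). Assuming $T_1(\XX)$, the image of $\NS(\XX)\tensor\Ql$ in $V$ is exactly $V^F=\ker(F-1)$, so the pairing restricted to $V^F$ is non-degenerate.

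Now suppose for contradiction that $F$ is not semisimple on $V_1$. Then I can pick $v\in V_1$ with $w:=(F-1)v\neq 0$ and $(F-1)^2v=0$, so $w\in V^F\setminus\{0\}$. For any $u\in V^F$, Galois invariance of the pairing gives
\begin{equation*}
\langle v,u\rangle = \langle Fv,Fu\rangle = \langle v+w,u\rangle = \langle v,u\rangle + \langle w,u\rangle,
\end{equation*}
hence $\langle w,u\rangle=0$ for every $u\in V^F$. This contradicts the non-degeneracy of the intersection pairing on $V^F$ established above. Therefore $F$ is semisimple at the eigenvalue $1$ on $V$, which is precisely equality in the second inequality of~(\ref{prop:T-ineqs}). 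Combined with $T_1(\XX)$ this yields $T_2(\XX)$. Independence of $\ell$ is then automatic since the right-hand side of $T_2$ does not involve $\ell$, and the integral refinement follows because the cycle class map has torsion-free cokernel onto its saturation in $H^2(\XXbar,\Zl(1))^{G_k}$ once the $\Ql$-dimensions match. The main obstacle in this plan is the non-degeneracy of the intersection pairing on $\NS(\XX)\tensor\Ql$: without the identification $\NS/\mathrm{tor}\cong\Num$ (Matsusaka's theorem, already cited in the excerpt) the orthogonality argument collapses and Jordan blocks cannot be excluded.
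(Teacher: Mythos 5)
Your proof is correct and uses the same core ingredients as the paper (Poincar\'e duality, non-degeneracy of the intersection pairing via Matsusaka, and compatibility of the cycle class map with the two pairings), but you package them differently. The paper's argument is a dual-space diagram chase: it sets up a commutative square whose vertical arrows are the cycle class map $h$ and its transpose $h^*$, whose top edge is the self-duality of $\NS(\XX)\tensor\Ql$ and whose bottom edge is the self-duality of $H^2(\XXbar,\Ql(1))$; granted $T_1$, the map $f$ from $G_k$-invariants to $G_k$-coinvariants is sandwiched between isomorphisms and so is itself one, which is exactly the statement that $\Fr_q$ has no nontrivial Jordan blocks at the eigenvalue $1$. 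You instead prove the semisimplicity head-on by showing that a length-two Jordan chain would hand you an element $w\in V^F$ lying in the radical of the pairing on $V^F$, contradicting the non-degeneracy you extracted from $T_1$ plus Matsusaka. The two routes are logically equivalent; yours is more transparent about \emph{why} $f$ must be an isomorphism, at the cost of singling out the Jordan structure rather than treating it abstractly. One small caveat: your closing parenthetical about the integral refinement (torsion-free cokernel once the $\Ql$-dimensions match) glosses over the real issue, which is the vanishing of $T_\ell\Br(\XX)$ and is handled separately in the paper's Theorem~\ref{thm:T1-Br}; but since the proposition as stated is only about the $\Ql$-versions of $T_1$, $T_2$, and $\ell$-independence, this does not affect the correctness of what you actually needed to prove.
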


\begin{proof}
First note that the intersection pairing on $\NS(\XX)$ is
non-degenerate, so we get an isomorphism
$$\NS(\XX)\tensor\Ql\cong\Hom(\NS(\XX),\Ql).$$
On the other hand, the cup product on $H^2(\XXbar,\Ql(1))$ is also
non-degenerate (by Poincar\'e duality), so we have
$$H^2(\XXbar,\Ql(1))\cong\Hom(H^2(\XXbar,\Ql(1)),\Ql).$$
If we use a superscript $G_k$ to denote invariants and a subscript
$G_k$ to denote coinvariants, then we have a natural homomorphism
$$H^2(\XXbar,\Ql(1))^{G_k}\to H^2(\XXbar,\Ql(1))_{G_k}$$
which is an isomorphism if and only if the subspace of
$H^2(\XXbar,\Ql(1))$ where $\Fr_q$ acts by 1 is equal to the whole of
the generalized eigenspace for the eigenvalue 1.  As we have seen
above, this holds if and only if we have
$$\dim_{\Ql}H^2(\XXbar,\Ql)^{Fr_q=q}= -\ord_{s=1}\zeta(\XX,s).$$

Now consider the diagram
\begin{equation*}
\xymatrix{
\NS(\XX)\tensor\Ql\ar[d]^h\ar@{=}[rr]&&\Hom(\NS(\XX),\Ql)\\
H^2(\XXbar,\Ql(1))^{G_k}\ar[r]^f&H^2(\XXbar,\Ql(1))_{G_k}\ar@{=}[r]&
\Hom(H^2(\XXbar,\Ql(1))^{G_k},\Ql)\ar[u]_{h^*}.}
\end{equation*}
The lower right arrow is an isomorphism by elementary linear algebra.
The maps $h$ and $h^*$ are the cycle map and its transpose and they
are isomorphisms if and only if $T_1(\XX)$ holds.  One checks that the
diagram commutes (\cite{Tate66b}*{p.~24} or
\cite{Milne75}*{Lemma~5.3}) and so $T_1(\XX)$ implies that $f$ is an
isomorphism.  Thus $T_1(\XX)$ implies $T_2(\XX)$.
\end{proof}

We remark that the equality of $\dim_{\Ql}H^2(\XXbar,\Ql)^{Fr_q=q}$
and $-\ord_{s=1}\zeta(\XX,s)$ would follow from the semi-simplicity of
$Fr_q$ acting on $H^2(\XXbar,\Ql)$ (or even from its semisimplicity on
the $Fr_q=q$ generalized eigenspace).  This is a separate ``standard''
conjecture (see for example \cite{Tate94}); it does not seem to imply
$T_1(\XX)$.

\section{$T_1$ and the Brauer group}
We define the (cohomological) Brauer group $\Br(\XX)$ by
$$\Br(\XX)=H^2(\XX,\G_m)=H^2(\XX,\OO_\XX^\times)$$
(with respect to the \'etale or finer topologies).  Because $\XX$ is a
smooth proper surface over a finite field, the cohomological Brauer
group is isomorphic to the usual Brauer group (defined in terms of
Azumaya algebras) and it is known to be a torsion group.  (See
\cite{MilneEC}*{IV.2} and also three fascinating articles by
Grothendieck collected in \cite{Grothendieck68}.)  Artin and Tate
conjectured in \cite{Tate66b} that $\Br(\XX)$ is finite.

Similarly, define
$$\Br(\XXbar)=H^2(\XXbar,\G_m)=H^2(\XXbar,\OO_\XX^\times).$$
This group is torsion but need not be finite.

Taking the cohomology of the exact sequence
$$0\to\mu_{\ell^n}\to\G_m\stackrel{\ell^n}{\longrightarrow}\G_m\to0$$
as in Section~\ref{s:cycle}, we have an exact sequence
\begin{equation}\label{eq:Kummer-ell^n}
0\to\NS(\XXbar)/\ell^n\to
H^2(\XXbar,\mu_{\ell^n})\to\Br(\XXbar)_{\ell^n}\to0.
\end{equation}
Taking $G_k$-invariants and then the inverse limit over powers of
$\ell$, we obtain an exact sequence
$$0\to\NS(\XX)\tensor\Zl\to H^2(\XXbar,\Zl(1))^{G_k}\to
T_\ell\Br(\XX)\to0.$$
Since $\Br(\XX)_\ell$ is finite, $T_\ell\Br(\XX)$ is zero if and only
if the $\ell$-primary part of $\Br(\XX)$ is finite.  It follows that
the $\ell$ part of the Brauer group is finite if and only if
$T_1(\XX)$ for $\ell$ holds if and only if the integral version of
$T_1(\XX)$ for $\ell$ holds.  In particular, since $T_1(\XX)$ is
independent of $\ell$, if $\Br(\XX)[\ell^\infty]$ is finite for one
$\ell$, then $\Br(\XX)[\ell^\infty]$ is finite for all $\ell\neq p$.
It is even true, although more difficult to prove, that $T_1(\XX)$ is
equivalent to the finiteness of $\Br(\XX)$.

\begin{thm}\label{thm:T1-Br}
  $T_1(\XX)$ holds if and only if $\Br(\XX)$ is finite if and only if
  there is an $\ell$ \textup{(}$\ell=p$ allowed\textup{)} such that
  the $\ell$-primary part of $\Br(\XX)$ is finite.
\end{thm}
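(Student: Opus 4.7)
The plan: The $\ell\neq p$ half of the theorem is essentially contained in the preceding paragraph: the exact sequence
\begin{equation*}
0 \to \NS(\XX)\tensor\Zl \to H^2(\XXbar,\Zl(1))^{G_k} \to T_\ell\Br(\XX) \to 0
\end{equation*}
shows $T_1(\XX)$ is equivalent to $T_\ell\Br(\XX) = 0$, and a standard argument exploiting that $T_\ell\Br(\XX)$ is $\Zl$-torsion-free, combined with the finiteness of $\Br(\XX)[\ell]$ (a quotient of the finite group $H^2(\XX,\mu_\ell)$ via the étale Kummer sequence), upgrades ``reduced'' to ``finite''. Since $T_1(\XX)$ is $\ell$-independent by Proposition~\ref{prop:T1->T2}, the equivalence of ``$T_1(\XX)$'' with ``$\Br(\XX)[\ell^\infty]$ finite for some/every $\ell\neq p$'' is immediate. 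What remains is to (i)~extend the equivalence to $\ell=p$ and (ii)~upgrade to finiteness of all of $\Br(\XX)$.

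For (i), I would pass to the fppf topology. The Kummer sequence $0\to\mu_{p^n}\to\G_m\xrightarrow{p^n}\G_m\to 0$ fails in the étale topology (the $p$-th power map on $\G_m$ is not étale-locally surjective in characteristic $p$) but is exact in the flat topology. Since $\G_m$ is smooth, $H^i_{fl}(\XX,\G_m)=H^i_{\text{\'et}}(\XX,\G_m)$, so the flat Brauer group coincides with $\Br(\XX)$. Setting $\Zp(1):=\varprojlim_n\mu_{p^n}$ in the flat topology, the same derivation yields
\begin{equation*}
0 \to \NS(\XX)\tensor\Zp \to H^2_{fl}(\XXbar,\Zp(1))^{G_k} \to T_p\Br(\XX) \to 0,
\end{equation*}
so $\Br(\XX)[p^\infty]$ is finite iff $\rk\NS(\XX) = \dim_{\Qp}H^2_{fl}(\XXbar,\Qp(1))^{G_k}$. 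To identify this dimension with $-\ord_{s=1}\zeta(\XX,s)$, and thereby tie it to $T_2(\XX)\Leftrightarrow T_1(\XX)$, the key input is the relationship between flat $p$-adic cohomology and the ``slope $[0,1)$'' part of crystalline cohomology $H^2_{\mathrm{cris}}(\XXbar/W)$, combined with Katz--Messing's theorem that the characteristic polynomial of Frobenius on $\ell$-adic and on crystalline cohomology coincide.

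For (ii), since $\Br(\XX)=\bigoplus_\ell\Br(\XX)[\ell^\infty]$ as a torsion group, and each $\Br(\XX)[\ell^\infty]$ (including $\ell=p$) is finite under $T_1(\XX)$ by (i) and the first paragraph, it remains to show $\Br(\XX)[\ell^\infty]=0$ for almost all $\ell$. Let $S$ be the finite set of primes consisting of $p$, the primes dividing $|\NS(\XXbar)_{\mathrm{tors}}|$ or $|\Pic^0(\XX)|$, and those $\ell$ for which $H^i(\XX,\Zl(1))$ has torsion for some $i\in\{2,3\}$ (finite by general finiteness of integral $\ell$-adic cohomology). For $\ell\notin S$, the integral cycle map $\NS(\XX)\tensor\Zl\to H^2(\XX,\Zl(1))$ is surjective (its cokernel is a quotient of $T_\ell\Br(\XX)=0$), so reducing mod $\ell^n$ and using torsion-freeness of $H^3(\XX,\Zl(1))$ shows $\NS(\XX)/\ell^n\to H^2(\XX,\mu_{\ell^n})$ is surjective. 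The Kummer short exact sequence $0\to\NS(\XX)/\ell^n\to H^2(\XX,\mu_{\ell^n})\to\Br(\XX)[\ell^n]\to 0$ then forces $\Br(\XX)[\ell^n]=0$ for all $n$.

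The main obstacle is step (i): identifying $\dim_{\Qp}H^2_{fl}(\XXbar,\Qp(1))^{G_k}$ with $-\ord_{s=1}\zeta(\XX,s)$ requires flat duality and input from Illusie's theory of the de~Rham--Witt complex, substantially deeper than the étale-cohomological manipulations sufficient elsewhere; by comparison, steps in (ii) and for $\ell\neq p$ are essentially formal consequences of the Kummer sequence.
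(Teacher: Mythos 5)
Your proposal is correct in outline but diverges from the paper's route in its two non-trivial steps. For the ``almost all $\ell$-primary parts vanish'' step, the paper works with the integral version of the diagram from Proposition~\ref{prop:T1->T2} and the multiplicative invariant $z(\phi)=\#\ker(\phi)/\#\coker(\phi)$ of a quasi-isomorphism: under $T_1(\XX)$, the maps $e$, $f$, $h$ are quasi-isomorphisms whose $z$-invariants are $\ell$-parts of fixed rational numbers (order of $\NS(\XX)_{\rm tors}$, discriminant of the intersection form, the leading coefficient of $\zeta(\XX,s)$ at $s=1$), hence units at almost all $\ell$; the multiplicativity of $z$ then forces $g^*$ to be an isomorphism for almost all $\ell$, i.e., $\Br(\XX)[\ell^\infty]=0$. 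This has the further payoff that it directly yields the Artin--Tate leading-coefficient formula. Your argument instead deduces surjectivity of $\NS(\XX)\otimes\Zl\to H^2(\XX,\Zl(1))$ from $T_\ell\Br(\XX)=0$ and then reduces mod $\ell^n$, which is a legitimate alternative, but your parenthetical ``(finite by general finiteness of integral $\ell$-adic cohomology)'' is a non-sequitur: each $H^i(\XX,\Zl)$ being a finitely generated $\Zl$-module does not by itself give torsion-freeness for almost all $\ell$. That statement is true here, and for a surface it can be made elementary (identify $H^2(\XXbar,\Zl(1))_{\rm tors}$ with $\NS(\XXbar)_{\rm tors}[\ell^\infty]$, use torsion Poincar\'e duality for $H^3$, then Hochschild--Serre to descend from $\XXbar$ to $\XX$), but that is precisely the content the paper's $z$-invariant bookkeeping subsumes; you should supply it rather than wave at it.

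For the $p$-primary part, you propose the fppf Kummer sequence, the identification of flat and \'etale $\G_m$-cohomology, and a crystalline comparison via Katz--Messing. The paper instead stresses that Milne's argument is ``formally similar'' to the $\ell\neq p$ case, replacing the lower-right term of the diagram by $\Hom(H^2(\XX,(\Qp/\Zp)(1)),\Qp/\Zp)$ and absorbing the extra $p$-adic content into the $z$-invariants, and then defers to \cite{Milne75}. Your sketch is actually closer to what Milne really does---the flat Kummer sequence and the comparison with crystalline / de Rham--Witt cohomology are the essential inputs---and you correctly identify this as the hard part; both your sketch and the paper's defer the substance, so this is a fair variant.
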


\begin{proof}
  We sketch the proof of the prime-to-$p$ part of this assertion
  following \cite{Tate66b} and refer to \cite{Milne75} for the full
  proof.  We already noted that the $\ell$-primary part of $\Br(\XX)$
  is finite for one $\ell\neq p$ if and only if $T_1(\XX)$ holds.  To
  see that almost all $\ell$-primary parts vanish, we consider the
  following diagram, which is an integral version of the diagram in
  the proof of Proposition~\ref{prop:T1->T2}:
\begin{equation*}
\xymatrix@C-15pt{
\NS(\XX)\tensor\Zl\ar[d]_h\ar[r]^>>>>{e}&
\Hom(\NS(\XX)\tensor\Zl,\Zl)\ar@{=}[r]&
\Hom(\NS(\XX)\tensor\Ql/\Zl,\Ql/\Zl)\\
H^2(\XXbar,\Zl(1))^{G_k}\ar[r]^f&H^2(\XXbar,\Zl(1))_{G_k}\ar@{=}[r]&
\Hom(H^2(\XXbar,(\Ql/\Zl)(1))^{G_k},\Ql/\Zl)\ar[u]_{g^*}}
\end{equation*}

Here $e$ is induced by the intersection form, $h$ is the cycle class
map, $f$ is induced by the identity map of $H^1(\XXbar,\Zl(1))$ and
$g^*$ is the transpose of a map
$$g:\NS(\XX)\tensor\Ql/\Zl\to H^2(\XXbar,(\Ql/\Zl)(1))$$
obtained by taking the direct limit over powers of $\ell$ of the first
map in equation~(\ref{eq:Kummer-ell^n}).

We say that a homomorphism $\phi:A\to B$ of $\Zl$-modules is a {\it
  quasi-isomorphism\/} if it has a finite kernel and cokernel.  In this
case, we define 
$$z(\phi)=\frac{\#\ker(\phi)}{\#\coker(\phi)}.$$  
It is easy to check that if $\phi_3=\phi_2\phi_1$ (composition) and if
two of the maps $\phi_1$, $\phi_2$, $\phi_3$ are quasi-isomorphisms,
then so is the third and we have $z(\phi_3)=z(\phi_2)z(\phi_1)$.

In the diagram above, if we assume $T_1(\XX)$, then $h$ is an
isomorphism.  The map $e$ is induced from the intersection pairing and
is a quasi-isomorphism and $z(e)$ is (the $\ell$ part of) the order of
the torsion subgroup of $\NS(\XX)$ divided by (the $\ell$ part of)
discriminant of the intersection form.  We saw above that under the
assumption of $T_1(\XX)$, the map $f$ is a quasi-isomorphism and it
turns out that $z(f)$ is essentially (the $\ell$ part of) the leading
term of the zeta function $\zeta(\XX,s)$ at $s=1$.  In particular,
under $T_1(\XX)$, $e$, $f$, and $h$ are isomorphisms for almost all
$\ell$.  The same must therefore be true of $g^*$.  By taking
$G_k$-invariants and a direct limit over powers of $\ell$ in
equation~(\ref{eq:Kummer-ell^n}), one finds that $z(g^*)$ is equal to
the order of $\Br(\XX)[\ell^\infty]$ and so this group is trivial for
almost all $\ell$.  This completes our sketch of the proof of the
theorem.
\end{proof}

The sketch above has all the main ideas needed to prove that the
prime-to-$p$ part of the Artin-Tate conjecture on the leading
coefficient of the zeta function at $s=1$ follows from the Tate
conjecture $T_1(\XX)$.  The $p$-part is formally similar although more
delicate.  To handle it, Milne replaces the group in the lower right
of the diagram with the larger group
$\Hom(H^2(\XX,(\Qp/\Zp)(1)),\Qp/\Zp)$.  The $z$ invariants of the maps
to and from this group turn out to have more $p$-adic content that is
related to the term $q^\alpha(\XX)$ in the Artin-Tate leading
coefficient conjecture.  We refer to \cite{Milne75} for the full
details and to \cite{UlmerCRM} for a discussion of several related
points, including the case $p=2$ (excluded in Milne's article, but now
provable due to improved $p$-adic cohomology) and higher dimensional
abelian varieties.

\section{The descent property of $T_1$}

If $\tilde\XX\to\XX$ is the blow up of $\XX$ at a closed point, then
$T_1(\tilde\XX)$ is equivalent to $T_1(\XX)$.  Indeed, under blowing
up both the rank of $\NS(\cdot)$ and the dimension of
$H^2(\cdot,\Ql(1))^{G_k}$ increase by one.  (See
Example~\ref{ss:blow-ups} above.)  In fact:

\begin{prop}\label{prop:T-descent}
  $T_1(\XX)$ is invariant under birational isomorphism.  More
  generally, if $\XX\to\YY$ is a dominant rational map, then
  $T_1(\XX)$ implies $T_1(\YY)$.
\end{prop}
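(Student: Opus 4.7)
The plan is to reduce to the case of a dominant morphism between smooth projective surfaces and then argue via pullback and Gysin pushforward. Given a dominant rational map $\phi:\XX\ratto\YY$, I first eliminate its indeterminacies: there exist a smooth projective surface $\XX'$, a birational morphism $\pi:\XX'\to\XX$ expressible as a sequence of blow-ups at closed points, and a morphism $\tilde\phi:\XX'\to\YY$ with $\tilde\phi=\phi\circ\pi$ wherever the right-hand side is defined. (This elimination of indeterminacy is classical for rational maps from a smooth projective surface to a projective variety.) By the blow-up comparison $T_1(\tilde\XX)\Leftrightarrow T_1(\XX)$ recorded just above the proposition, $T_1(\XX)$ is equivalent to $T_1(\XX')$, so it suffices to handle the dominant morphism case. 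The birational invariance claim is then just the dominant morphism case applied in both directions: a birational map $\XX\ratto\YY$ yields a roof $\XX\leftarrow\ZZ\to\YY$ with both legs composites of blow-ups, and hence $T_1(\XX)\Leftrightarrow T_1(\ZZ)\Leftrightarrow T_1(\YY)$.

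For the dominant morphism case, suppose $\tilde\phi:\XX'\to\YY$ is dominant between smooth projective surfaces; it is then generically finite of some degree $d$. Pullback $\tilde\phi^*$ and Gysin pushforward $\tilde\phi_*$ exist and are $G_k$-equivariant both on $\NS(-)\tensor\Ql$ and on $H^2(-,\Ql(1))$; they are compatible with each other under the cycle class map and satisfy $\tilde\phi_*\circ\tilde\phi^*=d\cdot\id$. This yields a commutative diagram
\begin{equation*}
\xymatrix{
\NS(\YY)\tensor\Ql\ar[r]\ar[d]_{\tilde\phi^*}&H^2(\YYbar,\Ql(1))^{G_k}\ar[d]^{\tilde\phi^*}\\
\NS(\XX')\tensor\Ql\ar[r]&H^2(\XXbar',\Ql(1))^{G_k}
}
\end{equation*}
whose bottom row is an isomorphism by $T_1(\XX')$. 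Given $c\in H^2(\YYbar,\Ql(1))^{G_k}$, write $\tilde\phi^*c$ as the cycle class of some $D\in\NS(\XX')\tensor\Ql$; then $d\cdot c=\tilde\phi_*\tilde\phi^*c$ is the cycle class of $\tilde\phi_*D$, so $c$ is the cycle class of $d^{-1}\tilde\phi_*D\in\NS(\YY)\tensor\Ql$. This proves surjectivity of the $\YY$ cycle class map, and combined with its automatic injectivity gives $T_1(\YY)$.

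The main obstacle, or at least the main black box, is the standard but nontrivial input from the six-functor formalism: the existence of the Gysin pushforward on $\ell$-adic cohomology for a generically finite morphism of smooth projective varieties of the same dimension, its $G_k$-equivariance, the projection formula $\tilde\phi_*\tilde\phi^*=\deg\tilde\phi$, and its compatibility with pushforward of cycles via the cycle class map. All of these are standard (see \cite{SGA4.5}, \cite{MilneEC}). The need to divide by $d$ is precisely why we argue after tensoring with $\Ql$; there is no hope of proving an integral analogue this way, but $T_1$ is formulated with $\Ql$-coefficients, so this is exactly the right framework.
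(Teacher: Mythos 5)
Your proof is correct and follows essentially the same strategy as the paper's: resolve indeterminacy by blow-ups (using the blow-up comparison), then handle the resulting generically finite dominant morphism via $\tilde\phi^*$ and $\tilde\phi_*$. Where the paper compresses the key step into the single phrase that pullback and pushforward ``present $\NS(\YY)\tensor\Ql$ as a direct factor of $\NS(\XX')\tensor\Ql$'' (and likewise on $H^2$), you unwind exactly what that means via the projection formula $\tilde\phi_*\tilde\phi^*=d$ and the commutative square; these are the same argument.
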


\begin{proof}
  We give simple proof of the case where $\XX$ and $\YY$ are surfaces.
  See \cite{Tate94} for the general case.

  First, we may assume $\XX\ratto\YY$ is a morphism.  Indeed, let
  $\tilde\XX\to\XX$ be a blow up resolving the indeterminacy of
  $\XX\ratto\YY$, i.e., so that the composition
  $\tilde\XX\to\XX\ratto\YY$ is a morphism.  As we have seen above
  $\T_1(\XX)$ implies $T_1(\tilde\XX)$ so we may replace $\XX$ with
  $\tilde\XX$ and show that $T_1(\YY)$ holds.

  So now suppose that $\pi:\XX\to\YY$ is a dominant morphism.  Since
  the dimensions of $\XX$ and $\YY$ are equal, $\pi$ must be
  generically finite, say of degree $d$.  But then the push forward
  and pull-back maps on cycles present $\NS(\YY)\tensor\Ql$ as a
  direct factor of $NS(\XX)\tensor\Ql$; they also present
  $H^2(\YYbar,\Ql(1))$ as a direct factor of $H^2(\XXbar,\Ql(1))$.
  The cycle class maps and Galois actions are compatible with these
  decompositions and since by assumption $NS(\XX)\tensor\Ql\isoto
  H^2(\XXbar,\Ql(1))^{G_k}$, we must also have
  $NS(\YY)\tensor\Ql\isoto H^2(\YYbar,\Ql(1))^{G_k}$, i.e.,
  $T_1(\YY)$.
\end{proof}

Note that the dominant rational map $\XX\ratto\YY$ could be a ground
field extension, or even a purely inseparable morphism.

\section{Tate's theorem on products}
In this section we sketch how $T_1$ for products of curves follows from
Tate's theorem on endomorphisms of abelian varieties over finite
fields.

\begin{thm}[Tate]\label{thm:products}
Let $\CC$ and $\DD$ be curves over $k$ and set $\XX=\CC\times_k\DD$.
Then $T_1(\XX)$ holds.
\end{thm}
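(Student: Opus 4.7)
The plan is to reduce $T_1(\XX)$ to Tate's isogeny theorem (Theorem~\ref{thm:TateIsogThm}) by matching the decomposition~(\ref{eq:NS(CxD)}) of $\NS(\XX)$ piece by piece with a K\"unneth decomposition of $H^2(\XXbar,\Ql(1))^{G_k}$. After replacing $k$ by a finite extension if necessary (harmless for $T_1$ by Proposition~\ref{prop:T-descent}), I may assume $\CC$ and $\DD$ have $k$-rational points, so~(\ref{eq:NS(CxD)}) gives $\NS(\XX)\cong\Z^2\oplus\Hom_k(J_\CC,J_\DD)$, whence
\begin{equation*}
\rk\NS(\XX)=2+\rk\Hom_k(J_\CC,J_\DD).
\end{equation*}

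On the cohomology side, K\"unneth decomposes $H^2(\XXbar,\Ql(1))$ into three summands. The outer summands $H^0(\CCbar,\Ql)\tensor H^2(\DDbar,\Ql(1))$ and $H^2(\CCbar,\Ql)\tensor H^0(\DDbar,\Ql(1))$ are each canonically $\Ql$ with trivial $G_k$-action, via the trace isomorphism $H^2(\cdot,\Ql(1))\cong\Ql$. For the middle summand, the proposition on Tate modules identifies $H^1(\CCbar,\Ql)\cong(V_\ell J_\CC)^*$, and combined with Weil self-duality $V_\ell J_\DD\cong(V_\ell J_\DD)^*(1)$ this gives a $G_k$-equivariant isomorphism
\begin{equation*}
H^1(\CCbar,\Ql)\tensor H^1(\DDbar,\Ql(1))\cong\Hom_\Ql(V_\ell J_\CC,V_\ell J_\DD).
\end{equation*}
Taking $G_k$-invariants and applying Theorem~\ref{thm:TateIsogThm} to the middle factor, I obtain $\dim_\Ql H^2(\XXbar,\Ql(1))^{G_k}=2+\rk\Hom_k(J_\CC,J_\DD)$, matching $\rk\NS(\XX)$ exactly.

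The final step is to verify that the cycle class map respects the two decompositions: the classes of a horizontal and vertical fiber must hit the outer K\"unneth summands, and the cycle class of a divisorial correspondence must go, under the K\"unneth/Weil identification of the middle summand with $\Hom_\Ql(V_\ell J_\CC,V_\ell J_\DD)$, to the map on Tate modules induced by the corresponding homomorphism of Jacobians under~(\ref{eq:divcor-hom}). Once this compatibility is in hand, the cycle class map becomes an injective $\Ql$-linear map between spaces of equal finite dimension, hence an isomorphism, which is $T_1(\XX)$. I expect this compatibility to be the main obstacle --- a standard but nontrivial functoriality relating correspondences, their cycle classes, and their actions on $H^1$ of the factor curves --- while the dimension count itself is forced, via K\"unneth and Tate, by the isogeny theorem.
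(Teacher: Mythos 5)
Your argument is correct and takes essentially the same route as the paper: both proofs decompose $\NS(\XX)\tensor\Ql$ via~(\ref{eq:NS(CxD)}) into $\Ql^2\oplus\Hom(J_\CC,J_\DD)\tensor\Ql$, decompose $H^2(\XXbar,\Ql(1))^{G_k}$ via K\"unneth together with the identifications $H^1(\CCbar,\Ql)\cong(V_\ell J_\CC)^*$ and the Weil self-duality $H^1(\DDbar,\Ql(1))\cong V_\ell J_\DD$, and then invoke Theorem~\ref{thm:TateIsogThm} on the correspondence piece. The only (minor) difference is at the last step: you close by a dimension count combined with the need for a compatibility check, but the compatibility you single out as the main obstacle is not actually required --- injectivity of the cycle class map $\NS(\XX)\tensor\Ql\into H^2(\XXbar,\Ql(1))^{G_k}$ is already known from the Kummer-sequence construction of Section~\ref{s:cycle}, so once the two dimensions agree the map is automatically an isomorphism; the paper instead asserts (without proof) that the cycle class map respects the two decompositions piece by piece, which gives a slightly more informative statement but is not needed for $T_1(\XX)$ itself.
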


\begin{proof}
  Extending $k$ if necessary, we may assume that $\CC$ and $\DD$ both
  have rational points.  Fix rational base points $x$ and $y$ (which we
  will mostly omit from the notation below).  Recall from
  Subsection~\ref{ss:products} that
$$\NS(\CC\times\DD)\cong\Z^2\times\DivCorr(\CC,\DD)
\cong\Z^2\times\Hom(J_\CC,J_\DD).$$

By the K\"unneth formula,
\begin{align*}
H^2(\XXbar,\Ql)&\cong 
\left(H^2(\CCbar,\Ql)\tensor H^0(\DDbar,\Ql)\right)\oplus
\left(H^0(\CCbar,\Ql)\tensor H^2(\DDbar,\Ql)\right)\\
&\qquad\oplus \left(H^1(\CCbar,\Ql)
\tensor H^1(\DDbar,\Ql)\right)\\
&\cong\Ql(-1)\oplus\Ql(-1)\oplus 
\left(H^1(\CCbar,\Ql)\tensor H^1(\DDbar,\Ql)\right)
\end{align*}
Twisting by $\Ql(1)$ and taking invariants, we have
$$H^2(\XXbar,\Ql(1))^{G_k}=
\Ql^2\oplus\left(H^1(\CCbar,\Ql)\tensor
  H^1(\DDbar,\Ql)(1)\right)^{G_k}.$$ 
Under the cycle class map, the factor $\Z^2$ of $\NS(\XX)$
(corresponding to $\CC\times\{y\}$ and $\{x\}\times\DD$) spans the
factor $\Ql^2$ of $H^2(\XXbar,\Ql(1))^{G_k}$ (corresponding to
$H^2\tensor H^0$ and $H^0\tensor H^2$ in the Kunneth decomposition).
Thus what we have to show is that the cycle class map induces an
isomorphism
$$\Hom(J_\CC,J_\DD)\tensor\Ql\isoto 
\left(H^1(\CCbar,\Ql)\tensor H^1(\DDbar,\Ql)(1)\right)^{G_k}$$

But $H^1(\DDbar,\Ql)(1)\cong H^1(\DDbar,\Ql)^*\cong V_\ell(J_\DD)$ and
$H^1(\CCbar,\Ql)\cong V_\ell(J_\CC)^*$ ($*=\Ql$-linear dual).  Thus
$$\left(H^1(\CCbar,\Ql)\tensor H^1(\DDbar,\Ql)(1)\right)^{G_k}\cong
\left( V_\ell(J_\CC)^*\tensor V_\ell(J_\DD)\right)^{G_k}
\cong\Hom_{G_k}(V_\ell(J_\CC),V_\ell(J_\DD)).$$

Thus the needed isomorphism is
$$\Hom(J_\CC,J_\DD)\tensor\Ql\isoto
\Hom_{G_k}(V_\ell(J_\CC),V_\ell(J_\DD))$$ 
and this is exactly the statement of Tate's theorem (Lecture~0,
Theorem~\ref{thm:TateIsogThm}).  This completes the proof of the
theorem.
\end{proof}

\begin{rems}\leavevmode
\begin{enumerate}
\item A variation of the argument above, using Picard and Albanese
  varieties, shows that $T_1$ for a product $\XX\times\YY$ of
  varieties of any dimension follows from $T_1$ for the factors.
\item It is worth noting that Tate's conjecture $T_1$ (and the proof
  of it for products of curves) only characterizes the image of in
  $\ell$-adic cohomology of $\NS(\XX)\tensor\Zl$, not the image of
  $\NS(\XX)$ itself.  This should be contrasted with the Lefschetz
  $(1,1)$ theorem, which characterizes the image of $\NS(\XX)$ in
  deRham cohomology when the ground field is $\C$.
\end{enumerate}
\end{rems}

\section{Products of curves and DPC}
Assembling the various parts of this lecture gives the main result:

\begin{prop}\label{prop:T-DPC}
Let $\XX$ be a smooth, projective surface over $k$.  If there is a
dominant rational map 
$$\CC\times_k\DD\ratto\XX$$
from a product of curves to $\XX$, then the Tate conjectures
$T_1(\XX)$ and $T_2(\XX)$ hold.
\end{prop}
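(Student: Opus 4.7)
The plan is to assemble three results already established in the lecture, with essentially no new work required.

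First, I would invoke Theorem~\ref{thm:products}: since $\CC$ and $\DD$ are curves over $k$, the Tate conjecture $T_1$ holds for their product, i.e., $T_1(\CC \times_k \DD)$ is true. This is where the actual arithmetic content lies---it rests on Tate's theorem on homomorphisms of abelian varieties (Theorem~\ref{thm:TateIsogThm}) via the K\"unneth decomposition and the identification $\NS(\CC\times\DD) \cong \Z^2 \oplus \Hom(J_\CC, J_\DD)$.

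Next, I would apply Proposition~\ref{prop:T-descent} (the descent property of $T_1$) to the given dominant rational map $\CC \times_k \DD \ratto \XX$. That proposition says precisely that if $\XX \ratto \YY$ is a dominant rational map of surfaces and $T_1(\XX)$ holds, then $T_1(\YY)$ holds. Here the roles of $\XX$ and $\YY$ in that statement are played by $\CC \times_k \DD$ and our $\XX$, respectively. Conclusion: $T_1(\XX)$ holds.

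Finally, by Proposition~\ref{prop:T1->T2}, the conjectures $T_1(\XX)$ and $T_2(\XX)$ are equivalent for any smooth projective surface, so $T_2(\XX)$ follows as well. There is no real obstacle here, since every nontrivial ingredient has been proved earlier in the lecture; the only thing to check is the minor bookkeeping that $\CC \times_k \DD$ is a surface (so that the surface version of Proposition~\ref{prop:T-descent} applies) and that $\XX$ inherits the standing hypotheses (smooth, projective, absolutely irreducible, with a rational point), which we may arrange after a harmless finite extension of $k$---this does not affect $T_1$ or $T_2$, since both are preserved under such base change by another application of Proposition~\ref{prop:T-descent}.
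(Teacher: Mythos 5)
Your argument is correct and is exactly the paper's own proof: apply Theorem~\ref{thm:products} to get $T_1(\CC\times_k\DD)$, then Proposition~\ref{prop:T-descent} to descend to $T_1(\XX)$, then Proposition~\ref{prop:T1->T2} to conclude $T_2(\XX)$. The extra remark about a harmless finite extension of $k$ is fine but not needed beyond what the cited propositions already handle.
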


Indeed, by Theorem~\ref{thm:products}, we have $T_1(\CC\times\DD)$ and
then by Proposition~\ref{prop:T-descent} we deduce $T_1(\XX)$.  By
Proposition~\ref{prop:T1->T2}, $T_2(\XX)$ follows as well.

We say that ``$\XX$ is dominated by a product of curves (DPC).''  The
question of which varieties are dominated by products of curves has
been studied by Schoen \cite{Schoen96}.  In particular, over any field
there are surfaces that are not dominated by products of curves.
Nevertheless, as we will see below, the collection of DPC surfaces is
sufficiently rich to give some striking results on the Birch and
Swinnerton-Dyer conjecture.

%%%%%%%%%%%%%%%%%%%%%%%%%%%%%%%%%%%%%%%%%%%%%%%%%%%%%%

\lecture{Elliptic curves and elliptic surfaces}\label{l:BSD-T}
We keep our standard notations throughout this lecture:  
$p$ is a prime, $k=\Fq$ is the finite field of characteristic $p$ with
$q$ elements, $\CC$ is a smooth, projective, absolutely irreducible
curve over $k$, $K=k(\CC)$ is the function field of $\CC$, and $E$ is
an elliptic curve over $K$.

\section{Curves and surfaces}\label{s:curves-surfaces}
In this section we will construct an elliptic surface $\EE\to\CC$
canonically associated to an elliptic curve $E/K$.  More precisely, we
give a constructive proof of the following result:

\begin{prop}\label{prop:model}
  Given an elliptic curve $E/K$, there exists a surface $\EE$ over $k$
  and a morphism $\pi:\EE\to\CC$ with the following properties: $\EE$
  is smooth, absolutely irreducible, and projective over $k$, $\pi$ is
  surjective and relatively minimal, and the generic fiber of $\pi$ is
  isomorphic to $E$.  The surface $\EE$ and the morphism $\pi$ are
  uniquely determined up to isomorphism by these requirements.
\end{prop}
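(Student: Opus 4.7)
The plan is to build $\EE$ in three stages---spread out a Weierstrass model, resolve singularities, contract superfluous fiber components---and then prove uniqueness via the structure theory of birational maps between smooth projective surfaces.

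Concretely, I would first choose a Weierstrass equation (\ref{eq:cubica}) for $E$. Covering $\CC$ by finitely many affine opens $U_\alpha=\spec R_\alpha$ and clearing denominators over each $R_\alpha$, the coefficients $a_i$ become elements of $R_\alpha$ and so cut out a closed subscheme of $\P^2_{R_\alpha}$. Since these local models all have the same generic fiber $E$, they glue (via their restriction to the generic point, or inside a suitable $\P^2_\CC$) to a projective, flat morphism $\EE_0\to\CC$ whose generic fiber is $E$. The surface $\EE_0$ is $2$-dimensional and generically smooth; it is absolutely irreducible because its generic fiber is. However, above the finitely many places of bad reduction, the total space $\EE_0$ may have surface singularities.

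Second, I invoke resolution of singularities for surfaces (Lipman~\cite{Lipman78}, cited in the paper) to obtain a proper birational morphism $\tilde\EE\to\EE_0$ with $\tilde\EE$ a regular projective surface. Since the resolution is an isomorphism away from the (isolated) singular points, the generic fiber is unchanged, and the composition $\tilde\EE\to\CC$ is still surjective with generic fiber $E$. Because $k=\Fq$ is perfect, regularity of $\tilde\EE$ is equivalent to smoothness of $\tilde\EE\to\spec k$. Third, to achieve relative minimality with respect to $\CC$, I iteratively contract any $(-1)$-curve contained in a fiber of $\tilde\EE\to\CC$ via Castelnuovo's criterion; each such contraction produces a smooth projective surface, strictly decreases the rank of $\NS$, and therefore terminates, giving the desired $\pi:\EE\to\CC$. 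The generic fiber is preserved throughout, and is therefore isomorphic to $E$.

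For uniqueness, suppose $\pi:\EE\to\CC$ and $\pi':\EE'\to\CC$ are two candidates. The identification of their generic fibers with $E$ defines a birational map $\EE\ratto\EE'$ over $\CC$. By the classical factorization theorem for birational maps between smooth projective surfaces, this map is resolved by a composite $\tilde\EE\to\EE$ of blow-ups at closed points, such that the induced map $\tilde\EE\to\EE'$ is a composite of blow-downs. All the exceptional curves introduced lie in fibers of the projections to $\CC$, so the contracted $(-1)$-curves on the $\EE'$-side also lie in fibers. Relative minimality of both $\EE$ and $\EE'$ forces every such blow-down in $\tilde\EE\to\EE'$ to cancel a blow-up in $\tilde\EE\to\EE$ (otherwise $\EE'$ would contain a $(-1)$-curve in a fiber), and hence $\EE\ratto\EE'$ is an isomorphism over $\CC$.

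The main obstacle is the uniqueness statement: existence is a matter of assembling standard inputs (spreading out, Lipman's resolution, Castelnuovo contraction), whereas uniqueness rests on the more delicate fact that a birational map between two relatively minimal fibered smooth projective surfaces with the same generic fiber is necessarily a biregular isomorphism. This is the step where I would be most careful to cite a precise reference (e.g., the relative minimal model theory for fibered surfaces, as in \cite{LiuAGAC}) rather than rework the argument from scratch.
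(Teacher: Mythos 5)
Your approach---spread out a Weierstrass model, resolve singularities by Lipman, contract $(-1)$-curves in fibers, then invoke uniqueness of relatively minimal models---is the general-purpose route (essentially the one in \cite{LiuAGAC}) and is correct modulo the gluing issue below. The paper instead takes a more explicit route tailored to elliptic fibrations. It first builds a \emph{minimal} Weierstrass surface $\WW\to\CC$ in Proposition~\ref{prop:W}: a normal projective surface with irreducible plane-cubic fibers, whose only singularities are rational double points \emph{precisely because} the local Weierstrass models are chosen minimal. It then resolves $\WW$ via Tate's algorithm (Proposition~\ref{prop:Tate-algo}). Since the minimal resolution of a rational double point has only $(-2)$-curves in its exceptional locus, the resulting $\EE$ is already relatively minimal, and no contraction step is needed. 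This is not just cosmetic: the explicit control over the blow-ups is exactly what lets one read off fiber types, conductors, and local zeta factors later in the lecture.

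There is a genuine gap in your construction of $\EE_0$. Having the same generic fiber $E$ does \emph{not} by itself make the local Weierstrass models over the opens $U_\alpha$ glue. Two integral Weierstrass models over $U_\alpha\cap U_\beta$ are related by a coordinate change $(u,r,s,t)$ with $u\in K^\times$ and $r,s,t\in K$; they give a biregular isomorphism on the overlap only if $u$ is a \emph{unit} on $U_\alpha\cap U_\beta$ (and $r,s,t$ regular there), and this depends on how denominators were cleared. (For instance, if one local model is minimal at a place $v$ of the overlap and the other is not, then $\ord_v(u)>0$.) Nor do the models all sit inside a single $\P^2_\CC$: the Weierstrass surface naturally lives in the $\P^2$-bundle $\P(\omega^2\oplus\omega^3\oplus\O_\CC)$ over $\CC$, which is trivial only when $\omega$ is. The paper handles this in the proof of Proposition~\ref{prop:W} by extending a fixed model over one bad place at a time, always using \emph{minimal} integral models, so that on each overlap both models are minimal and hence $u$ is forced to be a unit (compare $u^{12}\Delta'=\Delta$ with both discriminants of minimal valuation). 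You need either this argument or the $\P^2$-bundle picture to make your $\EE_0$ exist.

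Your uniqueness sketch is in the right spirit; the paper just cites Lichtenbaum \cite{Lichtenbaum68}, whose theorem packages exactly the factorization-of-birational-maps argument you outline, stated in the generality needed here (smooth projective surfaces over a finite, hence perfect but not algebraically closed, field).
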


Here ``the generic fiber of $\pi$'' means $\EE_K$,  the fiber product:
\begin{equation*}
\xymatrix{\EE_K:=\eta\times_{\CC}\EE\ar[r]\ar[d]&\EE\ar[d]^{\pi}\\
\eta=\spec K\ar[r]&\CC}
\end{equation*}
``Relatively minimal'' means that if $\EE'$ is another smooth,
absolutely irreducible, projective surface over $k$ with a surjective
morphism $\pi':\EE'\to\CC$, then any birational morphism $\EE\to\EE'$
commuting with $\pi$ and $\pi'$ is an isomorphism.  Relative
minimality is equivalent to the condition that there are
no rational curves of self-intersection $-1$ in the fibers of $\pi$
(i.e., to the non-existence of curves in fibers that can be blown
down).

\begin{rems}
  The requirements on $\EE$ and $\pi$ imply that $\pi$ is flat and
  projective and that all geometric fibers of $\pi$ are connected.
  These properties of $\pi$ will be evident from the explicit
  construction below.  It follows that $\pi_*\OO_\EE\cong\OO_\CC$ and
  more generally that $\pi$ is ``cohomologically flat in dimension
  zero,'' meaning that for every morphism $T\to C$ the base change 
$$\pi_T:\EE_T=\EE\times_\CC T\to T$$ 
satisfies $\pi_{T*}\OO_{\EE_T}=\OO_T$.
% follows from Raynaud 7.2.1
\end{rems}

Uniqueness in Proposition~\ref{prop:model} follows from general
results on minimal models, in particular
\cite{Lichtenbaum68}*{Thm.~4.4}.  See \cite{Chinburg86} and
\cite{LiuAGAC}*{9.3} for other expositions.

We first give a detailed construction of a (possibly singular)
``Weierstrass surface'' $\WW\to\CC$ and then resolve singularities to
obtain $\EE\to\CC$.

More precisely, the proposition follows from the following two results.

\begin{prop}\label{prop:W}
  Given an elliptic curve $E/K$, there exists a surface $\WW$ over $k$
  and a morphism $\pi_0:\WW\to\CC$ with the following properties:
  $\WW$ is normal, absolutely irreducible, and projective over $k$,
  $\pi_0$ is surjective, each of its fibers is isomorphic to an
  irreducible plane cubic, and its generic fiber is isomorphic to $E$.
\end{prop}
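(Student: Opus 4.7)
The plan is to construct $\WW$ explicitly as a closed subscheme of a $\P^2$-bundle over $\CC$, by globalizing a Weierstrass equation for $E$ in a coordinate-free way. The starting point is the fact recalled in Lecture~1 that $E$ may be presented over $K$ by a Weierstrass cubic with coefficients $a_1,\dots,a_6\in K$, and that for every place $v$ we may choose a model that is minimal integral at $v$.

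First I would cover $\CC$ by affine opens $U_\alpha$ and pick on each a Weierstrass model with $a_i\in\Gamma(U_\alpha,\O_\CC)$; by a finiteness argument one may even arrange that this model is minimal at every closed point of $U_\alpha$. On an overlap $U_\alpha\cap U_\beta$, two such models differ by an admissible transformation $(x,y)\mapsto(u^2x+r,\,u^3y+u^2sx+t)$ with $u\in\Gamma(U_\alpha\cap U_\beta,\O_\CC^\times)$ and $r,s,t\in\Gamma(U_\alpha\cap U_\beta,\O_\CC)$. The scaling factors $u_{\alpha\beta}$ form a \v Cech $1$-cocycle and so define an invertible sheaf $\LL$ on $\CC$; under the gluing, $x$ and $y$ transform as local sections of $\LL^{\otimes 2}$ and $\LL^{\otimes 3}$, and $a_i$ transforms as a section of $\LL^{\otimes i}$.

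Next I would form the projective bundle $\PP=\P(\O_\CC\oplus\LL^{\otimes 2}\oplus\LL^{\otimes 3})\to\CC$, with tautological coordinates $Z,X,Y$ of the appropriate weights. The globally defined homogeneous Weierstrass equation
$$Y^2Z+a_1XYZ+a_3YZ^2=X^3+a_2X^2Z+a_4XZ^2+a_6Z^3$$
then cuts out a Cartier divisor $\WW\subset\PP$, and I take $\pi_0\colon\WW\to\CC$ to be the restriction of the bundle map. Projectivity is immediate; absolute irreducibility follows because the generic fiber is $E$, which is geometrically irreducible. Every fiber of $\pi_0$ is defined by a Weierstrass cubic with coefficients in the residue field, hence is an irreducible plane cubic; in particular the section $[0\!:\!1\!:\!0]$ shows $\pi_0$ is surjective.

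The remaining item is normality. Since $\WW$ is a Cartier divisor in the smooth threefold $\PP$, it is a local complete intersection, hence Cohen--Macaulay ($S_2$), and Serre's criterion reduces normality to regularity in codimension one ($R_1$). The singular locus of $\WW$ is closed in $\WW$ and does not meet the generic fiber $E$ (which is smooth), so it is contained in finitely many closed fibers; each singular fiber $E_v$ has only isolated singular points, and at each such point the additional base-direction derivative of the defining equation is nonzero unless the coefficients $a_i$ themselves degenerate along a curve in $\CC$, which is ruled out by $\Delta\neq 0$ in $K$. Thus the singular locus of $\WW$ is a finite set of closed points, giving $R_1$ and hence normality. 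The principal obstacle is really the bookkeeping in the second paragraph---verifying that the admissible-transformation cocycle defines a genuine invertible sheaf and that the equation glues to a well-defined divisor in $\PP$---while all other assertions reduce either to the construction itself or to the codimension count for the singular locus.
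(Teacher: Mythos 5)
Your approach is essentially the paper's: globalize the local Weierstrass models and verify normality via Serre's criterion. You package the globalization as a hypersurface in the $\P^2$-bundle $\P(\OO_\CC\oplus\LL^{\otimes 2}\oplus\LL^{\otimes 3})$ from the start, whereas the paper glues affine Weierstrass surfaces over an increasing chain of opens of $\CC$ and only afterwards (in the section on $\omega$, which is your $\LL$) records the bundle description. The two constructions produce the same $\WW$; yours is cleaner and more structural, while the paper's keeps the minimal-model choices in hand for the subsequent run of Tate's algorithm.

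The one clause to flag is in your normality argument. You claim the base-direction derivative $\partial_t F$ is nonzero at a singular point of a bad fiber unless the $a_i$ degenerate along a curve, as if you expect $\WW$ to be regular at such a point. Typically it is not: $\WW$ genuinely has singular points (rational double points---which is exactly why Proposition~\ref{prop:Tate-algo} has work to do), and at those points $\partial_t F$ vanishes along with the fiber derivatives. Fortunately the observation is not needed. If the fiber $\WW_v$ is smooth at $p$, then $\O_{\WW,p}$ is regular by slicing along a uniformizer at $v$ (a nonzero divisor since $\WW$ dominates $\CC$), so the singular locus of $\WW$ is contained in the union, over the finitely many bad $v$, of the finitely many singular points of the irreducible plane cubics $\WW_v$. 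That finite set has codimension $2$, giving $R_1$; combined with your $S_2$ from the Cartier-divisor/Cohen--Macaulay argument, normality follows.
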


This proposition is elementary, but does not seem to be explained in
detail in the literature, so we give a proof below.

\begin{prop}\label{prop:Tate-algo}
  There is an explicit sequence of blow ups \textup{(}along closed
  points and curves in $\WW$\textup{)} yielding a proper birational
  morphism $\sigma:\EE\to\WW$ where the surface $\EE$ and the composed
  morphism $\pi=\pi_0\compose\sigma:\EE\to\WW\to\CC$ have the
  properties mentioned in Proposition~\ref{prop:model}.
\end{prop}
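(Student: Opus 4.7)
The plan is to produce $\EE \to \WW$ by resolving the singularities of $\WW$ via a fiber-by-fiber procedure (Tate's algorithm), and then to check that the resulting surface is smooth over $k$ and that no $(-1)$-curves appear in the fibers.

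First I would locate the singularities of $\WW$. Since the generic fiber of $\pi_0$ is the smooth curve $E$ and every fiber is an irreducible plane cubic, the only singular points of $\WW$ lie over the finite set of places $v$ of $\CC$ where the reduced Weierstrass equation defines a nodal or cuspidal cubic, and at each such $v$ the singular locus of $\WW$ consists of a single closed point $P_v$ (the singular point of the reduced cubic over $\kappa_v$). Because $\WW$ is a Cartier divisor on a smooth threefold (the projective bundle in which the Weierstrass equation lives), each $P_v$ is a hypersurface singularity of $\WW$, and the general theory of resolution of surface singularities (Lipman, cited in the excerpt) applies. One may in fact proceed very concretely: choose a uniformizer at $v$, write the local Weierstrass equation, and resolve by an explicit sequence of blow-ups of closed points (and normalizations, which in this hypersurface setting reduce to blow-ups along smooth curves in the exceptional locus). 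This is exactly Tate's algorithm \cite{Tate75}, which runs through a bounded list of cases and outputs the Kodaira type of the fiber together with the configuration of exceptional curves inserted to resolve $P_v$.

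Next I would check that the output $\EE \to \CC$ is smooth over $k$, not merely regular. Away from the blown-up points this is automatic because $\WW$ was already smooth there. At the exceptional divisor one checks by direct inspection of the local equations produced in each case of the algorithm that the local ring is regular and that, moreover, it is a smooth $k$-algebra; the key point is that the exceptional components introduced by Tate's algorithm are rational curves defined over the residue field $\kappa_v$, which is a finite (hence perfect) extension of $k$, so regularity of $\EE$ at these points coincides with smoothness over $k$. This handles the subtle discrepancy between regularity and smoothness mentioned in the earlier remark-exercise about non-perfect ground fields.

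Then I would verify relative minimality. By construction, each connected component of the exceptional divisor $\sigma^{-1}(P_v)$ is a configuration of rational curves whose intersection form is one of the standard negative-definite Kodaira lattices; inspecting these lattices one sees that no component has self-intersection $-1$ unless it was introduced unnecessarily, and the stopping rule of Tate's algorithm is precisely that no such contractible component remains. Equivalently, one may quote \cite{Lichtenbaum68} to say that the minimal regular model is unique and that Tate's procedure produces it. Finally, the generic fiber of $\pi$ is unchanged by $\sigma$ (which is an isomorphism away from the finitely many $P_v$), so the generic fiber is still $E$.

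The main obstacle is the bookkeeping at wildly ramified bad places in residue characteristics $p=2,3$: there the conductor exponent $\delta_v$ from Section~\ref{s:local-invs} is positive, Tate's algorithm has the greatest number of sub-cases, and one must check case by case both that the algorithm terminates and that the final model is smooth over $k$ rather than only regular. Since the paper permits us to invoke \cite{Tate75} for the algorithm and \cite{Lipman78} for the underlying resolution theorem, this verification is routine but not entirely trivial, and it is really the only place where the construction requires genuine effort beyond the characteristic-zero case.
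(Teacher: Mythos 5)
Your proposal is correct but differs from the paper's argument in a key structural respect. The paper's central observation --- which it explicitly flags as ``the key point'' --- is that the singularities of the minimal Weierstrass model $\WW$ are \emph{rational double points} (citing Conrad). This is what makes the resolution procedure so simple: the blow-up of a rational double point is again normal with at worst rational double points, so one can just iterate blow-ups of the isolated singular points and the process terminates, with no need for interleaved normalizations. Your proof omits this observation entirely and instead falls back on Lipman's general resolution theorem. That does give a correct proof, but it is much heavier artillery than what the paper uses, and it leaves unexplained why the algorithm is so explicit and self-terminating in this case. Relatedly, you conflate ``normalization'' with ``blowing up along smooth curves in the exceptional locus'' --- these are different operations, and in the RDP setting no normalization step is needed at all. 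The blow-ups along curves that do occur in Tate's algorithm (steps 6 and 7) are, as the paper notes, merely a device for handling several point blow-ups at once and are not essential.

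On the other hand, your proposal makes several points that the paper defers to references rather than spelling out, and these are genuinely worthwhile: the observation that $\kappa_v$ is a finite extension of $\Fq$ and hence perfect, so regularity of $\EE$ along the exceptional locus is the same as smoothness over $k$ (resolving the concern raised in the earlier remark-exercise about imperfect fields); the explicit check of relative minimality via the negative-definiteness of the Kodaira lattices and the absence of $(-1)$-curves; and the warning about wild ramification in residue characteristics $2$ and $3$. These complement the paper's argument nicely. To bring your proof into closer alignment with the paper's, the main thing to add is the rational-double-point structure of the singularities of $\WW$; that single fact replaces the appeal to Lipman and explains at a glance why iterated point blow-ups suffice.
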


\begin{proof}[Proof of Proposition~\ref{prop:W}]
  Choose a Weierstrass equation for $E$:
\begin{equation}\label{eq:model}
y^2+a_1xy+a_3y=x^3+a_2x^2+a_4x+a_6
\end{equation}
where the $a_i$ are in $K=k(\CC)$.  Recall that we have defined the
notion of a minimal integral model at a place $v$ of $K$: the $a_i$
should be integral at $v$ and the valuation at $v$ of $\Delta$ should
be minimal subject to the integrality of the $a_i$.  Clearly, there is
a non-empty Zariski open subset $U\subset\CC$ such that for every
closed point $v\in U$, the model~(\ref{eq:model}) is a minimal
integral model.

Let $\WW_1$ be the closed subset of $\P^2_U:=\P^2_k\times_k U$ defined
by the vanishing of
\begin{equation}\label{eq:EE}
Y^2Z+a_1XYZ+a_3YZ^2-\left(X^3+a_2X^2Z+a_4XZ^2+a_6Z^3\right)
\end{equation}
where $X,Y,Z$ are the standard homogeneous coordinates on $\P^2_k$.
Then $\WW_1$ is geometrically irreducible and there is an obvious
projection $\pi_1:\WW_1\to U$ (the restriction to $\WW_1$ of the
projection $\P^2_U\to U$).  The fiber of $\pi_1$ over a closed point
$v$ of $U$ is the plane cubic
$$Y^2Z+a_1(v)XYZ+a_3(v)YZ^2=X^3+a_2(v)X^2Z+a_4(v)XZ^2+a_6(v)Z^3$$
over the residue field $\kappa_v$ at $v$.  The generic point $\eta$ of
$\CC$ lies in $U$ and the fiber  of $\pi_1$ at $\eta$ is $E/K$.  

There are finitely many points in $\CC\setminus U$ and we must extend
the model $\WW_1\to U$ over each of these points.  Choose one of them,
call it $w$, and choose a model of $E$ that is integral and minimal at
$w$.  In other words, choose a model of $E$
\begin{equation}\label{eq:model'}
y^{\prime2}+a'_1x'y'+a'_3y'=x^{\prime3}+a'_2x'^2+a'_4x'+a'_6
\end{equation}
where the $a'_i\in K$ are integral at $w$ and the valuation at $w$ of the
discriminant $\Delta$ is minimal.  The new coordinates are related to
the old by a transformation
\begin{equation}\label{eq:change-of-coords}
(x,y)=(u^2x'+r,u^3y'+su^2x'+t)
\end{equation}
with $u\in K^\times$ and $r,s,t\in K$.  Let $U'$ be a Zariski open
subset of $\CC$ containing $w$ on which all of the $a_i'$ are integral
and the model~(\ref{eq:model'}) is minimal.  Let $\WW'$ be the
geometrically irreducible closed subset of $\P^1_{U'}$ defined by the
vanishing of
$$Y^{\prime2}Z'+a'_1X'Y'Z'+a'_3Y'Z^{\prime2}-
\left(X^{\prime3}+a'_2X^{\prime2}Z'+a'_4X'Z^{\prime2}+a'_6Z^{\prime}3\right)$$
with its obvious projection $\pi':\WW'\to U'$.
On the open set $V=U\cap U'$, $u$ is a unit and
the change of coordinates~(\ref{eq:change-of-coords}), or rather
its projective version
$$(X,Y,Z)=(u^2X'+rZ,u^3Y'+su^2X'+tZ',Z')$$
defines an isomorphism between $\pi_1^{-1}(V)$ and $\pi^{\prime-1}(V)$
compatible with the projections.  Glueing $\WW_1$ and $\WW'$ along
this isomorphism yields a new surface $\WW_2$ equipped with a
projection $\pi_2:\WW_2\to U_2$ where $U_2=U\cup U'$.  Note that $U_2$
is strictly larger than $U$.  Moreover $\pi_2$ is surjective, its
geometric fibers are irreducible projective plane cubics, and its
generic fiber is $E$.

We now iterate this construction finitely many times to extend the
original model over all of $\CC$.  We arrive at a surface $\WW$
equipped with a proper, surjective morphism $\pi:\WW\to\CC$ whose
geometric fibers are irreducible plane cubics and whose generic fiber
is $E$.  Since $\CC$ is projective over $k$, so is $\WW$.  Since $\WW$
is obtained by glueing reduced, geometrically irreducible surfaces
along open subsets, it is also reduced and geometrically irreducible.
Since it has only isolated singular points, by Serre's criterion it is
normal.

This completes the proof of Proposition~\ref{prop:W}.
\end{proof}

Note that the closure in $\WW$ of the identity element of $E$ is a
divisor on $\WW$ which maps isomorphically to the base curve $\CC$.
We write $s_0:\CC\to\WW$ for the inverse morphism.  This is the {\it
  zero section\/} of $\pi_0$.  In terms of the coordinates on $\WW_1$
used in the proof above, it is just the map $t\mapsto([0,1,0],t)$.

\begin{proof}[Discussion of Proposition~\ref{prop:Tate-algo}]
The algorithm mentioned in the Proposition is the
subject of Tate's famous paper \cite{Tate75}.  His article does not
mention blowing up, but the steps of the algorithm nevertheless give
the recipe for the blow ups needed.  The actual process of blowing up
is explained in detail in \cite{SilvermanAT}*{IV.9} so we will
not give the details here.  Rather, we explain why there is a simple
algorithm, following \cite{Conrad05}.

First note that the surface $\WW$ is reduced and irreducible and so
has no embedded components.  Also, it has isolated singularities.
(They are contained in the set of singular points of fibers of
$\pi_0$.)  By Serre's criterion, $\WW$ is thus normal.  Moreover, and
this is the key point, its singularities are {\it rational double
  points\/}.  (See \cite{Artin86} for the definition and basic
properties of rational singularities and \cite{BadescuAS}*{Chapters 3
  and 4} for many more details.  See \cite{Conrad05}*{Section~8} for
the fact that the singularities of a minimal Weierstrass model are
rational.)  This implies that the blow up of $\WW$ at one of its
singular points is again normal (so has isolated singularities) and
again has at worst rational double points.  An algorithm to
desingularize is then simply to blow up at a singular point and
iterate until the resulting surface is smooth.  Given the explicit
nature of the equations defining $\WW$, finding the singular points
and carrying out the blow ups is straightforward.

In fact, Tate's algorithm also calls for blowing up along certain
curves.  (This happens at steps 6 and 7.)  This has the effect of
dealing with several singular points at the same time, so is more
efficient, but it is not essential to the success of the algorithm.

This completes our discussion of Proposition~\ref{prop:Tate-algo}.
See below for a detailed example covering a case not treated
explicitly in \cite{SilvermanAT}.
\end{proof}

Conrad's article \cite{Conrad05} also gives a coordinate-free
treatment of integral minimal models of elliptic curves.

It is worth remarking that Tate's algorithm and the possible
structures of the bad fibers are essentially the same in
characteristic $p$ as in mixed characteristic.  On the other hand, for
non-perfect residue fields $k$ of characteristic $p\le3$, there are
more possibilities for the bad fibers, in both equal and mixed
characteristics---see \cite{Szydlo04}.

The zero section of $\WW$ lifts uniquely to a section which we
again denote by $s_0:\CC\to\EE$.

\section{The bundle $\omega$ and the height of $\EE$}
We construct an invertible sheaf on $\CC$ as follows, using the
notation of the proof of Proposition~\ref{prop:model}.  Take the
trivial invertible sheaf $\OO_U$ on $U$ with its generating section
$1_U$.  At each stage of the construction, extend this sheaf by
glueing $\OO_U$ and $\OO_{U'}$ over $U\cap U'$ by identifying $1_U$
and $u^{-1}1_{U'}$ where $u$ is the function appearing in the change
of coordinates~(\ref{eq:change-of-coords}).

The resulting invertible sheaf $\omega$ has several other
descriptions.  For example, the sheaf of relative differentials
$\Omega^1_{\EE/\CC}$ is invertible on the locus of $\EE$ where
$\pi:\EE\to\CC$ is smooth (in particular in a neighborhood of the zero
section) and, more or less directly from the definition, $\omega$ can
be identified with $s_0^*(\Omega^1_{\EE/\CC})$.  Using relative
duality theory, $\omega$ can also be identified with the inverse of
$R^1\pi_*\OO_\EE$.  Finally, since $\WW$ as only rational
singularities, $\omega$ is also isomorphic to $R^1\pi_{0*}\OO_\WW$

One may identify the coefficients $a_i$ of the Weierstrass equation
locally defining $\WW$ with sections of $\omega^i$.  Using this point
of view, $\WW$ can be identified with a closed subvariety of a certain
$\P^2$-bundle over $\CC$.  Namely, let $V$ be the locally free
$\OO_\CC$ module of rank three
\begin{equation}\label{eq:bundle}
V=\omega^2\oplus\omega^3\oplus\OO_\CC
\end{equation}
(where the exponents denote tensor powers).  If $\P V$ denotes the
projectivization of $V$ over $\CC$, a $\P^2$ bundle over $\CC$, then
$\WW$ is naturally the closed subset of $\P E$ defined locally by the
vanishing of Weierstrass equations as in (\ref{eq:EE}).

\begin{exers}
  Verify the identifications and assertions in this section.  In the
  case where $\CC=\P^1$, so $K=k(t)$, check that
  $\omega=\OO_{\P^1}(h)$ where $h$ is the smallest positive integer
  such that $E$ has a model (\ref{eq:model}) where the $a_i$ are in
  $k[t]$ and $\deg a_i\le hi$.
\end{exers}

\begin{exers}
  Check that $c_4$, $c_6$, and $\Delta$ define {\it canonical\/}
  sections of $\omega^4$, $\omega^6$, and $\omega^{12}$ respectively,
  independent of the choice of equation for $E$.  If $p=2$ or $3$,
  check that $b_2$ defines a canonical section of $\omega^2$ and that
  $c_4=b_2^2$ and $c_6=-b_2^3$.  If $p=2$, check that $a_1$ defines a
  canonical section of $\omega$ and that $b_2=a_1^2$.  Note that since
  positive powers of $\omega$ have non-zero sections, the degree of
  $\omega$ is non-negative.
\end{exers}

\begin{defn}
  The {\it height\/} of $\EE$, denoted $h$, is defined by
  $h=\deg(\omega)$, the degree of $\omega$ as an invertible sheaf on
  $\CC$.
\end{defn}

Note that if $E/K$ is constant (in the sense of Lecture~1) then
the height of the corresponding $\EE$ is 0.

\section{Examples}
The case when $\CC=\P^1$ is particularly simple.  First of all, one
may choose a model~(\ref{eq:model}) that is integral and minimal
simultaneously at every finite $v$, i.e., for every $v\in\A^1_k$.
Indeed, start with any model and change coordinates so that the $a_i$
are in $k[t]$.  If $w$ is a finite place where this model is not
minimal, it is possible (because $k[t]$ is a PID) to choose a change
of coordinates
$$(x,y)=(u^2x'+r,u^3y'+su^2x'+t)$$
where $r,s,t,u\in k[t][1/w]$ and $u$ a unit yielding a model that is
minimal at $w$.  Such a change of coordinates does not change the
minimality at any other finite place.  Thus after finitely many steps,
we have a model integral and minimal at all finite places.  (This
argument would apply for any $K$ and any Dedekind domain $R\subset K$
which is a PID, yielding a model with the $a_i\in R$ that is minimal
at all $v\in\spec R$.)

Focusing attention at $t=\infty$, there is a change of
coordinates~(\ref{eq:change-of-coords}) with $u=t^{-h}$
yielding a model integral and minimal at $\infty$.  (Here $h$ is
minimal so that $\deg(a_i)\le hi$.)  So the bundle
$\omega=\O(h)=\O(h\infty)$.  

As a very concrete example, consider the curve
$$y^2=x(x+1)(x+t^d)$$
over $\Fp(t)$ where $p>2$ and $d$ is not divisible by $p$.  Since
$\Delta=16t^{2d}(t^d-1)^2$, this model is integral and minimal at all
non-zero finite places.  It is also minimal at zero as one may see by
noting that $c_4$ and $c_6$ are units at 0.  At infinity, the change
of coordinates
$$(x,y)=(t^{2h}x',t^{3h}y')$$
with $h=\lceil d/2\rceil$ yields a minimal integral model.  Thus
$\omega=\O(h)$.

Working with Tate's algorithm shows that $E$ has $I_2$
reduction at the $d$-th roots of unity, $I_{2d}$ reduction at $t=0$,
and either $I_{2d}^*$ or $I_{2d}$ reduction at infinity depending on
whether $d$ is odd or even.  

Since the case of $I_n$ reduction is not treated explicitly in
\cite{SilvermanAT}, we give more details on the blow ups needed to
resolve the singularity over $t=0$.  In terms of the coordinates on
$\WW_1$ used in the proof of Proposition~\ref{prop:Tate-algo} we can
consider the affine surface defined by
$$x^3+(t^d+1)x^2+t^dx-y^2=0$$
which is an open neighborhood of the singularity at $x=y=t=0$.  
If $d=1$, then the tangent cone is the irreducible plane conic defined
by $x^2+tx-y^2=0$.  The singular point thus blows up into a smooth
rational curve and it is easy to check that the resulting surface is
smooth in a neighborhood of the fiber $t=0$.  Now assume that $d>1$.
Then the tangent cone is the reducible conic $x^2-y^2=0$ and so the
singular point blows up into two rational curves meeting at one point.
More precisely, the blow up is covered by three affine patches.  In
one of them, the surface upstairs is 
$$tx_1^3+(t^d+1)x_1^2+t^{d-1}x_1-y_1^2=0$$
and the morphism is $x=tx_1$, $y=ty_1$. The exceptional divisor is the
reducible curve $t=x_1^2-y_1^2=0$ and the point of intersection of the
components $t=x_1=y_1=0$ is again a double point.  Considering the
other charts shows that there are no other singular points in a
neighborhood of $t=0$ and that the exceptional divisor meets the
original fiber over $t=0$ in two points.  We now iterate this process
$d-1$ times, introducing two new components at each stage.  After
$d-1$ blow ups, the interesting part of our surface is given by
$$t^{d-1}x_{d-1}^3+(t^d+1)x_{d-1}^2+tx_{d-1}-y_{d-1}^2=0.$$
At this last stage, blowing up introduces one more component meeting
the two components introduced in the preceding step at one point each.
The (interesting part of the) surface is now
$$t^{d}x_{d}^3+(t^d+1)x_{d}^2+x_{d}-y_{d}^2=0$$
which is regular in a neighborhood of $t=0$.  Thus we see that the
fiber over $t=0$ in $\EE$ is a chain of $2d$ rational curves, i.e., a
fiber of type $I_{2d}$.

The resolution of the singularities over points with $t^d=1$ is
similar but simpler because only one blow up is required.  At
$t=\infty$, if $d$ is even then the situation is very similar to that
over $t=0$ and the reduction is again of type $I_{2d}$.  If $d$ is
odd, the reduction is of type $I_{2d}^*$.  We omit the details in this
case since it is treated fully in \cite{SilvermanAT}.

\begin{exer}
In the table in Tate's algorithm paper \cite{Tate75} (and the slightly
more precise version in \cite{SilvermanAEC}*{p.~448}), the last three
rows have restrictions on $p$.  Give examples showing that these
restrictions are all necessary for the discriminant and conductor
statements, and for the statement about $j$ in the $I_n^*$, $p=2$
case.  Show that the other assertions about the $j$-invariant are
correct for all $p$. 
\end{exer}

\section{$\EE$ and the classification of surfaces}\label{s:height}
It is sometimes useful to know how $\EE$ fits into the
Enriques-Kodaira classification of surfaces.  In this section only, we
replace $k$ with $\kbar$ and write $\EE$ for what elsewhere is denoted
$\EEbar$.

Recall that the height of $\EE$ is defined as $h=\deg\omega$.

\begin{prop}\label{prop:h=0}
$\omega\cong\O_\CC$ if and only if $E$ is constant.  If
$h=\deg(\omega)=0$, then $E$ is isotrivial.
\end{prop}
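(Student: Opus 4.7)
First I would handle ``$E$ constant $\Rightarrow \omega\cong\O_{\CCbar}$''. Writing $E\cong E_{0}\times_{\kbar}K$, take a Weierstrass equation for $E_{0}$ with coefficients $a_{i}\in\kbar$; its discriminant $\Delta\in\kbar^{\times}$ is a unit at every place $v$ of $K$, so this single equation is an integral minimal model at every $v$. No coordinate change~(\ref{eq:change-of-coords}) is therefore needed in the gluing used to build $\WW$, so every transition function $u$ appearing in the construction of $\omega$ is $1$, forcing $\omega\cong\O_{\CCbar}$.

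For the converse, I would invoke the description of $\WW$ as a closed subscheme of $\P V$, with $V=\omega^{2}\oplus\omega^{3}\oplus\O_{\CCbar}$, and the identification of the Weierstrass coefficients $a_{i}$ with global sections of $\omega^{i}$ \textup{(}forced by matching weights in the Weierstrass equation cutting out $\WW\subset\P V$\textup{)}. If $\omega\cong\O_{\CCbar}$, then each $\omega^{i}\cong\O_{\CCbar}$, so $a_{i}\in H^{0}(\CCbar,\O_{\CCbar})=\kbar$, and hence $E$ is defined by a Weierstrass equation over $\kbar$---i.e., constant.

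The last assertion rests on the canonical section $\Delta\in H^{0}(\CCbar,\omega^{12})$. Since $E$ is an elliptic curve, $\Delta\neq 0$, and its divisor is effective of degree $12\deg\omega$; when $\deg\omega=0$ this degree is $0$, so the divisor must be zero and $\Delta$ is nowhere vanishing. A nowhere vanishing section trivializes the bundle, so $\omega^{12}\cong\O_{\CCbar}$. Then $c_{4}^{3}\in H^{0}(\CCbar,\omega^{12})\cong\kbar$ and $\Delta\in\kbar^{\times}$, so $j(E)=c_{4}^{3}/\Delta\in\kbar$; by the earlier Remark/Exercise characterizing isotriviality as $j(E)\in k$, $E$ is isotrivial. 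The main technical step throughout is the identification of the Weierstrass coefficients as global sections of $\omega^{i}$---equivalently, the fact that triviality of $\omega$ lets one descend local minimal models to a single global Weierstrass equation; once this is granted, both the reverse direction of the equivalence and the isotriviality argument fall out immediately.
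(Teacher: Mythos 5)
Your proof is correct, but on both nontrivial points it takes a genuinely different route from the paper's.

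For the direction $\omega\cong\O_\CC\Rightarrow E$ constant, the paper argues geometrically: triviality of $\omega$ makes $\P V$ the trivial $\P^2$-bundle, so $\WW$ sits inside $\P^2_k\times_k\CC$, and the projection $\sigma:\WW\to\P^2_k$ is shown to have one-dimensional image $E_0$ by considering its restriction to a fiber; irreducibility and a dimension count then force $\WW=E_0\times\CC$. You instead use the identification of the Weierstrass coefficients $a_i$ with global sections of $\omega^i$: when $\omega$ is trivial, each $a_i\in H^0(\CC,\O_\CC)=\kbar$, so $E$ has a constant Weierstrass equation outright. Both are correct; yours is more algebraic and somewhat more direct once one grants the paper's stated identification of the $a_i$ as sections of $\omega^i$, while the paper's argument is self-contained at the level of the explicit construction of $\WW\subset\P V$.

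For the last assertion, the paper first deduces $\omega^{12}\cong\O_\CC$ from the nonvanishing section $\Delta$ exactly as you do, but then passes to a finite unramified cover of $\CC$ over which $\omega$ itself becomes trivial and invokes the first part of the proposition to conclude $E$ is constant over that cover, hence isotrivial. You skip the cover entirely: since $c_4^3$ and $\Delta$ are both global sections of the trivial bundle $\omega^{12}$, they are constants, so $j=c_4^3/\Delta\in\kbar$, and the earlier characterization of isotriviality in terms of the $j$-invariant finishes it. Your route avoids the (mild, but nontrivial) fact that a torsion line bundle trivializes on a finite unramified cover, at the cost of relying on the canonicity of $c_4$ and $\Delta$ as sections of $\omega^4$ and $\omega^{12}$ (an exercise stated just before the proposition); this is a genuine simplification.
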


\begin{proof}
  It is obvious that if $E$ is constant, then $\omega\cong\O_\CC$.
  Conversely, suppose $\omega\cong\O_\CC$.  Then the construction of
  $\pi_0:\WW\to\CC$ in Proposition~\ref{prop:W} yields an irreducible
  closed subset of $\P^2_\CC$ (because the $\P^2$-bundle $\P V$ in
  (\ref{eq:bundle}) is trivial):
$$\WW\subset\P^2_\CC=\P^2_k\times_k\CC.$$
Let $\sigma:\WW\to\P^2_k$ be the restriction of the projection
$\P^2_C\to\P^2_k$.  Then $\sigma$ is not surjective (since most points
in the line at infinity $Z=0$ are not in the image) and so its image
has dimension $<2$.  Considering the restriction of $\sigma$ to a
fiber of $\pi_0$ shows that the image of $\sigma$ is in fact an
elliptic curve $E_0$ and then it is obvious from dimension
considerations that
$$\WW=\pi_0^{-1}(\pi_0(\WW))=E_0\times\CC.$$  
It follows that $E$, the generic fiber of $\pi_0$,
is isomorphic to $E_0\times\spec K$, i.e., that $E$ is constant.

Now assume that $h=0$.  Then $\Delta$ is a non-zero global section of
the invertible sheaf $\omega^{12}$ on $\CC$ of degree 0.  Thus
$\omega^{12}$ is trivial.  It follows that there is a finite
unramified cover of $\CC$ over which $\omega$ becomes trivial and so
by the first part, $E$ becomes constant over a finite extension, i.e.,
$E$ is isotrivial.
\end{proof}

Note that $E$ being isotrivial does not imply that $h=0$.

\begin{exer}
Give an example of a non-constant $E$ of height zero.  Hint: Consider
the quotient of a product of elliptic curves by a suitable free action
of a group of order two.
\end{exer}

\begin{prop}
The canonical bundle of $\EE$ is
$\Omega^2_\EE\cong\pi^*\left(\Omega^1_C\tensor\omega\right)$.
\end{prop}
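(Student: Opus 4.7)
The strategy is to extract the canonical bundle formula from the relative cotangent sequence on the smooth locus of $\pi$, and to pin down the relative factor as $\pi^*\omega$ via a see-saw argument along $\pi$. First I would let $U\subset\EE$ denote the open locus where $\pi$ is smooth. Since each fiber of $\pi$ is an irreducible plane cubic (hence a Kodaira configuration after resolution), the non-smooth points of $\pi$ are the finitely many singular points of the singular fibers; thus $\EE\setminus U$ is a finite set and has codimension two in the smooth surface $\EE$. Moreover $s_0(\CC)\subset U$, since the zero section lies in the smooth part of the Weierstrass model already before resolution. On $U$ the relative cotangent sequence
$$0\to\pi^*\Omega^1_\CC\to\Omega^1_\EE\to\Omega^1_{\EE/\CC}\to0$$
is short exact with terms locally free of ranks $1,2,1$, and taking determinants yields $\Omega^2_\EE|_U\cong\pi^*\Omega^1_\CC|_U\otimes\Omega^1_{\EE/\CC}|_U$.

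Next I would set $\LL:=\Omega^2_\EE\otimes(\pi^*\Omega^1_\CC)^{-1}$, a line bundle on $\EE$. Since $\EE$ is a smooth (hence locally factorial) surface, line bundles are determined by their restriction to any open subset whose complement has codimension $\ge 2$, so $\LL$ is the unique line bundle on $\EE$ whose restriction to $U$ equals $\Omega^1_{\EE/\CC}|_U$. Combined with the display above, this gives $\Omega^2_\EE\cong\pi^*\Omega^1_\CC\otimes\LL$ globally, and by the defining description of $\omega$ we have $s_0^*\LL=s_0^*\Omega^1_{\EE/\CC}=\omega$.

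It remains to prove $\LL\cong\pi^*\omega$. I would apply the see-saw principle: using the cohomological flatness $\pi_*\OO_\EE=\OO_\CC$ and the existence of the section $s_0$, a line bundle $\LL$ on $\EE$ whose restriction to every fiber $F$ of $\pi$ is trivial satisfies $\LL\cong\pi^*(s_0^*\LL)=\pi^*\omega$. Adjunction together with the triviality of the conormal bundle $\OO_\EE(-F)|_F\cong\OO_F$ of any fiber identifies $\LL|_F$ (first on $F\cap U$, then on all of $F$ by the codimension-two extension) with the Grothendieck dualizing sheaf $\omega_F$. For a smooth fiber this is $\Omega^1_F\cong\OO_F$, which is trivial.

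The main obstacle is verifying $\omega_F\cong\OO_F$ for the singular Kodaira fibers. Each such $F$ is a reduced Gorenstein curve of arithmetic genus one, so $\omega_F$ has degree zero on every irreducible component and satisfies $h^0(\omega_F)=h^1(\omega_F)=1$ by Serre duality; the content of the claim is that the unique (up to scalar) global section is nowhere vanishing. By the inspection of the finitely many Kodaira types produced by the explicit Tate algorithm in the previous section, one checks that adjunction yields $\omega_F|_{C_i}\cong\OO_{C_i}$ on each component and that the invariant differential of a nearby smooth fiber extends to a nowhere-vanishing section of $\omega_F$, so $\omega_F\cong\OO_F$. (This is a classical property of minimal elliptic fibrations and may alternatively be cited.) Granted this, the see-saw argument yields $\LL\cong\pi^*\omega$, and combining with $\Omega^2_\EE\cong\pi^*\Omega^1_\CC\otimes\LL$ gives $\Omega^2_\EE\cong\pi^*(\Omega^1_\CC\otimes\omega)$.
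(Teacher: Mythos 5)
Your route is genuinely different from the one the paper cites (via $R^1\pi_*\OO_\EE$ and relative duality, \cite{BadescuAS}*{7.15}): you extract the formula from the relative cotangent sequence on the smooth locus of $\pi$ and then pin down the twist by a see-saw argument along fibers, whereas the reference identifies the relative dualizing sheaf with $\pi^*(R^1\pi_*\OO_\EE)^{-1}=\pi^*\omega$ directly. Your approach trades the duality machinery for a case-check that $\omega_F\cong\OO_F$ across the Kodaira fiber types; that is a reasonable trade and the skeleton of the argument (cotangent sequence, adjunction, see-saw with the section) is sound.

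However, two assertions in the middle are false as stated and should be corrected. You claim that the fibers of $\pi$ are irreducible plane cubics and that the non-smooth locus of $\pi$ is finite; neither is right. The irreducible-plane-cubic description applies to $\pi_0:\WW\to\CC$, not to the resolved $\pi:\EE\to\CC$, whose fibers are Kodaira configurations. For the starred types ($I_n^*$, $II^*$, $III^*$, $IV^*$) the fiber has components of multiplicity at least $2$; along any such component the fiber is non-reduced, so $\pi$ is nowhere smooth there, and $\EE\setminus U$ contains a whole curve, not just finitely many points. Consequently line bundles on $\EE$ are \emph{not} determined by restriction to $U$, and your ``unique extension'' characterization of $\LL$ fails. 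Fortunately that claim is not load-bearing: $\LL=\Omega^2_\EE\otimes(\pi^*\Omega^1_\CC)^{-1}$ is defined intrinsically, the equality $s_0^*\LL=\omega$ needs only $s_0(\CC)\subset U$ together with $\LL|_U\cong\Omega^1_{\EE/\CC}|_U$, and $\LL|_F\cong\omega_F$ follows directly from adjunction and the triviality of $\OO_\EE(F)|_F$ and of $(\pi^*\Omega^1_\CC)|_F$, with no detour through $U$. Similarly, ``each such $F$ is a reduced Gorenstein curve'' fails for the starred types, so your sketch for $\omega_F\cong\OO_F$ does not cover them; the conclusion is still correct for every relatively minimal Kodaira fiber, including the non-reduced ones (minimality forces $K_\EE\cdot C_i=0$ for every fiber component, and then the unique section of $\omega_F$ is nowhere vanishing), and your fallback of citing this is the right move, but the justifying sketch should not assume reducedness.
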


Here we are using that $\EE\to\CC$ has a section and therefore no
multiple fibers.  The proof, which we omit, proceeds by considering
$R^1\pi_*\O_\EE$ and using relative duality.  See for example
\cite{BadescuAS}*{7.15}. 

We now consider several cases:

If $2g_\CC-2+h>0$, then it follows from the Proposition that the
dimension of $H^0(\EE,(\Omega^2)^{\tensor^n})$ grows linearly with
$n$, so $\EE$ has Kodaira dimension 1.

If $2g_\C-2+h=0$, then the Kodaira dimension of $\EE$ is zero and
there are two possibilities: (1) $g_\CC=1$ and $h=0$; or (2) $g_\CC=0$
and $h=2$.  In the first case, there is an unramified cover of $\CC$
over which $\EE$ becomes constant and so $\EE$ is the quotient of a
product of two elliptic curves.  These surfaces are sometimes called
``bi-elliptic.''  In the second case, $\Omega^2_\EE=\O_\EE$ and
$H^1(\EE,\O_\EE)=H^0(\CC,\omega^{-1})=0$ and so $\EE$ is a K3 surface.

If $2g_\CC-2+h<0$, then the Kodaira dimension of $\EE$ is $-\infty$ and
there are again two possibilities: (1) $g_\CC=0$ and $h=1$, in which
case $\EE$ is a rational surface by Castelnuovo's criterion; or (2)
$g_\CC=0$ and $h=0$, in which case $E$ is constant and $\EE$ is a
ruled surface $E_0\times\CC=E_0\times\P^1$.

\section{Points and divisors, Shioda-Tate}\label{s:Shioda-Tate}
If $D$ is an irreducible curve on $\EE$, then its generic fiber
$$D.E:=D\times_{\CC}E$$ 
is either empty or is a closed point of $E$.  The former occurs if and
only if $D$ is supported in a fiber of $\pi$.  In the latter case, the
residue degree of $D.E$ is equal to the generic degree of $D\to\CC$.
Extending by linearity, we get homomorphism
$$\divr(\EE)\to\divr(E)$$
whose kernel consists of divisors supported in the fibers of $\pi$.  

There is a set-theoretic splitting of this homomorphism, induced by the
map sending a closed point of $E$ to its scheme-theoretic closure in
$\EE$.  However, this is not in general a group homomorphism.  

Let $L^1\divr(\EE)$ be the subgroup of divisors $D$ such that the
degree of $D.E$ is zero and let $L^2\divr(\EE)$ be subgroup such that
$D.E=0$.  We write $L^i\Pic(\EE)$ and $L^i\NS(\EE)$ ($i=1,2$) for the
images of $L^i(\EE)$ in $\Pic(\EE)$ and $\NS(\EE)$ respectively.   

The Shioda-Tate theorem relates the N\'eron-Severi group of $\EE$ to the
Mordell-Weil group of $E$:

\begin{thm}\label{thm:Shioda-Tate}
If $\EE\to\CC$ is non-constant, $D\mapsto D.E$ induces an isomorphism
$$\frac{L^1\NS(\EE)}{L^2\NS(\EE)}\cong E(K)$$
If $\EE\to\CC$ is constant, we have
$$\frac{L^1\NS(\EE)}{L^2\NS(\EE)}\cong E(K)/E(k)$$
\end{thm}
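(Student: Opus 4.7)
My plan is to define the map $\phi:L^1\NS(\EE)/L^2\NS(\EE)\to E(K)$ (respectively $E(K)/E(k)$ in the constant case) by $[D]\mapsto [D.E]$, using the Abel--Jacobi identification $\Pic^0(E)\cong E(K)$ for the generic fibre, and then to verify bijectivity using sections of $\pi$ for surjectivity and rational functions for injectivity. The real work is establishing well-definedness at the $\NS$ level.

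For surjectivity, given any $P\in E(K)$, the $K$-point $\spec K\to E\subset \EE$ extends uniquely by the valuative criterion of properness---$\pi$ is proper and $\CC$ is a smooth curve---to a section $s_P:\CC\to\EE$. Its image $\overline{P}$ is a horizontal prime divisor, $\overline{P}-\overline{O}\in L^1\divr(\EE)$, and $(\overline{P}-\overline{O}).E=P-O$ represents $P$ under Abel--Jacobi. For injectivity, suppose $[D]\in L^1\NS(\EE)$ maps to zero. In the non-constant case $D.E$ is principal, $D.E=\divr_E(f)$ for some $f\in K(E)^\times=K(\EE)^\times$, and then $D-\divr_\EE(f)$ has vanishing intersection with $E$, so lies in $L^2\divr(\EE)$; since $\divr_\EE(f)$ is algebraically trivial on $\EE$, I conclude $[D]\in L^2\NS(\EE)$. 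In the constant case, $D.E\sim_{\rm lin} Q-O$ for some $Q\in E_0(k)$, and I subtract both $\divr_\EE(f)$ and the algebraically trivial divisor $(\{Q\}-\{O\})\times\CC$ to land in $L^2\divr(\EE)$, and again conclude $[D]\in L^2\NS(\EE)$.

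The crux is well-definedness: if $D_1\sim_{\rm alg}D_2$ on $\EE$ and both are in $L^1\divr(\EE)$, I need $[D_1.E]=[D_2.E]$ in $E(K)$ (respectively, in $E(K)/E(k)$). Equivalently, the restriction map $\Pic^0(\EE)\to\Pic^0(E)=E(K)$ must have image zero in the non-constant case and image $E(k)$ in the constant case. This is where the structural identification of $\Picvar_\EE$ enters: in the non-constant case $\Picvar_\EE\cong J_\CC$ via $\pi^*$, so every algebraically trivial line bundle on $\EE$ has the form $\pi^*L_0$, and its restriction to the generic fibre $E$ is the pullback of $L_0$ along $E\to\spec K$, hence trivial. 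In the constant case $\EE=E_0\times_k\CC$, however, Subsection~\ref{ss:products} of Lecture~2 exhibits $\Picvar_\EE$ as acquiring an extra $E_0$-factor, and the corresponding algebraically trivial divisors $(\{Q\}-\{O\})\times\CC$ for $Q\in E_0(k)$ restrict on $E$ to the constant $K$-point $Q\in E(K)=\mor_k(\CC,E_0)$, matching precisely the $E(K)_{\rm tor}=E(k)$ ambiguity predicted by the Proposition of Section~\ref{s:constant}.

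The main obstacle is this structural input on $\Picvar_\EE$: pinning down the image of the restriction map on algebraically trivial line bundles requires a calculation with the relative Picard functor of $\pi$ (or equivalently an appeal to the Chow $K/k$-trace of $E$). Once that is settled, the sections-and-rational-functions arguments for surjectivity and injectivity are essentially formal.
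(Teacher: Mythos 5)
Your proof is correct and follows the standard route. The paper itself does not supply a proof of the Shioda--Tate theorem; it refers to \cite{Shioda99} (where $k$ is algebraically closed) and to \cite{UlmerCRM} for the modifications for finite $k$. So there is no internal argument to compare against, but the structure you lay out---define the map via $D\mapsto D.E$ and Abel--Jacobi, get surjectivity from sections of $\pi$ via the valuative criterion, get injectivity by extending a rational function from $E$ to $\EE$ and using $K(E)=K(\EE)$---is exactly what is done in those references. Your identification of the crux as well-definedness, i.e.\ controlling the image of the restriction $\Pic^0(\EE)\to\Pic^0(E)$, is also on target.

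One remark on the ``main obstacle'' you flag: the paper does in fact supply the ingredients to settle it quickly. The Leray spectral sequence for $\pi$ gives
$$0\to H^1(\CC,\OO_\CC)\to H^1(\EE,\OO_\EE)\to H^0(\CC,R^1\pi_*\OO_\EE)\to 0,$$
and the paper identifies $R^1\pi_*\OO_\EE$ with $\omega^{-1}$. If $E$ is non-constant then (Proposition \ref{prop:h=0}) $\omega\not\cong\OO_\CC$; since $\deg\omega\ge0$ always, a non-trivial $\omega$ has $H^0(\CC,\omega^{-1})=0$, so $\dim\Picvar_\EE=g_\CC$. Together with the split injection $\pi^*:J_\CC\into\Picvar_\EE$ coming from the zero section $s_0$ (so $s_0^*\pi^*=\id$), this makes $\pi^*$ an isomorphism, which is precisely the structural input you need. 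In the constant case $\EE=E_0\times_k\CC$ exactly (no blow-ups are needed), and equation (\ref{eq:Pic0(CxD)}) gives the extra $E_0$ factor you invoke. So your outline is complete once these paper-internal facts are cited.

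Two small imprecisions, neither of which damages the argument: in the injectivity step you say ``since $\divr_\EE(f)$ is algebraically trivial''; it is of course linearly trivial, which is what you actually use (it represents $0$ in $\Pic(\EE)$, hence in $\NS(\EE)$). And in the constant case, ``$E(k)$'' should be read as the image of $E_0(k)$ under the constant-section map $E_0(k)\into E(K)$, which by the Proposition of Section \ref{s:constant} is exactly $E(K)_{\rm tor}$; your choice of divisor $(\{Q\}-\{O\})\times\CC$ restricts on the generic fibre to precisely such a constant point, matching that description.
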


This theorem seems to have been known to the ancients (Lang, N\'eron,
Weil, ...) and was stated explicitly in \cite{Tate66b} and in papers
of Shioda.  A detailed proof in a more general context is given in
\cite{Shioda99}.  Note however that in \cite{Shioda99} the ground
field is assumed to be algebraically closed.  See \cite{UlmerCRM} for
the small modifications needed to treat finite $k$.

It is obvious that $NS(\EE)/L^1NS(\EE)$ is infinite cyclic.  We saw in
Example~\ref{ss:fibrations} of Lecture~2 that $L^2NS(\EE)$ is free
abelian of rank $1+\sum_v(f_v-1)$.  So as a corollary of the theorem,
we have the following rank formula, known as the Shioda-Tate formula:
\begin{equation}\label{eq:STformula}
\rk E(K)=\rk NS(\EE)-2-\sum_v(f_v-1)
\end{equation}

For more on the geometry of elliptic surfaces and elliptic curves over
function fields, with an emphasis on rational and K3 surfaces, I
recommend \cite{SchuttShiodaES}.

\section{$L$-functions and Zeta-functions}
We are going to relate the $L$-function of $E$ and the zeta function
of $\EE$.  We note that from the definition, $Z(\EE,T)$ depends only
on the underlying set of closed points of $\EE$ and we may partition
this set using the map $\pi$.

We have
\begin{align*}
Z(\EE,T)&=\prod_{\text{closed }x\in\EE}\left(1-T^{\deg(x)}\right)^{-1}\\
&=\prod_{\text{closed  }y\in\CC}
      \prod_{x\in\pi^{-1}(y)}\left(1-T^{\deg(x)}\right)^{-1}\\
&=\prod_{\text{closed  }y\in\CC}Z(\pi^{-1}(y),T^{\deg(y)})
\end{align*}

For $y$ such that $\pi^{-1}(y)$ is a smooth elliptic curve, we know
that 
$$Z(\pi^{-1}(y),T)=\frac{(1-a_yT+q_yT^2)}{(1-T)(1-q_yT)}$$
and the numerator here is the factor that enters into the definition of
$L(E,T)$.

To complete the calculation, we need an analysis of the contribution
of the bad fibers.  We consider the fiber $\pi^{-1}(y)$ as a scheme of
finite type over the residue field $\kappa_y$, the field of $q_y$
elements.  As such, it has irreducible components.  Its ``geometric
components'' are the components of the base change to
$\overline{\kappa}_y$; these are defined over some finite extension of
$\kappa_y$.

For certain reduction types ($I_n$, $I_n^*$ ($n\ge0$), $IV$ and
$IV^*$) it may happen that all the geometric components are defined
over $\kappa_y$, in which case we say the reduction is ``split'', or
it may happen that some geometric components are only defined over a
quadratic extension of $\kappa_y$, in which case we say the reduction
is ``non-split.''  This agrees with the standard usage in the case of
$I_n$ reduction and may be non-standard in the other cases.

\begin{prop}
The zeta function of the a singular fiber of $\pi$ has the form
\begin{align*}
Z(\pi^{-1}(y),T)&=
\frac{(1-T)^{a}(1+T)^b}{(1-q_yT)^{f}(1+q_yT)^g}\\
&=\frac{1}{(1-T)(1-q_yT)}\frac{(1-T)^{a+1}(1+T)^b}{(1-q_yT)^{f-1}(1+q_yT)^g}
\end{align*}
where the integers $a$, $b$, $f$, and $g$ are
determined by the reduction type at $y$ and are given in the following
table:
$$
\vbox{\offinterlineskip\hrule
\halign{&\vrule#&\strut\quad\hfil#\hfil\quad\cr
&\hfill &&$a$&&$b$&&$f$&&$g$&\cr
\noalign{\hrule}
&\hfill split $I_n$&&$0$&&$0$&&$n$&&$0$&\cr
\noalign{\hrule}
&\hfill non-split $I_n$, $n$ odd&&$-1$&&$1$&&$(n+1)/2$&&$(n-1)/2$&\cr
\noalign{\hrule}
&\hfill non-split $I_n$, $n$ even&&$-1$&&$1$&&$n/2+1$&&$(n-2)/2$&\cr
\noalign{\hrule}
&\hfill split $I_n^*$&&$-1$&&$0$&&$5+n$&&$0$&\cr
\noalign{\hrule}
&\hfill non-split $I_n^*$&&$-1$&&$0$&&$4+n$&&$1$&\cr
\noalign{\hrule}
&\hfill $II$&&$-1$&&$0$&&$1$&&$0$&\cr
\noalign{\hrule}
&\hfill $II^*$&&$-1$&&$0$&&$9$&&$0$&\cr
\noalign{\hrule}
&\hfill $III$&&$-1$&&$0$&&$2$&&$0$&\cr
\noalign{\hrule}
&\hfill $III^*$&&$-1$&&$0$&&$8$&&$0$&\cr
\noalign{\hrule}
&\hfill split $IV$&&$-1$&&$0$&&$3$&&$0$&\cr
\noalign{\hrule}
&\hfill non-split $IV$&&$-1$&&$0$&&$2$&&$1$&\cr
\noalign{\hrule}
&\hfill split $IV^*$&&$-1$&&$0$&&$7$&&$0$&\cr
\noalign{\hrule}
&\hfill non-split $IV^*$&&$-1$&&$0$&&$3$&&$4$&\cr
\noalign{\hrule}
}}
$$
\end{prop}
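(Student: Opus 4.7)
The plan is to compute $N_m := |\pi^{-1}(y)(\F_{q_y^m})|$ for each Kodaira fiber type and then recover the zeta function via $\log Z(\pi^{-1}(y),T) = \sum_{m\ge 1} N_m T^m/m$. By Kodaira's classification (equivalently, from the output of Tate's algorithm that resolved $\WW\to\CC$), each geometric fiber $F\times_{\kappa_y}\bar\kappa_y$ is either a single rational curve with a node (type $I_1$) or a cusp (type $II$), or a transverse configuration $\bigcup_i C_i$ of smooth rational curves $C_i\cong\P^1_{\bar\kappa_y}$ meeting in the pattern of an extended Dynkin diagram ($\tilde A_{n-1}$ for $I_n$, $\tilde D_{n+4}$ for $I_n^*$, $\tilde E_6,\tilde E_7,\tilde E_8$ for $IV^*, III^*, II^*$, and so on). Types $I_1$ and $II$ I would dispatch directly from the normalization map $\P^1\to F$, which is a universal homeomorphism in both cases.

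For the remaining types, inclusion--exclusion on the geometric components gives
$$N_m \;=\; \sum_i |C_i^{\Fr_y^m}| - \sum_{\text{nodes }P} |P^{\Fr_y^m}| + \varepsilon,$$
where $\varepsilon$ corrects for the unique triple intersection point occurring in type $IV$ (three components through one point) and is zero otherwise, and $|X^{\Fr_y^m}|$ denotes the number of $\Fr_y^m$-fixed geometric points of $X$. The split/non-split dichotomy in the table corresponds to whether $\Fr_y$ acts trivially on the set of geometric components or by a nontrivial involution prescribed by the Galois structure of the component group. A $\Fr_y$-fixed component contributes $q_y^m+1$ for every $m$; a component that $\Fr_y$ swaps with another $C_j$ contributes $q_y^m+1$ when $m$ is even but only the $\Fr_y^m$-fixed points of $C_i\cap C_j$ when $m$ is odd; a fixed node contributes $1$ and a swapped pair of nodes contributes $1+(-1)^m$.

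For each of the dozen-odd rows of the table I would enumerate, from the extended Dynkin diagram, the $\Fr_y$-orbits of components and nodes and sum the contributions above. In every case $N_m$ comes out as a $\Z$-linear combination
$$N_m = \alpha + \gamma(-1)^m + \beta q_y^m + \delta(-q_y)^m$$
of $1,(-1)^m,q_y^m,(-q_y)^m$, and exponentiating yields
$$Z(\pi^{-1}(y),T) = \frac{(1-T)^{-\alpha}(1+T)^{-\gamma}}{(1-q_yT)^{\beta}(1+q_yT)^{\delta}},$$
matching the table with $(a,b,f,g) = (-\alpha,-\gamma,\beta,\delta)$. A quick sanity check: for non-split $I_3$ one finds $1$ fixed component, $1$ swapped pair meeting at the fixed node $P_1$, and $1$ swapped pair of nodes, giving $N_m = 1 + 2q_y^m + (-q_y)^m - (-1)^m$, which is precisely $(a,b,f,g)=(-1,1,2,1)$.

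The main obstacle is the bookkeeping in the non-split cases, particularly non-split $I_n^*$: the dual graph $\tilde D_{n+4}$ admits several involutive automorphisms (swap the two arms at one end, swap the two arms at the other end, swap left and right, and combinations), and one must identify which actually arises as the geometric Galois action on the fiber. The table's single non-split row for $I_n^*$ (with $g=1$) forces the involution to swap exactly one pair of outer arms while fixing the other three arms and the entire central chain; this needs to be confirmed by inspecting where Tate's algorithm forces components to be $\kappa_y$-rational versus only rational over a quadratic extension. Once this identification is pinned down for each type, the remainder is a mechanical row-by-row inclusion--exclusion verification.
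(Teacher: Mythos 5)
Your approach is exactly the elementary point-counting argument that the paper itself recommends in the exercise immediately following this proposition, and it is correct: compute $N_m = |\pi^{-1}(y)(\F_{q_y^m})|$ by inclusion--exclusion over the Galois orbits of geometric components and their intersection points, write $N_m = -a - b(-1)^m + fq_y^m + g(-q_y)^m$, and exponentiate. Your flagged concern about pinning down the Galois involution for non-split $I_n^*$ is legitimate but is resolved exactly as you suggest: the component group is $\Z/4$ (resp.\ $(\Z/2)^2$) for $n$ odd (resp.\ even), and the non-trivial Galois action from Tate's algorithm fixes the central chain and one pair of outer components while swapping the other pair, giving a single conjugate pair and hence $g=1$.

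One thing worth flagging: if you carry out the bookkeeping carefully for \emph{non-split} $IV^*$ you will find a discrepancy with the printed table. The $\tilde E_6$ configuration has $7$ components; the non-trivial Galois action on the component group $\Z/3$ fixes one arm (three components: center, middle, end) and swaps the other two arms pairwise (two conjugate pairs). Since the swapped components in each pair are disjoint, for $m$ odd only the fixed chain contributes and $N_m = 3(q_y^m+1) - 2 = 3q_y^m + 1$, while for $m$ even $N_m = 7(q_y^m+1)-6 = 7q_y^m+1$. This forces $(a,b,f,g) = (-1,0,5,2)$, whereas the table prints $(-1,0,3,4)$; the latter is impossible since it would give $N_1 = 1 - q_y < 0$. (The corrected values are consistent with the accompanying exercise: $f=5$ is the number of $\kappa_y$-irreducible components, namely the three fixed ones plus the two conjugate pairs, and $g=2$ is the number of conjugate pairs.) Your method is sound and would catch this; just note that one table entry needs correction.
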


\begin{exer}
  Use an elementary point-counting argument to verify the proposition.
  In particular, check that the number of components of $\pi^{-1}(y)$
  that are rational over $\kappa_y$ is $f$ and that the order of pole
  at $T=q_y^{-1}$ of
$${Z(\pi^{-1}(y),T)}{(1-T)(1-q_yT)}$$
is $f-1$.
\end{exer}

Using the Proposition and the definition of the $L$-function
(in Lecture~1, equation~(\ref{eq:Ldef})) we find that
\begin{equation}\label{eq:Z-L}
L(E,T)=\frac{Z(\CC,T)Z(\CC,qT)}{Z(\EE,T)}
   \prod_{\text{bad }v}
   \frac{(1-T)^{a_v+1}(1+T)^{b_v}}{(1-q_vT^{\deg(v)})^{f_v-1}(1+q_vT^{\deg(v)})^{g_v}}
\end{equation}
where $a_v$, $b_v$, $f_v$ and $g_v$ are the invariants defined in the
Proposition at the place $v$.  Using the Weil conjectures
(see~Section~\ref{s:zetas} of Lecture~0), we see that the orders of
$L(E,s)$ and $\zeta(\EE,s)$ at $s=1$ are related as follows:
\begin{equation}\label{eq:Z-L-ords}
\ord_{s=1}L(E,s)=-\ord_{s=1}\zeta(\EE,s)
-2-\sum_v(f_v-1).
\end{equation}

\begin{rem}
  This simple approach to evaluating the order of zero of the
  $L$-function does not yield the important fact that $L(E,T)$ is a
  polynomial in $T$ when $E$ is non-constant, nor does it yield the
  Riemann hypothesis for $L(E,T)$.
\end{rem}

For a slightly more sophisticated (and less explicit) comparison of
$\zeta$-functions and $L$-functions in a more general context, see
\cite{Gordon79}.

\section{The Tate-Shafarevich and Brauer groups}
The last relationship between $E$ and $\EE$ we need concerns the
Tate-Shafarevich and Brauer groups.

\begin{thm}\label{thm:Sha-Br}
Suppose that $E$ is an elliptic curve over $K=k(\CC)$ and $\EE\to\CC$
is the associated elliptic surface as in
Proposition~\ref{prop:model}.  Then there is a canonical isomorphism
$$\Br(\EE)\cong\sha(E/K).$$
\end{thm}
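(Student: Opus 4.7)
The plan is to apply the Leray spectral sequence for $\pi:\EE\to\CC$ with coefficients in the \'etale sheaf $\G_m$, extract $\Br(\EE)=H^2(\EE,\G_m)$ from the $E_2$ page $E_2^{p,q}=H^p(\CC,R^q\pi_*\G_m)$, and then match the result with $\sha(E/K)$. Three vanishings drive the first step. Because $\pi$ is proper with geometrically connected fibers and cohomologically flat in dimension zero (as noted after Proposition~\ref{prop:model}), we have $\pi_*\G_m=\G_m$ on $\CC$. Because $\CC$ is a smooth projective curve over the finite field $k$, class field theory for function fields gives $H^i(\CC,\G_m)=0$ for all $i\ge 2$; in particular $\Br(\CC)=0$. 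Because the geometric fibers of $\pi$ are proper curves over algebraically closed fields, a fiberwise Tsen argument gives $R^2\pi_*\G_m=0$. Feeding these into the spectral sequence collapses the degree-2 computation to an isomorphism
$$\Br(\EE)\cong H^1(\CC,R^1\pi_*\G_m).$$

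Second, I would identify $R^1\pi_*\G_m$, which is a version of the relative Picard sheaf $\ps_{\EE/\CC}$. The section $s_0:\CC\to\EE$ splits off the fiberwise-degree map, and a careful analysis of how the relative Picard functor interacts with the reducible bad fibers (where the component groups $\Phi_v$ of the N\'eron model enter) produces an exact sequence of \'etale sheaves on $\CC$
$$0\to\mathcal{E}\to R^1\pi_*\G_m\to\Z\to 0,$$
with $\mathcal{E}$ the \'etale sheaf represented by the N\'eron model of $E$. Passing to cohomology and invoking $H^1(\CC,\Z)=\Hom_{\mathrm{cont}}(\pi_1(\CC),\Z)=0$ (source profinite, target discrete) yields
$$\Br(\EE)\cong H^1(\CC,\mathcal{E}).$$

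Third, I would match $H^1(\CC,\mathcal{E})$ with $\sha(E/K)$. By the N\'eron mapping property, $\mathcal{E}$ agrees with $j_*E$ where $j:\spec K\into\CC$ is the inclusion of the generic point, so the Leray spectral sequence for $j$ exhibits $H^1(\CC,\mathcal{E})$ as the kernel of a local restriction map $H^1(K,E)\to\bigoplus_v (R^1j_*E)_{\bar v}$ with $(R^1j_*E)_{\bar v}=H^1(K_v^{sh},E)$. A place-by-place analysis---combining the Hochschild--Serre sequence for $K_v^{sh}/K_v$ with Lang's theorem (which kills $H^1$ of the connected identity component of the N\'eron special fiber over the finite residue field $\kappa_v$)---identifies this kernel with $\sha(E/K)$.

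The main obstacle is the second step: pinning down $R^1\pi_*\G_m$ precisely in terms of $\mathcal{E}$ at the bad fibers, where the reducible fiber geometry and the component groups $\Phi_v$ interact subtly, and coordinating this bookkeeping with the parallel local analysis of $H^1_{\mathrm{unr}}(K_v,E)$ versus $H^1(K_v,E)$ in the final step. Everything else is essentially formal manipulation of the Leray spectral sequence, combined with Tsen's theorem and Lang's theorem.
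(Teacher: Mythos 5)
Your route is exactly the one the paper sketches (and attributes to Grothendieck's \emph{Le groupe de Brauer III}, \S4): Leray for $\pi$, collapse using $\pi_*\G_m=\G_m$, $\Br(\CC)=0$, and $R^2\pi_*\G_m=0$ (Tsen on fibers) to get $\Br(\EE)\cong H^1(\CC,R^1\pi_*\G_m)$, then compare $R^1\pi_*\G_m$ with the sheaf represented by the N\'eron model and finally compare $H^1$ of that with $\sha(E/K)$ using Lang's theorem locally. So the architecture is the right one and matches the paper's discussion.

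However, the pivotal exact sequence you write in the second step,
$$0\to\mathcal{E}\to R^1\pi_*\G_m\to\Z\to 0,$$
is not correct as stated, and this is more than a bookkeeping issue you can defer. The \'etale sheaf $R^1\pi_*\G_m$ is the relative Picard sheaf $\ps_{\EE/\CC}$. Over a bad place $v$ its quotient by the degree-zero part is \emph{not} $\Z$: a reducible fiber with $f_v$ components contributes a multi-degree in $\Z^{f_v}$, so that classes such as $\OO(Z_1-Z_2)$ (difference of two fiber components) lie in the kernel of the total-degree map without lying in the N\'eron model sheaf $\mathcal{E}$. What Raynaud's theorem actually gives (under cohomological flatness in degree $0$, which holds here because of the section $s_0$) is an identification of the \emph{identity components} $\ps^0_{\EE/\CC}\cong\mathcal{E}^0$, not of $\ker(\deg)$ with $\mathcal{E}$. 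One then has two separate component-type sheaves to reconcile: $\ps/\ps^0$, with stalks the free abelian group on geometric fiber components, and $\mathcal{E}/\mathcal{E}^0$, with stalks the N\'eron component groups $\Phi_v$ (a finite quotient determined by the intersection matrix of the fiber). Grothendieck's argument handles this by showing that these skyscraper discrepancies contribute nothing to $H^1$ in the end, so your final conclusion $\Br(\EE)\cong H^1(\CC,\mathcal{E})$ survives, but the route there is genuinely more involved than a single short exact sequence with cokernel $\Z$. You flag the second step as ``the main obstacle,'' which is the right instinct, but the specific sequence you propose would lead you astray if taken literally; the correct statement separates out the fiber-component sheaf and the component-group sheaf and compares them, rather than collapsing both into a constant $\Z$.
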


The proof of this result, which is somewhat involved, is given in
\cite{Grothendieck68}*{Section~4}.  The main idea is simple enough:
one computes $\Br(\EE)=H^2(\EE,\G_m)$ using the morphism
$\pi:\EE\to\CC$ and a spectral sequence.  Using that the Brauer group
of a smooth, complete curve over a finite field vanishes, one finds
that the main term is $H^1(\CC,R^1\pi_*\G_m)$.  Since $R^1\pi_*\G_m$
is the sheaf associated to the relative Picard group, it is closely
related to the sheaf on $\CC$ represented by the N\'eron model of $E$.
This provides a connection with the Tate-Shafarevich group which leads
to the theorem.

See \cite{UlmerCRM} for more details about this and the closely
related connection between $H^2(\EEbar,\Zl(1))^{G_k}$ and the
$\ell$-Selmer group of $E$.

\section{The main classical results}
We are now in a position to prove the theorems of
Section~\ref{s:results} of Lecture~1.  For convenience, we restate
Theorem~\ref{thm:BSD1} and a related result.

\begin{thm}\label{thm:BSD-Tate}
Suppose that $E$ is an elliptic curve over $K=k(\CC)$ and $\EE\to\CC$
is the associated elliptic surface as in
Proposition~\ref{prop:model}.  
\begin{enumerate}
\item BSD holds for $E$ if and only if $T_2$ holds for $\EE$.
\item
$\rk E(K)\le\ord_{s=1}L(E,s)$.
\item
The following are equivalent:
\begin{itemize}
\item $\rk E(K)=\ord_{s=1}L(E,s)$
\item $\sha(E/K)$ is finite
\item for any one prime number $\ell$ \textup{(}$\ell=p$ is
  allowed\textup{)}, the $\ell$-primary part $\sha(E/K)_{\ell^\infty}$
  is finite.
\end{itemize}
\item
If $K'/K$ is a finite extension and if the BSD conjecture holds
  for $E$ over $K'$, then it holds for $E$ over $K$.
\end{enumerate}
\end{thm}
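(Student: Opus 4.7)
The strategy is to extract all four parts from the dictionary between $E/K$ and the associated elliptic surface $\EE/k$ built up in this lecture. The two fundamental bridges are the Shioda-Tate formula \eqref{eq:STformula} and the order-of-vanishing comparison \eqref{eq:Z-L-ords}:
\begin{align*}
\rk E(K) &= \rk \NS(\EE) - 2 - \sum_v(f_v-1), \\
\ord_{s=1}L(E,s) &= -\ord_{s=1}\zeta(\EE,s) - 2 - \sum_v(f_v-1).
\end{align*}
(In the constant case Shioda-Tate involves $E(K)/E(k)$, but since $E(k)$ is finite the ranks coincide, so the same identity is in force.)

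For item (1), subtracting the two displayed formulas yields
$$\ord_{s=1}L(E,s) - \rk E(K) \;=\; -\ord_{s=1}\zeta(\EE,s) - \rk \NS(\EE),$$
so BSD for $E$ is literally equivalent to $T_2(\EE)$. For item (2), the inequality $\rk \NS(\EE) \le -\ord_{s=1}\zeta(\EE,s)$ from the chain \eqref{prop:T-ineqs}, combined with this same subtraction, gives the bound $\rk E(K) \le \ord_{s=1}L(E,s)$.

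For item (3), I would chain together three equivalences that have already been set up. By Proposition \ref{prop:T1->T2}, the conjectures $T_1(\EE)$ and $T_2(\EE)$ are equivalent, and by item (1) both are equivalent to BSD for $E$. Theorem \ref{thm:T1-Br} then shows that $T_1(\EE)$ is in turn equivalent to finiteness of $\Br(\EE)$, and even to finiteness of a single $\ell$-primary component $\Br(\EE)[\ell^\infty]$ (including $\ell=p$). Finally, Theorem \ref{thm:Sha-Br} transports this from $\Br(\EE)$ to $\sha(E/K)$ via the canonical isomorphism, respecting $\ell$-primary decompositions.

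For item (4), the descent, the plan is to use item (3) to reduce to a single $\ell$-primary part and then apply restriction-corestriction. Choose a prime $\ell \neq p$ with $\ell \nmid n := [K':K]$. By item (3) applied over $K'$, the group $\sha(E/K')[\ell^\infty]$ is finite. The kernel of the restriction map $\sha(E/K) \to \sha(E/K')$ is annihilated by $n$ (since corestriction composed with restriction is multiplication by $n$), hence is trivial on $\ell$-primary parts by our choice of $\ell$. Thus $\sha(E/K)[\ell^\infty]$ embeds into $\sha(E/K')[\ell^\infty]$ and is finite, and item (3) applied back over $K$ gives BSD for $E/K$. The main obstacle really lies in the inputs rather than in this assembly: Theorem \ref{thm:Sha-Br} and the $\ell$-independence (and $p$-primary) content of Theorem \ref{thm:T1-Br} each require serious work, but once they are granted, the present theorem reduces to the bookkeeping sketched above.
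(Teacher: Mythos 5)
Items (1)--(3) of your proposal follow the paper's argument essentially verbatim: subtract the Shioda--Tate formula from the order-of-vanishing comparison to get BSD $\Leftrightarrow T_2(\EE)$, read off the inequality from \eqref{prop:T-ineqs}, and chain $T_2(\EE) \Leftrightarrow T_1(\EE) \Leftrightarrow \Br(\EE)$ finite $\Leftrightarrow \sha(E/K)$ finite through Proposition~\ref{prop:T1->T2}, Theorem~\ref{thm:T1-Br}, and Theorem~\ref{thm:Sha-Br}.

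For item (4) you take a genuinely different route, and there is a gap. The paper deduces the descent property from Proposition~\ref{prop:T-descent}: if $K'/K$ is finite, the elliptic surface $\EE'$ over the curve $\CC'$ with function field $K'$ admits a dominant rational map to $\EE$, so $T_1(\EE')$ implies $T_1(\EE)$, and one concludes via item (1). You instead invoke restriction--corestriction on $\sha$ together with item (3). That argument is correct for \emph{separable} extensions $K'/K$, where $G_{K'}$ sits inside $G_K$ with finite index and $\mathrm{cores}\circ\mathrm{res}=[K':K]$, and the local--global compatibility of $\mathrm{cores}$ keeps you inside $\sha$. But the statement is for an arbitrary finite extension, and in the function-field setting purely inseparable extensions are important --- the paper even flags this explicitly in the remark after Proposition~\ref{prop:T-descent} (``the dominant rational map \dots could be \dots a purely inseparable morphism''). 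For $K'/K$ purely inseparable there is no inclusion $G_{K'}\subset G_K$ of finite index, so the corestriction machinery you rely on is not available as stated, and one has to compare $\sha(E/K)$ and $\sha(E/K')$ by some other device (for instance by identifying $E/K'$ with a Frobenius twist $E^{(p^{\pm m})}/K$ and arguing there). The geometric descent via Proposition~\ref{prop:T-descent} sidesteps this entirely, since a purely inseparable cover $\CC'\to\CC$ still gives a dominant rational map of surfaces. Your argument as written therefore proves item (4) only for separable $K'/K$; you would need an additional step to dispose of the purely inseparable case, at which point the paper's route through the surface is both shorter and uniform.
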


\begin{proof}
Comparing (\ref{eq:STformula}) and (\ref{eq:Z-L-ords}), we have that
$$\rk E(K)-\ord_{s=1}L(E,s)=\rk NS(\EE)+\ord_{s=1}\zeta(\EE,s).$$
Since BSD is the assertion that the left hand side is zero and $T_2$
is the assertion that the right hand side is zero, these conjectures
are equivalent.

By Theorem~\ref{prop:T-ineqs} of Lecture~2, the right hand side is
$\le0$ and therefore so is the left.  This gives the inequality $\rk
E(K)\le\ord_{s=1}L(E,s)$.

The statements about $\sha(E/K)$ follow from Theorem~\ref{thm:Sha-Br}
($\sha(E/K)\cong\Br(\EE)$), the equivalence of BSD and $T_2(\EE)$, and
Theorem~\ref{thm:T1-Br} of Lecture~2.

The last point follows from the equivalence of BSD and $T_2(\EE)$ and
Proposition~\ref{prop:T-descent} of Lecture~2.
\end{proof}

\begin{proof}[Proofs of Theorems~\ref{thm:BSD2} and
  \ref{thm:BSD-low-height} of Lecture~1]
  Theorem~\ref{thm:BSD2} of Lecture~1 concerns isotrivial elliptic
  curves.  By the last point of Theorem~\ref{thm:BSD-Tate} above, it
  suffices to show that BSD holds for constant curves.  But if $E$ is
  constant, then $\EE$ is a product of curves, so the Tate conjecture
  for $\EE$ follows from Theorem~\ref{thm:products} of Lecture~2.  The
  first point of Theorem~\ref{thm:BSD-Tate} above then gives BSD for
  $E$.

  Theorem~\ref{thm:BSD-low-height} of Lecture~1 concerns elliptic
  curves over $k(t)$ of low height.  By the discussion in
  Section~\ref{s:height}, if $E/k(t)$ has height $\le2$ then $\EE$ is
  a rational or K3 surface.  (Strictly speaking, this is true only
  over a finite extension of $k$, but the last point of
  Theorem~\ref{thm:BSD-Tate} allows us to make this extension without
  loss of generality.)  But $T_2(\XX)$ for a rational surface follows
  from Proposition~\ref{prop:T-DPC} of Lecture~2.  For $E$ such that
  $\EE$ is a K3 surfaces, Artin and Swinnerton-Dyer proved the
  finiteness of $\sha(E/K)$ (and therefore BSD) in
  \cite{ArtinSwinnertonDyer73}.
\end{proof}

\section{Domination by a product of curves}
Combining part 1 of Theorem~\ref{thm:BSD-Tate} with
Proposition~\ref{prop:T-DPC} of Lecture~2,
we have the following.
\begin{thm}\label{thm:DPC-BSD}
Let $E$ be an elliptic curve over $K$ with associated surface
$\EE$.  If $\EE$ is dominated by a product of curves, then BSD holds
for $E$.
\end{thm}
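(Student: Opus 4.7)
The plan is to chain together two results already proved in the excerpt: the characterization of BSD in terms of the Tate conjecture for the associated surface, and the fact that surfaces dominated by products of curves satisfy the Tate conjecture. There is almost no new work to do; the statement is essentially a repackaging.

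Concretely, I would start from the hypothesis that there exists a dominant rational map $\CC_1\times_k\CC_2\ratto\EE$ from a product of smooth projective curves over $k$ onto the elliptic surface $\EE$ constructed in Proposition~\ref{prop:model}. Applying Proposition~\ref{prop:T-DPC} of Lecture~2 to $\XX=\EE$ immediately yields that $T_1(\EE)$ and $T_2(\EE)$ both hold. (Recall that Proposition~\ref{prop:T-DPC} itself is established by first invoking Theorem~\ref{thm:products} to get $T_1$ for the product of curves, then the descent property Proposition~\ref{prop:T-descent} to push $T_1$ across the dominant rational map, and finally Proposition~\ref{prop:T1->T2} to upgrade $T_1$ to $T_2$.)

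Having $T_2(\EE)$ in hand, I would then appeal to part~(1) of Theorem~\ref{thm:BSD-Tate} above, which asserts that the BSD conjecture for $E$ is equivalent to $T_2(\EE)$. This equivalence rests on the Shioda--Tate formula \eqref{eq:STformula} together with the comparison \eqref{eq:Z-L-ords} between $\ord_{s=1}L(E,s)$ and $\ord_{s=1}\zeta(\EE,s)$, both of which were established earlier in this lecture. Combining these gives the BSD conjecture for $E$, completing the proof.

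There is no real obstacle here: the entire content of Theorem~\ref{thm:DPC-BSD} has been front-loaded into the two results being cited. If I were to identify a subtle point, it would only be the verification that all hypotheses on $\EE$ needed by Proposition~\ref{prop:T-DPC} (smoothness, projectivity, absolute irreducibility over $k$, existence of a $k$-rational point) are built into the construction of $\EE\to\CC$ in Proposition~\ref{prop:model} --- which they are, since $\EE$ inherits a $k$-rational point from the zero section $s_0:\CC\to\EE$ and any $k$-point of $\CC$. So the proof is essentially a one-line citation.
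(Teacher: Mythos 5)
Your proof is correct and is exactly the paper's argument: apply Proposition~\ref{prop:T-DPC} to $\EE$ to obtain $T_2(\EE)$, then invoke part~(1) of Theorem~\ref{thm:BSD-Tate} to convert this into BSD for $E$. The paper itself presents Theorem~\ref{thm:DPC-BSD} as an immediate corollary of these two results, so there is nothing to add.
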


Theorem~\ref{thm:4-monos} (``four monomials'') and Berger's theorem
\ref{thm:Berger} are both corollaries of Theorem~\ref{thm:DPC-BSD}, as
we will explain in the remainder of this lecture.

\section{Four monomials}
We recall Shioda's conditions.  Suppose that $f\in R=k[x_1,x_2,x_3]$
is the sum of exactly four non-zero monomials:
$$f=\sum_{i=1}^4c_i\prod_{j=1}^3 x_j^{e_{ij}}$$
where $c_i\in k$ and the $e_{ij}$ are non-negative integers.  Let
$e_{i4}=1-\sum_{j=1}^3e_{ij}$ and form the $4\times4$ matrix
$A=(e_{ij})$.  Assuming that $\det(A)\neq0$ (in $\Z$), let $\delta$ be
the smallest positive integer such that there is a $4\times4$ integer
matrix $B$ with $AB=\delta I_{4\times4}$.  We say that $f$ {\it
  satisfies Shioda's 4-monomial condition\/} if $\delta\neq0$ in $k$,
i.e., if $p\nodiv\delta$.  The following exercise shows that this is
equivalent to the definition in Lecture~1.

\begin{exers}
  Show that a prime $\ell$ divides $\delta$ if and only if it divides
  $\det(A)$.  Show that if we change the definition of $e_{i4}$ to
  $e_{i4}=d-\sum_{j=1}^3e_{ij}$ for some other non-zero integer $d$
  and define $\delta_d$ using the new $A=(e_{ij})$, then $\delta_1$
  divides $\delta_d$ for all $d$.  I.e., $d=1$ is the optimal choice
  to minimize $\delta$.
\end{exers}
  
\begin{exer}\label{exer:change-of-coords}
  With $c_i$ and $e_{ij}$ as above, show that the system of equations
$$\prod_{j=1}^4d_{j}^{e_{ij}}=c_i^{-1}\qquad i=1,\dots,4$$
has a solution with $d_{j}\in\Fqbar$, $j=1,\dots,4$.
\end{exer}

\begin{proof}[Proof of Theorem~\ref{thm:4-monos} of Lecture~1]
  Briefly, the hypotheses imply that the associated elliptic surface
  $\EE\to\P^1$ is dominated by a Fermat surface (of degree $\delta$)
  and thus by a product of Fermat curves (of degree $\delta$).  Thus
  Theorem~\ref{thm:DPC-BSD} implies that BSD holds for $E$.

  In more detail, note that $\EE$ is birational to the affine surface
  $V(f)\subset\A^3_k$.  So it will suffice to show that $V(f)$ is
  dominated by a product of curves.  To that end, it will be convenient
  to identify $k[t,x,y]$ and $R=k[x_1,x_2,x_3]$ by sending $t\mapsto
  x_1$, $x\mapsto x_2$ and $y\mapsto x_3$, so that $f$ becomes
  $$f=\sum_{i=1}^4c_i\prod_{j=1}^3 x_j^{e_{ij}}.$$

  Exercise~\ref{exer:change-of-coords} implies that, after extending
  $k$ if necessary, we may change coordinates ($x_j\mapsto d_jx_j$) so
  that the coefficients $c_i$ are all $1$.  Then the matrix $A$
  defines rational a map $\phi$ from $V(f)$ to the Fermat surface of
  degree $1$ 
$$F^2_1 =\{y_1+y_2+y_3+y_4=0\}\subset\P^3_k,$$ 
namely $\phi^*(y_i)=\prod_{j=1}^4 x_j^{e_{ij}}$.  Similarly, the
matrix $B$ defines a rational map $\psi$ from the Fermat surface of
degree $\delta$
$$F^2_\delta=\{z_1^\delta+z_2^\delta+z_3^\delta+z_4^\delta=0\}
\subset\P^3_k$$
to $V(f)$, namely $\psi^*(x_i)=\prod_{j=1}^4 z_j^{B_{ij}}$.  The
composition of these maps is the standard projection from $F^2_\delta$
to $F^2_1$, namely $y_i\mapsto z_i^\delta$ and so both maps are
dominant.

  Finally, Shioda and Katsura \cite{ShiodaKatsura79} showed that
  $F^2_\delta$ is dominated by the product of Fermat curves
  $F^1_\delta\times F^1_\delta$.  Thus, after extending $k$, $\EE$ is
  dominated by a product of curves and Theorem~\ref{thm:DPC-BSD}
  finishes the proof.
\end{proof}

As we will explain below, this Theorem can be combined with results on
analytic ranks to give examples of elliptic curves over $\Fp(t)$ with
arbitrarily large Mordell-Weil rank.  (In fact, similar ideas can be used to
produce Jacobians of every dimension with large rank.  For this, see
\cite{Ulmer07b} and also \cite{UlmerCRM}.)  

Unfortunately, Theorem~\ref{thm:4-monos} is very rigid---as one sees
in the proof, varying the coefficients in the 4-nomial $f$ does not
vary the isomorphism class of $\EE$ over $\Fqbar$ and so we get only
finitely many non-isomorphic elliptic curves over $\Fpbar(t)$.
Berger's construction, explained in the next subsection, was motivated
by a desire to overcome this rigidity and give {\it families\/} of
examples of curves where one knows the BSD conjecture.

\section{Berger's construction}\label{s:Berger}
Berger gave a much more flexible construction of surfaces that are
dominated by a product of curves in a tower.  More precisely, we note
that if $\EE\to\P^1$ is an elliptic surface and $\phi:\P^1\to\P^1$ is
the morphism with $\phi^*(t)=u^d$ (corresponding to the field
extension $k(u)/k(t)$ with $u^d=t$), then it is not in general the
case that the base changed surface
$$\xymatrix{\EE'=\EE\times\P^1_k\ar[r]\ar[d]&\P^1_k\ar[d]\\
  \P^1_k\ar^\phi[r]&\P^1_k}$$ 
is dominated by a product of curves.  Berger's construction gives a
rich class of curves for which DPC {\it does\/} hold in every layer of
a tower of coverings.  We restate Theorem~\ref{thm:berger} from
Lecture~1 in a slightly different (but visibly equivalent) form.

\begin{thm}\label{thm:Berger}
Let $E$ be an elliptic curve over $K=k(t)$ and assume that there are
rational functions $f(x)$ and $g(y)$ on $\P^1_k$ such that $E$ is
birational to the curve $V(f(x)-tg(y))\subset\P^1_K\times\P^1_K$.
Then the BSD conjecture holds for $E$ over the field $k(u)=k(t^{1/d})$
for all $d$ prime to $p$.
\end{thm}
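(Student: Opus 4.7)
The strategy is to reduce Theorem~\ref{thm:Berger} to Theorem~\ref{thm:DPC-BSD}: I will construct a dominant rational map from a product of two smooth projective curves onto the elliptic surface $\EE_d \to \P^1_u$ attached to $E$ viewed over $k(u) = k(t^{1/d})$, and then conclude BSD for $E$ over $k(u)$ from the DPC property. This is the same general route used above for Theorem~\ref{thm:4-monos}, but with a very different auxiliary product.

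For the construction, clearing denominators shows that $\EE_d$ is birational to the affine surface in $\A^3_{x,y,u}$ cut out by $f(x) = u^d g(y)$, since substituting $t = u^d$ into the hypothesized birational model for $E$ over $k(t)$ produces a birational model for $E$ over $k(u)$. I would then introduce the two ``superelliptic'' curves given by the normalizations of
$$C_d : z^d = f(x), \qquad D_d : w^d = g(y),$$
which are smooth projective curves over $k$; after a harmless finite base extension, permitted by Theorem~\ref{thm:BSD-Tate}(4), one may take both to be absolutely irreducible. The key rational map is
$$\Phi : C_d \times_k D_d \ratto \EE_d, \qquad (P,Q) \mapsto \bigl(x(P),\, y(Q),\, z(P)/w(Q)\bigr),$$
whose image lies in $\EE_d$ because the third coordinate $u := z/w$ automatically satisfies $u^d = z^d/w^d = f(x)/g(y)$, i.e., $f(x) = u^d g(y)$.

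To verify dominance, I would compute a generic fiber of $\Phi$: given $(x_0, y_0, u_0) \in \EE_d$, any $d$-th root $z_0$ of $f(x_0)$ yields a preimage by setting $w_0 = z_0/u_0$, since the relation $u_0^d g(y_0) = f(x_0)$ forces $w_0^d = g(y_0)$. There are exactly $d$ such preimages, indexed by the choice of $z_0$, so $\Phi$ is generically finite of degree $d$ and dominant. Theorem~\ref{thm:DPC-BSD} then delivers BSD for $E$ over $k(u)$.

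I do not expect any deep obstacle; the technical subtleties are essentially bookkeeping. One must ensure absolute irreducibility of $C_d$ and $D_d$ (which can fail if $f$ or $g$ is a nontrivial perfect $e$-th power for some $e \mid d$ with $e > 1$, but is then repaired by replacing $d$ by $d/e$ on the offending factor); one uses the hypothesis $p \nmid d$ to guarantee that $k(u)/k(t)$ is separable and that the covers $C_d \to \P^1_x$, $D_d \to \P^1_y$ are tame with well-behaved smooth projective models in characteristic $p$; and one must justify the birationality between the affine equation $f(x) = u^d g(y)$ and the relatively minimal model $\EE_d$, which is harmless since DPC is a birational invariant. The conceptual content is Berger's observation that the special form $f(x) = t g(y)$ is exactly what allows the $d$-th root extraction required to pass to $k(t^{1/d})$ to split symmetrically between the $x$- and $y$-coordinates, so that the two auxiliary covers each involve only one variable.
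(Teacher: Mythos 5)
Your proof is correct and takes essentially the same route as the paper: both introduce the superelliptic covers $\CC_d : z^d = f(x)$ and $\DD_d : w^d = g(y)$ and use the rational map $(x,z,y,w)\mapsto(x,y,u=z/w)$ to dominate the surface birational to $f(x)=u^d g(y)$, then invoke Theorem~\ref{thm:DPC-BSD}. The extra care you take with dominance, absolute irreducibility, and tameness is sound but is glossed over in the paper's own sketch.
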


\begin{proof}
  Clearing denominators we may interpret $f(x)-tg(y)$ as defining a
  hypersurface $\XX$ in the affine space $\A^3$ with coordinates $x$,
  $y$, and $t$ and it is clear that the elliptic surface $\EE\to\P^1$
  associated to $E$ is birationally isomorphic to $\XX$.  On the other
  hand, $\XX$ is visibly birational to $\P^1\times\P^1$ since we may
  eliminate $t$.  Thus $\XX$ and $\EE$ are dominated by a product of
  curves.  This checks the case $d=1$.

  For larger $d$, note that the elliptic surface $\EE_d\to\P^1$
  associated to $E/k(u)$ is birational to the hypersurface $\XX_d$ in
  $\A^3_k$ defined by $f(x)-u^dg(y)$.  Berger showed by a fundamental
  group argument, generalizing \cite{Schoen96}, that $\XX_d$ is
  dominated by a product of curves, more precisely, by a product of
  covers of $\P^1$.  (For her argument to be correct, $\pi_1$ should
  be replaced by the prime-to-$p$ fundamental group $\pi_1^{p'}$
  throughout.)  This was later made more explicit in \cite{UlmerDPCT},
  where it was observed that $\XX_d$ is dominated by a product of two
  explicit covers of $\P^1$.

  More precisely, let $\CC_d$ and $\DD_d$ be the covers of $\P^1_k$
  defined by $z^d=f(x)$ and $w^d=g(y)$.  Then there is a rational map
  from $\CC_d\times\DD_d$ to the hypersurface $\XX_d$, namely
$$(x,z,y,w)\mapsto (x,y,u=z/w).$$
This is clearly dominant and so $\XX_d$ and $\EE$ are dominated by
products of curves.

Applying Theorem~\ref{thm:DPC-BSD} finishes the proof.
\end{proof}

Note that there is a great deal of flexibility in the choice of data
for Berger's construction.  As an example, take $f(x)=x(x-a)/(x-1)$
and $g(y)=y(y-1)$ where $a\in\Fq$ is a parameter.  Then if $a\neq1$,
the curve $f(x)=tg(y)$ in $\P^1\times\P^1$ has genus 1 and a rational
point.  A simple calculation shows that it is birational to the
Weierstrass cubic
$$y^2+txy-ty=x^3-tax^2+t^2ax.$$
Theorem~\ref{thm:Berger} implies that this curve satisfies the BSD
conjecture over $\F_{q^n}(t^{1/d})$ for all $n$ and all $d$ prime to
$p$.  Varying $q$ and $a$ we get infinitely many curves for which BSD
holds at every layer of a tower.

We will give more examples and discuss further applications of the idea
behind Berger's construction in Lectures~4 and 5.

%%%%%%%%%%%%%%%%%%%%%%%%%%%%%%%%%%%%%%%%%%%%%%%%%%%%

\lecture{Unbounded ranks in towers}

In order to prove results on analytic ranks in towers, we need a more
sophisticated approach to $L$-functions.  In this lecture we explain
Grothendieck's approach to $L$-functions over function fields and then
use it and a new linear algebra lemma to find elliptic curves with
unbounded analytic and algebraic ranks in towers of function fields.

\section{Grothendieck's analysis of $L$-functions}
\numberwithin{equation}{subsection}
\subsection{Galois representations}\label{ss:gal-reps}
As usual, we let $K=k(\CC)$ be the function field of a curve over a
finite field $k$ and $G_K=\gal(K^{sep}/K)$ its Galois group.  As in
Lecture~0, Section~\ref{s:ffs}, we write $D_v$, $I_v$, and $\Fr_v$ for
the decomposition group, inertia group, and (geometric) Frobenius at a
place $v$ of $K$.

We fix a prime $\ell\neq p$ and consider a representation
\begin{equation}\label{eq:rho}
\rho:G_K\to\GL(V)\cong\GL_n(\Qlbar)
\end{equation}
on a finite-dimensional $\Qlbar$ vector space.
We make several standing assumptions about $\rho$.

First, we always assume $\rho$ is continuous and unramified away from
a finite set of places of $K$.  By a compactness argument (see
\cite{KatzSarnakRMFEM}*{9.0.7}) , it is
possible to define $\rho$ over a finite extension $L$ of $\Ql$, i.e.,
there is a representation
$$\rho':G_K\to\GL_n(L)$$
isomorphic to $\rho$ over $\Qlbar$.  Nothing we say will depend on the
field of definition of $\rho$ and we will generally not distinguish
between $\rho$ and isomorphic representations defined over subfields
of $\Qlbar$.

We also always assume that $\rho$ is pure of integral weight $w$,
i.e., for all $v$ where $\rho$ is unramified, the eigenvalues of
$\rho(\Fr_v)$ are Weil numbers of size $q_v^{w/2}$.

Finally, we sometimes assume that $\rho$ is ``symplectically self-dual
of weight $w$.''  This means that on the space $V$ where $\rho$ acts,
there is an $G_K$-equivariant, alternating pairing with values in
$\Qlbar(-w)$.

\subsection{Conductors}
The Artin conductor of $\rho$ is a divisor on $\CC$ (a formal sum of
places of $K$) and is a measure of its ramification.  We write
$\cond(\rho)=\n=\sum_vn_v[v]$.  To define the local coefficients, fix
a place $v$ of $K$ and let $G_i\subset I_v$ be the higher ramification groups
at $v$ (in the lower numbering).  Then define
$$n_v=\sum_{i=0}^\infty\frac1{[G_0:G_i]}\dim V/V^{G_i}.$$
Here $V^{G_i}$ denotes the subspace of $V$ invariant under $G_i$.  It
is clear that $n_v=0$ if and only if $\rho$ is unramified at $v$.  If
$\rho$ is tamely ramified at $v$ (i.e., $G_1$ acts trivially), then
$n_v=\dim V/V^{G_0}=\dim V/V^{I_v}$.  In general, the first term of
the sum above is the {\it tame conductor\/} and the rest of the sum is
the {\it Swan conductor\/}.  We refer to \cite{MilneEC}*{V.2} and also
\cite{SerreLRFG}*{\S19} for an alternative definition and more
discussion about the conductor, including the fact that the local
coefficients $n_v$ are integers.

\subsection{$L$-functions}
Let us fix an isomorphism $\Qlbar\cong\C$ so that we may regard
eigenvalues of Frobenius on $\ell$-adic representations as complex
numbers.  Having done this, a representation~(\ref{eq:rho}) gives rise
to an $L$-function, defined as an Euler product:
\begin{equation}\label{eq:L-def}
L(\rho,T)=\prod_v\det\left(1-T\Fr_v|V^{I_v}\right)
\end{equation}
and $L(\rho,s)=L(\rho,q^{-s})$.  The product is over the places of
$K$, the exponent $I_v$ denotes the subspace of elements invariant
under the inertia group $I_v$, and $\Fr_v$ is a Frobenius element at
$v$.

Because of our assumption that $\rho$ is pure of weight $w$, the
product defining $L(\rho,s)$ converges absolutely and defines a
holomorphic function in the region $\RP s>w/2+1$.

It is clear from the definition that if $\rho$ and $\sigma$ are Galois
representations then $L(\rho\oplus\sigma,s)=L(\rho,s)L(\sigma,s)$ and
$L(\rho(n),s)=L(\rho,s-n)$.

It is also clear that $L(\rho_{triv},s)=\zeta(\CC,s)$. and so
$L(\rho_{triv}(n),s)=\zeta(\CC,s-n)$.   

\begin{exer}
  Prove that if $\rho$ factors through $G_K\to G_k$, so that $\Fr_v$
  goes to $\alpha^{\deg v}$, then
$$L(\rho,T)=Z(\CC,\alpha T)$$
is a twisted version of the zeta function of $\CC$.  Compare with
Exercise~\ref{exer:L-const} of Lecture~1.  Note that a representation
factors through $G_K\to G_k$ if and only if it is trivial on $G_{\kbar
  K}$, so this exercise fills in the missing cases in the following theorem.
\end{exer}

\begin{thm}\label{thm:GA}
  Suppose that $\rho$ is a representation of $G_K$ \textup{(}satisfying the
  standing hypotheses of Subsection~\ref{ss:gal-reps}\textup{)} that
  contains no copies of the trivial representation when restricted to
  $G_{\kbar K}$.  Then there is a canonically defined $\Qlbar$-vector
  space $H(\rho)$ with continuous $G_k$ action such that
$$L(\rho,s)=\det\left(1-q^{-s}\Fr_q|H(\rho)\right).$$
The dimension of $H(\rho)$ is $\deg(\rho)(2g_\CC-2)+\deg\n$ where $\n$
is the conductor of $\rho$.
\end{thm}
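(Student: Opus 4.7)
The plan is to realize $\rho$ as a lisse $\ell$-adic sheaf on a dense open $U\subset\CC$, extend it to $\CC$ by pushforward, and then apply Grothendieck's trace formula together with the Euler-Poincar\'e formula of Grothendieck-Ogg-Shafarevich. Choose $U$ open dense in $\CC$ so that $\rho$ is unramified on $\pi_1(U)$; then $\rho$ corresponds to a lisse $\Qlbar$-sheaf $\FF$ of rank $n=\deg\rho$ on $U$. Let $j\colon U\hookrightarrow\CC$ and put $\GG:=j_*\FF$; its stalk at a geometric point above $v\notin U$ is the inertia-invariant subspace $V^{I_v}$, and its stalk at $v\in U$ is $V$ itself. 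With this convention the Euler factor of $L(\rho,T)$ at each place $v$ is exactly $\det(1-T^{\deg v}\Fr_v\mid\GG_{\bar v})^{-1}$, so $L(\rho,T)$ coincides with the $L$-function of the constructible sheaf $\GG$ on $\CC$.

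Applying Grothendieck's trace formula (see \cite{SGA4.5} or \cite{MilneEC}) to $\GG$ on the smooth projective curve $\CCbar$ yields
\[
L(\rho,T)=\prod_{i=0}^{2}\det\bigl(1-T\Fr_q\bigm|H^i(\CCbar,\GG)\bigr)^{(-1)^{i+1}}.
\]
The hypothesis that $\rho|_{G_{\Fqbar K}}$ contains no trivial constituent says that $\FF$ has no invariants under the geometric fundamental group, so $H^0(\CCbar,\GG)=0$. By Poincar\'e duality for the middle extension $\GG$ on a curve, $H^2(\CCbar,\GG)$ is Tate-dual to $H^0(\CCbar,j_*\FF^{\vee})$, so its vanishing reduces to the analogous statement for $\rho^{\vee}$. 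This in turn follows from the semisimplicity of $\rho|_{G_{\Fqbar K}}$---a consequence of purity via Weil II---under which absence of a trivial subrepresentation is equivalent to absence of a trivial quotient. Setting $H(\rho):=H^1(\CCbar,\GG)$, the first assertion of the theorem follows on substituting $T=q^{-s}$.

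For the dimension I combine Grothendieck-Ogg-Shafarevich on $\overline U=U\times_k\Fqbar$,
\[
\chi_c(\overline U,\FF)=n\,\chi_c(\overline U)-\sum_{v\notin U}\deg(v)\,\swan_v(\FF),
\]
with the excision identity $\chi(\CCbar,\GG)=\chi_c(\overline U,\FF)+\sum_{v\notin U}\deg(v)\dim V^{I_v}$. Since $\chi_c(\overline U)=2-2g_\CC-\sum_{v\notin U}\deg(v)$, these collapse to
\[
\chi(\CCbar,\GG)=n(2-2g_\CC)-\sum_v\deg(v)\bigl(\dim V/V^{I_v}+\swan_v(\FF)\bigr)=n(2-2g_\CC)-\deg\n,
\]
on recognizing the local conductor exponent $n_v=\dim V/V^{I_v}+\swan_v(\FF)$ as tame-plus-Swan. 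Since $H^0=H^2=0$, $\dim H(\rho)=-\chi=n(2g_\CC-2)+\deg\n$, which is the second assertion.

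The main obstacle is the vanishing of $H^2$: the stated hypothesis only rules out trivial constituents for $\rho$, not \emph{a priori} for $\rho^{\vee}$. Semisimplicity of $\rho|_{G_{\Fqbar K}}$ (coming from purity and Weil II) closes this gap, and in the most important cases---$\rho$ pure of nonzero weight, as for the Galois action on a Tate module---the matter is immediate, since no trivial constituent can occur on either side. Beyond that single step, the proof is a direct assembly of standard $\ell$-adic machinery: the trace formula, Poincar\'e duality for middle extensions, and Grothendieck-Ogg-Shafarevich.
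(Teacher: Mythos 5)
Your proposal follows essentially the same route as the paper: realize $\rho$ as a lisse sheaf on an open $U$, push forward by $j_*$, apply the Grothendieck--Lefschetz trace formula, observe that $H^0$ and $H^2$ vanish so that $L(\rho,s)$ is the characteristic polynomial on $H^1$, and use Grothendieck--Ogg--Shafarevich for the dimension. The one place you diverge is the vanishing of $H^2$: the paper identifies $H^2(\CCbar,j_*\FF)$ directly with the $G_{\kbar K}$-coinvariants of $V$ (twisted), whereas you reduce via Poincar\'e duality to $H^0$ of the dual middle extension and then invoke semisimplicity of $\rho|_{G_{\kbar K}}$ (a consequence of purity and Weil~II) to pass from ``no trivial sub'' to ``no trivial quotient.'' Both are correct; your version makes explicit a sub-versus-quotient subtlety that the paper's compressed phrasing leaves to the reader, at the cost of importing Weil~II where the direct coinvariants identification does not need it. Your derivation of the dimension formula from the compact-support GOS formula and excision is also a correct unwinding of the identity the paper simply cites.
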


\begin{proof}
  (Sketch) The representation $\rho:G_K\to\GL(V)$ gives rise to a
  constructible sheaf $\FF_\rho$ on $\CC$.  In outline: $\rho$ is
  essentially the same thing as a lisse sheaf $\FF_U$ on the open
  subset $j:U\into\CC$ over which $\rho$ is unramified.  We defined
  $\FF_\rho$ as the push-forward $j_*\FF_U$.  For each closed point
  $v$ of $\CC$, the stalk of $\rho$ at $v$ is $V^{I_v}$.

  Let $H^i(\CCbar,\FF)$ be the \'etale cohomology groups of $\FF$.
  They are finite dimensional $\Qlbar$ vector spaces and give
  continuous representations of $G_k$.

  The Grothendieck-Lefschetz fixed point formula says that for each
  finite extension $\Fqn$ of $k\cong\Fq$, we have
$$\sum_{x\in\CC(\Fqn)} \tr(Fr_x|\FF_x)=
\sum_{i=0}^2(-1)^i\tr\left(Fr_{q^n}|H^i(\CCbar,\FF)\right).$$ 
On the left hand side, the sum is over points of $\CC$ with
values in $\Fqn$ and the summand is the trace of the action of the
Frobenius at $x$ on the stalk of $\FF$ at a geometric point over $x$.

Multiplying both sides by $T^n/n$, summing over $n\ge1$, and exponentiating,
one finds that
$$L(\rho,T)=
\prod_{i=0}^2\det\left(1-T\Fr_q|H^i(\CCbar,\FF)\right)^{(-1)^{i+1}}.$$ 

Now $H^0(\CCbar,\FF)$ and $H^2(\CCbar,\FF)$ are isomorphic
respectively to the invariants and coinvariants of $V$ under $G_{\kbar
  K}$ and so under our hypotheses on $\rho$, $H^i(\CCbar,\FF)$
vanishes for $i=0,2$.  Thus we have
$$L(\rho,s)=\det\left(1-q^{-s}\Fr_q|H(\rho)\right)$$
where $H(\rho)=H^1(\CCbar,\FF)$.

The dimension formula comes from an Euler characteristic formula
proven by Raynaud and sometimes called the
Grothendieck-Ogg-Shafarevich formula.  It says
$$\sum_{i=0}^2(-1)^i\dim H^i(\CCbar,\FF)
=\deg(\rho)(2-2g_\CC)-\deg(\cond(\rho)).$$ 
Since $H^0$ and $H^2$ vanish, this gives the desired dimension formula.
\end{proof}

Obviously we have omitted many details.  I recommend
\cite{MilneEC}*{V.1 and V.2} as a compact and readable source for
several of the key points, including passing from $\ell$-torsion
sheaves to $\ell$-adic sheaves, the conductor, and the
Grothendieck-Ogg-Shafarevich formula.  See \cite{MilneEC}*{VI.13} for
the Grothendieck-Lefschetz trace formula.  

\begin{rem-exer}
If we are willing to use a virtual representation of $G_k$ in place of
a usual representation, then the Theorem has a more elegant
restatement which avoids singling out representations that are
trivial when restricted to $G_{\kbar K}$.  State and prove this
generalization.
\end{rem-exer}

\begin{exer}\label{exer:Artin}
  Check that we have the Artin formalism formula: if $F/K$ is a finite
  separable extension and $\rho$ is a representation of $G_F$, then
$$L(\rho,s)=L(\ind^{G_K}_{G_F}\rho,s).$$
Note that the left hand side is an Euler product on $F$ with almost
all factors of some degree, say $N$, whereas the right hand side is an
Euler product on $K$, with almost all factors of degree $N[F:K]$.  The
equality can be taken to be an equality of Euler products, where that
on the left is grouped according to the places of $K$.
\end{exer}

\subsection{Functional equation and Riemann hypothesis}
Theorem~\ref{thm:GA} shows that the $L$-function of $\rho$ has an
analytic continuation to the entire $s$ plane (meromorphic if we allow
$\rho$ to have trivial factors over $\kbar K$).  In this section we
deduce other good analytic properties of $L(\rho,s)$.

\begin{thm} Suppose \textup{(}in addition to the standing
  hypotheses\textup{)} that $\rho$ is symplectically self-dual of
  weight $w$.  Then $L(\rho,s)$ satisfies a functional equation
$$L(\rho,w+1-s)=\pm q^{N(s-(w+1)/2)}L(\rho,s)$$
where $N=(2g_\CC-2)\deg(\rho)+\deg(\cond(\rho))$.  
The zeroes of $\rho$ lie on the line $\RP s=(w+1)/2$.
\end{thm}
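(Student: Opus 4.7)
The plan is to leverage Theorem~\ref{thm:GA}, which writes $L(\rho,s)=\det(1-q^{-s}\Fr_q|H(\rho))$ with $H(\rho)=H^1(\CCbar,\FF_\rho)$ of dimension $N$. Both assertions then become structural statements about the action of $\Fr_q$ on this single finite-dimensional $\Qlbar$-vector space.

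For the functional equation, I would exploit the self-duality of $\rho$. The symplectic pairing $V\otimes V\to\Qlbar(-w)$ gives an isomorphism $V^\vee\cong V(w)$, hence an isomorphism $\FF_\rho^\vee\cong\FF_\rho(w)$ of lisse sheaves on the open set $U$ where $\rho$ is unramified. Since $\FF_\rho=j_*\FF_{\rho,U}$ is a middle extension, this extends to $\CC$. Combining with Poincar\'e duality on the curve,
$$H^1(\CCbar,\FF_\rho)\times H^1(\CCbar,\FF_\rho^\vee(1))\longrightarrow H^2(\CCbar,\Qlbar(1))\cong\Qlbar,$$
yields a perfect $\Fr_q$-equivariant pairing $H(\rho)\times H(\rho)\to\Qlbar(-(w+1))$. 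Consequently the multiset of Frobenius eigenvalues on $H(\rho)$ is stable under $\alpha\mapsto q^{w+1}/\alpha$, and a direct calculation writing each factor in reciprocal form produces
$$L(\rho,w+1-s)=\frac{(-1)^N q^{Ns}}{\det(\Fr_q|H(\rho))}\,L(\rho,s).$$
Since the involution on eigenvalues forces $\det(\Fr_q|H(\rho))=\pm q^{N(w+1)/2}$ (the sign accounting for self-paired eigenvalues $\pm q^{(w+1)/2}$ of odd multiplicity), the functional equation follows in the stated form.

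For the Riemann hypothesis, the essential input is Deligne's main theorem from Weil~II. The sheaf $\FF_\rho$ is the middle extension of a pure lisse sheaf of weight $w$ on $U$, hence itself pure of weight $w$ on $\CCbar$, and Deligne's weight bound for the cohomology of a pure sheaf on a smooth proper curve then gives that $H^1(\CCbar,\FF_\rho)$ is pure of weight $w+1$. Thus every eigenvalue $\alpha_i$ of $\Fr_q$ on $H(\rho)$ is an algebraic number of complex absolute value $q^{(w+1)/2}$ in every embedding, and the zeros of $L(\rho,s)=\prod_{i=1}^N(1-\alpha_iq^{-s})$ lie on $\RP s=(w+1)/2$.

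The main obstacle is the Riemann hypothesis, which genuinely requires Deligne's deep Weil~II theorem; the functional equation, by contrast, is essentially bookkeeping once the cohomological formula and Poincar\'e duality are in hand. The only additional care needed is in checking that the self-duality of $\rho$ lifts to a self-duality of the middle-extension sheaf $\FF_\rho$ and is compatible with Poincar\'e duality, but this is automatic from the functoriality of $j_*$ on lisse sheaves over a curve.
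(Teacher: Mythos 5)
Your proposal follows essentially the same route as the paper: invoke Theorem~\ref{thm:GA} to identify $L(\rho,s)$ with a characteristic polynomial of Frobenius on $H(\rho)=H^1(\CCbar,\FF_\rho)$, construct a perfect Frobenius-equivariant pairing on $H(\rho)$ of weight $w+1$ from the symplectic self-duality of $\rho$ together with Poincar\'e duality, and appeal to Deligne's purity theorem (Weil~II) for the location of the zeroes.

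The one place where you wave your hands is exactly the point the paper flags as subtle. You assert that Poincar\'e duality gives the pairing directly on $H^1(\CCbar,\FF_\rho)\times H^1(\CCbar,\FF_\rho^\vee(1))$ and that self-duality of the middle extension ``is automatic from the functoriality of $j_*$.'' But $\FF_\rho=j_*\FF_U$ is not lisse on $\CC$, and the Verdier dual of $j_*\FF_U$ is $j_!(\FF_U^\vee)(1)[2]$, not $j_*(\FF_U^\vee)(1)[2]$; so a naive application of duality pairs $H^1(\CCbar,j_*\FF_U)$ with $H^1(\CCbar,j_!\FF_U^\vee(1))$, which is not what you want. The paper's route is to first identify $H^1(\CCbar,j_*\FF_U)$ with the image of the forget-supports map $H^1_c(\overline U,\FF_U)\to H^1(\overline U,\FF_U)$ and then transport the $H^1_c\times H^1$ cup-product pairing on $U$ to that image; non-degeneracy on the image is then a genuine argument, not a formality. (Incidentally, the same forget-supports identification is also what makes the weight-$(w+1)$ purity of $H(\rho)$ work: $H^1_c$ has weight $\le w+1$, $H^1$ has weight $\ge w+1$, so the image is pure.) This subtlety does not change the outcome of your argument, but it should be acknowledged rather than deferred to ``functoriality.''
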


\begin{proof}
  (Sketch) We use the notation of the proof of Theorem~\ref{thm:GA}.
  The functional equation comes from a symmetric pairing
$$H(\rho)\times H(\rho)\to H^2(\CCbar,\Qlbar(-w))\cong\Qlbar(-w-1).$$
(Symmetric because $\rho$ is skew-symmetric and $H=H^1$.)  That there
is such a pairing is not as straightforward as it looks, because we
defined the sheaf $\FF$ as a push forward $j_*\FF_U$ where
$j:U\into\CC$ is a non-empty open set over which $\rho$ is unramified
and $\FF_U$ is the lisse sheaf on $U$ corresponding to $\rho$.  It is
well-known that $j^*$ identifies $H^1(\CCbar,\FF)$ with the image of
the ``forget supports'' map
$$H^1_c(\overline{U},\FF_U)\to H^1(\overline{U},\FF_U)$$
from compactly supported cohomology to usual cohomology.  (This is
often stated, but the only proof I know of in the literature is
\cite{Ulmer05}*{7.1.6}.)  The cup product
$$H^1_c(\overline{U},\FF_U)\times H^1(\overline{U},\FF^*_U)\to
H^2_c(\overline{U},\Qlbar)\cong\Qlbar(-1)$$
then induces a pairing on $H^1(\CCbar,\FF)$ via the above
identification.  Poincar\'e duality shows that the pairing is
non-degenerate and so $H(\rho)$ is orthogonally self-dual of
weight $w+1$.  

The location of the zeroes is related to the eigenvalues of Frobenius
on $H(\rho)=H^1(\CCbar,\FF)$ and these are Weil numbers of size
$q^{w+1}$ by Deligne's purity theorem \cite{Deligne80}.  I recommend
the Arizona Winter School 2000 lectures of Katz (published as
\cite{Katz01}) for a streamlined proof of Deligne's theorem in the
generality needed here.
\end{proof}

\section{The case of an elliptic curve}
Next, we apply the results of the previous section to elliptic
curves.  Throughout, $E$ will be an elliptic curve over a function
field $K=k(\CC)$ over a finite field $k$ of characteristic $p$.

\subsection{The Tate module}
We consider the Tate module of $E$.  More precisely, fix a prime
$\ell\neq p$ and let
$$T_\ell E=\varprojlim_n E(\Kbar)[\ell^n]
\quad\text{and}\quad V_\ell E=T_\ell E\tensor_\Zl \Ql.$$ 
Let $\rho_E$ be the representation of $G_K$ on the dual vector space
$V_\ell^*=\Hom(V_\ell E,\Ql)\cong H^1(\overline{E},\Ql)$.  Then
$\rho_E$ is two-dimensional and continuous and (by the criterion of
Ogg-N\'eron-Shafarevich, see \cite{SerreTate68}*{Thm.~1}) it is
unramified outside the (finite) set of places where $E$ has bad
reduction.

At every place $v$ of $K$ where $E$ has good reduction, we have
$$\det(1-\rho(\Fr_v)T)=1-a_vT+q_vT^2$$
where $a_v$ is defined as in (\ref{eq:a_v}) by
$\#E_v(\kappa_v)=1-a_v+q_v$.  This follows from the smooth base change
theorem \cite{MilneEC}*{VI.4} and the cohomological description of the
zeta function of the reduction, as in Section~\ref{s:cohomology} of
Lecture~0.  Thus $\rho$ is pure of weight $w=1$.

The Weil pairing induces an alternating, $G_k$-equivariant pairing
$V_\ell E\times V_\ell E\to\Ql(-1)$ and so $\rho$ is symplectically
self-dual of weight 1.

If $E$ is constant, then $\rho_E$ factors through $G_K\to G_k$ and
since $G_k$ is abelian, $\rho_E$ is the direct sum of two characters.
More precisely, if $E\cong E_0\times_kK$ and
$1-aT+qT^2=(1-\alpha_1T)(1-\alpha_2T)$ is the numerator of the
$Z$-function of $E_0$, then $\rho_E$ is the sum of the two characters
that send $\Fr_v$ to $\alpha_i^{\deg v}$.

If $E$ is non-isotrivial, then $\rho_E$ restricted to $G_{\kbar K}$
has no trivial subrepresentations.  One way to see this is to use a
slight generalization of the MWLN theorem, according to which $E(\kbar
K)$ is finitely generated (when $E$ is non-isotrivial).  Thus its
$\ell$-power torsion is finite and this certainly precludes a trivial
subrepresentation in $\rho|_{G_{\kbar K}}$.  In fact, by a theorem of
Igusa \cite{Igusa59}, $\rho|_{G_{\kbar
    K}}$ is contains an open subgroup of $\SL_2(\Z_\ell)$ so is
certainly irreducible, even absolutely irreducible.

\begin{exer}
  Show that if $E$ is isotrivial but not constant, then $\rho_E$
  restricted to $G_{\kbar K}$ has no trivial subrepresentation.  Hint:
  $E$ is a twist of a contant curve $E'=E_0\times_k K$.  Relate the
  action of $G_K$ on the Tate module of $E$ to its action on that of
  $E'$ and show that there exists an element $\sigma\in G_{\kbar K}$
  that acts on $V_\ell E$ via a non-trivial automorphism of $E$.  But
  a non-trivial automorphism has only finitely many fixed points.
\end{exer}

We can summarize this discussion as follows.

\begin{prop}
Let $\rho$ be the action of $G_K$ on the Tate module $V_\ell E$ of
$E$.  Then $\rho$ is continuous, unramified outside a finite set of
places of $K$, and is pure and symplectically self-dual of weight $1$.
If $E$ is non-constant, then $\rho|_{G_{\kbar K}}$ has no trivial
subrepresentations. 
\end{prop}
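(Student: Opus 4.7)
The plan is to verify each of the five assertions in turn, since almost all the pieces have already been assembled in the preceding discussion and in Lecture~0; the proof is essentially a matter of citing the right fact at each step.

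First, continuity and the ramification statement. The action of $G_K$ on each finite group $E[\ell^n]$ is continuous for the discrete topology, so passing to the inverse limit $T_\ell E$ gives a continuous action on a profinite $\Zl$-module, and tensoring with $\Ql$ preserves continuity. For the ramification, I would invoke the criterion of N\'eron--Ogg--Shafarevich exactly as cited in the paragraph above: $\rho$ is unramified at a place $v$ if and only if $E$ has good reduction at $v$, and since $E$ is defined over $K$ it has good reduction at all but finitely many places.

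Next, weight and self-duality. At a place $v$ of good reduction, the smooth and proper base change theorems identify $V_\ell^* = H^1(\overline E,\Ql)$ with $H^1(\overline{E_v},\Ql)$ compatibly with Frobenius, so $\det(1-\rho(\Fr_v)T) = 1 - a_v T + q_v T^2$ by the cohomological formula for the zeta function of the reduced curve recalled in Section~\ref{s:zetas} of Lecture~0. The roots are Weil numbers of size $q_v^{1/2}$ by the Hasse bound (equivalently, the Riemann hypothesis for the elliptic curve $E_v$), proving $\rho$ is pure of weight~$1$. For self-duality I would write down the Weil pairing $E[\ell^n]\times E[\ell^n]\to \mu_{\ell^n}$, which is alternating, non-degenerate, and $G_K$-equivariant; passing to the limit gives an alternating, non-degenerate, $G_K$-equivariant pairing $V_\ell E\times V_\ell E\to \Ql(1)$, and dualizing yields the required pairing on $V_\ell^*$ with values in $\Ql(-1)$, making $\rho$ symplectically self-dual of weight~$1$.

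Finally, and this is the step that requires a little genuine argument, I would show that if $E$ is non-constant then $\rho|_{G_{\kbar K}}$ contains no trivial subrepresentation. I would split into two cases. If $E$ is non-isotrivial, then either I invoke the theorem of Igusa cited in the paragraph above, according to which the image of $G_{\kbar K}$ contains an open subgroup of $\SL_2(\Zl)$ and hence $\rho|_{G_{\kbar K}}$ is absolutely irreducible, or I give the softer argument: a trivial subrepresentation would contribute to $E(\kbar K)[\ell^\infty]$, but the Lang--N\'eron theorem (applied over the constant field $\kbar$, i.e.\ the generalization mentioned in the parenthetical after Theorem~1.6.1 of Lecture~1) ensures $E(\kbar K)$ is finitely generated, hence has finite $\ell$-power torsion, a contradiction. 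If $E$ is isotrivial but non-constant, I would follow the hint of the preceding exercise: choose a finite extension $K'/K$ over which $E$ becomes isomorphic to a constant curve $E'=E_0\times_k K$, so that $V_\ell E$ and $V_\ell E'$ agree as $G_{\Kbar}$-modules, but they differ as $G_{K'}$-modules by a non-trivial twist. Choose $\sigma\in G_{\kbar K}$ mapping to a non-trivial element of $\gal(\kbar K'/\kbar K)\subset\Aut_{\kbar K}(E)$; then $\sigma$ acts on $V_\ell E$ as a non-trivial automorphism of $E_0$, which has only finitely many fixed points, so the fixed space of $\sigma$ on $V_\ell E$ is zero, precluding any trivial subrepresentation. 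The main (only) obstacle is this last case, and the exercise's hint already spells out the correct mechanism.
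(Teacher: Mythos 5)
Your proof is correct and tracks the paper's own argument almost verbatim: continuity and finiteness of ramification via N\'eron--Ogg--Shafarevich, purity via smooth base change and the Hasse bound, self-duality via the Weil pairing, and the non-constant case split into non-isotrivial (MWLN/Igusa) and isotrivial-but-non-constant (the twisting argument that the paper leaves as an exercise). The only quibble is a small bookkeeping slip in the last step --- you wrote that $V_\ell E$ and $V_\ell E'$ agree as $G_{\Kbar}$-modules and differ as $G_{K'}$-modules, whereas they agree as $G_{K'}$-modules and differ as $G_K$-modules --- but the conclusion you draw (a $\sigma\in G_{\kbar K}$ acting on $V_\ell E$ as a non-trivial automorphism of $E_0$) is the right one.
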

% referee suggests adding that rho is rational as in Serre
% this was related to his suggestion to add rationality hypotheses on
% rho, but this is unnecessary, so omitted

The conductor of $\rho_E$ as defined in the previous section is equal
to the conductor of $E$ as mentioned in Section~\ref{s:local-invs} of
Lecture~1.  This was proven by Ogg in \cite{Ogg67}.

\subsection{The $L$-function}
Applying the results of the previous section, we get a very
satisfactory analysis of the $L$-function of $E$.  Since we know
everything about the constant case by an elementary analysis
(cf.~exercise~\ref{exer:L-const} of Lecture~1), we restrict to the
non-constant case.

\begin{thm}
  Let $E$ be a non-constant elliptic curve over $K=k(\CC)$ and let $q$
  be the cardinality of $k$.  Let $\n$ be the conductor of $E$.  Then
  $L(E,s)$ is a polynomial in $q^{-s}$ of degree
  $N=4g_\CC-4+\deg(\n)$. Its inverse roots are Weil numbers of size
  $q$ and it satisfies a functional equation
$$L(E,2-s)=\pm q^{N(s-1)} L(E,s).$$
\end{thm}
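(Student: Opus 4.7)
The plan is to deduce this theorem as a direct application of the Grothendieck machinery developed in the previous section, specialized to $\rho = \rho_E$, the representation of $G_K$ on $V_\ell^* E$. The preceding Proposition has already verified all the standing hypotheses: $\rho_E$ is continuous, unramified outside the places of bad reduction, pure of weight $w=1$, and symplectically self-dual of weight $1$; moreover, since $E$ is non-constant, $\rho_E|_{G_{\kbar K}}$ contains no trivial subrepresentation. Comparing the Euler factors of $L(\rho_E,s)$ (defined via $\det(1-T\Fr_v|V^{I_v})$) with the Euler factors of $L(E,s)$ from Lecture~1, equation~(\ref{eq:Ldef}), one sees they agree: at good places this follows from the smooth base change theorem together with the cohomological formula for the zeta function of $E_v$, and at bad places one invokes Ogg's theorem identifying the Galois-theoretic conductor of $\rho_E$ with the conductor $\n$ defined in Section~\ref{s:local-invs}.

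With the identification $L(E,s) = L(\rho_E,s)$ in hand, Theorem~\ref{thm:GA} immediately gives
\[
L(E,s) = \det\!\left(1 - q^{-s}\Fr_q \,\big|\, H(\rho_E)\right),
\]
so $L(E,s)$ is a polynomial in $q^{-s}$. Its degree is computed by the dimension formula in Theorem~\ref{thm:GA}:
\[
\dim H(\rho_E) = \deg(\rho_E)(2g_\CC - 2) + \deg(\n) = 2(2g_\CC - 2) + \deg(\n) = 4g_\CC - 4 + \deg(\n) = N,
\]
using $\deg(\rho_E) = 2$.

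For the functional equation, I would apply the symplectic self-duality theorem of the previous section directly with $w=1$: it yields
\[
L(E, 2-s) = \pm\, q^{N(s - 1)}\, L(E, s),
\]
since $w + 1 = 2$ and $(w+1)/2 = 1$. Finally, the Riemann hypothesis assertion---that the inverse roots $\alpha_i$ are Weil numbers of size $q$---is exactly the statement that the zeroes of $L(E,s)$ lie on the line $\RP s = (w+1)/2 = 1$, which follows from Deligne's purity theorem applied to $H^1(\CCbar, \FF_{\rho_E})$, as quoted in the previous section.

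There is no real obstacle: the entire proof is a matter of checking that $\rho_E$ satisfies the hypotheses (done in the Proposition) and then reading off the three assertions from the general results. If anything requires care, it is the Euler-factor comparison at the bad places (i.e., the fact that $\cond(\rho_E) = \n$), which rests on Ogg's formula and is a non-trivial input cited from the literature; but given that input, the rest is mechanical specialization.
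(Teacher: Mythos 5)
Your proposal matches the paper's approach exactly: the paper states the theorem immediately after the Proposition verifying that $\rho_E$ satisfies the standing hypotheses, with the phrase ``Applying the results of the previous section,'' and the intended proof is precisely the specialization you wrote out — identify $L(E,s)$ with $L(\rho_E,s)$, invoke Theorem~\ref{thm:GA} for polynomiality and the degree formula (with Ogg's theorem supplying $\cond(\rho_E)=\n$), and read off the functional equation and Riemann hypothesis from the self-duality theorem and Deligne's purity. The only very small imprecision is attributing the agreement of the bad Euler factors to Ogg's theorem, which really only gives the conductor degree; the bad-factor comparison $\det(1-T\Fr_v\mid V^{I_v})=1-a_vT^{\deg v}$ is a separate (standard, easy) local check, but the paper elides this too.
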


Combining the Theorem with Theorem~\ref{thm:BSD1}, we obtain the
following.

\begin{cor}\label{cor:rankbound}
The rank of $E(K)$ is bounded above by $N=4g_\CC-4+\deg(\n)$.  If
equality holds, then $L(E,s)=(1-q^{1-s})^N$.
\end{cor}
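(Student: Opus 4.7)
The plan is to combine the inequality $\rk E(K)\le \ord_{s=1}L(E,s)$ from Theorem~\ref{thm:BSD1}(\ref{item:inequality}) with the structural information about $L(E,s)$ provided by the theorem immediately above. Since $L(E,s)$ is a polynomial in $q^{-s}$ of degree $N=4g_\CC-4+\deg(\n)$, the order of vanishing at any point (in particular at $s=1$) is at most $N$. Chaining inequalities,
$$\rk E(K)\le \ord_{s=1}L(E,s)\le N,$$
which gives the first assertion.

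For the equality case, suppose $\rk E(K)=N$. Then both inequalities above must be equalities, so $\ord_{s=1}L(E,s)=N$, meaning $L(E,s)$ vanishes at $s=1$ to the maximum possible order. Write
$$L(E,s)=\prod_{i=1}^N(1-\alpha_i q^{-s}),$$
where by the theorem each $\alpha_i$ is a Weil number of size $q$. The order of vanishing at $s=1$ is exactly the number of indices $i$ with $\alpha_i q^{-1}=1$, i.e., $\alpha_i=q$. If this count equals $N$, then every $\alpha_i$ equals $q$, and hence
$$L(E,s)=\prod_{i=1}^N(1-q\cdot q^{-s})=(1-q^{1-s})^N,$$
as claimed.

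There is essentially no obstacle here: the corollary is a mechanical consequence of (i) the Tate--Milne inequality $\rk E(K)\le \ord_{s=1}L(E,s)$ and (ii) the polynomiality and purity statement in the preceding theorem. The only step requiring a moment's thought is the rigidity in the equality case, where one uses that a complex number of absolute value $q$ equals $q$ precisely when it equals $q$ as a root of $1-\alpha_i q^{-1}=0$; no stronger Galois-theoretic information about the $\alpha_i$ is needed.
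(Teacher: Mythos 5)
Your proof is correct and follows exactly the route the paper intends: combine $\rk E(K)\le\ord_{s=1}L(E,s)$ from Theorem~\ref{thm:BSD1} with the polynomiality and degree count from the theorem immediately preceding, and then observe that an order-$N$ zero at $s=1$ of a degree-$N$ polynomial in $q^{-s}$ forces all inverse roots to equal $q$. The only stylistic note is that the purity (Weil-number) property of the $\alpha_i$ plays no role here, as you yourself observe; polynomiality alone suffices.
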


The sign in the functional equation can be computed as a product of
local factors.  This can be seen via the connection with automorphic
forms (a connection which is outside the scope of these lectures)
or, because we are in the function field situation, directly via
cohomological techniques.  See \cite{Laumon84} for the latter.

\section{Large analytic ranks in towers}

\subsection{Statement of the theorem}\label{ss:analytic-ranks}
We give a general context in which one obtains large analytic
ranks by passing to layers of a suitable tower of function fields.

As usual, let $p$ be a prime and $q$ a power of $p$.  Let $K=\Fq(t)$,
for each $d$ not divisible by $p$, set $F_d=\Fq(t^{1/d})\cong\Fq(u)$,
and $K_d=\Fq(\mu_d)(t^{1/d})\cong\Fq(\mu_d)(u)$.  

Suppose that $E$ is an elliptic curve over $K$.  Let $\n$ be the
conductor of $E$ and let
$$\n'=\n-\dim (V_\ell E/V_\ell E^{I_0})[0]-\dim (V_\ell E/V_\ell E^{I_\infty})[\infty].$$
This is the conductor of $E$ except that we have removed the tame part
at $t=0$ and $t=\infty$.

\begin{thm}\label{thm:analytic-ranks}
  Let $E$ be an elliptic curve over $K$ and define $\n'$ as above.
  Suppose that $\deg\n'$ is odd.  Then the analytic rank of $E$ over
  $F_d$ \textup{(}and $K_d$\textup{)} is unbounded as $d$ varies.
  More precisely, there exists a constant $c$ depending only on $E$
  such that if $d$ has the form $d=q^n+1$, then
$$\ord_{s=1}L(E/F_d,s)\ge \frac{d}{2n}-c=\frac{q^n+1}{2n}-c.$$
and
$$\ord_{s=1}L(E/K_d,s)\ge d-c=q^n+1-c$$
\end{thm}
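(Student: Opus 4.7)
The plan is to decompose the $L$-functions via Artin formalism and then exploit functional equation signs to force vanishing at $s=1$. For $d=q^n+1$, note that $q^{2n}\equiv 1\pmod d$ but $q^n\equiv -1\pmod d$, so $\mu_d\subset\F_{q^{2n}}\setminus\F_{q^n}$, and $K_d=\F_{q^{2n}}(t^{1/d})$ is a cyclic Kummer extension of $\F_{q^{2n}}(t)$ of degree $d$ with Galois group generated by a character $\chi_0$. By Exercise~\ref{exer:Artin},
$$L(E/K_d,s)=\prod_{j=0}^{d-1}L(E\otimes\chi_0^j,s),$$
where each factor is the $L$-function over $\F_{q^{2n}}(t)$ of the two-dimensional Galois representation $\rho_E\otimes\chi_0^j$. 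For $F_d$, the same formula applies after descending to $\Fr_q$-orbits of characters: since the order of $q$ in $(\Z/d\Z)^\times$ is exactly $2n$, all but $O(1)$ orbits have size $2n$, so $L(E/F_d,s)$ is a product over roughly $d/(2n)$ factors.

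Next, each $L(E\otimes\chi_0^j,s)$ satisfies a functional equation $s\leftrightarrow 2-s$ from the theorem of Section 4, with a sign $\varepsilon_j\in\{\pm1\}$. When $\varepsilon_j=-1$, the polynomial $L(E\otimes\chi_0^j,s)$ in $q^{-s}$ must vanish to odd order at $s=1$ (since the fixed locus $s=1$ of the involution forces sign $-1$ polynomials to have a root there). So each $j$ with $\varepsilon_j=-1$ contributes at least $1$ to $\ord_{s=1}L(E/K_d,s)$, and each $\Fr_q$-orbit of such $j$ contributes at least $1$ to $\ord_{s=1}L(E/F_d,s)$.

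The heart of the proof is therefore to show that under the hypothesis that $\deg\n'$ is odd, all but $O(1)$ of the $\varepsilon_j$ equal $-1$. Using the cohomological computation of $\varepsilon_j$ as a product of local root numbers (as in \cite{Laumon84}), I would analyze the contributions place by place: (i) at places of good reduction for $E$ where $\chi_0^j$ is unramified, the local sign is trivial; (ii) at bad places $v\notin\{0,\infty\}$, the local sign depends on $\chi_0^j(\Fr_v)$ and on the reduction type of $E$, and contributes a factor of the form $(-1)^{n_v}\chi_0^j(\alpha_v)$ for some $\alpha_v\in K^\times$; (iii) at $v\in\{0,\infty\}$, where $\chi_0^j$ is totally ramified, the local $\varepsilon$-factor is an explicit Gauss sum in $\chi_0^j$ together with a factor accounting for the tame part of $E$'s conductor at that place. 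The key computation is that the tame-conductor contributions at $0$ and $\infty$ cancel the $\chi$-dependent parts of the Gauss sums---this is exactly what the definition $\n'=\n-(\cdots)[0]-(\cdots)[\infty]$ encodes---leaving residual sign $(-1)^{\deg\n'}\cdot\chi_0^j(\beta)$ for a fixed $\beta\in K^\times$. When $\deg\n'$ is odd, this equals $-1$ for all $j$ such that $\chi_0^j(\beta)=1$, which is at least $d-O(1)$ values of $j$ (and at least $d/(2n)-O(1)$ orbits).

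The main obstacle will be the local root number calculation at $0$ and $\infty$, especially in the wild ramification regime where $p=2,3$ and $E$ may have additive reduction with $\delta_v>0$. The Gauss-sum computation is delicate and depends on the precise Weierstrass model of $E$ near $0$ and $\infty$, but the miracle is that the tame part of the conductor---which accounts for the passive ramification of $E$ at these points---is precisely what gets twisted by the Kummer character $\chi_0^j$ under base change to $t^{1/d}$, and thus precisely what is stripped off in the definition of $\n'$. The bound $c$ in the theorem absorbs the finitely many characters of order dividing $d$ where $\chi_0^j(\beta)\neq 1$, as well as any characters where (i) or (ii) require special handling.
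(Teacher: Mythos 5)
Your decomposition via Artin formalism (respectively regrouping by Frobenius orbits over $F_d$) matches the paper's first step, but the functional-equation argument as written has a genuine gap. You claim each factor $L(\rho_E\otimes\chi_0^j,s)$ satisfies a functional equation $s\leftrightarrow 2-s$ with a sign $\varepsilon_j$ and cite the theorem of Section 4 of Lecture 4. That theorem requires the representation to be \emph{symplectically self-dual}, and $\rho_E\otimes\chi_0^j$ is not: its contragredient is $\rho_E\otimes\chi_0^{-j}(1)$, so the functional equation pairs $L(\rho_E\otimes\chi_0^j,s)$ with $L(\rho_E\otimes\chi_0^{-j},2-s)$. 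For $j\not\equiv -j\pmod d$ there is no single sign $\varepsilon_j$, and setting $s=1$ gives only $L(\rho_E\otimes\chi_0^j,1)=\varepsilon\,L(\rho_E\otimes\chi_0^{-j},1)$, which forces nothing. The right object to consider is the orbit piece $V_o=\bigoplus_{j\in o}H(\rho\otimes\chi^j)$; when $d=q^n+1$ one has $-1\equiv q^n\pmod d$, so $-j$ lies in the orbit of $j$ and $V_o$ \emph{is} self-dual. This is precisely the regrouping the paper performs.

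At that point the paper diverges from your plan: rather than computing root numbers, it applies a purely linear-algebraic lemma (Proposition~\ref{prop:la}) to each self-dual orbit piece, using that the orbit has even size $a$ and that the block $W_0=H(\rho\otimes\chi^{j_o})$ has odd dimension. The odd dimension is exactly where the hypothesis $\deg\n'$ odd enters, via the Grothendieck--Ogg--Shafarevich formula: for characters $\chi^{j}$ of large enough order, the tame parts of the conductor at $0$ and $\infty$ are "eaten" by the ramification of $\chi^{j}$, so $\deg\cond(\rho\otimes\chi^j)\equiv\deg\n'\pmod 2$. You correctly intuit the role of $\n'$, but route it through local $\varepsilon$-factors and Gauss sums at $0$ and $\infty$. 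That route is not a priori wrong once corrected to work orbit-by-orbit, but it is substantially harder (as you acknowledge, the wildly ramified cases when $p=2,3$ are delicate and remain a sketch), and it only yields a sign (hence one zero per orbit), whereas the linear-algebra lemma gives the stronger conclusion that $1-T^{a}$ divides $\det(1-\Fr_q T\mid V_o)$. The paper's lemma is the key device you are missing; with it, no local root-number computations are needed.
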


This theorem is proven in detail in \cite{Ulmer07b}*{\S2-4}.  We will
sketch the main lines of the argument below.

\subsection{A linear algebra lemma}
Our analytic rank results ultimately come from the following
odd-looking result of linear algebra.

\begin{prop}\label{prop:la}
  Let $V$ be a finite-dimensional vector space with subspaces $W_i$
  indexed by $i\in\Z/a\Z$ such that $V=\oplus_{i\in\Z/a\Z}W_i$.  Let
  $\phi:V\to V$ be an invertible linear transformation such that
  $\phi(W_i)= W_{i+1}$ for all $i\in\Z/a\Z$.  Suppose that $V$ admits
  a non-degenerate, $\phi$-invariant symmetric bilinear form $\< ,\>$.
  Suppose that $a$ is even and $\<,\>$ induces an isomorphism
  $W_{a/2}\cong W_0^*$ \textup{(}the dual vector space of
  $W_0$\textup{)}.  Suppose also that $N=\dim W_0$ is odd.  Then the
  polynomial $1- T^{a}$ divides $\det(1-\phi T|V)$.
\end{prop}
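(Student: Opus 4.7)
The plan is to reduce the divisibility claim to the statement that the operator $\psi := \phi^a|_{W_0}$ has $1$ as an eigenvalue, and then use the form $\langle,\rangle$ to force this.

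First, fix a basis $\{e_1,\dots,e_N\}$ of $W_0$ and extend it to the basis $\{\phi^i e_j : 0\le i<a,\ 1\le j\le N\}$ of $V$, grouped so that block $i$ is the basis of $W_i$. In this basis the matrix of $\phi$ is a block companion matrix with $N\times N$ identity blocks on the subdiagonal (corresponding to $\phi:W_i\isoto W_{i+1}$) and the matrix of $\psi$ in the upper-right $N\times N$ corner (corresponding to $\phi^a:W_0\to W_0$). A direct block-determinant computation yields
\[
\det(1 - T\phi\,|\,V) = \det(1 - T^a\psi\,|\,W_0).
\]
Writing the right hand side as $\prod_j(1-T^a\lambda_j)$ and $1-T^a=\prod_{\zeta^a=1}(1-\zeta T)$, we see that $1-T^a$ divides $\det(1 - T\phi\,|\,V)$ if and only if some $\lambda_j = 1$, i.e., $\det(\psi - I) = 0$.

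Next, introduce the shifted pairing $B(u,v) := \langle u,\phi^{a/2}v\rangle$ on $W_0\times W_0$. Since $\phi^{a/2}$ restricts to an isomorphism $W_0 \isoto W_{a/2}$ and by hypothesis $\langle,\rangle$ pairs $W_0$ perfectly with $W_{a/2}$, the form $B$ is non-degenerate on $W_0$---though generally not symmetric. Combining $\phi$-invariance with symmetry of $\langle,\rangle$ yields
\[
B(u,v) = \langle u,\phi^{a/2}v\rangle = \langle\phi^{a/2}u,\phi^a v\rangle = \langle\phi^{a/2}u,\psi v\rangle = \langle\psi v,\phi^{a/2}u\rangle = B(\psi v,u).
\]

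Fixing a basis of $W_0$, let $\mathbf{B}$ and $\Psi$ denote the matrices of $B$ and $\psi$. The identity $B(u,v) = B(\psi v, u)$ becomes $u^T\mathbf{B}v = v^T\Psi^T\mathbf{B}u = u^T\mathbf{B}^T\Psi v$ for all $u,v$, which forces $\mathbf{B} = \mathbf{B}^T\Psi$, hence $\Psi = \mathbf{B}^{-T}\mathbf{B}$. Consequently
\[
\Psi - I = \mathbf{B}^{-T}(\mathbf{B} - \mathbf{B}^T).
\]
The matrix $\mathbf{B}-\mathbf{B}^T$ is skew-symmetric of odd size $N\times N$, so its determinant vanishes. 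Therefore $\det(\Psi - I) = 0$, and $1$ is an eigenvalue of $\psi$, giving $1-T^a \mid \det(1 - T\phi\,|\,V)$. The main obstacle is the first reduction: the block-companion determinant identity is elementary but must be set up carefully so that the role of $\psi$ becomes transparent. The hypothesis that $N$ is odd is not used until the very last line, where it is precisely what makes the skew-symmetric determinant collapse---this is the whole point of the lemma.
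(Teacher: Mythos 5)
Your proof is correct, and it is essentially the matrix-language argument the paper attributes to \cite{Ulmer05}*{7.1.11ff}: reduce to showing $1$ is an eigenvalue of $\psi=\phi^a|_{W_0}$ via the block-companion identity $\det(1-T\phi|V)=\det(1-T^a\psi|W_0)$, then use the shifted pairing $B(u,v)=\langle u,\phi^{a/2}v\rangle$ to obtain $\Psi-I=\mathbf{B}^{-T}(\mathbf{B}-\mathbf{B}^T)$, whose determinant vanishes because $\mathbf{B}-\mathbf{B}^T$ is skew-symmetric of odd size $N$. (The final step implicitly uses that the scalar field has characteristic $\neq 2$, which holds in the intended application over $\Qlbar$.)
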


We omit the proof of this proposition, since it is not hard and it
appears in two forms in the literature already.  Namely, embedded in
\cite{Ulmer05}*{7.1.11ff} is a matrix-language proof of the
proposition, and a coordinate-free proof is given in
\cite{Ulmer07b}*{\S2}.

\subsection{Sketch of the proof of Theorem~\ref{thm:analytic-ranks}}
For simplicity, we assume that $E$ is non-isotrivial.  (If $p>3$ and
$E$ is isotrivial, then the theorem is vacuous because all of the
local conductor exponents $n_v$ are even.)  Let $\rho$ be the
representation of $G_K$ on $V=H^1(\overline{E},\Ql)=(V_\ell E)^*$ and
let $\rho_d$ be the restriction of $\rho$ to $G_{F_d}$.  Then by
Grothendieck's analysis, we have
$$L(E/F_d,s)=\det\left(1-\Fr_qq^{-s}|H(\rho_d)\right).$$
Here $H(\rho_d)$ is an $H^1$ on the rational curve whose function field
is $F_d=\Fqbar(u)=\Fqbar(t^{1/d})$. 

The projection formula in cohomology (a parallel of the Artin
formalism \ref{exer:Artin}) implies that
$$H(\rho_d)\cong H(\ind_{G_{F_d}}^{G_K}\rho)\cong 
H(\rho\tensor\ind_{G_{F_d}}^{G_K}{\bf 1})$$
where $\bf 1$ denotes the trivial representation.  Since the
cohomology $H$ is computed on $\overline{\P}^1_u$ (the $\P^1$ with
coordinate $u$, with scalars extended to $\Fqbar$) and
$\overline{\P}^1_u\to\overline{\P}^1_t$ is Galois with group $\mu_d$, we
have
$$H(\rho_d)\cong\bigoplus_{j=0}^{d-1}H(\rho\tensor\chi^j)$$
where $\chi$ is a character of $\gal(\Fqbar(u)/\Fqbar(t))$ of order
exactly $d$.

Now the decomposition displayed above is not preserved by Frobenius.
Indeed $\Fr_q$ sends $H(\rho\tensor\chi^j)$ to
$H(\rho\tensor\chi^{qj})$.  Thus we let $o\subset\Z/d\Z$ denote an
orbit for multiplication by $q$ and we regroup:
$$H(\rho_d)\cong\bigoplus_{o\subset\Z/d\Z}
\left(\bigoplus_{j\in o}H(\rho\tensor\chi^j)\right).$$

We write $V_o$ for the summand indexed by an orbit $o\subset\Z/d\Z$ in
the last display and $a_o$ for the cardinality of $o$.  As we will see
presently, the hypotheses of the theorem imply that
Proposition~\ref{prop:la} applies to most of the $V_o$ and for each
one where it does, we get a zero of the $L$-function.  Before we do
that, there is one small technical point to take care of: The linear
algebra proposition requires that $V$ be literally self-dual (not
self-dual with a weight) and it implies that $1$ is an eigenvalue of
$\phi$ on $V$.  To get the eigenvalue $q$ that we need, we should
twist $\rho$ by $-1/2$ (which is legitimate once we have fixed
choice of square root of $q$) so that
it has weight 0, apply the lemma, and twist back to get the desired
zero. We leave the details of these points to the reader.

Assuming we have made the twist just mentioned, we need to check which
$V_o$ are self-dual.  Since $\rho$ is self-dual, Poincar\'e duality
gives a non-degenerate pairing on $H(\rho_d)$ which puts
$H(\rho\tensor\chi^j)$ in duality with $H(\rho\tensor\chi^{-j})$.
Thus if $d=q^n+1$ for some $n>0$, then all of the orbits $o$ will
yield a self-dual $V_o$.  Possibly two of these orbits have odd order
(those through $0$ and $d/2$, which have order $1$) and all of the
other have $a_o$ even.  Moreover, for the orbits of even order,
setting $W_{o,i}=H(\rho\tensor\chi^{q^ij_o})$ for some fixed $j_o\in
o$, we have
$$V_o\cong\bigoplus_{i=0}^{a_o-1}W_{o,i}$$
with $W_{o,i}$ and $W_{o,i+a_0/2}$ in duality.

The last point that we need is that $W_{o,i}$ should be
odd-dimensional.  The hypothesis on $\n'$ implies that for all
characters $\chi^j$ of sufficiently high order (depending only on
$E$), the conductor of $\rho\tensor\chi^j$ is odd.  The
Grothendieck-Ogg-Shafarevich dimension formula (mentioned at the end
of the proof of Theorem~\ref{thm:GA}) then implies that for all orbits
$o$ consisting of characters of high order, $H(\rho\tensor\chi^{j_o})$
has odd dimension.

The linear algebra proposition~\ref{prop:la} now implies that for
$d=q^n+1$ and for most orbits $o\subset\Z/d\Z$, $1$ is an eigenvalue
of $\Fr_q$ on $V_o$ (and $q$ is an eigenvalue of $\Fr_q$ on the
corresponding factor of $H(\rho_d)$).  Since each of these orbits has
size $\le 2n$, there is a constant $c$ such that the number of
``good'' orbits is $\ge d/2n$.  Thus
$$\ord_{s=1}L(E/F_d,s)\ge\frac{d}{2n}-c$$
for a constant $c$ depending only on $E$.

To get the assertions over $K_d$, note that in passing from $F_d$ to
$K_d$, each factor $(1-q^{a_o}T^{a_o})$ of $L(E/F_d,T)$ becomes
$(1-qT)^{a_o}$ and so
$$\ord_{s=1}L(E/K_d,s)\ge d-c$$ 
for another $c$ independent of $E$.

This completes our discussion of Theorem~\ref{thm:analytic-ranks}.  We
refer to \cite{Ulmer07b}*{\S2-4} for more details.
\qed

\subsection{Examples}
It is easy to see that the hypotheses in
Theorem~\ref{thm:analytic-ranks} are not very restrictive and that
high analytic ranks are in a sense ubiquitous.  The following
rephrasing of the condition in the theorem should make this clear.

\begin{exer}
  Prove that if $p>3$ and $E$ is an elliptic curve over $K$, then
  Theorem~\ref{thm:analytic-ranks} guarantees that $E$ has unbounded
  analytic rank in the tower $F_d$ if the number of geometric points
  of $\P^1_{\Fq}$ over which $E$ has multiplicative reduction is odd.
\end{exer}

\begin{cor}\label{cor:a-r-examples}
Let $p$ be any prime number, $K=\Fp(t)$, and let $E$ be one of the
curves $E_7$, $E_8$, or $E_9$ defined in Subsection~\ref{ss:examples}
of Lecture~1.  Then 
$$\ord_{s=1}L(E/\Fp(t^{1/d}),s)$$ 
is unbounded as $d$ varies through integers prime to $p$
\end{cor}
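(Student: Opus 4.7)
The plan is to apply Theorem~\ref{thm:analytic-ranks} to each of $E_7$, $E_8$, $E_9$ over $K=\Fp(t)$, which reduces to verifying that the associated divisor $\n'$ has odd degree in every case.  Writing $\delta_v = n_v - \dim(V_\ell E/V_\ell E^{I_v})$ for the Swan conductor at $v$ (so that $\delta_v=0$ at places of tame reduction), the quantity to evaluate is
$$\deg\n' \;=\; \sum_{v\ne 0,\infty} n_v\deg v \;+\; \delta_0+\delta_\infty.$$

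For $p>3$ the computation is immediate.  Each discriminant factors as $\Delta=t^a(1-ct)$ with $a\in\{1,2,3\}$ and $c\in\{432,64,27\}$ nonzero in $\Fp$, giving three bad places: multiplicative reduction $I_a$ at $t=0$, multiplicative reduction $I_1$ at $t=1/c$, and tame additive reduction at $\infty$.  The precise additive type is read off from $(v(c_4),v(c_6),v(\Delta))$ in the integral model obtained by the substitution $x=X/s^2$, $y=Y/s^3$ with $s=1/t$: the types turn out to be $IV^*$, $III^*$, $II^*$ for $E_7$, $E_8$, $E_9$, respectively, all with $\delta_\infty=0$.  Hence $\deg\n' = 1+0+0 = 1$, which is odd, and the theorem applies.

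For $p\in\{2,3\}$ the constant $c$ above may reduce mod $p$ either to $1$ (the extra $I_1$ simply moves to $t=1$ but is still away from $0$ and $\infty$) or to $0$ (the extra bad place is swallowed into $\infty$), and in the latter cases the lost contribution must come from wild ramification at $\infty$.  For each of the six small-characteristic pairs $(E_i,p)$ I plan to run Tate's algorithm on the explicit local model at $s=0$ to identify the Kodaira type (and hence the geometric component count $f_\infty$), and then compute the conductor via the Ogg-Saito formula $n_\infty = v(\Delta_{\min})-f_\infty+1$.  The outcomes turn out to be: the Kodaira type at $\infty$ is the same as in the tame regime ($IV^*$, $III^*$, or $II^*$), while $v(\Delta_{\min})$ takes values in $\{8,9,10,11\}$ according to how many of the relevant integer coefficients vanish mod $p$.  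In every one of the six cases the displayed formula evaluates to $1$, either because of the surviving $I_1$ at $t=1$ (with $\delta_\infty=0$) or because of a compensating $\delta_\infty=1$ from wild ramification at $\infty$.

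The main obstacle is precisely the case analysis in characteristics $2$ and $3$: both auxiliary polynomials $P(T)$ and $Q(Y)$ in Tate's algorithm acquire repeated roots for these models, so one must walk through the full cascade of steps and correctly identify where the algorithm terminates.  Since the coefficients of each local Weierstrass model at $\infty$ are monomials in $s$, however, each case is a bounded finite computation that pins down $f_\infty$ unambiguously, and the parity of $\deg\n'$ follows.
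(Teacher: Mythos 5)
Your proposal is correct and follows the paper's strategy: apply Theorem~\ref{thm:analytic-ranks} to each curve over $\Fp(t)$ after verifying that $\deg\n'$ is odd.  For $p>3$ the bookkeeping (one $I_1$ away from $\{0,\infty\}$, everything else tame) coincides with the paper's one-line justification.

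For $p\in\{2,3\}$, however, your case analysis is actually more careful than the paper's.  The paper asserts that for small $p$ each $E_i$ ``has good reduction at all finite non-zero places,'' but this is false in two of the six cases: $E_7/\F_2$ has $\Delta=t^3(1+t)$ with $c_4\equiv 1$, hence an $I_1$ fiber at $t=1$, and $E_8/\F_3$ has $\Delta=t^2(1-t)$ with $c_4\equiv 1$, likewise an $I_1$ at $t=1$.  Your observation that $c$ may reduce to $1$ or to $0$ mod $p$ (here $27\equiv 1,\,64\equiv 1$ in the respective characteristics) captures exactly those exceptions.  In those two cases the reduction at $\infty$ turns out to be tame ($\delta_\infty=0$: type $IV^*$ with $v(\Delta_{\min})=8$ for $E_7/\F_2$, type $III^*$ with $v(\Delta_{\min})=9$ for $E_8/\F_3$, so Ogg gives $n_\infty=2$) and $\deg\n'$ is the odd contribution of the surviving $I_1$; in the remaining four cases the finite bad locus is just $t=0$ and $\delta_\infty=1$ supplies the odd term, as the paper claims.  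So your two-alternative dichotomy is the correct statement, the paper's phrasing is slightly off, and the Kodaira types $IV^*,III^*,II^*$ at $\infty$ do persist for $p\le3$ as you predicted.  The only thing to check carefully is that you do run all six Tate's-algorithm computations (and note that $Q(Y)$ actually has distinct roots for $E_7$, giving $IV^*$ at step 8, rather than acquiring a repeated root); the proposal as written promises this but treats it as plan rather than performance.
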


\begin{proof}
  If $p>3$, then one sees immediately by considering the discriminant
  and $j$-invariant that $E$ has one finite, non-zero place of
  multiplicative reduction and is tame at 0 and $\infty$, thus it
  satisfies the hypotheses of Theorem~\ref{thm:analytic-ranks}.  If
$p=2$ or 3, one checks using Tate's algorithm that $E$ has good
reduction at all finite non-zero places and is tame at zero, but the
wild part of the conductor at $\infty$ is odd and so the theorem again
applies.
\end{proof}

For another example, take the Legendre curve 
$$y^2=x(x-1)(x-t)$$
over $\Fp(t)$, $p>2$.  It is tame at 0 and $\infty$ and has exactly
one finite, non-zero place of multiplicative reduction.

\section{Large algebraic ranks}
\subsection{Examples via the four-monomial theorem}
Noting that the curves $E_7$, $E_8$, and $E_9$ are defined by
equations involving exactly four monomials, we get a very nice
result on algebraic ranks.

\begin{thm}
  Let $p$ be any prime number, $K=\Fp(t)$, and let $E$ be one of the
  curves $E_7$, $E_8$, or $E_9$ defined in
  Subsection~\ref{ss:examples} of Lecture~1.  Then for all $d$ prime
  to $p$ and all powers $q$ of $p$, the Birch and Swinnerton-Dyer
  conjecture holds for $E$ over $K_d=\Fq(t^{1/d})$.  Moreover, the rank of
  $E(\Fp(t^{1/d}))$ is unbounded as $d$ varies.
\end{thm}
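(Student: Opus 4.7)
The plan is to prove the two assertions by combining results already established: Theorem~\ref{thm:4-monos} (the four-monomial theorem) will give the BSD assertion, and Corollary~\ref{cor:a-r-examples} (unbounded analytic rank in the tower $\Fp(t^{1/d})$) will upgrade this to unbounded algebraic rank.

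For BSD, I set $u=t^{1/d}$ so that $K_d=\Fq(u)$. Each of $E_7$, $E_8$, $E_9$ is defined over $\Fq(t)\subset K_d$ by a Weierstrass equation that is a sum of exactly four non-zero monomials in $\Fq[t,x,y]$; substituting $t=u^d$ yields an equation with four non-zero monomials in $\Fq[u,x,y]$, so the hypothesis of Theorem~\ref{thm:4-monos} on the shape of $f$ is preserved. A direct $4\times4$ determinant computation shows that for each of $E_7$, $E_8$, and $E_9$ the Shioda matrix over $\Fq(t)$ has determinant $\pm 1$; the substitution $t\mapsto u^d$ multiplies precisely one column-entry of this matrix by $d$ (the entry recording the exponent of $u$ in the monomial previously containing $t$), so the new Shioda matrix $A_d$ has determinant $\pm d$. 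The hypothesis $p\nodiv d$ then guarantees Shioda's condition over $K_d$, and Theorem~\ref{thm:4-monos} applied over $K_d$ yields BSD for $E_i$ over $K_d$ for every power $q$ of $p$ and every $d$ prime to $p$.

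For the unbounded rank assertion I specialize to $q=p$, so $K_d=\Fp(t^{1/d})$. Corollary~\ref{cor:a-r-examples} gives that $\ord_{s=1}L(E_i/\Fp(t^{1/d}),s)$ is unbounded as $d$ ranges over integers prime to $p$ (concretely, along $d=p^n+1$ by Theorem~\ref{thm:analytic-ranks}). By the BSD assertion just proved, the analytic rank equals $\rk E_i(\Fp(t^{1/d}))$ for every such $d$, and therefore the algebraic rank is also unbounded.

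The substantive ingredients---domination of the elliptic surface by a product of Fermat curves (used inside Theorem~\ref{thm:4-monos}) and the linear-algebraic mechanism that forces extra zeros of $L$-functions in Kummer towers (Proposition~\ref{prop:la} and Theorem~\ref{thm:analytic-ranks})---have already been carried out, so there is no real obstacle here. The only step requiring verification is that Shioda's condition survives base change along $t\mapsto u^d$, and this reduces to the elementary observation that the Shioda determinant scales by $d$, which is exactly the factor controlled by the hypothesis $p\nodiv d$.
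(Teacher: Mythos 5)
Your proof is correct and follows essentially the same route as the paper: apply Theorem~\ref{thm:4-monos} (via Shioda's condition) to get BSD over each $K_d$, then combine with Corollary~\ref{cor:a-r-examples} to convert unbounded analytic ranks into unbounded algebraic ranks. The only difference is that you spell out why the Shioda determinant scales by $d$ under $t\mapsto u^d$ (the $(d-1)$-multiple of the first column absorbed into the fourth column contributes nothing to the determinant), a verification the paper leaves to the reader with the phrase ``as soon as we note that $E/K_d$ is defined by an equation satisfying Shioda's conditions.''
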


\begin{proof}
  This follows immediately from Corollary~\ref{cor:a-r-examples} and
  Theorem~\ref{thm:4-monos} of Lecture~1 as soon as we note that
  $E/K_d$ is defined by an equation satisfying Shioda's conditions.
\end{proof}
%the 4x4 matrices for d=1 all have det +/-1

Similar ideas can be used to show that for every prime $p$ and every
genus $g>0$, there is an explicit hyperelliptic curve $C$ over
$\Fp(t)$ such that the Jacobian of $C$ satisfies BSD over
$\Fq(t^{1/d})$ for all $q$ and $d$ and has unbounded rank in the
tower $\Fp(t^{1/d})$.  This is the main theorem of \cite{Ulmer07b}.

\subsection{Examples via Berger's construction}
As we pointed out in Lecture~3, the Shioda 4-monomial construction is
rigid---varying the coefficients does not lead to families that
vary geometrically.  Berger's thesis developed a new construction with
parameters that leads to families of  curves for which the BSD
conjecture holds in a tower of fields.  This together with the analytic
ranks result \ref{thm:analytic-ranks} gives examples of families of
elliptic curves with unbounded ranks.  

To make this concrete, we quote the first example with
parameters from \cite{Berger08} that, together with the analytic rank
construction \ref{thm:analytic-ranks}, gives rise to unbounded
analytic and algebraic ranks.

\begin{thm}[Berger]
Let $k=\Fq$ be a finite field of characteristic $p$ and let $a\in\Fq$ with
$a\neq0,1,2$.  Let $E$ be the elliptic curve over $K=\Fq(t)$ defined by
$$y^2+a(t-1)xy+a(t^2-t)y=x^3+(2a+1)tx^2+a(a+2)t^2x+a^2t^3.$$
Then for all $d$ prime to $p$ the BSD conjecture holds for $E$ over
$\Fq(t^{1/d})$.  Moreover, for every $q$ and $a$ as above, the rank
of $E(\Fq(t^{1/d}))$ is unbounded as $d$ varies.
\end{thm}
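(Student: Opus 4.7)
My plan is to verify the hypotheses of Theorem~\ref{thm:Berger} (which gives BSD in the towers $\Fq(t^{1/d})/\Fq(t)$) and Theorem~\ref{thm:analytic-ranks} (which forces unbounded analytic rank once the reduced conductor $\n'$ has odd degree), and then combine.

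For the Berger hypothesis, the starting observation is that both sides of the given Weierstrass equation factor cleanly: the right-hand side equals $(x+t)(x+at)^2$ and the left-hand side equals $y\bigl(y+a(t-1)(x+t)\bigr)$, so the defining equation is
\[
y\bigl(y+a(t-1)(x+t)\bigr)=(x+t)(x+at)^2.
\]
Introducing $X=(x+t)/(x+at)$ and $Y=y/(x+at)$ and using the identity $(x+at)(1-X)=(a-1)t$, this rewrites as $(1-X)Y\bigl(Y+a(t-1)X\bigr)=(a-1)tX$, a relation linear in $t$ that solves explicitly for $t$ as a rational function of $X$ and $Y$. I would then apply a further birational substitution---the one supplied by Berger's construction via the branched covers $\CC_d,\DD_d\to\P^1$ of Section~\ref{s:Berger}---to separate the $X$- and $Y$-dependence, displaying the surface in the form $F(x')=tG(y')$ with $F, G$ rational in one variable. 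Theorem~\ref{thm:Berger} then immediately gives BSD for $E$ over $\Fq(t^{1/d})$ for every $d$ prime to $p$.

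For the conductor-parity hypothesis, I would compute the discriminant of the given model explicitly. A direct calculation from the formulas in $b_2, b_4, b_6, b_8$ yields
\[
\Delta(t) = a^2(a-1)^4\, t^4 (t-1)^2 V(t),
\]
where $V(t)=a^2 t^2-2(a^2-8a+8)t+a^2$ is a palindromic quadratic whose discriminant in $t$ equals $-64(a-1)(a-2)^2$. For $a\neq 0,1,2$ this is nonzero, so $V$ has two distinct roots in $\Fqbar$ and contributes exactly $2$ to $\deg\cond(E)$ (two degree-$1$ places when $1-a$ is a square in $\Fq$, and one degree-$2$ place otherwise). Combined with $I_n$-type multiplicative reductions of conductor exponent $1$ at $t=0$, $t=1$, and $t=\infty$ (the last verified using the substitution $s=1/t$, $x=x'/s^2$, $y=y'/s^3$, which returns the equation to the same shape in $s$), one obtains $\deg\cond(E)=5$, and stripping the tame contributions at $0$ and $\infty$ leaves $\deg\n'=3$, which is odd. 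The value $a=2$ is excluded precisely because $V(t)=4(t+1)^2$ there, producing an additional $I_n$ place at $t=-1$.

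With $\deg\n'$ odd, Theorem~\ref{thm:analytic-ranks} forces $\ord_{s=1}L(E/\Fq(t^{1/d}),s)\to\infty$ along $d=q^n+1$, so the analytic rank is unbounded; combined with BSD from the first step, this promotes to $\rk E(\Fq(t^{1/d}))\to\infty$. The main obstacle is the Berger-form identification: the factorization of the Weierstrass equation makes the shape of the answer apparent, but extracting an explicit separation of variables $t=F(X)/G(Y)$ from the coupled relation above is the technical content of Berger's thesis for this family. The conductor computation is then a direct calculation, modulo a small amount of case analysis in residue characteristics $2$ and $3$ to verify that no wild contribution disturbs the parity at $\infty$.
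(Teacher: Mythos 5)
Your proposal follows the same two-pronged strategy as the paper: verify Berger's hypothesis (Theorem~\ref{thm:Berger}) to get BSD in the tower $\Fq(t^{1/d})$, then verify the parity hypothesis on $\n'$ for Theorem~\ref{thm:analytic-ranks} to force unbounded analytic rank, and combine. Your conductor computation is correct and matches the paper's verbatim: the same discriminant $\Delta=a^2(a-1)^4t^4(t-1)^2V(t)$ with $V(t)=a^2t^2-(2a^2-16a+16)t+a^2$, the same discriminant $-64(a-1)(a-2)^2$ of $V$, the same conclusion $\deg\n'=3$ for $p>3$ with details for $p\le3$ deferred; your observation that $a=2$ is excluded because $V(t)=4(t+1)^2$ degenerates and yields even $\deg\n'$ is a useful gloss the paper leaves implicit.

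However, there is a genuine gap in the verification that $E$ is of Berger form. Your factorization $y\bigl(y+a(t-1)(x+t)\bigr)=(x+t)(x+at)^2$ and substitution $X=(x+t)/(x+at)$, $Y=y/(x+at)$ leading to $(1-X)Y\bigl(Y+a(t-1)X\bigr)=(a-1)tX$ is a good start, but this relation still couples $X$, $Y$, and $t$, and you stop at the point of "a further birational substitution" without producing it. The paper supplies the missing data directly: $f(x)=x(x-a)/(x-1)$ and $g(y)=y(y-a)/(y-1)$, and asserts that $V(f-tg)\subset\P^1_K\times\P^1_K$ is birational to $E$---a finite, checkable claim. Attributing the gap to "the technical content of Berger's thesis" is not an escape: the explicit $f$ and $g$ \emph{are} the content you need in order to invoke Theorem~\ref{thm:Berger}, and without exhibiting them (or the explicit coordinate change) the BSD half of the theorem is unproven. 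To close the gap, you would need to either complete your substitution to reach a form $f(X')-tg(Y')=0$ with the correct $f,g$, or simply state the paper's choice of $f,g$ and verify the birational equivalence with the given Weierstrass model.
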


\begin{proof}
  This is an instance of Berger's construction
  (Theorem~\ref{thm:Berger} of Lecture~3).  Indeed, let
  $f(x)=x(x-a)/(x-1)$ and $g(y)=y(y-a)/(y-1)$.  Then
  $V(f-tg)\subset\P^1_K\times\P^1_K$ is birational to $E$, which is a
  smooth elliptic curve for all $a\neq0,1$. 
%? The map is ...
  Berger's Theorem~\ref{thm:Berger} of Lecture~3 shows that $E$
  satisfies BSD over the fields $\Fq(t^{1/d})$.

  The discriminant of $E$ is
$$\Delta=a^2(a-1)^4t^4(t-1)^2\left(a^2t^2-(2a^2-16a+16)t+a^2\right).$$
Assume first that $p>3$.  One checks that $\Delta$ is relatively prime
to $c_4$ so that the zeroes of $\Delta$ are places of multiplicative
reduction.  Since the discriminant (in $t$) of the quadratic factor
$a^2t^2-(2a^2-16a+16)t+a^2$ is $-64(a-1)(a-2)^2$ we see that there are
three finite, non-zero geometric points of multiplicative reduction.
Since $p>3$, the reduction at 0 and $\infty$ is tame and so $\n'$
(defined as in Subsection~\ref{ss:analytic-ranks} of Lecture~4) has
degree 3.  Thus by Theorem~\ref{thm:analytic-ranks} of Lecture~4, $E$
has unbounded analytic ranks in the tower $\Fq(t^{1/d})$ and thus also
unbounded algebraic ranks by the previous paragraph on BSD.

If $p=2$ or 3, one needs to use Tate's algorithm to compute $\n'$,
which again turns out to have degree 3.  We leave the details of this
computation as a pleasant exercise for the reader.
\end{proof}

\numberwithin{equation}{section}

%%%%%%%%%%%%%%%%%%%%%%%%%%%%%%%%%%%%%%%%%%%%%%%%% 

\lecture{More applications of products of curves}

In the last part of Lecture~4, we chose special curves $E$ and used a
domination $\CC\times\DD\ratto\EE$ of the associated surface to deduce
the Tate conjecture for $\EE$ and thus the BSD conjecture for $E$.
This yields an {\it a priori\/} equality of analytic and algebraic
ranks.  We then used other, cohomological, methods (namely the
analytic ranks theorem) to compute the analytic rank.

It turns out to be possible to use domination by a product of curves
and geometry to prove directly results about algebraic ranks and
explicit points.  We sketch some of these applications in this
lecture.

\section{More on Berger's construction}\label{s:moreBerger}
Let $k$ be a field (not necessarily finite), $K=k(t)$, and
$K_d=k(t^{1/d})=k(u)$.  Recall that in Berger's construction we start
with rational curves $\CC=\P^1_k$ and $\DD=\P^1_k$ and rational
functions $f(x)$ on $\CC$ and $g(y)$ on $\DD$.  We get a curve in
$\P^1_K\times\P^1_K$ defined by $f(x)-tg(y)=0$ and we let $E$ be the
smooth proper model over $K$ of this curve.  (Some hypotheses are
required for this to exist, but they are weaker than our standing
hypotheses below.)  The genus of $E$ was computed by Berger in
\cite{Berger08}*{Theorem~3.1}.  All the examples we consider
will be of genus 1 and will have a $K$-rational point.

We establish more notation to state a precise result.  Let us assume
for simplicity all the zeroes and poles of $f$ and $g$ are
$k$-rational.  Write
\begin{equation}\label{eq:divs}
\dvsr(f)=\sum_{i=1}^k a_iP_i-\sum_{i'=1}^{k'} a'_{i'}P'_{i'}
\quad\text{and}\quad \dvsr(g)=\sum_{j=1}^\ell
b_jQ_j-\sum_{j'=1}^{\ell'} b'_{j'}Q'_{j'}
\end{equation}
with $a_i,a'_{i'},b_i,b'_{j'}$ positive integers and $P_i$, $P'_{i'}$,
$Q_j$, and $Q'_{j'}$ distinct $k$-rational points.  Let
$$m=\sum_{i=1}^ka_i=\sum_{i'=1}^{k'} a'_{i'}
\quad\text{and}\quad
n=\sum_{j=1}^\ell b_j=\sum_{j'=1}^{\ell'} b'_{j'}.$$

As standing hypotheses, we assume that: (i) all the multiplicities
$a_i$, $a'_{i'}$, $b_j$, and $b'_{j'}$ are prime to the characteristic
of $k$; and (ii)
$\gcd(a_1\dots,a_k,a'_1,\dots,a'_{k'}
)=\gcd(b_1\dots,b_\ell,b'_1,\dots,b'_{\ell'})=1$.

Under these hypotheses, Berger computes that the genus of $E$ is 
\begin{equation}\label{eq:genus}
g_E=(m -1)(n-1)-\sum_{i,j}\delta(a_i,b_j)-\sum_{i',j'}\delta(a'_{i'},b'_{j'})
\end{equation}
where $\delta(a,b)=(ab-a-b+\gcd(a,b))/2$.

From now on we assume that we have chosen the data $f$ and $g$ so that
$E$ has genus 1.  Two typical cases are where $f$ and $g$ are
quadratic rational functions with simple zeroes and poles, or where
$f$ and $g$ are cubic polynomials.  There is always a $K$-rational
point on $E$; for example, we may take a point where $x$ and $y$ are
zeroes of $f$ and $g$.

Let $\EE_d\to\P^1$ be the elliptic surface over $k$ attached to
$E/K_d$.  It is clear that $\EE_d$ is birational to the closed subset
of $\P^1_k\times\P^1_k\times\P^1_k$ (with coordinates $x,y,u$) defined
by the vanishing of $f(x)-u^dg(y)$.  We saw in Section~\ref{s:Berger}
of Lecture~3 that $\EE$ is dominated by a product of curves and we
would now like to make this more precise.

Recall that we defined covers $\CC_d\to\CC=\P^1$ and
$\DD_d\to\DD=\P^1$ by the equations $z^d=f(x)$ and $w^d=g(y)$.  Note
that there is an action of $\mu_d$, the $d$-th roots of unity, on
$\CC_d$ and on $\DD_d$.

\begin{prop}\label{prop:quotient}
  The surface $\EE_d$ is birationally isomorphic to the quotient
  surface $(\CC_d\times\DD_d)/\mu_d$ where $\mu_d$ acts diagonally.
\end{prop}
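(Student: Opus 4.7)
The plan is to exhibit an explicit rational map from $\CC_d\times\DD_d$ to $\EE_d$, check that it is $\mu_d$-invariant for the diagonal action, and then verify that the induced map from the quotient is birational by comparing degrees. Since everything is two-dimensional and dominant, it will suffice to identify the generic fiber with a single $\mu_d$-orbit.

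First I would recall from Section~\ref{s:Berger} of Lecture~3 that $\EE_d$ is birational to the hypersurface $\XX_d\subset\A^3$ defined by $f(x)-u^dg(y)=0$, and that the rational map
\[
\Phi:\CC_d\times\DD_d\ratto\XX_d,\qquad (x,z,y,w)\mapsto\bigl(x,y,\,u=z/w\bigr)
\]
is well defined on the dense open locus where $w\neq 0$, and lands in $\XX_d$ because $u^d=z^d/w^d=f(x)/g(y)$. I would then observe that $\Phi$ is dominant: its image contains every $(x_0,y_0,u_0)\in\XX_d$ with $g(y_0)\neq 0$, by solving $z^d=f(x_0)$ for $z$ and setting $w=z/u_0$.

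Next I would check that $\Phi$ is invariant under the diagonal $\mu_d$-action $\zeta\cdot(x,z,y,w)=(x,\zeta z,y,\zeta w)$, since $x$, $y$, and the ratio $z/w$ are all fixed. Therefore $\Phi$ factors through a dominant rational map
\[
\overline\Phi:(\CC_d\times\DD_d)/\mu_d\ratto\EE_d.
\]
Both source and target are two-dimensional and irreducible (the quotient is irreducible because the $\mu_d$-action on $\CC_d\times\DD_d$ is, so $\overline\Phi$ is generically finite, and it remains to show the generic degree is $1$.

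For the degree computation I would fix a generic point $(x_0,y_0,u_0)\in\EE_d$ with $f(x_0)g(y_0)u_0\neq 0$ and determine its preimage in $\CC_d\times\DD_d$. The conditions $z^d=f(x_0)$, $w^d=g(y_0)$, and $z/w=u_0$ are equivalent to $z^d=f(x_0)$ together with $w=z/u_0$; hence the preimage consists of exactly $d$ points, namely $(x_0,\zeta z_0,y_0,\zeta z_0/u_0)$ for $\zeta\in\mu_d$, where $z_0$ is any fixed $d$-th root of $f(x_0)$. This is precisely one $\mu_d$-orbit under the diagonal action, so $\overline\Phi$ has generic degree $1$ and is therefore birational.

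The main obstacle is a minor bookkeeping point rather than a conceptual one: making sure that the point counting argument genuinely produces a single $\mu_d$-orbit (as opposed to several), which boils down to the fact that once $z$ is chosen from among the $d$ roots of $f(x_0)$, the constraint $z/w=u_0$ pins down $w$ uniquely, so the $d$ preimages differ only by the simultaneous scaling $(z,w)\mapsto(\zeta z,\zeta w)$. Everything else is formal.
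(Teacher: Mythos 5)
Your proof is correct and follows the same route as the paper: both identify $\EE_d$ birationally with the hypersurface $f(x)-u^dg(y)=0$, use the same map $(x,z,y,w)\mapsto(x,y,z/w)$, and conclude by observing that this map has generic degree $d$ and factors through the diagonal $\mu_d$-quotient. You simply spell out the degree count via an explicit preimage computation, which the paper asserts without detail.
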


\begin{proof}
  We have already noted that $\EE_d$ is birational to the zero set
  $\XX$ of $f(x)-u^dg(y)$ in $\P^1_k\times\P^1_k\times\P^1_k$.  Define
  a rational map from $\CC_d\times\DD_d$ to $\XX$ by sending
  $(x,z,y,w)$ to $(x,y,u=z/w)$.  It is clear that this map factors
  through the quotient $(\CC_d\times\DD_d)/\mu_d$.  Since the map is
  generically of degree $d$, it induces a birational isomorphism
  between $(\CC_d\times\DD_d)/\mu_d$ and $\XX$.  Thus
  $(\CC_d\times\DD_d)/\mu_d$ is birationally isomorphic to $\EE_d$.
\end{proof}

In the next section we will explain how this birational isomorphism can
be used to compute the N\'eron-Severi group of $\EE_d$ and the
Mordell-Weil group $E(K_d)$.

\section{A rank formula}\label{s:rankformula}
We keep the notation and hypotheses of the preceding subsection.
Consider the base $\P^1_k$, the one corresponding to $K$, with
coordinate $t$.  For each geometric point $x$ of this $\P^1_k$, let
$f_x$ be the number of components in the fiber of $\EE\to\P^1$ over
$x$.  For almost all $x$, $f_x=1$ and its value at any point can be
computed using Tate's algorithm.

Define two constants $c_1$ and $c_2$ by the formulae
$$c_1=\sum_{x\neq0,\infty}(f_x-1)$$
and
$$c_2=(k-1)(\ell-1)+(k'-1)(\ell'-1).$$
Here the sum is over geometric points of $\P^1_k$ except $t=0$ and
$t=\infty$ and $k$, $k'$, $\ell$, and $\ell'$ are the numbers of
distinct zeroes and poles of $f$ and $g$
(cf.~equation~(\ref{eq:divs})).  Note that $c_1$ and $c_2$ depend only
on the data defining $E/K$, not on $d$.

\begin{thm}\label{thm:rank-formula}
  Suppose that $k$ is algebraically closed and that $d$ is relatively
  prime to all of the multiplicities $a_i$, $a'_{i'}$, $b_j$, and
  $b'_{j'}$ and to the characteristic of $k$.  Then we have
$$\rk E(K_d)=\rk\Hom(J_{\CC_d},J_{\DD_d})^{\mu_d}-c_1d+c_2.$$
Here $\Hom(\cdots)^{\mu_d}$ signifies the homomorphisms
commuting with the actions of $\mu_d$ on the two Jacobians induced by
its action on the curves.
\end{thm}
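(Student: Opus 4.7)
The plan is to deduce the rank formula from a careful comparison of three geometric objects: the N\'eron--Severi group of the elliptic surface $\EE_d$, that of the product $\CC_d\times\DD_d$ with its diagonal $\mu_d$-action, and the local contributions at the bad fibers of $\pi_d\colon\EE_d\to\P^1_u$. The starting point is the Shioda--Tate formula (\ref{eq:STformula}), which applied to $\EE_d$ reads
$$\rk E(K_d)=\rk\NS(\EE_d)-2-\sum_{v\in\P^1_u}(f_v^{(d)}-1),$$
with $f_v^{(d)}$ the number of irreducible components of $\pi_d^{-1}(v)$. One must therefore compute each of the two terms on the right.

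For the local sum, I would split it according to the image of $v$ under $u\mapsto u^d=t$. Since this cover is \'etale away from $\{0,\infty\}$ and \'etale base change preserves reduction type (via the N\'eron model), each place $v_0$ of $\P^1_t$ other than $0$ and $\infty$ has exactly $d$ preimages on $\P^1_u$, each contributing $f_{v_0}-1$ to the Shioda--Tate sum; this gives a total of $c_1d$. The remaining summands $(f_0^{(d)}-1)+(f_\infty^{(d)}-1)$ will be absorbed into $c_2$ in the final step.

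For $\rk\NS(\EE_d)$, I would use the birational identification $\EE_d\sim Y:=(\CC_d\times\DD_d)/\mu_d$ from Proposition~\ref{prop:quotient}. Over $\bar k$, the product formula (\ref{eq:NS(CxD)}) gives
$$\NS(\CC_d\times\DD_d)\cong\Z^2\oplus\Hom(J_{\CC_d},J_{\DD_d}),$$
and the diagonal $\mu_d$-action is trivial on $\Z^2$ (the classes $\CC_d\times\{y\}$ and $\{x\}\times\DD_d$ are independent of the choice of point in $\NS$) and acts by conjugation on $\Hom$. Since rational-coefficient N\'eron--Severi of a finite quotient coincides with the invariants of the cover, one gets $\rk\NS(Y)\otimes\Q=2+\rk\Hom(J_{\CC_d},J_{\DD_d})^{\mu_d}$. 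To bridge $Y$ and $\EE_d$ I resolve the cyclic quotient singularities of $Y$, all of which sit above $\mu_d$-fixed points of $\CC_d\times\DD_d$; under the fibration $u=z/w$ those fixed points project only to $u=0$ or $u=\infty$. So the minimal resolution $\tilde Y\to Y$ produces exceptional components living entirely over $u=0,\infty$, and $\tilde Y$ is then joined to the relatively minimal elliptic surface $\EE_d$ by a sequence of blow-ups and blow-downs whose effect on $\NS$ must be tracked.

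The main obstacle is this last bookkeeping. One needs to match the exceptional components of $\tilde Y\to Y$, plus the blow-ups/blow-downs converting $\tilde Y$ into $\EE_d$, against the irreducible components of the fibers of $\EE_d$ over $u=0$ and $u=\infty$. The resulting contribution of exceptional divisors to $\rk\NS(\EE_d)$, combined with the surplus $(f_0^{(d)}-1)+(f_\infty^{(d)}-1)$ from the Shioda--Tate sum, must produce exactly $c_2$; the shape $c_2=(k-1)(\ell-1)+(k'-1)(\ell'-1)$ is expected to emerge from the fact that the $\mu_d$-fixed points above $t=0$ form a $k\times\ell$ grid (zeros of $f$ against zeros of $g$), those above $t=\infty$ form a $k'\times\ell'$ grid, and in each grid a single rank-one combination is absorbed by the fiber class already counted in the $\Z^2$ summand, leaving $(k-1)(\ell-1)$ and $(k'-1)(\ell'-1)$ new classes respectively. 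Once this combinatorial count is verified, substituting into Shioda--Tate gives the formula.
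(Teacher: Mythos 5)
Your proposal follows essentially the same strategy as the paper's sketch: Shioda--Tate applied to $\EE_d$, the computation $\NS(\CC_d\times\DD_d)\cong\Z^2\oplus\Hom(J_{\CC_d},J_{\DD_d})$, passage to $\mu_d$-invariants for the quotient, and careful tracking of birational modifications linking $(\CC_d\times\DD_d)/\mu_d$ to $\EE_d$. The paper's own write-up is deliberately brief and defers all the combinatorial bookkeeping to \cite{UlmerDPCT}*{Section~6}, and your outline has the same shape, ending at the same place (``once this combinatorial count is verified\ldots''). A genuine contribution of your version over what appears in these notes is the clean explanation of the $c_1d$ term: since $u\mapsto u^d$ is \'etale away from $\{0,\infty\}$ and \'etale base change preserves the special fiber of the N\'eron model, each place $v_0\neq 0,\infty$ of $\P^1_t$ has exactly $d$ preimages each contributing $f_{v_0}-1$; the paper merely attributes the $c_1d$ to ``the Shioda--Tate step'' without saying this.

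One imprecision worth flagging: it is not quite correct that the $\mu_d$-fixed points of $\CC_d\times\DD_d$ all project to $u=0$ or $u=\infty$ under $u=z/w$. Fixed points of type (zero of $f$, pole of $g$) go to $u=0$ and type (pole, zero) go to $u=\infty$, but the points of type (zero, zero) and (pole, pole)---precisely the grids in your $(k-1)(\ell-1)+(k'-1)(\ell'-1)$ count---are points of indeterminacy of the rational map $u$: at such a point $z\sim s^{a_i}$ and $w\sim r^{b_j}$ in local coordinates, so $u=z/w$ is $0/0$. Resolving the cyclic quotient singularity there and then eliminating the indeterminacy is exactly where the bookkeeping lives, and the exceptional components need not sit in a single fiber of $u$. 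This doesn't invalidate your outline---it's the same gap the paper leaves open---but the phrase ``project only to $u=0$ or $u=\infty$'' should be tempered: what's true is that the discrepancy between $\rk\NS(\EE_d)$ and $\rk\NS(Y)\otimes\Q$, together with the residual $(f_0^{(d)}-1)+(f_\infty^{(d)}-1)$, is localized to the modifications carried out over a neighborhood of $u\in\{0,\infty\}$ and the resolution of the finitely many quotient singularities, and it is the net effect of all of these that produces $c_2$.
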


\begin{proof}[Sketch of Proof]
In brief, we use the birational isomorphism
$$(\CC_d\times\DD_d)/\mu_d\ratto\EE_d$$
to compute the rank of the
N\'eron-Severi group of $\EE_d$ and then use the Shioda-Tate formula to
compute the rank of $E(K_d)$.

More precisely, we saw in Lecture~2, Subsection~\ref{ss:products} that
the N\'eron-Severi group of the product $\CC_d\times\DD_d$ is
isomorphic to $\Z^2\times\Hom(J_{\CC_d},J_{\DD_d})$.  It follows
easily that the N\'eron-Severi group of the quotient
$(\CC_d\times\DD_d)/\mu_d$ is isomorphic to
$\Z^2\times\Hom(J_{\CC_d},J_{\DD_d})^{\mu_d}$.  

One then keeps careful track of the blow-ups needed to pass from
$(\CC_d\times\DD_d)/\mu_d$ to $\EE_d$.  The effect of blow-ups on
N\'eron-Severi is quite simple and was noted in
Subsection~\ref{ss:blow-ups} of Lecture~2.  This is the main source of
the term $c_2$ in the formula.

Finally, one computes the rank of $E(K_d)$ using the Shioda-Tate
formula, as in Section~\ref{s:Shioda-Tate} of Lecture~3.  This step is
the main source of the term $c_1d$.

The hypothesis that $k$ is algebraically closed is not essential for any
of the above, but it avoids rationality questions that would greatly
complicate the formula.

For full details on the proof of this theorem (in a more general
context) see \cite{UlmerDPCT}*{Section~6}.
\end{proof}

\section{First examples}\label{s:firstexample}
One of the first examples is already quite interesting.  We give a
brief sketch and refer to \cite{UlmerDPCT} for more details.

With notation as in Section~\ref{s:moreBerger}, we take $f(x)=x(x-1)$
and $g(y)=y^2/(1-y)$.  The genus formula (\ref{eq:genus}) shows that $E$
has genus 1.  In fact, the change of coordinates $x=-y/(x+t)$,
$y=-x/t$ brings it into the Weierstrass form
$$y^2+xy+ty=x^3+tx^2.$$

We remark in passing that if the characteristic of $k$ is not $2$, $E$
has multiplicative reduction at $t=1/16$ and good reduction elsewhere
away from $0$ and $\infty$.  Thus by the analytic rank result of
Lecture~2, when $k$ is finite, say $k=\Fp$ and $p>3$, we expect $E$ to
have unbounded analytic rank in the tower $\Fp(t^{1/d})$.  (In fact a
more careful analysis gives the same conclusion for every $p$.)

Now assume that $k$ is algebraically closed.  To compute the constant
$c_1$, one checks that (for $k$ of any characteristic) $E$ has exactly
one irreducible component over each geometric point of $\P^1_k$.  Thus
$c_1=0$.  It is immediate from the definition that $c_2=0$.  Thus our
rank formula yields
$$\rk E(K_d)=\rk\Hom(J_{\CC_d},J_{\DD_d})^{\mu_d}.$$

Next we note that there is an isomorphism $\phi:\CC_d\to\DD_d$ sending
$(x,z)$ to $(y=1/x,w=1/z)$.  This isomorphism {\it anti-commutes\/}
with the $\mu_d$ action: Let $\zeta_d$ be a primitive $d$-th root of
unity and write $[\zeta_d]$ for its action on curves or Jacobians.  Then 
$\phi\compose[\zeta_d]=[\zeta_d^{-1}]\compose\phi$.  Using $\phi$ to
identify $\CC_d$ and $\DD_d$, our rank formula becomes
$$\rk E(K_d)=\rk\en(J_{\CC_d})^{anti-\mu_d}$$
where ``$\en(\cdots)^{anti-\mu_d}$'' denotes those endomorphisms
anti-commuting with $\mu_d$ in the sense above.

Suppose that $k$ has characteristic zero.  Then a consideration of the
(faithful) action of $\en(J_{\CC_d})$ on the differentials
$H^0(J_{\CC_d},\Omega^1)$ shows that $\en(J_{\CC_d})^{anti-\mu_d}=0$
for all $d$ (see \cite{UlmerDPCT}*{7.6}).  We conclude that for $k$ of
characteristic zero, the rank of $E(K_d)$ is zero for all $d$.

Now assume that $k$ has characteristic $p$ (and is algebraically
closed).  If we take $d$ of the form $p^f+1$ then we get many elements
of $\en(J_{\CC_d})^{anti-\mu_d}$. Namely, we consider the Frobenius
$\Fr_{p^f}$ and compute that
$$\Fr_{p^f}\compose[\zeta_d]=[\zeta_d^{p^f}]\compose\Fr_{p^f}=
[\zeta_d^{-1}]\compose\Fr_{p^f}.$$
The same computation shows that $\Fr_{p^f}\compose[\zeta_d^i]$
anticommutes with $\mu_d$ for all $i$.  It turns out that there are
two relations among these endomorphism in $\en(J_{\CC_d})$ if $p>2$
and just one relation if $p=2$ (see \cite{UlmerDPCT}*{7.8-7.10}).  Thus
we find that, for $d$ of the special form $d=p^f+1$,
$$\rk E(\Fpbar(t^{1/d}))=\begin{cases}
d-2&\text{if $p>2$}\\
d-1&\text{if $p=2$.}
\end{cases}$$
The reader may enjoy checking that this is in exact agreement with
what the analytic rank result (Theorem~\ref{thm:analytic-ranks} of
Lecture~4) predicts.  

Somewhat surprisingly, there are {\it more\/} values of $d$ for which
we get high ranks.  A natural question is to identify all pairs
$(p,d)$ such that $E(\Fpbar(t^{1/d})$ has ``new'' rank, i.e, points of
infinite order not coming from smaller values of $d$.  The exact set
of pairs $(p,d)$ for which we get high rank is mysterious.  There are
``systematic'' cases (such as $(p,p^f+1)$, as above, or $(p,2(p-1))$)
and other cases that may be sporadic.  This is the subject of ongoing
research so we will not go into more detail, except to note that the
example in Section~\ref{s:2ndexample} below is relevant to this
question.

\section{Explicit points}\label{s:explicitpoints}
The main ingredients in the rank formula of
Section~\ref{s:rankformula} are the calculation of the N\'eron-Severi
group of a product of curves in terms of homomorphisms of Jacobians
and the Shioda-Tate formula.  Tracing through the proof leads to a
homomorphism
$$\Hom(J_{\CC_d},J_{\DD_d})^{\mu_d}\cong\DivCorr(\CC_d,\DD_d)
\to L^1\NS(\EE_d)\to
\frac{L^1\NS(\EE_d)}{L^2\NS(\EE_d)}\cong E(K_d).$$

For elements of $\Hom(J_{\CC_d},J_{\DD_d})^{\mu_d}$ where we can find
an explicit representation in $\DivCorr(\CC_d,\DD_d)$, the geometry of
Berger's construction leads to explicit points in $E(K_d)$.  This
applies notably to the endomorphisms $\Fr_{p^f}\compose[\zeta_d^i]$
appearing in the analysis of the first example above.  Indeed, these
endomorphisms are represented in $\DivCorr(\CC_d,\DD_d)$ by the graphs
of Frobenius composed with the automorphisms $[\zeta_d^i]$ of $\CC_d$.  

Tracing through the geometry leads to remarkable explicit expressions
for points in $E(K_d)$.  The details of the calculation are presented
in \cite{UlmerDPCT}*{\S8} so we will just state the results here, and
only in the case $p>2$.  

\begin{thm}
Let $p>2$, $k=\Fpbar$ and $K=k(t)$.  Let $E$ be the elliptic
curve
$$y^2+xy+ty=x^3+tx^2$$
over $K$.  Let $q=p^f$, $d=q+1$, $K_d=k(t^{1/d})$, and 
$$P(u)=\left(\frac{u^q(u^q-u)}{(1+4u)^q},
  \frac{u^{2q}(1+2u+2u^q)}{2(1+4u)^{(3q-1)/2}}
  -\frac{u^{2q}}{2(1+4u)^{q-1}}\right).$$ 
Then the points $P_i=P(\zeta_d^it^{1/d})$ for $i=0,\dots,d-1$ lie in
$E(K_d)$ and they generate a finite index subgroup of $E(K_d)$, which
has rank $d-2$.  The relations among them are that
$\sum_{i=0}^{d-1}P_i$ and $\sum_{i=0}^{d-1}(-1)^iP_i$ are torsion.
\end{thm}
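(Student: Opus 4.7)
The plan is to assemble the theorem from the general machinery of Sections~\ref{s:moreBerger}--\ref{s:explicitpoints}: Theorem~\ref{thm:rank-formula}, the chain of maps $\Hom(J_{\CC_d}, J_{\DD_d})^{\mu_d} \to E(K_d)/\text{tors}$ of Section~\ref{s:explicitpoints}, and the explicit analysis of this very curve in Section~\ref{s:firstexample}. The latter already shows that $\rk E(K_d) = d-2$ when $p>2$ and $d = q+1$, and exhibits the $d$ endomorphisms $\Fr_q \circ [\zeta_d^i]$ ($i = 0, \ldots, d-1$) as generating a finite-index subgroup of $\en(J_{\CC_d})^{anti-\mu_d}$ with exactly two independent relations modulo torsion. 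What remains is to show: (i) $P(u)$ really defines a point of $E$ over $K_d$; (ii) $P_i$ is (a rational multiple of) the image of $\Fr_q \circ [\zeta_d^i]$ in $E(K_d) \otimes \Q$; and (iii) these two relations translate into $\sum_i P_i \equiv 0$ and $\sum_i (-1)^i P_i \equiv 0$ modulo torsion.

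Step (i) is a mechanical verification: substitute the given $X$, $Y$, and $t = u^d$ into $Y^2 + XY + tY - X^3 - tX^2$, clear denominators by $(1+4u)^{3q}$, and check the resulting polynomial identity in $u$. For step (ii), use the isomorphism $\CC_d \isoto \DD_d$ sending $(x,z) \mapsto (1/x, 1/z)$ to convert $\Fr_q \circ [\zeta_d^i]$ into a $\mu_d$-equivariant homomorphism $J_{\CC_d} \to J_{\DD_d}$, represented by the graph $\Gamma_i \subset \CC_d \times \DD_d$ of the composite $(x, z) \mapsto (x^{-q}, \zeta_d^i z^{-q})$. Under the rational map $(\CC_d \times \DD_d)/\mu_d \ratto \EE_d$ of Proposition~\ref{prop:quotient}, the coordinate $u = z/w$ restricted to $\Gamma_i$ becomes $\zeta_d^{-i} z^d$, so $\Gamma_i$ projects birationally onto a multisection of $\EE_d \to \P^1$. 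Parametrizing this multisection by $u$ and applying the birational change of coordinates to Weierstrass form from Section~\ref{s:firstexample} then produces the explicit point $P(\zeta_d^i t^{1/d}) = P_i$ for each $i$.

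For (iii), the relation $\sum_i P_i \equiv 0$ modulo torsion follows from $\sum_i \Fr_q \circ [\zeta_d^i] = \Fr_q \circ \sum_i [\zeta_d^i]$ together with the fact that $\sum_i [\zeta_d^i]$ factors through the $\mu_d$-invariant subvariety $J_{\CC_d}^{\mu_d}$, which is isogenous to $J_{\CC_d/\mu_d} = J_{\P^1} = 0$. The relation $\sum_i (-1)^i P_i \equiv 0$ comes from an analogous but subtler identity in $\en(J_{\CC_d})$ established in \cite{UlmerDPCT}*{7.8--7.10}, reflecting the absence of a second $\mu_d$-character from the relevant piece of $J_{\CC_d}$. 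Both relations are visibly $\Q$-linearly independent, so they exhaust the relations among the $P_i$ in $E(K_d) \otimes \Q$, and the $P_i$ generate a subgroup of finite index.

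The step I expect to be most technical is (ii): tracing the divisor class $[\Gamma_0]$ explicitly through the $\mu_d$-quotient, the resolution of singularities producing $\EE_d$, and finally the birational transformation to Weierstrass form, while keeping enough information alive to read off the precise exponents and coefficients appearing in $P(u)$. This bookkeeping, rather than any conceptual difficulty, is the heart of the computation and is carried out in \cite{UlmerDPCT}*{\S8}.
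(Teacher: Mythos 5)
Your proposal is correct in outline, but it takes a different route from the paper's own verification strategy.  You follow the ``derivational'' sketch that precedes the theorem in Section~\ref{s:explicitpoints}: identify $P_i$ as the image of the correspondence class of $\Fr_q\compose[\zeta_d^i]$ under the chain
$\Hom(J_{\CC_d},J_{\DD_d})^{\mu_d}\cong\DivCorr(\CC_d,\DD_d)\to L^1\NS(\EE_d)\to E(K_d)$,
and then import the rank count and the two relations from the analysis of
$\en(J_{\CC_d})^{anti\text{-}\mu_d}$ in Section~\ref{s:firstexample}.  The paper's proof after the theorem statement instead \emph{verifies} the result:  an elementary check that $P(u)$ lands in $E(K_d)$, followed by a direct computation of the N\'eron--Tate height pairing on the lattice generated by the $P_i$, identifying it as a rescaling of $A_{(d-2)/2}^*\oplus A_{(d-2)/2}^*$ (which yields both the independence and the two relations at once), and then a comparison with the rank $d-2$ from Section~\ref{s:firstexample} (or, alternatively, with the conductor bound $\deg\n=d+2$ via Corollary~\ref{cor:rankbound}).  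The tradeoff is genuine:  your route explains \emph{why} $P(u)$ has the shape it does, but it requires carrying a divisor class through the $\mu_d$-quotient, the resolution of singularities, and the birational change to Weierstrass form --- exactly the bookkeeping you flag as the hard step; the height-pairing route takes the formula for $P(u)$ as given and checks everything self-containedly, at the cost of hiding where the formula came from.  Both ultimately defer the full computation to \cite{UlmerDPCT}*{\S8}.  Two small remarks:  in step~(iii) your justification of the second relation $\sum_i(-1)^iP_i\equiv0$ could be made as transparent as the first --- $\sum_i(-1)^i[\zeta_d^i]$ is (up to the scalar $d$) the idempotent for the character $\chi_{d/2}$ of $\mu_d$, and the $\chi_{d/2}$-isotypic piece of $J_{\CC_d}$ is trivial because $\CC_d/\mu_{d/2}$ is the genus-zero curve $z^2=x(x-1)$, exactly parallel to $\CC_d/\mu_d=\P^1$ for the first relation; and in step~(ii), once $\Gamma_i$ is recognized as a $d$-fold multisection of $\EE_d\to\P^1$, you need to justify that the point it contributes to $E(K_d)$ is the honest $P_i$ (not merely a rational multiple of it) before you can assert the stated integral relations rather than relations up to bounded denominator.
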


It is elementary to check that the points lie in $E(K_d)$.  To check
their independence and the relations by elementary means, one may
compute the height pairing on the lattice they generate.  It turns out
to be a scaling of the direct sum of two copies of the $A_{(d-2)/2}^*$
lattice.  Since we know from the previous section that $E(K_d)$ has
rank $d-2$, the explicit points generate a subgroup of finite index.
As another check that they have finite index, we could compute the
conductor of $E$---it turns out to have degree $d+2$---and apply
Corollary~\ref{cor:rankbound} of Lecture~4.  All this is explained in
detail in \cite{UlmerDPCT}*{\S8}.

\section{Another example}\label{s:2ndexample}
We keep the notation and hypotheses of Sections~\ref{s:moreBerger} and
\ref{s:rankformula}.  For another example, assume that $k=\Fpbar$ with
$p>2$.  Let $f(x)=x/(x^2-1)$ and $g(y)=y(y-1)$.  The curve
$f(x)-tg(y)=0$ has genus 1 and the change of coordinates
$x=(x'+t)/(x'-t)$, $y=-y'/2tx'$ brings it into the Weierstrass form
$$y^{\prime2}+2tx'y'=x^{\prime3}-t^2x'.$$
This curve, call it $E$, has multiplicative reduction of type $I_1$ at
the places dividing $t^2+4$, good reduction at other finite, non-zero
places, and tame reduction at $t=0$ and $t=\infty$.  We find that the
constants $c_1$ and $c_2$ are both zero and that
$$\rk E(\Fpbar(t^{1/d}))=\rk\Hom(J_{\CC_d},J_{\DD_d})^{\mu_d}.$$

Recall that the curves $\CC_d$ and $\DD_d$ are defined by the
equations
$$z^d=f(x)=\frac x{x^2-1}\quad\text{and}\quad w^d=g(y)=y(y-1).$$
Consider the morphism $\phi:\CC_d\to\DD_d$ defined by
$\phi^*(y)=1/(1-x^2)$ and $\phi^*(w)=z^2$.  It is obviously not constant
and so induces a surjective homomorphism $\phi_*:J_{\CC_d}\to J_{\DD_d}$.

The homomorphism $\phi_*$ clearly does not commute with the action of
$\mu_d$.  Indeed, if $\zeta_d$ denotes a primitive $d$-th root of
unity and $[\zeta_d]$ its action on one of the Jacobians, we have
$\phi_*\compose[\zeta_d]=[\zeta_d^2]\compose\phi_*$.  (This formula
already holds at the level of the curves $\CC_d$ and $\DD_d$.)

Now let us assume that $d$ has the form $d=2p^f-1$ and consider the
map $\phi\compose\Fr_{p^f}:\CC_d\to\DD_d$.  Then we find that
$$(\phi\compose\Fr_{p^f})_*\compose[\zeta_d]=
[\zeta_d^{2p^f}]\compose(\phi\compose\Fr_{p^f})_*=
[\zeta_d]\compose(\phi\compose\Fr_{p^f})_*$$ in
$\Hom(J_{\CC_d},J_{\DD_d})$, in other words that
$(\phi\compose\Fr_{p^f})_*$ commutes with the $\mu_d$ action.
Similarly $([\zeta_d^i]\compose\phi\compose\Fr_{p^f})_*$ commutes
with the $\mu_d$ action for all $i$.

Further analysis of the homomorphisms
$([\zeta_d^i]\compose\phi\compose\Fr_{p^f})_*$ in
$\Hom(J_{\CC_d},J_{\DD_d})^{\mu_d}$ (along the lines of
\cite{UlmerDPCT}*{7.8}) shows that they are almost independent; more
precisely, they generate a subgroup of rank $d-1$.  Thus we find (for
$d$ of the form $d=2p^f-1$) that the rank of $E(k(t^{1/d}))$ is at
least $d-1$.

The reader may find it a pleasant exercise to write down explicit
points in this situation, along the lines of the discussion in
Section~\ref{s:explicitpoints} and \cite{UlmerDPCT}*{\S8}.

\section{Further developments}
There have been further developments in the area of rational points on
curves and Jacobians over function fields.  To close, we mention three
of them.

In the examples of Sections~\ref{s:firstexample} and
\ref{s:2ndexample}, the set of $d$ that are ``interesting,'' i.e.,
for which we get high rank over $K_d$, depends very much on $p$, the
characteristic of $k$.  In his thesis (University of Arizona, 2010),
Tommy Occhipinti gives, for every $p$, remarkable examples of
elliptic curves $E$ over $\Fp(t)$ such that
for {\it all\/} $d$ prime to $p$ we have
$$\rk E(\Fpbar(t^{1/d}))\ge d.$$
The curves come from Berger's construction where $f$ and $g$ are
generic degree two rational functions.  The rank inequality comes from
the rank formula in Theorem~\ref{thm:rank-formula} and the Honda-Tate
theory of isogeny classes of abelian varieties over finite fields.

In the opposite direction, the author and Zarhin have given examples
of curves of every genus over $\C(t)$ such that their Jacobians have
bounded rank in the tower of fields $\C(t^{1/\ell^n})$ where $\ell$ is
a prime.  See \cite{UlmerZarhin10}.

Finally, after some encouragement by Dick Gross at PCMI, the author
produced explicit points on the Legendre curve over the fields
$\Fp(\mu_d)(t^{1/d})$ where $d$ has the form $p^f+1$ and proved in a
completely elementary way that they give Mordell-Weil groups of
unbounded rank.  In fact, this construction is considerably easier
than that of Tate and Shafarevich \cite{TateShafarevich67} and could
have been found in the 1960s.  See \cite{UlmerLegendre}.

It appears that this territory is rather fertile and that there is much
still to be discovered about high ranks and explicit points on curves
and Jacobians over function fields.  Happy hunting!

\bibliography{database}

\end{document}